\newenvironment{renumerate}{\begin{enumerate}[label={\textup{(\roman*)}}]}{\end{enumerate}}
\newenvironment{aenumerate}{\begin{enumerate}[label={\textup{(\alph*)}}]}{\end{enumerate}}
\newcommand{\nc}{\newcommand}
\nc{\CC}{{\mathbb{C}}}
\nc{\LL}{{\mathbb{L}}}
\nc{\RR}{{\mathbb{R}}}
\renewcommand{\P}{{\mathbb{P}}}
\nc{\OO}{{\mathbb{O}}}
\nc{\QQ}{{\mathbb{Q}}}
\nc{\ZZ}{{\mathbb{Z}}}
\nc{\cA}{{\mathcal{A}}}
\nc{\cB}{{\mathcal{B}}}
\nc{\cC}{{\mathcal{C}}}
\nc{\cD}{{\mathcal{D}}}
\nc{\cE}{{\mathcal{E}}}
\nc{\tcE}{{\tilde{\mathcal{E}}}}
\nc{\cF}{{\mathcal{F}}}
\nc{\cG}{{\mathcal{G}}}
\nc{\cH}{{\mathcal{H}}}
\nc{\cI}{{\mathcal{I}}}
\nc{\cJ}{{\mathcal{J}}}
\nc{\cK}{{\mathcal{K}}}
\nc{\cL}{{\mathcal{L}}}
\nc{\cM}{{\mathcal{M}}}
\nc{\cN}{{\mathcal{N}}}
\nc{\cO}{{\mathcal{O}}}
\nc{\cP}{{\mathcal{P}}}
\nc{\cQ}{{\mathcal{Q}}}
\nc{\cR}{{\mathcal{R}}}
\nc{\cS}{{\mathcal{S}}}
\nc{\cT}{{\mathcal{T}}}
\nc{\cU}{{\mathcal{U}}}
\nc{\cV}{{\mathcal{V}}}
\nc{\cW}{{\mathcal{W}}}
\nc{\cX}{{\mathcal{X}}}
\nc{\cY}{{\mathcal{Y}}}
\nc{\cZ}{{\mathcal{Z}}}
\nc{\rc}{{\mathrm{c}}}
\nc{\rf}{{\mathsf{f}}}
\nc{\rch}{{\mathrm{ch}}}
\nc{\rtd}{{\mathrm{td}}}
\nc{\rB}{{\mathrm{B}}}
\nc{\rF}{{\mathrm{F}}}
\nc{\rG}{{\mathrm{G}}}
\nc{\rH}{{\mathrm{H}}}
\nc{\rK}{{\mathrm{K}}}
\nc{\rM}{{\mathrm{M}}}
\nc{\rP}{{\mathrm{P}}}
\nc{\rR}{{\mathrm{R}}}
\nc{\rS}{{\mathrm{S}}}
\nc{\rT}{{\mathrm{T}}}
\nc{\rX}{{\mathrm{X}}}
\nc{\rQ}{{\mathrm{Q}}}
\nc{\bA}{{\mathbf{A}}}
\nc{\bB}{{\mathbf{B}}}
\nc{\bC}{{\mathbf{C}}}
\nc{\bD}{{\mathbf{D}}}
\nc{\bE}{{\mathbf{E}}}
\nc{\bF}{{\mathbf{F}}}
\nc{\bG}{{\mathbf{G}}}
\nc{\bH}{{\mathbf{H}}}
\nc{\bI}{{\mathbf{I}}}
\nc{\bJ}{{\mathbf{J}}}
\nc{\bK}{{\mathbf{K}}}
\nc{\bL}{{\mathbf{L}}}
\nc{\bM}{{\mathbf{M}}}
\nc{\bN}{{\mathbf{N}}}
\nc{\bO}{{\mathbf{O}}}
\nc{\bP}{{\mathbf{P}}}
\nc{\bQ}{{\mathbf{Q}}}
\nc{\bR}{{\mathbf{R}}}
\nc{\bS}{{\mathbf{S}}}
\nc{\bT}{{\mathbf{T}}}
\nc{\bU}{{\mathbf{U}}}
\nc{\bV}{{\mathbf{V}}}
\nc{\bW}{{\mathbf{W}}}
\nc{\bX}{{\mathbf{X}}}
\nc{\bY}{{\mathbf{Y}}}
\nc{\bZ}{{\mathbf{Z}}}
\nc{\ba}{{\mathbf{a}}}
\nc{\bb}{{\mathbf{b}}}
\nc{\bc}{{\mathbf{c}}}
\nc{\bd}{{\mathbf{d}}}
\nc{\be}{{\mathbf{e}}}
\nc{\bg}{{\mathbf{g}}}
\nc{\bh}{{\mathbf{h}}}
\nc{\bi}{{\mathbf{i}}}
\nc{\bj}{{\mathbf{j}}}
\nc{\bk}{{\mathbf{k}}}
\nc{\bl}{{\mathbf{l}}}
\nc{\bm}{{\mathbf{m}}}
\nc{\bn}{{\mathbf{n}}}
\nc{\bo}{{\mathbf{o}}}
\nc{\bp}{{\mathbf{p}}}
\nc{\bq}{{\mathbf{q}}}
\nc{\br}{{\mathbf{r}}}
\nc{\bs}{{\mathbf{s}}}
\nc{\bt}{{\mathbf{t}}}
\nc{\bu}{{\mathbf{u}}}
\nc{\bv}{{\mathbf{v}}}
\nc{\bw}{{\mathbf{w}}}
\nc{\bx}{{\mathbf{x}}}
\nc{\by}{{\mathbf{y}}}
\nc{\bz}{{\mathbf{z}}}
\nc{\fA}{{\mathfrak{A}}}
\nc{\fB}{{\mathfrak{B}}}
\nc{\fC}{{\mathfrak{C}}}
\nc{\fD}{{\mathfrak{D}}}
\nc{\fE}{{\mathfrak{E}}}
\nc{\fF}{{\mathfrak{F}}}
\nc{\fG}{{\mathfrak{G}}}
\nc{\fH}{{\mathfrak{H}}}
\nc{\fI}{{\mathfrak{I}}}
\nc{\fJ}{{\mathfrak{J}}}
\nc{\fK}{{\mathfrak{K}}}
\nc{\fL}{{\mathfrak{L}}}
\nc{\fM}{{\mathfrak{M}}}
\nc{\fN}{{\mathfrak{N}}}
\nc{\fO}{{\mathfrak{O}}}
\nc{\fP}{{\mathfrak{P}}}
\nc{\fQ}{{\mathfrak{Q}}}
\nc{\fR}{{\mathfrak{R}}}
\nc{\fS}{{\mathfrak{S}}}
\nc{\fT}{{\mathfrak{T}}}
\nc{\fU}{{\mathfrak{U}}}
\nc{\fV}{{\mathfrak{V}}}
\nc{\fW}{{\mathfrak{W}}}
\nc{\fX}{{\mathfrak{X}}}
\nc{\fY}{{\mathfrak{Y}}}
\nc{\fZ}{{\mathfrak{Z}}}
\nc{\fa}{{\mathfrak{a}}}
\nc{\fb}{{\mathfrak{b}}}
\nc{\fc}{{\mathfrak{c}}}
\nc{\fd}{{\mathfrak{d}}}
\nc{\fe}{{\mathfrak{e}}}
\nc{\ff}{{\mathfrak{f}}}
\nc{\fg}{{\mathfrak{g}}}
\nc{\fh}{{\mathfrak{h}}}
\nc{\fj}{{\mathfrak{j}}}
\nc{\fk}{{\mathfrak{k}}}
\nc{\fl}{{\mathfrak{l}}}
\nc{\fm}{{\mathfrak{m}}}
\nc{\fn}{{\mathfrak{n}}}
\nc{\fo}{{\mathfrak{o}}}
\nc{\fp}{{\mathfrak{p}}}
\nc{\fq}{{\mathfrak{q}}}
\nc{\fr}{{\mathfrak{r}}}
\nc{\fs}{{\mathfrak{s}}}
\nc{\ft}{{\mathfrak{t}}}
\nc{\fu}{{\mathfrak{u}}}
\nc{\fv}{{\mathfrak{v}}}
\nc{\fw}{{\mathfrak{w}}}
\nc{\fx}{{\mathfrak{x}}}
\nc{\fy}{{\mathfrak{y}}}
\nc{\fz}{{\mathfrak{z}}}
\nc{\sA}{{\mathsf{A}}}
\nc{\sB}{{\mathsf{B}}}
\nc{\sC}{{\mathsf{C}}}
\nc{\sD}{{\mathsf{D}}}
\nc{\sE}{{\mathsf{E}}}
\nc{\sF}{{\mathsf{F}}}
\nc{\sG}{{\mathsf{G}}}
\nc{\sH}{{\mathsf{H}}}
\nc{\sI}{{\mathsf{I}}}
\nc{\sJ}{{\mathsf{J}}}
\nc{\sK}{{\mathsf{K}}}
\nc{\sL}{{\mathsf{L}}}
\nc{\sM}{{\mathsf{M}}}
\nc{\sN}{{\mathsf{N}}}
\nc{\sO}{{\mathsf{O}}}
\nc{\sP}{{\mathsf{P}}}
\nc{\sQ}{{\mathsf{Q}}}
\nc{\sR}{{\mathsf{R}}}
\nc{\sS}{{\mathsf{S}}}
\nc{\sT}{{\mathsf{T}}}
\nc{\sU}{{\mathsf{U}}}
\nc{\sV}{{\mathsf{V}}}
\nc{\sW}{{\mathsf{W}}}
\nc{\sX}{{\mathsf{X}}}
\nc{\sY}{{\mathsf{Y}}}
\nc{\sZ}{{\mathsf{Z}}}
\nc{\sa}{{\mathsf{a}}}
\nc{\sd}{{\mathsf{d}}}
\nc{\se}{{\mathsf{e}}}
\nc{\sg}{{\mathsf{g}}}
\nc{\sh}{{\mathsf{h}}}
\nc{\si}{{\mathsf{i}}}
\nc{\sj}{{\mathsf{j}}}
\nc{\sk}{{\mathsf{k}}}
\nc{\sm}{{\mathsf{m}}}
\nc{\sn}{{\mathsf{n}}}
\nc{\so}{{\mathsf{o}}}
\nc{\sq}{{\mathsf{q}}}
\nc{\sr}{{\mathsf{r}}}
\nc{\st}{{\mathsf{t}}}
\nc{\su}{{\mathsf{u}}}
\nc{\sv}{{\mathsf{v}}}
\nc{\sw}{{\mathsf{w}}}
\nc{\sx}{{\mathsf{x}}}
\nc{\sy}{{\mathsf{y}}}
\nc{\sz}{{\mathsf{z}}}
\nc{\oA}{{\overline{A}}}
\nc{\oB}{{\overline{B}}}
\nc{\oC}{{\overline{C}}}
\nc{\oD}{{\overline{D}}}
\nc{\oE}{{\overline{E}}}
\nc{\oF}{{\overline{F}}}
\nc{\oG}{{\overline{G}}}
\nc{\oH}{{\overline{H}}}
\nc{\oI}{{\overline{I}}}
\nc{\oJ}{{\overline{J}}}
\nc{\oK}{{\overline{K}}}
\nc{\oL}{{\overline{L}}}
\nc{\oM}{{\overline{M}}}
\nc{\oN}{{\overline{N}}}
\nc{\oO}{{\overline{O}}}
\nc{\oP}{{\overline{P}}}
\nc{\oQ}{{\overline{Q}}}
\nc{\oR}{{\overline{R}}}
\nc{\oS}{{\overline{S}}}
\nc{\oT}{{\overline{T}}}
\nc{\oU}{{\overline{U}}}
\nc{\oV}{{\overline{V}}}
\nc{\oW}{{\overline{W}}}
\nc{\oX}{{\overline{X}}}
\nc{\oY}{{\overline{Y}}}
\nc{\oZ}{{\overline{Z}}}
\nc{\oa}{{\overline{a}}}
\nc{\ob}{{\overline{b}}}
\nc{\oc}{{\overline{c}}}
\nc{\od}{{\overline{d}}}
\nc{\of}{{\overline{f}}}
\nc{\og}{{\overline{g}}}
\nc{\oh}{{\overline{h}}}
\nc{\oi}{{\overline{i}}}
\nc{\oj}{{\overline{j}}}
\nc{\ok}{{\overline{k}}}
\nc{\ol}{{\overline{l}}}
\nc{\om}{{\overline{m}}}
\nc{\on}{{\overline{n}}}
\nc{\oo}{{\overline{o}}}
\nc{\op}{{\overline{p}}}
\nc{\oq}{{\overline{q}}}
\nc{\os}{{\overline{s}}}
\nc{\ot}{{\overline{t}}}
\nc{\ou}{{\overline{u}}}
\nc{\ov}{{\overline{v}}}
\nc{\ow}{{\overline{w}}}
\nc{\ox}{{\overline{x}}}
\nc{\oy}{{\overline{y}}}
\nc{\oz}{{\overline{z}}}
\nc{\tA}{{\tilde{A}}}
\nc{\tB}{{\tilde{B}}}
\nc{\tC}{{\tilde{C}}}
\nc{\tD}{{\tilde{D}}}
\nc{\tE}{{\tilde{E}}}
\nc{\tF}{{\tilde{F}}}
\nc{\tG}{{\tilde{G}}}
\nc{\tH}{{\tilde{H}}}
\nc{\tI}{{\tilde{I}}}
\nc{\tJ}{{\tilde{J}}}
\nc{\tK}{{\tilde{K}}}
\nc{\tL}{{\tilde{L}}}
\nc{\tM}{{\tilde{M}}}
\nc{\tN}{{\tilde{N}}}
\nc{\tO}{{\tilde{O}}}
\nc{\tP}{{\tilde{P}}}
\nc{\tQ}{{\tilde{Q}}}
\nc{\tR}{{\tilde{R}}}
\nc{\tS}{{\tilde{S}}}
\nc{\tT}{{\tilde{T}}}
\nc{\tU}{{\tilde{U}}}
\nc{\tV}{{\tilde{V}}}
\nc{\tW}{{\tilde{W}}}
\nc{\tX}{{\tilde{X}}}
\nc{\tY}{{\tilde{Y}}}
\nc{\tZ}{{\tilde{Z}}}
\nc{\ta}{{\tilde{a}}}
\nc{\tb}{{\tilde{b}}}
\nc{\tc}{{\tilde{c}}}
\nc{\td}{{\tilde{d}}}
\nc{\te}{{\tilde{e}}}
\nc{\tf}{{\tilde{f}}}
\nc{\tg}{{\tilde{g}}}
\nc{\ti}{{\tilde{i}}}
\nc{\tj}{{\tilde{j}}}
\nc{\tk}{{\tilde{k}}}
\nc{\tl}{{\tilde{l}}}
\nc{\tm}{{\tilde{m}}}
\nc{\tn}{{\tilde{n}}}
\nc{\tp}{{\tilde{p}}}
\nc{\tq}{{\tilde{q}}}
\nc{\tr}{{\tilde{r}}}
\nc{\ts}{{\tilde{s}}}
\nc{\tu}{{\tilde{u}}}
\nc{\tv}{{\tilde{v}}}
\nc{\tw}{{\tilde{w}}}
\nc{\tx}{{\tilde{x}}}
\nc{\ty}{{\tilde{y}}}
\nc{\tz}{{\tilde{z}}}
\nc{\hA}{{\hat{A}}}
\nc{\hB}{{\hat{B}}}
\nc{\hC}{{\hat{C}}}
\nc{\hD}{{\hat{D}}}
\nc{\hE}{{\hat{E}}}
\nc{\hF}{{\hat{F}}}
\nc{\hG}{{\hat{G}}}
\nc{\hH}{{\hat{H}}}
\nc{\hI}{{\hat{I}}}
\nc{\hJ}{{\hat{J}}}
\nc{\hK}{{\hat{K}}}
\nc{\hL}{{\hat{L}}}
\nc{\hM}{{\hat{M}}}
\nc{\hN}{{\hat{N}}}
\nc{\hO}{{\hat{O}}}
\nc{\hP}{{\hat{P}}}
\nc{\hQ}{{\hat{Q}}}
\nc{\hR}{{\hat{R}}}
\nc{\hS}{{\widehat{S}}}
\nc{\hT}{{\hat{T}}}
\nc{\hU}{{\widehat{U}}}
\nc{\hV}{{\hat{V}}}
\nc{\hW}{{\hat{W}}}
\nc{\hX}{{\hat{X}}}
\nc{\hY}{{\hat{Y}}}
\nc{\hZ}{{\hat{Z}}}
\nc{\ha}{{\hat{a}}}
\nc{\hb}{{\hat{b}}}
\nc{\hc}{{\hat{c}}}
\nc{\hd}{{\hat{d}}}
\nc{\he}{{\hat{e}}}
\nc{\hf}{{\widehat{f}}}
\nc{\hg}{{\hat{g}}}
\nc{\hh}{{\hat{h}}}
\nc{\hi}{{\hat{i}}}
\nc{\hj}{{\hat{j}}}
\nc{\hk}{{\hat{k}}}
\nc{\hl}{{\hat{l}}}
\nc{\hm}{{\hat{m}}}
\nc{\hn}{{\hat{n}}}
\nc{\ho}{{\hat{o}}}
\nc{\hp}{{\hat{p}}}
\nc{\hq}{{\hat{q}}}
\nc{\hr}{{\hat{r}}}
\nc{\hs}{{\hat{s}}}
\nc{\hu}{{\hat{u}}}
\nc{\hv}{{\hat{v}}}
\nc{\hw}{{\hat{w}}}
\nc{\hx}{{\hat{x}}}
\nc{\hy}{{\hat{y}}}
\nc{\hz}{{\hat{z}}}
\nc{\eps}{\varepsilon}
\nc{\lan}{\big\langle}
\nc{\ran}{\big\rangle}
\nc{\kk}{{\Bbbk}}
\newcommand{\moplus}{\mathop{\textstyle\bigoplus}\limits}
\nc{\et}{{\mathrm{\acute{e}t}}}
\nc{\num}{{\mathrm{num}}}
\nc{\xrightiso}{ \xrightarrow{\ \raisebox{-0.5ex}[0ex][0ex]{$\sim$}\ }}
\def\bw#1#2{\textstyle{\bigwedge\hskip-0.9mm^{#1}}\hskip0.2mm{#2}}
\DeclareMathOperator{\cores}{\mathrm{cores}}
\DeclareMathOperator{\Res}{\mathrm{Res}}
\DeclareMathOperator{\Map}{\mathrm{Map}}
\DeclareMathOperator{\Hom}{\mathrm{Hom}}
\DeclareMathOperator{\Ext}{\mathrm{Ext}}
\DeclareMathOperator{\cHom}{\mathcal{H}\mathit{om}}
\DeclareMathOperator{\Spec}{\mathrm{Spec}}
\DeclareMathOperator{\Coh}{\mathrm{Coh}}
\DeclareMathOperator{\Qcoh}{\mathrm{Qcoh}}
\DeclareMathOperator{\Bl}{\mathrm{Bl}}
\DeclareMathOperator{\Pic}{\mathrm{Pic}}
\DeclareMathOperator{\Pictw}{\mathrm{Pic}^{\mathrm{tw}}}
\DeclareMathOperator{\bPic}{\mathbf{Pic}}
\newcommand{\uPic}{{\Pic}}
\DeclareMathOperator{\Sch}{\mathrm{Sch}}
\DeclareMathOperator{\Br}{\mathrm{Br}}
\DeclareMathOperator{\CH}{\mathrm{CH}}
\DeclareMathOperator{\Gal}{\mathbf{G}}
\DeclareMathOperator{\Sym}{\mathrm{Sym}}
\DeclareMathOperator{\Ker}{\mathrm{Ker}}
\DeclareMathOperator{\Cone}{\mathrm{Cone}}
\DeclareMathOperator{\pr}{\mathrm{pr}}
\DeclareMathOperator{\Gr}{\mathrm{Gr}}
\DeclareMathOperator{\OGr}{\mathrm{OGr}}
\DeclareMathOperator{\LGr}{\mathrm{LGr}}
\DeclareMathOperator{\Fl}{\mathrm{Fl}}
\DeclareMathOperator{\Gm}{\mathbb{G}_{\mathrm{m}}}
\DeclareMathOperator{\id}{\mathrm{id}}
\DeclareMathOperator{\rank}{\mathrm{rk}}
\nc{\bkk}{{\overline{\kk}}}
\newcommand{\g}{{\mathrm{g}}}
\theoremstyle{plain}
\newtheorem{theorem}{Theorem}[section]
\newtheorem{lemma}[theorem]{Lemma}
\newtheorem{proposition}[theorem]{Proposition}
\newtheorem{corollary}[theorem]{Corollary}
\theoremstyle{definition}
\newtheorem{definition}[theorem]{Definition}
\newtheorem{example}[theorem]{Example}
\newtheorem{notation}[theorem]{Notation}
\theoremstyle{remark}
\newtheorem{remark}[theorem]{Remark}
\title{Derived categories of families of Fano threefolds}
\author{Alexander Kuznetsov}
\address{{\sloppy
\parbox{0.99\textwidth}{
Algebraic Geometry Section, Steklov Mathematical Institute of Russian Academy of Sciences,\\
8 Gubkin str., Moscow 119991 Russia
\\[5pt]
Laboratory of Algebraic Geometry, NRU Higher School of Economics, Russian Federation
}\bigskip}}
\email{akuznet@mi-ras.ru}
\date{}
\thanks{I was partially supported by the HSE University Basic Research Program.}
\begin{document}

\begin{abstract}
We construct $S$-linear semiorthogonal decompositions of derived categories of smooth Fano threefold fibrations~$X/S$
with relative Picard rank~$1$ and rational geometric fibers 
and discuss how the structure of components of these decompositions is related to rationality properties of~$X/S$.
\end{abstract}

\maketitle

{\small 
\tableofcontents}

\section{Introduction}

Fano varieties form one of the most interesting classes of algebraic varieties.
Over an algebraically closed field of characteristic zero and in dimensions up to~3 
smooth Fano varieties have been completely classified.
In dimension~3 the classification, obtained by works of Fano, Iskovskikh, and Mori--Mukai, 
counts up to~105 deformation families.
Geometry of Fano threefolds has been thoroughly investigated;
in particular, quite a lot is known about their derived categories. 
The most important case of threefolds of Picard rank~1 was discussed in~\cite{k2009Fano} 
and in the general case one can use the Minimal Model Program to reduce the description 
to simpler Fano threefolds, or conic bundles, or del Pezzo surface fibrations, 
which are also in many cases accessible to investigation.
The goal of this paper is to study derived categories of smooth Fano threefolds~$X$ over \emph{non-closed} fields of characteristic zero,
as well as $G$-equivariant derived categories of $G$-Fano varieties, 
and more generally, derived categories of smooth families~$X/S$ of Fano threefolds 
over arbitrary connected characteristic zero base schemes~$S$
(the two cases above correspond to~$S = \Spec(\kk)$, the spectrum of a non-closed field, 
and~$S = \rB G$, the classifying stack of a finite group~$G$, respectively).
Note that the case of smooth families of Fano varieties of dimension~$1$ 
(i.e., $\P^1$-bundles) is easy (see Theorem~\ref{thm:bernardara}),
and the case of dimension~$2$ has been discussed in~\cite{AB}.

Of course, the main invariant of a family~$X/S$ of Fano threefolds is the deformation type of its geometric fibers,
i.e., of the fibers of~$X \to S$ over geometric points of the base
(in the case where~$S = \Spec(\kk)$, this is just the deformation type of the Fano threefold~$X_\bkk$,
and, if~$S = \rB G$, of the underlying threefold~$X$).
So, 105 deformation types in the Fano--Iskovskikh--Mori--Mukai classification 
lead to~105 types of Fano threefold fibrations.
As the number of types is rather large, and since the methods we have to access the derived category are rather ad hoc,
we restrict our attention to smooth Fano threefold fibrations~$X/S$ which enjoy the following two properties:
\begin{aenumerate}
\item 
\label{restriction-picard}
the relative Picard rank of~$X/S$ is~$1$, and
\item 
\label{restriction-rationality}
the geometric fibers of~$X/S$ are rational.
\end{aenumerate}
The reasons to consider only such~$X$ are quite obvious: 
property~\ref{restriction-picard} ensures that the study of~$X/S$ 
does not reduce by the Minimal Model Program to simpler cases,
while property~\ref{restriction-rationality} is relevant to potential applications to rationality problems.

Assumptions~\ref{restriction-picard} and~\ref{restriction-rationality} 
reduce the number of deformation types significantly, leaving only:
\begin{itemize}
\item 
8 types of Fano threefolds of geometric Picard rank~$1$: 
$\P^3$, quadric~$\sQ^3$, del Pezzo threefolds~$\sY_d$ with~$d \in \{4,5\}$, 
and prime Fano threefolds~$\sX_g$ with~$g \in \{7,9,10,12\}$;
\item 
6 types of Fano threefolds with higher geometric Picard rank;
over an algebraically closed field these varieties have the following explicit descriptions:
\begin{itemize}
\item 
$\sX_{1,1,1} = \P^1 \times \P^1 \times \P^1$;
\item 
$\sX_{2,2} \subset \P^2 \times \P^2$, a divisor of bidegree~$(1,1)$;
\item 
$\sX_{2,2,2} \subset \P^2 \times \P^2 \times \P^2$, a complete intersection of divisors of multidegree~$(1,1,0)$, $(1,0,1)$, and~$(0,1,1)$;
\item  
$\sX_{4,4} \subset \P^4 \times \P^4$, an intersection of the graph of the Cremona transformation~$\P^5 \dashrightarrow \P^5$
(given by quadrics passing through the Veronese surface) with~$\P^4 \times \P^4 \subset \P^5 \times \P^5$;
\item 
$\sX_{3,3} \subset \P^3 \times \P^3$, a complete intersection of three divisors of bidegree~$(1,1)$;
\item 
$\sX_{1,1,1,1} \subset \P^1 \times \P^1 \times \P^1 \times \P^1$, a divisor of multidegree~$(1,1,1,1)$.
\end{itemize}
\end{itemize}
Indeed, the case of geometric Picard rank~1 is classical,
and in the case of higher geometric Picard rank 
the classification of threefolds with property~\ref{restriction-picard} is contained in~\cite{Pro13},
while the restriction imposed by assumption~\ref{restriction-rationality} on the list from~\cite{Pro13} 
can be found in~\cite{AB92} (cf.~\cite{KP21}).

As it was already mentioned above, the goal of this paper 
is to study derived categories of smooth Fano fibrations~$X/S$ of the~14 types listed above;
more precisely, we will construct interesting \emph{$S$-linear} semiorthogonal decompositions of their derived categories
(see~\S\ref{ss:db-linear} for a reminder about the $S$-linear property).


As we also hinted, we expect the constructed semiorthogonal decompositions to have implications for rationality problems,
although we have no results in this direction and never mention rationality in the body of the paper.
So, having in mind rationality criteria from~\cite{KP19,KP21} 
in the case~$S = \Spec(\kk)$ 
for Fano threefolds of the above types
(which amount to the existence of points or appropriate rational curves defined over~$\kk$),
we will discuss how the components of our decompositions simplify
when~$S$ is arbitrary and the natural generalizations of these criteria 
(existence of sections of~$X/S$ or of appropriate relative Hilbert schemes over~$S$) are satisfied.

Our results are summarized in the following theorems.
We denote by~$\bD(X)$ the bounded derived category of coherent sheaves on~$X$,
and by~$\bD(Y,\upbeta)$ the bounded derived category of~$\upbeta$-twisted coherent sheaves on~$Y$,
where~$\upbeta \in \Br(Y)$ is a Brauer class.
We remnd the definition and main properties of twisted sheaves in~\S\ref{ss:twisted-sheaves}.

In the case where the geometric Picard rank of fibers of~$X/S$ is~1, 
the description we obtain is similar to the description over algebraically closed fields from~\cite{k2009Fano},
the main difference is the appearance of various Brauer classes that could not be observed over~$\bkk$.
First, there are four types of Fano threefolds~$X/S$, 
where all the components are twisted derived categories of~$S$.
We write~$X(S)$ for the set of all sections~$S \to X$ of the morphism~$X \to S$.

\begin{theorem}
\label{thm:main-base}
Let~$p \colon X \to S$ be a smooth projective morphism with geometric fibers isomorphic to~$\P^3$, or~$\sQ^3$, or~$\sY_5$, or~$\sX_{12}$.
Then~$\bD(X)$ has an $S$-linear semiorthogonal decomposition 
\begin{equation}
\label{eq:dbx-base}
\bD(X) = \langle \bD(S), \bD(S,\upbeta_1), \bD(S,\upbeta_2), \bD(S,\upbeta_3) \rangle
\end{equation}
with four components equivalent to twisted derived categories of the base~$S$, 
where the Brauer classes~\mbox{$\upbeta_i \in \Br(S)$} are the following:
\begin{aenumerate}
\item 
\label{brauer:y5}
if the fibers have type~$\sY_5$ then~$\upbeta_1 = \upbeta_2 = \upbeta_3 = 1$;
\item 
\label{brauer:q3-x12}
if the fibers have type~$\sQ^3$ or~$\sX_{12}$ then~$\upbeta_1 = \upbeta_2 = 1$ and~$\upbeta_3^2 = 1$;
\item 
\label{brauer:p3}
if the fibers have type~$\P^3$ then~$\upbeta_i = \upbeta^i$ where~$\upbeta \in \Br(S)$ is such that~$\upbeta^4 = 1$.
\end{aenumerate}
Moreover, if the fibers have type~$\P^3$ and~$X(S) \ne \varnothing$ then~$\upbeta = 1$,
and if the fibers have type~$\sQ^3$ or~$\sX_{12}$ and~$X(S) \ne \varnothing$ then~$\upbeta_3$ can be represented by a conic bundle.
\end{theorem}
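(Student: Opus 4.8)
The plan is to produce~\eqref{eq:dbx-base} as a \emph{relative exceptional collection}. Over~$\bkk$, each of the four fibre types~$\P^3$, $\sQ^3$, $\sY_5$, $\sX_{12}$ --- these are precisely the Fano threefolds of geometric Picard rank~$1$ with~$h^{2,1}=0$ --- carries a full exceptional collection~$\langle E_0,E_1,E_2,E_3\rangle$ of length~$4$ with~$E_0=\cO$ (classical; see~\cite{k2009Fano}). Each~$E_i$ is exceptional, in particular rigid and simple, so it deforms uniquely and extends --- uniquely up to a twist by~$\Pic(S)$ --- to an~$S$-flat~$\upbeta_i$-twisted coherent sheaf~$\cE_i$ on~$X$ with~$\cE_i|_{X_s}\cong E_i$, for a well-defined Brauer class~$\upbeta_i\in\Br(S)$; here~$\upbeta_0=1$ since one may take~$\cE_0=\cO_X$. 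Once the~$\cE_i$ are in hand, the standard base-change machinery for semiorthogonal decompositions finishes the argument: by cohomology and base change one gets~$\mathrm{R}p_*\RcHom(\cE_i,\cE_i)\cong\cO_S$ and~$\mathrm{R}p_*\RcHom(\cE_j,\cE_i)=0$ for~$i<j$ (these hold fibrewise over~$\bkk$), so the functors~$\bD(S,\upbeta_i)\to\bD(X)$, $F\mapsto\mathrm{L}p^*F\otimes\cE_i$, are fully faithful with semiorthogonal images, and they generate~$\bD(X)$ since they do so over every geometric point of~$S$. Thus the theorem reduces to constructing the~$\cE_i$ and identifying the classes~$\upbeta_i$.

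\textbf{The cases~$\P^3$ and~$\sY_5$.} If the fibres are~$\P^3$, then~$p$ is a Brauer--Severi scheme of relative dimension~$3$, hence~$X=\P_S(\cE)$ for a~$\upbeta$-twisted locally free sheaf~$\cE$ of rank~$4$ with~$\upbeta^4=1$; the relative (twisted) Beilinson resolution of the relative diagonal --- the Brauer--Severi analogue of Theorem~\ref{thm:bernardara} --- then gives~\eqref{eq:dbx-base} with~$\cE_i=\cO_{X/S}(i)$ and~$\upbeta_i=\upbeta^i$. If moreover~$X(S)\neq\varnothing$, a section of~$p$ corresponds to a rank-$1$ locally free quotient of~$\cE$, i.e.\ to a~$\upbeta$-twisted line bundle; the existence of such a bundle forces~$\upbeta=1$. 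If the fibres are~$\sY_5$, we use rigidity: the del Pezzo threefold of degree~$5$ is unique over every field, so~$X/S$ is pulled back from the universal family over~$B\Aut(\sY_5)$, and the obstruction to extending~$E_i$ over~$X/S$ is the pullback of the obstruction to linearizing~$E_i$ for the (linearly reductive) group~$\Aut(\sY_5)$; since the members of the Orlov collection --- $\cO$ and bundles inherited from the tautological bundle of the ambient Grassmannian~$\Gr(2,5)$ --- are~$\Aut(\sY_5)$-linearizable, this obstruction vanishes, each~$E_i$ extends to an \emph{honest} sheaf on~$X$, and~$\upbeta_1=\upbeta_2=\upbeta_3=1$.

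\textbf{The case~$\sQ^3$.} One first checks that the ample generator of the relative Picard group descends to a genuine line bundle~$\cO_{X/S}(1)$ on~$X$: its potential Brauer obstruction is the class of the relative spinor bundle (a twisted sheaf of rank~$2$), hence has exponent dividing~$2$, and it also has exponent dividing~$3$ since~$\cO_{X/S}(3)=\omega_{X/S}^{-1}$ is honest, so it vanishes. Setting~$\cV:=p_*\cO_{X/S}(1)$, we realise~$X\subset\P_S(\cV)$ as a smooth quadric threefold fibration, and Kuznetsov's semiorthogonal decomposition for quadric fibrations reads
\[
\bD(X)=\langle\,\bD(S,\Cl_0),\ \bD(S),\ \bD(S)\otimes\cO_{X/S}(1),\ \bD(S)\otimes\cO_{X/S}(2)\,\rangle,
\]
where~$\Cl_0=\Cl_0(X/S)$ is the even Clifford algebra of the relative quadratic data, an Azumaya~$\cO_S$-algebra. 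Right-mutating the Clifford component past the other three produces~\eqref{eq:dbx-base} with~$\upbeta_1=\upbeta_2=1$ and~$\upbeta_3=[\Cl_0]$; since~$\Cl_0$ is the even Clifford algebra of a quadratic form of \emph{odd} rank, its Brauer class has exponent dividing~$2$, i.e.~$\upbeta_3^2=1$. Finally, if~$X(S)\neq\varnothing$, the section is a relatively isotropic line in~$X\subset\P_S(\cV)$, hence yields an isotropic line subbundle~$L\subset\cV$; passing to~$L^{\perp}/L$, a rank-$3$ quadratic bundle with the same even Clifford class, we see that~$\upbeta_3$ is represented by a quaternion algebra, namely the one attached to the associated conic bundle.

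\textbf{The case~$\sX_{12}$, and the main obstacle.} Over~$\bkk$ the relevant length-$4$ collection on~$\sX_{12}$ (see~\cite{k2009Fano}) is built from~$\cO$, the relative anticanonical line bundle, and (twists of) the Mukai bundle; the Mukai bundle is stable and rigid on each fibre, so it spreads out as a twisted sheaf on~$X$. One then shows that two of the four components of~\eqref{eq:dbx-base} are untwisted and that the remaining class~$\upbeta_3$ satisfies~$\upbeta_3^2=1$ --- the class~$\upbeta_3$ being of a quaternionic nature analogous to the even Clifford class of the quadric case --- and that a section of~$p$, equivalently a section of the relevant relative Hilbert scheme of lines or conics (the family version of the rational-point/curve criteria of~\cite{KP19,KP21}), reduces the underlying datum so that~$\upbeta_3$ is represented by a conic bundle. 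I expect the genuine difficulties to be concentrated entirely in this last case: realising the relative Mukai bundle as a twisted sheaf, proving \emph{relative} fullness of the collection it generates, and above all pinning down~$\upbeta_3$ together with the conic-bundle statement, which requires a careful study in family of the Hilbert schemes of lines and conics on prime Fano threefolds of genus~$12$, in the spirit of~\cite{KP19,KP21}.
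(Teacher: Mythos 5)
Your treatment of three of the four cases is essentially sound: the $\P^3$ case is the same Severi--Brauer argument as the paper's; for $\sQ^3$ you reproduce the decomposition via the even Clifford algebra of the quadric fibration (a legitimate alternative, which the paper itself notes can be extracted from the quadric-fibration literature, whereas the paper argues via the relative moduli space of spinor bundles and a twisted universal family); and for $\sY_5$ your descent-from-$\rB\Aut(\sY_5)$ linearization argument is a plausible substitute for the paper's more pedestrian route (vanishing of~$\bB(H_X)$ because $\gcd(\upiota,\upchi)=\gcd(2,7)=1$, then the excess conormal bundle giving the rank-$2$ bundle $\cU$). But the $\sX_{12}$ case is a genuine gap, and you say so yourself: "realising the relative Mukai bundle as a twisted sheaf, proving relative fullness, and above all pinning down $\upbeta_3$" is precisely the content of the theorem in that case, not a routine afterthought. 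Your general principle "exceptional, hence deforms uniquely, hence extends as a twisted sheaf" is not enough here: to get a global twisted sheaf over all of $S$ one needs existence \emph{and} uniqueness of the relevant stable bundles on \emph{every} geometric fibre, which for genus $12$ is a nontrivial input. The paper does this by identifying two relative moduli spaces, $\rM_{X/S}(2;H_X,7L_X,0)$ and $\rM_{X/S}(3;-H_X,10L_X,-2P_X)$, showing each maps isomorphically to $S$ (uniqueness and exceptionality of the bundles on the fibres being quoted from the literature on $\sX_{12}$), extracting twisted universal families, and then killing or bounding the twists by determinant and pushforward-rank arguments: the rank-$3$ bundle $\cU$ is untwisted because its twist is simultaneously $3$-torsion (from $\wedge^3\cU\cong\cO(-H_X)$) and $7$-torsion (from the rank-$7$ pushforward $p_*\cU^\vee$), while the rank-$2$ bundle $\cE$ has $2$-torsion twist from $\wedge^2\cE\cong\cO(H_X)$. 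The relative collection is then $\langle\cO_X,\cU^\vee,\cE,\wedge^2\cU^\vee\rangle$, with fullness checked fibrewise. None of this appears in your proposal, and your description of the fibrewise collection ("$\cO$, the anticanonical line bundle, and twists of the Mukai bundle") misses the extra rank-$2$ bundle that actually carries $\upbeta_3$. Moreover, the final claim needs no "careful study in family of Hilbert schemes of lines and conics": a section $i\colon S\to X$ immediately gives the $\upbeta_3$-twisted rank-$2$ bundle $i^*\cE$ on $S$, so $\upbeta_3$ is the class of the conic bundle $\P_S(i^*\cE)$; your parenthetical "equivalently a section of the relevant relative Hilbert scheme of lines or conics" is also not what the theorem asserts for $\sX_{12}$.

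A secondary error in the $\sQ^3$ case: you justify $\bB(H_X)^2=1$ by saying the Brauer obstruction of $\cO_{X/S}(1)$ "is the class of the relative spinor bundle". That identification is false --- $\bB(H_X)$ is always trivial for odd-dimensional quadric fibrations, while the spinor class $\upbeta_\cS$ (equivalently the even Clifford class) can be a nontrivial $2$-torsion element; they are different invariants. The conclusion you need is nevertheless true and easy: either use $\gcd(\upiota,\upchi)=\gcd(3,5)=1$ as in Corollary~\ref{cor:fundamental-class}, or note that $p_*\cO_{X/S}(H_X)$ has rank $5$ and the defining quadric gives a $\bB(H_X)^2$-twisted line bundle on $S$, so $\bB(H_X)^5=\bB(H_X)^2=1$. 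With that repair your Clifford-algebra route through the quadric case is fine, including the isotropic-reduction argument for the conic-bundle representation when $X(S)\ne\varnothing$; but as it stands the theorem is only proved for three of the four fibre types.
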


This theorem is a combination of Theorem~\ref{thm:bernardara} (and Example~\ref{ex:p3}) 
and Theorems~\ref{thm:db-quadric}, \ref{thm:y5}, \ref{thm:x12}.

\begin{remark}
\label{rem:rationality-base}
As we mentioned above, it is interesting to compare these results to rationality criteria 
over non-closed fields~$\kk$.
Recall that Fano threefolds of type~$\sY_5$ are always rational over~$\kk$, 
while those of type~$\P^3$, $\sQ^3$, and~$\sX_{12}$ are rational over~$\kk$ 
if and only if~$X(\kk) \ne \varnothing$, see~\cite[Theorem~1.1]{KP19}.
We obtain a simple implication:
if~$X$ is rational over~$\kk$ then all Brauer classes appearing in the right hand side of~\eqref{eq:dbx-base} have order at most~2,
and those of order~2 can be represented by conic bundles.
We will discuss the meaning of this observation at the end of the Introduction.
\end{remark}

In the second case the category~$\bD(X)$ decomposes into two twisted derived categories of the base 
and a twisted derived category of a smooth projective curve over~$S$.
In the statement of the theorem below~$\rF_d(X/S)$ 
denotes the relative Hilbert scheme of rational curves of degree~$d$ 
(with respect to the primitive ample generator of the Picard group) in the fibers of~$X/S$
and we write~$\rF_d(X/S)(S)$ for the set of all sections~$S \to \rF_d(X/S)$ of the morphism~$\rF_d(X/S) \to S$.

\begin{theorem}
\label{thm:main-base+curve}
Let~$p \colon X \to S$ be a smooth projective morphism with geometric fibers isomorphic to~$\sY_4$, or~$\sX_{10}$, or~$\sX_9$, or~$\sX_7$.
Then~$\bD(X)$ has an $S$-linear semiorthogonal decomposition 
\begin{equation}
\label{eq:dbx-base+curve}
\bD(X) = \langle \bD(S), \bD(S,\upbeta_1), \bD(\Gamma,\upbeta_\Gamma) \rangle
\end{equation}
with two components equivalent to twisted derived categories of the base~$S$
and one component equivalent to a twisted derived category of a smooth projective curve~$\Gamma \to S$,
where 
\begin{aenumerate}
\item 
\label{brauer:y4}
if the fibers have type~\hbox to 1.4em{$\sY_4$\hfill} then~$\upbeta_1^2 = 1$, $\g(\Gamma) = 2$, and~$\upbeta_\Gamma^4 = 1$;
\item 
\label{brauer:x10}
if the fibers have type~\hbox to 1.4em{$\sX_{10}$\hfill} then~$\upbeta_1 = 1$, $\g(\Gamma) = 2$, and~$\upbeta_\Gamma^3 = 1$;
\item 
\label{brauer:x9}
if the fibers have type~\hbox to 1.4em{$\sX_9$\hfill} then~$\upbeta_1 = 1$, $\g(\Gamma) = 3$, and~$\upbeta_\Gamma^2 = 1$;
\item 
\label{brauer:x7}
if the fibers have type~\hbox to 1.4em{$\sX_7$\hfill} then~$\upbeta_1 = 1$, $\g(\Gamma) = 7$, and~$\upbeta_\Gamma = 1$.
\end{aenumerate}
Moreover, when the fibers have type~$\sY_4$, or~$\sX_{10}$, or~$\sX_9$ and one has
\begin{equation*}
X(S) \ne \varnothing
\qquad\text{and}\qquad 
\rF_d(X/S)(S) \ne \varnothing
\end{equation*}
where $d = 1$ for type~$\sY_4$, $d = 2$ for type~$\sX_{10}$, and~$d = 3$ for type~$\sX_9$, 
then~$\upbeta_1 = 1$ and~$\upbeta_\Gamma = 1$.
\end{theorem}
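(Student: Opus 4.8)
For each of the four types the plan is to spread out over $S$ the classical geometric model of the geometric fibre and then to run a mutation / homological-projective-duality argument in the $S$-linear, $\upbeta$-twisted setting. Concretely, a fibre of type $\sX_{10}$ (resp. $\sX_9$, resp. $\sX_7$) is, in its minimal embedding, a transverse linear section of the $G_2$-Grassmannian $\GtGr$ (resp. of $\LGr(3,6)$, resp. of the spinor tenfold $\OGr_+(5,10)$), and a fibre of type $\sY_4$ is a complete intersection of two quadrics in~$\P^5$. The tautological bundles witnessing these descriptions are infinitesimally rigid and compatible with base change, so they descend to $X$; but the descent datum may be obstructed by a Brauer class on $S$, so over $S$ the ambient space becomes a form $M \to S$ of the relevant homogeneous variety, carrying a relatively ample $\cO_M(1)$ and a $\Gm$-gerbe, with $X \subset M$ cut out by a subbundle of the projective bundle into which $M$ embeds. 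For $\sY_4$ the first step is instead: $-K_{X/S} = 2H$, the class $H$ need exist only as a $\upbeta_1$-twisted line bundle with $\upbeta_1^2 = 1$, and $|H|$ realises $X$ as a complete intersection of two relative quadrics inside a Severi--Brauer fivefold over $S$ of Brauer class $\upbeta_1$ (which is $2$-torsion because that fivefold contains a quadric bundle).

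\textbf{Producing the decomposition.} On $M$ there is an $S$-linear Lefschetz decomposition of the (suitably twisted) derived category of $M$ with respect to $\cO_M(1)$ --- the relative twisted version of the classical one. Applying relative, twisted homological projective duality (for $\sY_4$ this is exactly Kuznetsov's duality for quadric bundles via the even Clifford algebra; for the three prime cases one invokes the known HPD-duals of $\GtGr$, $\LGr(3,6)$, $\OGr_+(5,10)$ together with the standard mutation bookkeeping that passes to linear sections) produces an $S$-linear semiorthogonal decomposition of $\bD(X)$ consisting of two copies of $\bD(S,\upbeta_i)$ carved out by the Lefschetz components of $M$, plus one ``primitive'' component equal to the dual-linear-section category of the HPD-dual of $M$. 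In all four cases this primitive component is $\bD(\Gamma,\upbeta_\Gamma)$ for a smooth curve $\Gamma \to S$: for $\sY_4$, $\Gamma$ is the discriminant double cover of the pencil $\P^1_S$ of quadrics and $\upbeta_\Gamma$ is the class of the degree-$4$ Azumaya algebra over $\Gamma$ obtained from the even Clifford algebra, so $\upbeta_\Gamma^4 = 1$; for $\sX_{10}$, $\Gamma$ has genus $2$ (a double cover of the dual $\P^1$ branched at $6$ points) with an order-dividing-$3$ twist coming from the $G_2$-structure; for $\sX_9$, $\Gamma$ is a plane quartic of genus $3$ with an order-$2$ Clifford-type twist attached to the $\Sp_6$-structure; for $\sX_7$, $\Gamma$ is a canonically embedded genus-$7$ curve (a linear section of the other spinor tenfold) and the spinor construction already yields an honest bundle, so $\upbeta_\Gamma = 1$. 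The genus is read off from the statement over $\bkk$, being a deformation invariant of the geometric fibre, and the orders of $\upbeta_1$ and $\upbeta_\Gamma$ are dictated by the centres of the structure groups and the ranks of the Clifford/Azumaya algebras in play.

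\textbf{The ``moreover''.} Both $\upbeta_1$ and $\upbeta_\Gamma$ are classes of natural $\Gm$-gerbes, and a section provides a splitting object. A section $S \to X$ gives a point in every fibre; for $\sY_4$ it is in particular a section of the Severi--Brauer fivefold, forcing $\upbeta_1 = 1$ (for $\sX_{10}$ and $\sX_9$ one already has $\upbeta_1 = 1$). A section of $\rF_d(X/S)$ is, fibrewise, a rational curve of degree $d$ on $X$, and in each of Mukai's pictures such a curve carries the data of a module over the relevant sheaf of algebras: a line on a complete intersection of two quadrics is a Clifford module for the even Clifford algebra, and correspondingly a conic on $\sX_{10}$ and a twisted cubic on $\sX_9$ determine sheaves on the dual curve. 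Hence a section of $\rF_d(X/S)$ yields an $S$-family of such modules, i.e. a splitting of the gerbe on $\Gamma$, giving $\upbeta_\Gamma = 1$. What needs checking is that this correspondence is well defined relatively --- that the fibrewise module glues to a genuine (a priori twisted) sheaf over $\Gamma$ --- and it is here that the hypothesis $X(S) \ne \varnothing$ enters, providing the base point needed to rigidify the family.

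\textbf{Main obstacle.} The crux is the second step in the twisted relative setting: one must construct the HPD-dual of each homogeneous-space \emph{bundle} together with its Lefschetz structure while tracking every Brauer twist, and then identify the dual linear section as a curve of the asserted genus bearing a twist of the asserted order. Even absolutely, the HPD-duals of $\GtGr$ and of the spinor tenfold are subtle (they are sheaves of algebras rather than honest varieties), and the bookkeeping that separates the class landing on $S$ (which becomes $\upbeta_1$) from the one landing on $\Gamma$ (which becomes $\upbeta_\Gamma$), and that pins down their orders, is exactly the delicate point; a secondary issue is computing the Brauer class of the gerbe of descent data for the rigid tautological bundles in the first place.
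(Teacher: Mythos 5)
There is a genuine gap: the heart of your argument is the appeal to ``relative, twisted homological projective duality'' for forms over $S$ of $\GtGr$, $\LGr(3,6)$ and $\OGr_+(5,10)$, together with the identification of the primitive component of the dual linear section as $\bD(\Gamma,\upbeta_\Gamma)$ with the stated genus and the stated order of $\upbeta_\Gamma$. No such HPD statements are available in the generality you need (for $\GtGr$ and the spinor tenfold the duals are not known as varieties even absolutely, and nothing is established in the Brauer-twisted relative setting), and the separation of the twist into a class $\upbeta_1$ on $S$ and a class $\upbeta_\Gamma$ on $\Gamma$ with the asserted torsion is exactly the content of the theorem, not bookkeeping that follows from ``centres of structure groups''. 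You flag this yourself as the main obstacle but never resolve it, so the proposal assumes the core of what is to be proved. A second unproved step is the descent of the tautological/Mukai bundles to $X$ over $S$ ``by rigidity'': one must control the Brauer obstruction of the descended bundle, and in fact one needs to show it is trivial (otherwise the ambient model and the fibrewise exceptional pairs do not glue as claimed); this requires an actual argument, e.g.\ combining determinant and rank torsion constraints. Also, your explanation of the role of $X(S)\ne\varnothing$ in the ``moreover'' (a base point to rigidify the family of modules) is not what is needed: the point of a section is to produce a twisted sheaf of rank~$1$ or~$2$ whose existence kills or halves the order of the relevant Brauer class.

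For comparison, the paper proves the theorem without any HPD. For each type it starts from the known fibrewise decomposition $\bD(X_s)=\langle \Phi_{\cE}(\bD(\Gamma_{X_s})),\cO,\cU^\vee\rangle$ (or $\langle\Phi_\cU(\bD(\Gamma_{X_s})),\cO,\cO(1)\rangle$ for $\sY_4$), gives $\Gamma_{X_s}$ a modular interpretation as an open locus in a moduli space of stable bundles via the uniqueness results of Proposition~\ref{prop:uniqueness-general} and Proposition~\ref{prop:curve-general}, and then defines $\Gamma\coloneqq\rM^\circ\subset\rM_{X/S,H_X}(\rP)$; smoothness and projectivity of $\Gamma\to S$ follow from $\Ext^2(\cE_y,\cE_y)=0$ and Theorem~\ref{thm:moduli-smooth}, the class $\upbeta_\Gamma$ appears as the obstruction to a universal sheaf (Proposition~\ref{prop:twisted-universal}), and the $S$-linear decomposition is obtained by the fibrewise criterion of Proposition~\ref{prop:relative-sod}. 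The Mukai bundles $\cU$ are shown to be honestly untwisted by playing rank against determinant (and Lemma~\ref{lemma:beta-2}), the torsion bounds on $\upbeta_\Gamma$ come from $\det\cU$ and $\det\cE$, and the ``moreover'' is proved by pushing the universal sheaf forward along a relative curve of degree $d$, which yields a $\upbeta_\Gamma$-twisted line bundle on $\Gamma$ and hence $\upbeta_\Gamma=1$ by Corollary~\ref{cor:twisted-rank}. If you want to salvage your route, the quadric-pencil (even Clifford algebra) argument for $\sY_4$ is the one case where the required relative twisted duality is actually available, but for the three prime types you would have to replace the HPD input by something like the paper's moduli-theoretic construction.
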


This theorem is a combination of Theorems~\ref{thm:y4}, \ref{thm:x10}, \ref{thm:x9}, and~\ref{thm:x7}.

\begin{remark}
\label{rem:rationality-base+curve}
As before, these results should be looked at from the perspective of the rationality criteria.
Indeed, threefolds of type~$\sX_7$ 
are rational over a non-closed field~$\kk$ if and only if~$X(\kk) \ne \varnothing$,
while~$\sY_4$, $\sX_{10}$, and~$\sX_9$ are rational over~$\kk$ 
if and only if~$X(\kk) \ne \varnothing$ and~$\rF_d(X)(\kk) \ne \varnothing$
(where~$d$ is the same as in Theorem~\ref{thm:main-base+curve}),
see~\cite{BW} and~\cite[Theorem~1.1]{KP19}.
Thus, as before, if~$X$ is rational over~$\kk$ then~$\upbeta_1 = 1$ and~$\upbeta_\Gamma = 1$.
\end{remark}

As we mentioned above, the results of Theorems~\ref{thm:main-base} and~\ref{thm:main-base+curve}
are just extensions to the relative case of the analogous results for Fano threefolds over algebraically closed fields.
In the last part of the paper, discussing the case of Fano fibrations with fibers of higher geometric Picard rank, 
we can no longer use the easy semiorthogonal decompositions 
of the corresponding Fano threefolds over algebraically closed fields
because they are not invariant under possible monodromy actions, and so they do not extend to $S$-linear decompositions.
Accordingly, to construct an $S$-linear semiorthogonal decomposition we need to find 
sufficiently symmetric semiorthogonal decompositions of derived categories of these threefolds.
We were able to do this in four out of six cases.
The new feature here is the appearance of two components equivalent to (twisted) derived categories
of finite \'etale coverings of the base of degree equal to geometric Picard rank of the fibers.

\begin{theorem}
\label{thm:main-base+coverings}
Let~$p \colon X \to S$ be a smooth projective morphism with geometric fibers 
isomorphic to~$\sX_{1,1,1}$, or~$\sX_{2,2}$, or~$\sX_{2,2,2}$, or~$\sX_{4,4}$.
Then~$\bD(X)$ has an $S$-linear semiorthogonal decomposition 
\begin{equation}
\label{eq:dbx-base+coverings}
\bD(X) = \langle \bD(S), \bD(S,\upbeta_1), \bD(S',\upbeta'_0), \bD(S',\upbeta'_1) \rangle
\end{equation}
where~$S' \to S$ is a finite \'etale covering of degree equal to the geometric Picard rank of~$X/S$,
and 
\begin{aenumerate}
\item 
\label{brauer:x111}
if the fibers have type~\hbox to 2.1em{$\sX_{1,1,1}$\hfill} then~$\upbeta_1^2 = 1$, ${\upbeta'_0}^2 = {\upbeta'_1}^2 = 1$;
\item 
\label{brauer:x22}
if the fibers have type~\hbox to 2.1em{$\sX_{2,2}$\hfill} then~$\upbeta_1 = 1$, ${\upbeta'_0}^3 = {\upbeta'_1}^3 = 1$;
\item 
\label{brauer:x222}
if the fibers have type~\hbox to 2.1em{$\sX_{2,2,2}$\hfill} then~$\upbeta_1^2 = 1$, ${\upbeta'_0} = {\upbeta'_1} = 1$;
\item 
\label{brauer:x44}
if the fibers have type~\hbox to 2.1em{$\sX_{4,4}$\hfill} then~$\upbeta_1 = 1$, $\upbeta'_0 =  1$, ${\upbeta'_1}^2 = 1$.
\end{aenumerate}
Moreover, if the fibers have type~$\sX_{1,1,1}$ or~$\sX_{2,2}$ and~$X(S) \ne \varnothing$ 
then~$\upbeta_1 = \upbeta'_0 = \upbeta'_1 = 1$ 
and if the fibers have type~$\sX_{2,2,2}$ or~$\sX_{4,4}$ and~$X(S) \ne \varnothing$ then~$\upbeta_1$ and~$\upbeta'_1$
can be represented by conic bundles.
\end{theorem}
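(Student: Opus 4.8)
The plan is to handle all four cases by a single mechanism: first construct, over an algebraic closure $\bkk$ of the residue field of a geometric point of $S$, a semiorthogonal decomposition of $\bD(X_\bkk)$ that is invariant under the geometric monodromy of $X/S$, and then glue it to an $S$-linear decomposition by descent, using the $S$-linearity and twisted-sheaf formalism recalled in \S\ref{ss:db-linear} and \S\ref{ss:twisted-sheaves}. The point is that the obvious decompositions — the K\"unneth decomposition of $(\P^1)^3$, the $\P^1$-bundle decomposition of $\sX_{2,2}=\Fl(1,2;3)$ over $\P^2$, and the decompositions of $\sX_{2,2,2}\subset(\P^2)^3$ and of $\sX_{4,4}$ coming from their realizations inside products of projective spaces — single out a factor or a ruling and so are not monodromy-invariant. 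So the first step is to apply a sequence of mutations, using the full symmetric group $\mathfrak{S}_\rho$ ($\rho\in\{2,3\}$ the geometric Picard rank) that permutes the relevant factors, to bring the collection into the \emph{symmetric} shape
\begin{equation*}
\bD(X_\bkk) = \langle\, \cO_{X_\bkk},\ E,\ B_0,\ B_1 \,\rangle,
\end{equation*}
in which $E$ is a single $\mathfrak{S}_\rho$-invariant exceptional object (a root of $\omega_{X_\bkk}^{-1}$ when one is available, an exceptional object produced by mutation otherwise) and each $B_i$ is a block of $\rho$ pairwise \emph{completely orthogonal} exceptional objects that $\mathfrak{S}_\rho$ permutes simply transitively; for $(\P^1)^3$ one such block is mutation-equivalent to $\{\cO(1,0,0),\cO(0,1,0),\cO(0,0,1)\}$, for $\sX_{2,2,2}$ one block is $\{\cO(1,0,0),\cO(0,1,0),\cO(0,0,1)\}$ itself, and for $\sX_{2,2}$ and $\sX_{4,4}$ the two rulings provide the size-$2$ blocks.

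Checking that this is a semiorthogonal decomposition has two halves. The required $\RHom$-vanishings — semiorthogonality in the displayed order and complete orthogonality inside each $B_i$ — reduce by the K\"unneth formula to $\Ho^\bullet(\P^1,\cO(-1))=0$ and $\Ho^\bullet(\P^2,\cO(-1))=\Ho^\bullet(\P^2,\cO(-2))=0$; for $\sX_{2,2,2}$ and $\sX_{4,4}$ one first replaces $\cO_{X_\bkk}$ by the Koszul resolution associated with the defining divisors and carries the computation out on the ambient product. Fullness for $(\P^1)^3$ is Beilinson's theorem together with invariance of generation under mutation; for $\sX_{2,2}$, $\sX_{2,2,2}$ and $\sX_{4,4}$ it follows from the principle that a (rectangular) Lefschetz collection on an ambient variety restricts to a full exceptional collection on a smooth subvariety of the appropriate codimension once the redundant part is discarded — for $\sX_{4,4}$ this is most naturally read through the homological projective duality of the Veronese embedding of $\P^2$ underlying the Cremona graph. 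I expect this fullness verification for $\sX_{2,2,2}$ and $\sX_{4,4}$, together with keeping track of the mutations needed to reach the symmetric shape, to be the most laborious part of the argument over $\bkk$.

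Given the monodromy-invariant decomposition $\langle \cO_{X_\bkk}, E, B_0, B_1\rangle$, descent produces an $S$-linear decomposition $\bD(X)=\langle \bD(S), \cA, \cC_0, \cC_1\rangle$. The first component is $\bD(S)$ because $\cO_X$ is globally defined. The invariant object $E$ glues to a $\upbeta_1$-twisted line bundle on $S$, so $\cA\simeq\bD(S,\upbeta_1)$; the order of $\upbeta_1$ equals the order of the obstruction to $E$ itself (as opposed to a genuine tensor power of it) descending to an object of $\bD(X)$, read off from the multiplicative relations among the exceptional objects — trivial in the $\sX_{2,2}$ and $\sX_{4,4}$ cases, and of order dividing $2$ (because $E^{\otimes 2}$, a power of $\omega_{X/S}^{-1}$ or a globally defined line bundle, descends) in the $\sX_{1,1,1}$ and $\sX_{2,2,2}$ cases. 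Each block $B_i$, being permuted simply transitively by the monodromy, descends over the degree-$\rho$ \'etale cover $S'\to S$ corresponding to that permutation action — the relative scheme of the $\rho$ extremal contractions, i.e.\ of the projections onto the factors of the ambient product — to a single $\upbeta'_i$-twisted line bundle on $S'$, namely the tautological $\cO(1)$ of the Severi--Brauer scheme over $S'$ attached to that ruling. Hence $\cC_i\simeq\bD(S',\upbeta'_i)$, and the period of $\upbeta'_i$ is that of the Severi--Brauer scheme: $2$ for the $\P^1$-rulings of $\sX_{1,1,1}$ and one ruling of $\sX_{4,4}$, $3$ for the $\P^2$-projections of $\sX_{2,2}$, and trivial for $\sX_{2,2,2}$ (where the relevant $\P^2$-bundle is the projectivization of an honest rank-$3$ bundle on $S'$) and the other ruling of $\sX_{4,4}$ — exactly the values in the statement.

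For the final assertions, note that a section $\sigma\colon S\to X$ restricts over each geometric point to a point of $X_\bkk$, hence to a point of each factor of the ambient product, and therefore defines a section of each of the Severi--Brauer schemes over $S$ (respectively $S'$) appearing above; such a section trivializes the corresponding Brauer class. This disposes of $\upbeta_1$, $\upbeta'_0$ and $\upbeta'_1$ in the $\sX_{1,1,1}$ and $\sX_{2,2}$ cases. In the $\sX_{2,2,2}$ and $\sX_{4,4}$ cases the surviving class ($\upbeta_1$, respectively $\upbeta'_1$) has order $2$ but need not vanish; to show it is represented by a conic bundle I would exhibit that conic bundle explicitly — as a relative Hilbert scheme of lines (or conics) in the fibers cut out by the defining equations, equivalently as the even Clifford algebra of a rank-$3$ quadratic form extracted from those equations — and use the section $\sigma$ to verify that it is flat of the expected type. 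The main obstacles, to summarize, are: fullness of the symmetric collections for $\sX_{2,2,2}$ and $\sX_{4,4}$; the precise determination of the periods of $\upbeta_1$ and $\upbeta'_i$, i.e.\ of which tensor powers of the gluing cocycle descend to $S$ and to $S'$; and the explicit conic-bundle presentation of the order-$2$ classes in the last two cases.
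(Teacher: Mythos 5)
There is a genuine gap at the heart of your plan: the passage from a monodromy-invariant semiorthogonal decomposition of $\bD(X_\bkk)$ to an $S$-linear decomposition of $\bD(X)$ is asserted (``descent produces an $S$-linear decomposition''), but no such general descent theorem exists, and the paper's entire mechanism is devoted to replacing it. What is actually done is to produce \emph{global twisted Fourier--Mukai kernels} and then check everything fiberwise via Proposition~\ref{prop:relative-sod}: the block components are images of the Weil-restriction functor $\Phi=\pr_{X*}\circ\pr_Y^*$ of Theorem~\ref{thm:pmd-qs-ps} applied to relative Severi--Brauer or quadric fibrations over $S'$ (constructed in Propositions~\ref{prop:x111}, \ref{prop:x22}, \ref{prop:x222}, \ref{prop:forms-x44}), while the extra invariant object is obtained as a twisted universal bundle on a relative moduli space shown to be isomorphic to $S$ (Corollary~\ref{cor:moduli-etale}, Proposition~\ref{prop:twisted-universal}); this in turn needs stability and \emph{uniqueness} of the fiberwise bundle (Proposition~\ref{prop:uniqueness-general} applied to the special self-dual resolutions such as \eqref{eq:x222-cex-cex-1} and \eqref{eq:x44-cexvee-cex}), none of which appears in your outline. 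Your torsion bookkeeping also relies on this: for $\sX_{2,2,2}$ the invariant object is a rank-$4$ bundle $\cE_X$ (not a root of $\omega^{-1}$ and not something whose square is ``a globally defined line bundle''), and the naive determinant argument only gives $\upbeta_1^4=1$; the claimed $\upbeta_1^2=1$ requires the rank-doubling sequence $0\to\cE_X\to\cO_X^{\oplus 8}\to\cE_X^\vee\to 0$ via Lemma~\ref{lemma:beta-2}. Likewise the triviality of $\upbeta'_0,\upbeta'_1$ for $\sX_{2,2,2}$ and of $\upbeta_1$ for $\sX_{2,2}$ (i.e.\ that the relevant corestriction classes vanish and the $\P^2$-fibration over $S'$ is an honest projective bundle) is a nontrivial computation with twisted pushforwards of ideal sheaves (Propositions~\ref{prop:x22} and~\ref{prop:x222}), not something ``read off from multiplicative relations''.

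Separately, your structural picture of the $\sX_{4,4}$ case is wrong: the geometric fibers sit in $Q^3\times Q^3$, the degree-$2$ cover $S'$ parameterizes the two contractions to quadric fibrations $Z\to S'$, and there are no ``rulings'' furnishing blocks of line bundles of Severi--Brauer type. In the symmetric collection actually used, $\langle \cE_X,\cO,\cS_1^\vee,\cS_2^\vee,\cO(h_1),\cO(h_2)\rangle$ (Proposition~\ref{prop:x44-ec}), one of the two monodromy-permuted blocks consists of the rank-$2$ spinor bundles pulled back from the two quadrics; relatively, this component is $\Phi(\cS^\vee\otimes\bD(S',\upbeta_\cS))$ where $\cS$ is the twisted spinor bundle of $Z\to S'$, and the $2$-torsion class $\upbeta'_1=\upbeta_\cS$ is the spinor Brauer class (represented by a conic bundle by restricting $\cS$ along a section of $Z\to S'$), while $\upbeta'_0=1$ because $H_Z$ is untwisted for an odd-dimensional quadric fibration. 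Your proposal, which posits line-bundle blocks attached to rulings and invokes homological projective duality of the Veronese for fullness, neither constructs the invariant rank-$3$ bundle $\cE_X$ (whose untwistedness follows from combining a rank-$8$ pushforward with the determinant) nor accounts for the spinor component, so the $\sX_{4,4}$ case would not go through as written; the conic-bundle representations in the last assertion are also obtained in the paper simply by restricting rank-$2$ twisted bundles along the section, rather than via Hilbert schemes of lines or even Clifford algebras, which you would still have to justify.
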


This theorem is a combination of Theorems~\ref{thm:x111}, \ref{thm:x22}, \ref{thm:x222}, and~\ref{thm:x44}.

\begin{remark}
\label{rem:rationality-base+coverings}
For Fano threefolds of these types the criterion of rationality over a non-closed field~$\kk$
established in~\cite[Theorem~1.2(ii)]{KP21}
amounts to the existence of a $\kk$-point;
as before if it holds all Brauer classes appearing in~\eqref{eq:dbx-base+coverings} 
are trivial or can be represented by conic bundles.
\end{remark}

In the last two cases --- Fano fibrations with fibers of types~$\sX_{3,3}$ and~$\sX_{1,1,1,1}$ ---
we have not managed to find $S$-linear semiorthogonal decompositions in which all components are geometric.
The best we could achieve is the following result, 
where we use the notion of base change for semiorthogonal decompositions developed in~\cite{K11}.

\begin{theorem}
\label{thm:main-weird}
Let~$p \colon X \to S$ be a smooth projective morphism with geometric fibers 
isomorphic to~$\sX_{3,3}$ or~$\sX_{1,1,1,1}$.
Then~$\bD(X)$ has an $S$-linear semiorthogonal decomposition 
\begin{equation}
\label{eq:dbx-base+weird}
\bD(X) = \langle \bD(S), \bD(S',\upbeta'), \cA \rangle
\end{equation}
where~$S' \to S$ is a finite \'etale covering of degree equal to the geometric Picard rank of~$X/S$,
and 
\begin{aenumerate}
\item 
\label{brauer:x33}
if the fibers have type~$\sX_{3,3}$ then~${\upbeta'}^4 = 1$ 
and the base change~$\cA_{S'}$ of~$\cA$ along~$S' \to S$ has a semiorthogonal decomposition
\begin{equation*}
\cA_{S'} = \langle \bD(S', {\upbeta'}^2), \bD(\Gamma') \rangle,
\end{equation*}
where~$\Gamma' \to S'$ is a smooth projective curve of genus~$3$;
\item 
\label{brauer:x1111}
if the fibers have type~$\sX_{1,1,1,1}$ then~${\upbeta'}^2 = 1$
and the base change~$\cA_{S'}$ of~$\cA$ along~$S' \to S$ has a semiorthogonal decomposition
\begin{equation*}
\cA_{S'} = \langle \bD(S'', \upbeta''), \bD(\Gamma') \rangle,
\end{equation*}
where~$\Gamma' \to S'$ is a smooth projective curve of genus~$1$,
$S'' \to S'$ is a finite \'etale covering of degree~$3$, and~$\upbeta'' \in \Br(S'')$ is a Brauer class such that~${\upbeta''}^2 = 1$. 
\end{aenumerate}
\end{theorem}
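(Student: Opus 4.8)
The plan is to treat the two cases $\sX_{3,3}$ and $\sX_{1,1,1,1}$ in parallel, following the strategy already used for the four cases covered by Theorem~\ref{thm:main-base+coverings}: first produce a sufficiently symmetric (i.e.\ monodromy-invariant) semiorthogonal decomposition of $\bD(X_\bkk)$ for a single threefold over the algebraic closure, then descend it to an $S$-linear decomposition, accounting for the Brauer obstruction that appears when a potentially monodromy-permuted block of exceptional objects is reassembled into a single component. For $\sX_{3,3}$, the geometric Picard rank is $3$, so the étale covering $S' \to S$ will have degree $3$; over $\bkk$ the threefold $\sX_{3,3}$ carries three natural projections to $\P^3$ (the complete intersection of three $(1,1)$-divisors in $\P^3 \times \P^3$ projects to each factor, but the relevant structure is really the three $\P^1$-bundle-like or conic-bundle-like structures coming from the three $(1,1)$ forms), and I would assemble the exceptional blocks indexed by these three structures into the twisted factor $\bD(S',\upbeta')$, with $\upbeta'$ being the class obstructing a global choice — its order dividing $4$ reflecting the fact that the monodromy acts through $\Sym_3$ on a set of size related to $4 = \rank \bD(\text{del Pezzo } \sY_4)/\text{curve part}$, but more precisely $\upbeta'$ and $\upbeta'^2$ arise because the residual category $\cA$, after base change to $S'$, still retains a $\ZZ/2$-ambiguity visible as $\bD(S',\upbeta'^2)$. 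The curve $\Gamma'$ of genus $3$ is the relative analogue of the intermediate-Jacobian-type curve attached to $\sX_9$ (note $\sX_{3,3}$ is birationally related to $\sX_9$ over $\bkk$ after passing to the cover), and $\g(\Gamma') = 3$ matches Theorem~\ref{thm:main-base+curve}\ref{brauer:x9}.

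For $\sX_{1,1,1,1}$ the analogous picture is one dimension up in the tower: geometric Picard rank $4$, hence $S' \to S$ of degree $4$ (I expect it to be the covering parametrizing the four pencil structures cut out by the four groups of variables, or rather a $\ZZ/2 \times \ZZ/2$-type covering on which $\Sym_4$ acts through its quotient), with $\upbeta'^2 = 1$. After base change to $S'$ the residual category $\cA_{S'}$ itself splits as a twisted derived category of a further degree-$3$ étale covering $S'' \to S'$ — this secondary covering and its Brauer class $\upbeta''$ with $\upbeta''^2 = 1$ reflect that $\sX_{1,1,1,1}$ over $\bkk$ has, beyond the four obvious projections to $\P^1 \times \P^1 \times \P^1$, a further $3$-fold symmetry (the three ways of partitioning four factors into $2+2$), exactly paralleling the appearance of a genus-$1$ curve and the "$2,2,2$-type" combinatorics of $\sX_{2,2,2}$; the genus-$1$ curve $\Gamma'$ is the relative elliptic curve governing the derived category of the quartic del Pezzo surface fibration to which $\sX_{1,1,1,1}$ restricts. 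Concretely I would: (i) fix $\sX_{3,3,3}$ resp.\ $\sX_{1,1,1,1}$ over $\bkk$, use the $\P^1$-bundle / conic-bundle mutation techniques of Theorem~\ref{thm:bernardara} and the quadric-fibration and del-Pezzo-fibration decompositions to write $\bD(X_\bkk)$ with a chunk $\langle \bD(\bkk), \text{(stuff permuted by monodromy)}, \bD(\Gamma'_\bkk)\rangle$; (ii) verify $\Aut$-equivariance of this decomposition under the geometric monodromy group (computed from the known automorphisms/deformation theory of these two families, using \cite{Pro13}); (iii) apply the descent formalism of \cite{K11} (base change for semiorthogonal decompositions) to get the $S$-linear decomposition \eqref{eq:dbx-base+weird}, reading off $S'$, $S''$, $\Gamma'$, and the twist classes from the cocycle describing how the exceptional objects fail to be globally defined; (iv) compute the orders of $\upbeta'$, $\upbeta''$ by a Brauer-group computation — typically via the exact sequence relating $\Br(S)$, $\Br(S')$ and the kernel of corestriction, together with the observation that the relevant classes come from $\PGL_2$- or Severi–Brauer-type torsors, hence have order dividing $2$ (for $\upbeta''$, $\upbeta'$ in the $\sX_{1,1,1,1}$ case) or $4$ (for $\upbeta'$ in the $\sX_{3,3}$ case, where a $\PGL_4$-torsor or a pair of nested $\PGL_2$-torsors can appear).

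The main obstacle I expect is step (ii)–(iii) for $\sX_{1,1,1,1}$: finding a semiorthogonal decomposition of $\bD(X_\bkk)$ that is genuinely invariant under the full monodromy (which can be as large as $\Sym_4$ acting on the four $\P^1$-factors, together with further symmetries), and such that the descended components are recognizable as twisted derived categories of explicit covers and curves rather than abstract admissible subcategories — this is precisely why the theorem settles for the weaker "two-step" statement with $\cA$ only understood after base change. The secondary covering $S'' \to S'$ of degree $3$ is the subtle point: it must be produced canonically from the geometry over $S'$ (not just over a further cover), which requires identifying a monodromy-stable $3$-element set inside the combinatorial data on $X_{S'}$, plausibly the set of $2+2$ partitions of the four pencils that are compatible with the chosen section of $S' \to S$. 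For $\sX_{3,3}$ the analogous difficulty is milder because the genus-$3$ curve and the residual $\ZZ/2$-twist can be imported almost directly from the $\sX_9$ analysis of Theorem~\ref{thm:main-base+curve} after base-changing to the degree-$3$ cover on which $\sX_{3,3}$ becomes birational to an $\sX_9$-fibration; the remaining work there is bookkeeping with the order-$4$ class $\upbeta'$ and checking that $\upbeta'^2$ is the class that survives into $\cA_{S'}$.
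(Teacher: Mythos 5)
There is a genuine gap, and it starts with the geometry. For $\sX_{3,3}$ the geometric Picard rank is $2$, not $3$: the threefold is a complete intersection of three $(1,1)$-divisors in $\P^3\times\P^3$, its Picard group is generated by the two hyperplane classes, and accordingly the covering $S'\to S$ in the theorem has degree $2$ (Proposition~\ref{prop:x33}), with $\upbeta'$ of order dividing $4$ simply because it is the Brauer class of a twisted bundle $V$ of rank $4$ with $X\subset\Res_{S'/S}(\P_{S'}(V))$. Your degree-$3$ covering, the ``three projections to $\P^3$'', and the attempted link of the genus-$3$ curve to the $\sX_9$ story are all incorrect; in the paper the genus-$3$ curve appears because, after base change to $S'$, the projection $X\times_S S'\to\P_{S'}(V)$ is fiberwise the blowup of $\P^3$ along a smooth sextic of genus $3$, so $X\times_S S'\cong\Bl_{\Gamma'}(\P_{S'}(V))$. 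Similarly for $\sX_{1,1,1,1}$ the genus-$1$ curve is not extracted from a quartic del Pezzo fibration: writing $S'\times_S S'=S'\sqcup S''$ with $S''$ the complement of the diagonal (this is the canonical degree-$3$ covering $S''\to S'$, not a set of $2{+}2$ partitions chosen ad hoc), one forms the $\sX_{1,1,1}$-fibration $Y=\Res_{g_1}(\P_{S''}(g_2^*V))$ over $S'$, and $X\times_S S'\to Y$ is fiberwise the blowup of $\P^1\times\P^1\times\P^1$ along a genus-$1$ curve; the class $\upbeta''$ is explicitly $g_1^*(\upbeta')\cdot g_2^*(\upbeta')$, hence $2$-torsion.

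The second, more structural, problem is your overall mechanism. You propose to find a monodromy-invariant semiorthogonal decomposition of $\bD(X_{\bkk})$ and descend it via the base-change formalism of \cite{K11}; but for these two types no such fully geometric invariant decomposition is available --- that is precisely why the theorem only describes $\cA$ after base change, so step (ii)--(iii) of your plan cannot be carried out as stated. The paper instead argues in the opposite order: it first constructs the components $\bD(S)$ and $\bD(S',\upbeta')$ directly over $S$ by applying the Weil-restriction decomposition (Theorem~\ref{thm:pmd-qs-ps}, via Proposition~\ref{prop:relative-sod}) to the embedding $X\hookrightarrow\Res_{S'/S}(\P_{S'}(V))$ furnished by Propositions~\ref{prop:x33} and~\ref{prop:forms-x1111}, \emph{defines} $\cA$ as the orthogonal complement, and only then base-changes to $S'$, where the blowup structures above together with the standard (twisted) projective-bundle and $\sX_{1,1,1}$ decompositions and a short sequence of mutations yield $\cA_{S'}=\langle\bD(\Gamma'),\cE\otimes\bD(S',{\upbeta'}^2)\rangle$ (with $\cE$ a twist of the relative tangent bundle) in case~\ref{brauer:x33}, and $\cA_{S'}=\langle\bD(\Gamma'),\bD(S'',\upbeta'')\rangle$ in case~\ref{brauer:x1111}. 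Without the Weil-restriction input, the explicit forms of Propositions~\ref{prop:x33} and~\ref{prop:forms-x1111}, and the blowup descriptions over $S'$, your outline does not produce the covers, the curves, or the stated torsion orders.
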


This theorem is a combination of Theorems~\ref{thm:x33} and~\ref{thm:x1111}.

The components~$\cA \subset \bD(X)$ appearing in~\eqref{eq:dbx-base+weird} are very interesting.
As the theorem tells us, after an \'etale base change they decompose into two geometric components, 
but over~$S$ this decomposition is not defined.
The appearance of categories of this new type seems to be related 
to the new feature in the rationality behaviour over~$\kk$ observed in~\cite[Theorem~1.2(iii) and Conjecture~1.3]{KP21}:
Fano threefolds~$X$ of type~$\sX_{3,3}$ (and conjecturally of type~$\sX_{1,1,1,1}$ as well) 
are \emph{never rational} over~$\kk$
(under the usual assumption that the Picard rank over~$\kk$ is~$1$).

As it was mentioned in Remarks~\ref{rem:rationality-base}, \ref{rem:rationality-base+curve}, and~\ref{rem:rationality-base+coverings}
our results are compatible with the rationality criteria. 
The relation can be formulated in the language of hypothetical \emph{Griffiths components}.
Generalizing~\cite[Definition~3.9]{K16} we say
(see also Definition~\ref{def:griffiths}) 
that an $S$-linear indecomposable over~$S$ semiorthogonal component~\mbox{$\cA \subset \bD(X)$}
is a {\sf Griffiths component} if it does not have
an $S$-linear embedding into the derived category of a smooth projective variety over~$S$ 
of dimension at most~$\dim(X/S) -2$; 
such components are expected (see~\cite[\S3.3]{K16}) to provide obstructions to rationality, 
in the same way as the Griffiths components of intermediate Jacobians of Fano threefolds do.
The main issue with this definition is that semiorthogonal decompositions 
are known to violate the Jordan--H\"older property (see~\cite[\S3.4]{K16} and~\cite{K13}),
so that it is not clear if the set of Griffiths components of~$\bD(X)$ is independent on the choice of a semiorthogonal decomposition.

It is easy to see that categories of the form~$\bD(S')$ and~$\bD(\Gamma)$, 
where~$S' \to S$ is a finite \'etale morphism and~$\Gamma \to S$ is a smooth projective curve over~$S$, 
are non-Griffiths components for~$X$ of dimension~$3$ over~$S$.
Moreover, for~$S'$ as above if~$\upbeta' \in \Br(S')$ is a 2-torsion Brauer class
which can be represented by a conic bundle over~$S'$
then~$\bD(S',\upbeta')$ is also a non-Griffiths component 
(it can be embedded into the derived category of the conic bundle).
In Proposition~\ref{prop:non-griffiths} we show that these are the only possible non-Griffiths components.
Therefore, our results imply the following:

\begin{corollary}
\label{cor:griffiths}
Let~$p \colon X \to S$ be a smooth projective morphism as in Theorems~\textup{\ref{thm:main-base}}, 
\textup{\ref{thm:main-base+curve}}, or~\textup{\ref{thm:main-base+coverings}}.
If~$S = \Spec(\kk)$ and~$X$ is rational over~$S$ then~$\bD(X)$ has an $S$-linear semiorthogonal decomposition with no Griffiths components.
\end{corollary}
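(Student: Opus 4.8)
The plan is to combine the three structural theorems with the rationality criteria and the classification of non-Griffiths components (Proposition~\ref{prop:non-griffiths}). Fix $S = \Spec(\kk)$ and suppose $X$ is rational over $\kk$. I would go through the fourteen relevant deformation types case by case, but organized according to which of Theorems~\ref{thm:main-base}, \ref{thm:main-base+curve}, or~\ref{thm:main-base+coverings} applies, and in each case verify that the semiorthogonal decomposition produced by that theorem has all components of one of the following shapes: $\bD(S')$ for $S' \to S$ finite \'etale, $\bD(\Gamma)$ for $\Gamma/S$ a smooth projective curve, or $\bD(S',\upbeta')$ with $\upbeta'$ a $2$-torsion class representable by a conic bundle over $S'$ --- all of which are non-Griffiths by the discussion preceding the corollary.

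For Theorem~\ref{thm:main-base}: the rationality criteria recalled in Remark~\ref{rem:rationality-base} say that for type $\sY_5$ rationality is automatic (and $\upbeta_1 = \upbeta_2 = \upbeta_3 = 1$ already), while for types $\P^3$, $\sQ^3$, $\sX_{12}$ rationality over $\kk$ forces $X(\kk) \ne \varnothing$. By the ``moreover'' clause of Theorem~\ref{thm:main-base}, for $\P^3$ this gives $\upbeta = 1$, hence all $\upbeta_i = 1$ and every component in~\eqref{eq:dbx-base} is just $\bD(S)$; for $\sQ^3$ and $\sX_{12}$ we get $\upbeta_1 = \upbeta_2 = 1$ and $\upbeta_3$ of order at most $2$ and representable by a conic bundle over $S$, so again all four components are non-Griffiths. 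For Theorem~\ref{thm:main-base+curve}: Remark~\ref{rem:rationality-base+curve} gives that rationality over $\kk$ implies $X(\kk) \ne \varnothing$ (and $\rF_d(X)(\kk) \ne \varnothing$ for types $\sY_4$, $\sX_{10}$, $\sX_9$, while for $\sX_7$ no extra hypothesis is needed), so the ``moreover'' clause of Theorem~\ref{thm:main-base+curve} forces $\upbeta_1 = 1$ and $\upbeta_\Gamma = 1$; then~\eqref{eq:dbx-base+curve} reads $\bD(X) = \langle \bD(S), \bD(S), \bD(\Gamma)\rangle$ with $\Gamma/S$ a smooth projective curve, all non-Griffiths. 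For Theorem~\ref{thm:main-base+coverings}: by Remark~\ref{rem:rationality-base+coverings} rationality over $\kk$ again amounts to $X(\kk) \ne \varnothing$; the ``moreover'' clause of Theorem~\ref{thm:main-base+coverings} then gives, for types $\sX_{1,1,1}$ and $\sX_{2,2}$, that $\upbeta_1 = \upbeta'_0 = \upbeta'_1 = 1$, so all components in~\eqref{eq:dbx-base+coverings} are $\bD(S)$ or $\bD(S')$; and for types $\sX_{2,2,2}$ and $\sX_{4,4}$ it gives that $\upbeta_1$ and $\upbeta'_1$ are $2$-torsion and representable by conic bundles (over $S$ and $S'$ respectively) while $\upbeta'_0 = 1$, so all four components are again non-Griffiths. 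In every case the resulting $S$-linear decomposition has no Griffiths components, which is the claim.

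The one point requiring a little care --- and the likely main obstacle --- is the passage from ``$\upbeta$ can be represented by a conic bundle'' (the conclusion supplied by the structural theorems) to ``the corresponding component $\bD(S',\upbeta)$ embeds $S'$-linearly into $\bD(Z)$ for a smooth projective $Z/S'$ of dimension $\le 1$'', i.e.\ that such a twisted category is genuinely non-Griffiths in the precise sense of Definition~\ref{def:griffiths}. This is exactly the content of the remark preceding the corollary (a conic bundle over $S'$ is a smooth projective variety of relative dimension $1$, and the twisted derived category of a $2$-torsion Brauer class represented by it embeds into its derived category via the standard semiorthogonal decomposition of a conic bundle), so no new argument is needed; one only has to make sure that the conic bundles furnished by Theorems~\ref{thm:main-base}, \ref{thm:main-base+coverings} are over the appropriate base ($S$ or the \'etale cover $S'$) and are themselves smooth and projective over $S$, which is part of those theorems' statements. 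With that in hand the corollary is a direct bookkeeping consequence.
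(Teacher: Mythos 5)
Your proposal is correct and follows essentially the same route as the paper, which states the corollary as a direct consequence of the ``moreover'' clauses of Theorems~\ref{thm:main-base}, \ref{thm:main-base+curve}, \ref{thm:main-base+coverings}, the rationality criteria recalled in Remarks~\ref{rem:rationality-base}--\ref{rem:rationality-base+coverings}, and the observation (made just before the corollary, with Proposition~\ref{prop:non-griffiths} in the appendix) that $\bD(S')$, $\bD(\Gamma)$, and $\bD(S',\upbeta')$ with $\upbeta'$ a $2$-torsion class represented by a conic bundle are non-Griffiths. Your case-by-case bookkeeping matches the intended argument, so nothing further is needed.
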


We consider this result as yet another confirmation of the Griffiths components philosophy. 

\begin{remark}
Note that the converse of Corollary~\ref{cor:griffiths} is not true: 
for instance a smooth Fano fibration~$X \to S$ with fibers of type~$\P^3$ 
associated to a nontrivial 2-torsion Brauer class which can be represented by a conic bundle
has no Griffiths components but is not rational over~$S$; 
however in this example~$X$ is \emph{stably birational} to the conic bundle.
\end{remark}

\medskip 

The paper is organized as follows.

In~\S\ref{sec:picard} we discuss the relative Picard group and twisted sheaves.
In~\S\ref{ss:picard} we define the monodromy action 
of the \'etale fundamental group of the base of a Fano fibration on the Picard group of its geometric fiber
and identify the invariant classes for this action with global sections of the Picard sheaf.
In~\S\ref{ss:twisted-sheaves} we remind the notion of twisted sheaves with respect to a Brauer class,
and in~\S\ref{ss:relative-divisors} we discuss the special case of relative twisted line bundles 
and their relation to morphisms to Severi--Brauer varieties.

In~\S\ref{sec:db-forms} we discuss derived categories and moduli spaces in the relative setting.
In~\S\ref{ss:db-linear} we review the notion of $S$-linear semiorthogonal decompositions and $S$-linear functors and their properties,
and remind a result of Bernardara about derived categories of Severi--Brauer varieties.
In~\S\ref{ss:moduli-definition} we discuss the definition and basic properties of moduli spaces
and prove that in some cases universal sheaves exist as twisted sheaves.
In~\S\ref{ss:uniqueness} we establish some general uniqueness results about stable vector bundles on Fano threefolds
which we use later to prove the uniqueness of Mukai bundles and to provide a modular interpretation 
to the curves appearing in semiorthogonal decompositions.

After these preparations we pass to the main story of the paper 
and discuss the case of Fano fibrations with fibers of geometric Picard rank~1:
in~\S\ref{sec:big-index} we discuss Fano fibrations of large index, 
i.e., smooth quadric fibrations and smooth del Pezzo fibrations of degree~5 and~4 (i.e., fiber types~$\sQ^3$, $\sY_5$ and~$\sY_4$), 
and in~\S\ref{sec:prime-fano} we discuss prime Fano fibrations (i.e., fiber types~$\sX_{12}$, $\sX_{10}$, $\sX_9$, and~$\sX_7$).

For the case of higher geometric Picard rank we first recall in~\S\ref{sec:weil} some material about Weil restriction of scalars:
in~\S\ref{ss:forms-powers} we classify Fano fibrations with geometric fibers 
being powers of other Fano varieties (Proposition~\ref{prop:forms-general}),
and in~\S\ref{ss:weil-decomposition} we prove a useful result 
about derived categories of Weil restrictions (Theorem~\ref{thm:pmd-qs-ps}), which is interesting by itself.
After that in~\S\ref{sec:big-picard} we discuss the case of Fano fibrations with fibers of higher geometric Picard rank~1.

Finally, in Appendix~\ref{sec:griffiths-components} we classify relative non-Griffiths components of threefold fibrations. 

\medskip

\noindent{\bf Conventions.}
All schemes in this paper are schemes of finite type over a field~$\kk$ of characteristic zero.
When we consider a morphism~$X \to S$, the base scheme~$S$ is usually assumed to be connected;
we denote by~$s_0 \in S$ a fixed geometric point and by~$X_{s_0}$ the corresponding geometric fiber of~$X$.

Given a Grassmannian~$\Gr(k,V)$ we denote by~$\cU$ and~$\cU^\perp$ the tautological subbundles of rank~$k$ and~$n - k$
in~$V \otimes \cO$ and~$V^\vee \otimes \cO$, respectively;
we also use the same notation for the relative Grassmannian~$\Gr_S(k,V)$, where~$V$ is a vector bundle on~$S$.

Finally, as it was already mentioned before, $\bD(X)$ stands for the bounded derived category of coherent sheaves on~$X$
and~$\bD(Y,\upbeta)$ is the bounded derived category of $\upbeta$-twisted coherent sheaves on~$Y$, 
where~$\upbeta \in \Br(Y)$ is a Brauer class.

\medskip
\noindent{\bf Acknowledgements.}
I would like to thank S.~Gorchinskiy, D.~Huybrechts, D.~Orlov, Yu.~Prokho\-rov, and~C.~Shramov for useful discussions.

\section{Relative Picard group and twisted sheaves}
\label{sec:picard}

In this section~$S$ is a connected scheme of finite type over the base field~$\kk$ of characteristic zero;
in particular~$S$ is noetherian.
If~$s_0 \in S$ is a geometric point we denote by~$\uppi_1(S,s_0)$ the \emph{\'etale fundamental group} of~$S$,
so that there is an equivalence of categories between finite \'etale morphisms $S' \to S$ and finite~$\uppi_1(S,s_0)$-sets.

\subsection{Monodromy action on the Picard group}
\label{ss:picard}

Let $p \colon X \to S$ be a smooth projective morphism with connected fibers.
Consider the \'etale sheaf of abelian groups
\begin{equation*}
\uPic_{X/S} \coloneqq \bR_\et^1p_*(\Gm),
\end{equation*}
where the direct image is taken in the \'etale topology.
In other words, this is the \'etale sheafification of the presheaf 
that associated to an \'etale morphism~$U \to S$ the group~$\Pic(X \times_S U)$.

We will often consider elements of the group~$\uPic_{X/S}(S) \coloneqq \rH^0(S, \uPic_{X/S})$
and call them {\sf relative divisor classes}.
Note that for any geometric point~$s \in S$ there is a natural restriction map 
\begin{equation}
\label{eq:picard-restriction}
\uPic_{X/S}(S) \to \Pic(X_s).
\end{equation}
One of the goals of this section is to interpret its image.
We concentrate on the case of smooth Fano fibrations, i.e., smooth projective morphisms such that~$-K_{X/S}$ is ample over~$S$.

\begin{proposition}
\label{prop:picard-covering}
Let~$p \colon X \to S$ be a smooth Fano fibration.
There is a finite \'etale morphism~$S' \to S$ and a geometric point~$s'_0 \in S'$ over~$s_0$ such that the restriction morphism
\begin{equation*}
\uPic_{X \times_S S'/S'}(S') \to \Pic((X \times_S S')_{s'_0}) \cong \Pic(X_{s_0})
\end{equation*}
is an isomorphism.
Moreover, we can assume~$S'$ is connected.
\end{proposition}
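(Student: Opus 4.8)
The plan is to reduce the statement to the finiteness of the monodromy action of~$\uppi_1(S,s_0)$ on~$\Pic(X_{s_0})$, and then to deduce that finiteness from the existence of a monodromy‑invariant polarization via the Hodge Index Theorem.

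By the discussion in~\S\ref{ss:picard} we have the monodromy representation~$\uprho\colon\uppi_1(S,s_0)\to\Aut(\Pic(X_{s_0}))$, and, for a smooth Fano fibration, the relative Picard scheme~$\bPic_{X/S}$ is \'etale over~$S$: for every geometric fiber~$X_s$ one has $\rH^1(X_s,\cO_{X_s})=\rH^2(X_s,\cO_{X_s})=0$ by Kodaira vanishing, so~$\bPic_{X/S}\to S$ is simultaneously smooth (deformation theory of line bundles) and unramified, hence \'etale. Granting that~$\uprho$ has finite image, its kernel is then an open normal subgroup of~$\uppi_1(S,s_0)$ --- the representation is continuous, so the kernel is closed, and a closed finite‑index subgroup of a profinite group is open --- and I take~$S'\to S$ to be the corresponding connected finite \'etale Galois covering and~$s'_0$ a geometric point of~$S'$ over~$s_0$. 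Over~$S'$ the monodromy is trivial, so the \'etale sheaf~$\uPic_{X\times_S S'/S'}$ is constant, equal to~$\underline{\Pic(X_{s_0})}$; hence~$\uPic_{X\times_S S'/S'}(S')=\Pic(X_{s_0})=\Pic((X\times_S S')_{s'_0})$ with the restriction morphism the identity, in particular an isomorphism, and~$S'$ is connected by construction, which also gives the last assertion. (If one prefers to avoid local constancy: a monodromy‑invariant class spans a degree‑$1$ orbit, so it lies on a connected component of~$\bPic_{X\times_S S'/S'}$ that is a degree‑$1$ \'etale cover of~$S'$, hence is the image of a section; this gives surjectivity of the restriction map, and injectivity follows from rigidity of sections of a separated \'etale scheme over the connected base~$S'$.)

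It remains to prove that~$\Ima(\uprho)$ is finite, which is the crux. Recall that~$\Pic(X_{s_0})$ is a free abelian group of finite rank, since a Fano variety over a field of characteristic zero is simply connected; set~$n=\dim(X_{s_0})$, the cases~$n\le 1$ being trivial (then~$X_{s_0}$ is a point or~$\P^1$). The relative anticanonical class~$-K_{X/S}\in\uPic_{X/S}(S)$ restricts to a class~$h\in\Pic(X_{s_0})$ which is ample (as~$p$ is a Fano fibration) and monodromy‑invariant, being in the image of the restriction map~\eqref{eq:picard-restriction}, which lands in the invariants. Since intersection numbers are deformation invariants, the monodromy acts by isometries of the symmetric bilinear form
\[
\rB(\alpha,\beta)=\int_{X_{s_0}}\alpha\cdot\beta\cdot h^{\,n-2}
\]
on~$\Pic(X_{s_0})$, and it fixes~$h$. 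By the Hodge Index Theorem the restriction of~$\rB$ to the sublattice~$\rM=h^{\perp_{\rB}}\cap\Pic(X_{s_0})$ is negative definite, while~$\Pic(X_{s_0})\otimes\QQ=\QQ h\oplus(\rM\otimes\QQ)$. A definite lattice has finite automorphism group, so~$\uprho$ has finite image on~$\rM$; as~$\uprho$ fixes~$h$ and~$\Pic(X_{s_0})$ is torsion‑free, $\Ima(\uprho)$ is finite.

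The main obstacle is exactly this last step, and the point needing the most care is that~$\uprho$ genuinely preserves the intersection form and genuinely fixes the polarizing class~$h$ --- the latter relying on the fact that~$-K_{X/S}$ is defined over all of~$S$, not merely fiberwise; once this is in place the Hodge Index Theorem finishes the argument. The remaining ingredients --- \'etaleness of~$\bPic_{X/S}$ for Fano fibrations, the Galois correspondence for~$\uppi_1(S,s_0)$, and the compatibility of~$\uPic$ with the finite base change~$S'\to S$ --- are standard and are recorded in~\S\ref{ss:picard}.
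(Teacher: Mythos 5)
There is a genuine gap, and it sits at the foundation of your argument: you start from ``the monodromy representation $\uprho\colon\uppi_1(S,s_0)\to\Aut(\Pic(X_{s_0}))$'' as if it were already available, but in the paper this action is \emph{defined} only after, and by means of, the very proposition you are proving (one first gets $S'\to S$ with the restriction isomorphism, and only then transports the Galois action). To construct the action independently you would need more than \'etaleness of $\bPic_{X/S}$: an \'etale, separated, locally finite type $S$-scheme does not carry a $\uppi_1(S,s_0)$-action on its fiber (think of an open immersion $U\hookrightarrow S$), so continuity of $\uprho$, the local constancy of $\uPic_{X/S}$ you invoke (``trivial monodromy $\Rightarrow$ constant sheaf''), and even your fallback claim that a fixed class lies on a component of $\bPic$ that is a degree-$1$ cover of $S'$, all require knowing that the relevant pieces of the Picard scheme are \emph{finite} (in particular proper) over the base. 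That finiteness is exactly the crux of the paper's proof: by Kleiman, for each Hilbert polynomial $\varphi$ the open and closed subscheme $\bPic^{\varphi}_{X/S}\subset\bPic_{X/S}$ is finite over $S$, hence (being also \'etale, by Kodaira vanishing) a finite \'etale cover; the paper then takes a finite generating set $L_1,\dots,L_N$ of $\Pic(X_{s_0})$, the covers $S_{L_i}=\bPic^{\varphi_i}_{X/S}$ with their tautological sections, and the fiber product over $S$ with a marked point, getting surjectivity, while injectivity comes from \'etaleness of $\bPic_{X/S}$. None of this finiteness input appears in your proposal, so as written the argument is circular (or at least has its essential step missing).

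Your Hodge-index observation is correct and not without value: once the action \emph{is} constructed (e.g.\ from the decomposition of $\bPic_{X/S}$ into the finite \'etale pieces $\bPic^{\varphi}_{X/S}$, which already forces every orbit to be finite and hence the image of $\uprho$ to be finite, since $\Pic(X_{s_0})$ is finitely generated), fixing the $\uppi_1$-invariant ample class $h=-K_{X/S}\vert_{X_{s_0}}$ and using negative definiteness of the form $\alpha\cdot\beta\cdot h^{\,n-2}$ on $h^{\perp}$ gives a single connected Galois cover trivializing the whole Picard group, slightly more than the statement asks. But it addresses finiteness of the image, which is not the hard point; the hard point is producing, after a finite \'etale base change, actual line bundles on the family restricting to prescribed classes on the fiber, and for that you need the properness/finiteness of $\bPic^{\varphi}_{X/S}$ over $S$ (or an equivalent extension argument), as in the paper's proof.
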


\begin{proof}
By~\cite[Theorem~9.4.8]{Kleiman} the Picard functor is represented by the scheme~$\bPic_{X/S}$ 
separated and locally of finite type over~$S$,
i.e., there is an isomorphism~$\Pic_{X/S}(T) \cong \Map_S(T,\bPic_{X/S})$ for all \'etale $S$-schemes~$T$.
Since by Kodaira vanishing and the Fano condition we have
\begin{equation*}
H^1(X_s,\cO_{X_s}) = H^2(X_s,\cO_{X_s}) = 0
\end{equation*}
for each geometric point~$s \in S$,
applying~\cite[Theorem~9.5.11, Remark~9.5.12, and Proposition~9.5.19]{Kleiman}
we conclude that the morphism~$\bPic_{X/S} \to S$ is \'etale.
Therefore, the restriction morphism
\begin{equation*}
\Pic_{X/S}(S) = \Map_S(S,\bPic_{X/S}) \to \Map_S(s_0,\bPic_{X/S}) = \Pic(X_{s_0})
\end{equation*}
is injective, and the same argument shows that it stays injective after any base change.
It remains to find a finite \'etale morphism~$S' \to S$ and a geometric point~$s'_0 \in S'$ over~$s_0$ 
such that after base change to~$S'$, the above morphism is surjective at~$s'_0$.

Let~$L \in \Pic(X_{s_0})$ be a line bundle, let~$\varphi$ be the Hilbert polynomial of~$L$ (with respect to the anticanonical polarization),
and let~$\uPic^{\varphi}_{X/S} \subset \uPic_{X/S}$ be the subfunctor 
that parameterizes line bundles on fibers of~$X/S$ with Hilbert polynomial~$\varphi$.
By~\cite[Theorem~9.6.20]{Kleiman} it is represented by an open and closed subscheme~$\bPic^{\varphi}_{X/S} \subset \bPic_{X/S}$
which is finite over~$S$.
Therefore, after base change to the finite \'etale covering~$S_L \coloneqq \bPic^{\varphi}_{X/S} \to S$ 
with marked point~$s_{L,0}$ corresponding to the line bundle~$L$ on~$X_{s_0}$
there is a section~$\uplambda \in \Pic_{X \times_S {S_L}/S_L}(S_L)$ with value at~$s_{L,0}$ equal to~$L$.
Thus, $L$ belongs to the image of the restriction morphism~$\Pic_{X \times_S {S_L}/S_L}(S_L) \to \Pic((X \times_S {S_L})_{s_{L,0}})$.

Now we choose a finite generating set~$\{L_i\}$, $1 \le i \le N$, for~$\Pic(X_{s_0})$ and applying the above construction 
we obtain a finite collection of finite \'etale morphisms~$S_{L_i} \to S$ with marked points~$s_{L_i,0}$ 
and sections~$\uplambda_i \in \Pic_{X_{S_{L_i}}/S_{L_i}}(S_{L_i})$.
It remains to take
\begin{equation*}
S' \coloneqq S_{L_1} \times_S S_{L_2} \times_S \dots \times_S S_{L_N}
\qquad\text{and}\qquad 
s'_0 \coloneqq (s_{L_1,0},s_{L_2,0},\dots,s_{L_N,0}).
\end{equation*}
Then the pullbacks of the sections~$\uplambda_i$ to~$\Pic(X \times_S {S'}/S')$ take value~$L_i$ at~$s'_0$,
hence the image of the restriction morphism~$\uPic_{X \times_S {S'}/S'}(S') \to \Pic((X \times_S {S'})_{s'_0})$ 
contains a generating set of the group~$\Pic((X \times_S {S'})_{s'_0})$, and therefore it is surjective.

Finally, if the scheme~$S'$ constructed above is not connected, 
just replace it by the connected component containing the point~$s'_0$.
\end{proof}

Let~$S' \to S$ be the \'etale morphism constructed in Proposition~\ref{prop:picard-covering}.
Let~$\hS \to S$ be a Galois \'etale covering which factors through~$S'$. 
Then for any geometric point~$\hs_0$ lying over~$s'_0$ 
the restriction morphism~$\Pic_{X \times_S \hS/\hS}(\hS) \to \Pic((X \times_S \hS)_{\hs_0}) \cong \Pic(X_{s_0})$ is bijective.
Consider the exact sequence
\begin{equation*}
1 \to \uppi_1(\hS,\hs_0) \to \uppi_1(S,s_0) \to \Gal_{\hS/S} \to 1,
\end{equation*}
where~$\Gal_{\hS/S}$ is the Galois group of~$\hS/S$.
The action of~$\Gal_{\hS/S}$ on~$\hS$ induces its action on~$\Pic_{X \times_S \hS/\hS}(\hS)$, 
and via the restriction isomorphism, a $\Gal_{\hS/S}$-action on~$\Pic(X_{s_0})$,
i.e., a continuous~$\uppi_1(S,s_0)$-action on~$\Pic(X_{s_0})$. 
We call it the {\sf monodromy action}.

\begin{remark}
It is easy to check that the above definition of the monodromy action does not depend on the choice of the morphism~$S' \to S$.
As we will not need this fact, we omit a verification.
\end{remark}

We have the following immediate consequence.

\begin{corollary}
\label{cor:picard-invariant}
If~$p \colon X \to S$ is a smooth Fano fibration and~$s_0 \in S$ is a geometric point 
the restriction morphism~\eqref{eq:picard-restriction} induces an isomorphism
\begin{equation}
\label{eq:picard-isomorphism}
\Pic_{X/S}(S) \cong \Pic(X_{s_0})^{\uppi_1(S,s_0)} \cong \Pic(X_{s_0})^{\Gal_{\hS/S}} \subset \Pic(X_{s_0}).
\end{equation} 
with the subgroup of monodromy invariant line bundles in the Picard group of a geometric fiber.
\end{corollary}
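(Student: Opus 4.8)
The plan is to combine Proposition~\ref{prop:picard-covering} with Galois descent for the étale sheaf $\uPic_{X/S}$. First I would fix the connected finite étale morphism $S' \to S$ and the geometric point $s'_0$ over $s_0$ furnished by Proposition~\ref{prop:picard-covering}, enlarge $S' \to S$ to a connected Galois covering $\hS \to S$ (choosing $\hs_0$ over $s'_0$), and observe — exactly as in the paragraph preceding the statement — that the restriction map $\Pic_{X\times_S\hS/\hS}(\hS) \to \Pic(X_{s_0})$ is then a $\Gal_{\hS/S}$-equivariant bijection, where the target carries the monodromy action by definition. It therefore suffices to identify $\Pic_{X/S}(S)$ with the subgroup of $\Gal_{\hS/S}$-invariants inside $\Pic_{X\times_S\hS/\hS}(\hS)$.

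For this I would use that $\uPic_{X/S}$ is an étale sheaf on $S$ together with the fact (from the proof of Proposition~\ref{prop:picard-covering}) that $\bPic_{X/S} \to S$ is étale, so $\uPic_{X/S}$ is representable and in particular pullback along $\hS \to S$ of sections behaves well. The key step is the sheaf-theoretic equality
\begin{equation*}
\rH^0(S,\uPic_{X/S}) \;=\; \rH^0\bigl(\hS,\uPic_{X\times_S\hS/\hS}\bigr)^{\Gal_{\hS/S}},
\end{equation*}
which is just the statement that a section of an étale sheaf over $S$ is the same as a $\Gal_{\hS/S}$-invariant section over a Galois cover $\hS$ (descent along the étale cover $\hS \to S$, whose Čech groupoid has $\hS\times_S\hS \cong \bigsqcup_{\Gal_{\hS/S}}\hS$). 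Pulling the monodromy isomorphism through this identity gives the first isomorphism in~\eqref{eq:picard-isomorphism}; the second follows because $\uppi_1(\hS,\hs_0)$ acts trivially on $\Pic(X_{s_0})$ by construction of the monodromy action (it factors through $\Gal_{\hS/S}$), so the $\uppi_1(S,s_0)$-invariants and the $\Gal_{\hS/S}$-invariants coincide; the final inclusion into $\Pic(X_{s_0})$ is the injectivity of~\eqref{eq:picard-restriction} already recorded in Proposition~\ref{prop:picard-covering}.

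The only genuine point to be careful about — and the step I would expect to be the main obstacle — is checking that the monodromy action is well defined, i.e.\ independent of the auxiliary cover, so that the right-hand side of~\eqref{eq:picard-isomorphism} makes sense; but the excerpt explicitly relegates this to a remark and allows us to suppress it, so in the proof I would simply invoke the construction as given and note that compatibility of the restriction maps for $\hS/S'$ and $\hS/S$ makes the displayed invariant subgroups agree. Everything else is formal manipulation of the descent isomorphism and the equivariance of the restriction map, with no further geometric input needed beyond Proposition~\ref{prop:picard-covering}.
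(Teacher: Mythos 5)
Your proposal is correct and follows essentially the same route as the paper: injectivity of the restriction map is quoted from the proof of Proposition~\ref{prop:picard-covering}, and the image is identified via the Galois-descent identity $\Pic_{X/S}(S) \cong \Pic_{X \times_S \hS/\hS}(\hS)^{\Gal_{\hS/S}}$ combined with the bijectivity of restriction over $\hS$. The only cosmetic difference is that you justify the descent identity by the \'etale sheaf condition for the cover $\hS \to S$ (using $\hS \times_S \hS \cong \bigsqcup_{\Gal_{\hS/S}} \hS$), while the paper phrases the same fact via the free $\Gal_{\hS/S}$-action on $X \times_S \hU$ with quotient $X \times_S U$; both are the same descent statement.
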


\begin{proof}
Injectivity of the restriction morphism has been shown in the proof of Proposition~\ref{prop:picard-covering}
and the description of the image follows from the above discussion taking into account an identification
\begin{equation*}
\Pic_{X/S}(S) \cong \Pic_{X \times_S \hS/\hS}(\hS)^{\Gal_{\hS/S}},
\end{equation*}
which in its turn follows from the fact that for any \'etale morphism~$U \to S$ if~$\hU \coloneqq U \times_S \hS$
the group~$\Gal_{\hS/S}$ acts freely on~$X \times_S \hU = (X \times_S U) \times_U \hU$ and the quotient is~$X \times_S U$.
\end{proof}

\begin{corollary}
\label{cor:orbit-covering}
Let~$p \colon X \to S$ be a smooth Fano fibration.
For a geometric point~\mbox{$s_0 \in S$} and a divisor class~$h_{s_0} \in \Pic(X_{s_0})$ let~$d$ denote the length 
of the monodromy orbit~\mbox{$\uppi_1(S,s_0) \cdot h_{s_0} \subset \Pic(X_{s_0})$}.
There is a finite \'etale morphism~$f \colon S' \to S$ of degree~$d$ with connected~$S'$
and a point~\mbox{$s'_0 \in f^{-1}(s_0)$} 
such that the class~$h_{s_0}\vert_{X_{s'_0}} \in \Pic(X_{s'_0})$ is monodromy invariant.
\end{corollary}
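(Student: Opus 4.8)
The plan is to reduce the assertion to the Galois theory of the \'etale fundamental group. Fix, as in the discussion following Proposition~\ref{prop:picard-covering}, a connected Galois \'etale covering~$\hS \to S$ together with a geometric point~$\hs_0$ over~$s_0$ for which the restriction map~$\uPic_{X\times_S\hS/\hS}(\hS) \to \Pic(X_{s_0})$ is bijective; by construction the monodromy action of~$\uppi_1(S,s_0)$ on~$\Pic(X_{s_0})$ factors through the finite quotient~$\Gal_{\hS/S} = \uppi_1(S,s_0)/\uppi_1(\hS,\hs_0)$, namely it is obtained from the~$\Gal_{\hS/S}$-action on~$\uPic_{X\times_S\hS/\hS}(\hS)$. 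In particular the monodromy orbit of~$h_{s_0}$ is finite and its stabilizer~$H \coloneqq \mathrm{Stab}_{\uppi_1(S,s_0)}(h_{s_0})$ is an open subgroup containing~$\uppi_1(\hS,\hs_0)$; by the orbit--stabilizer relation~$[\uppi_1(S,s_0):H] = d$.

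First I would apply the equivalence between connected finite \'etale coverings of~$S$ equipped with a geometric point over~$s_0$ and transitive finite~$\uppi_1(S,s_0)$-sets equipped with a marked element, taking the orbit~$\uppi_1(S,s_0)/H$ of~$h_{s_0}$: this produces a connected finite \'etale morphism~$f \colon S' \to S$ of degree~$[\uppi_1(S,s_0):H] = d$ and a geometric point~$s'_0 \in f^{-1}(s_0)$ such that~$f_* \uppi_1(S',s'_0) = H$ inside~$\uppi_1(S,s_0)$. Since~$\uppi_1(\hS,\hs_0) \subseteq H$, the covering~$\hS \to S$ factors uniquely, compatibly with the chosen base points, as~$\hS \to S' \xrightarrow{f} S$; moreover~$\uppi_1(\hS,\hs_0)$, being normal in~$\uppi_1(S,s_0)$, is normal in the subgroup~$\uppi_1(S',s'_0)$, so~$\hS \to S'$ is again a connected Galois \'etale covering, with~$\Gal_{\hS/S'} = \uppi_1(S',s'_0)/\uppi_1(\hS,\hs_0)$ equal to the image of~$H$ in~$\Gal_{\hS/S}$.

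It then remains to identify the monodromy action attached to the family~$X \times_S S' \to S'$ at~$s'_0$. Since~$\hS \to S'$ is a connected Galois \'etale covering over which the restriction map~$\uPic_{X\times_S\hS/\hS}(\hS) \to \Pic(X_{s'_0}) \cong \Pic(X_{s_0})$ is bijective (it is literally the same bijection as above), this covering may be used to compute that monodromy action; hence the monodromy action of~$\uppi_1(S',s'_0) = \Gal_{\hS/S'}$ is simply the restriction of the~$\Gal_{\hS/S}$-action on~$\uPic_{X\times_S\hS/\hS}(\hS)$ to the subgroup~$\Gal_{\hS/S'}$, which, transported through the bijection, is the monodromy action of~$\uppi_1(S,s_0)$ restricted along~$f_* \colon \uppi_1(S',s'_0) \xrightarrow{\sim} H \hookrightarrow \uppi_1(S,s_0)$. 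As every element of~$H$ fixes~$h_{s_0}$ by the very definition of~$H$, the class~$h_{s_0}\vert_{X_{s'_0}}$ is monodromy invariant, which is what we wanted. I expect the only point requiring any care to be this last comparison of the monodromy actions before and after the \'etale base change~$S' \to S$; it is however immediate once both actions are read off from the single Galois covering~$\hS$ and from the commutative square relating~$\uppi_1(S',s'_0) \to \Gal_{\hS/S'}$ to~$\uppi_1(S,s_0) \to \Gal_{\hS/S}$, so that the argument is in the end a routine application of the dictionary between coverings and~$\uppi_1$-sets.
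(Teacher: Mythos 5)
Your proposal is correct and follows essentially the same route as the paper: the covering~$S' \to S$ is the one attached to the orbit (equivalently the coset space of the stabilizer of~$h_{s_0}$), its degree is the orbit length, connectedness follows from transitivity, and invariance of~$h_{s_0}\vert_{X_{s'_0}}$ holds because~$\uppi_1(S',s'_0)$ is identified with the stabilizer. The only difference is that you spell out the comparison of monodromy actions under the base change via the common Galois covering~$\hS$, a point the paper's proof leaves implicit.
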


\begin{proof}
The covering~$S' \to S$ is associated with the~$\uppi_1(S,s_0)$-set being the orbit~$\uppi_1(S,s_0) \cdot h_{s_0}$;
the covering degree is equal to the length of the orbit, and
the scheme~$S'$ is connected because the~$\uppi_1(S,s_0)$-action on this set is transitive.
The point~$s'_0 \in S'$ corresponds to the point~$h_{s_0}$ in the orbit;
then the class~$h_{s_0}$ is invariant under the action of the subgroup~$\uppi_1(S',s'_0) \subset \uppi_1(S,s_0)$
which is equal to the stabiliser of~$h_{s_0}$ in~$\uppi_1(S,s_0)$, 
hence~$h_{s_0}\vert_{X_{s'_0}}$ is monodromy invariant.
\end{proof}

The monodromy action preserves intrinsic geometric structures of~$\Pic(X_{s_0})$.

\begin{lemma}
\label{lemma:nef-cone}
Let~$p \colon X \to S$ be a smooth Fano fibration.
The monodromy action of~$\uppi_1(S,s_0)$ on~$\Pic(X_{s_0})$ 
preserves the canonical class and the nef cone in~$\Pic(X_{s_0})$.
\end{lemma}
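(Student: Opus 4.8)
The plan is to deduce both invariance statements from the fact that the monodromy action is induced by the action of a Galois group~$\Gal_{\hS/S}$ on the relative Picard group~$\Pic_{X\times_S\hS/\hS}(\hS)$ by \emph{pullback} of line bundles along scheme automorphisms of~$X\times_S\hS$ over~$\hS$. The key point is that such pullback is a ring-theoretic operation compatible with all the structures in question. Concretely, write~$\sigma \in \Gal_{\hS/S}$ for an element and let~$\tilde\sigma \colon X\times_S\hS \to X\times_S\hS$ be the induced automorphism; then the monodromy action of~$\sigma$ on~$\Pic(X_{s_0})$ is, via the restriction isomorphism of Proposition~\ref{prop:picard-covering}, the map~$L \mapsto (\tilde\sigma^*\cL)|_{X_{\hs_0}}$ for any relative line bundle~$\cL$ extending~$L$. (Here one must check~$\tilde\sigma$ permutes the fibers and matches up~$\hs_0$ with~$\sigma(\hs_0)$, but since~$\sigma$ fixes~$S$ and all geometric fibers over a fixed point of~$S$ are identified with~$X_{s_0}$, this is harmless.)

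First I would treat the canonical class. Since~$X\times_S\hS \to \hS$ is smooth, the relative canonical bundle~$\omega_{X\times_S\hS/\hS}$ is a line bundle on~$X\times_S\hS$, it restricts to~$K_{X_{s_0}}$ on the geometric fiber, and it is canonically~$\tilde\sigma$-equivariant because~$\tilde\sigma$ is an~$\hS$-automorphism (pullback of relative differentials commutes with base-preserving automorphisms). Hence~$\tilde\sigma^*\omega_{X\times_S\hS/\hS} \cong \omega_{X\times_S\hS/\hS}$, and restricting to~$X_{\hs_0}$ shows the monodromy action fixes~$K_{X_{s_0}}$.

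Next I would treat the nef cone. Nefness of a divisor class~$D$ on the smooth projective variety~$X_{s_0}$ is intrinsic: $D$ is nef if and only if~$D\cdot C \ge 0$ for every irreducible curve~$C \subset X_{s_0}$. The automorphism~$\tilde\sigma$ restricts to an isomorphism of schemes~$X_{\hs_0} \xrightiso X_{\sigma(\hs_0)}$; composing with the identifications of both fibers with~$X_{s_0}\otimes\bkk$ (over the geometric point) gives an abstract scheme automorphism of~$X_{s_0}$ whose action on~$\Pic(X_{s_0})$ is the monodromy action of~$\sigma^{-1}$ (or~$\sigma$, up to the convention). Any automorphism of a smooth projective variety sends irreducible curves to irreducible curves and preserves intersection numbers, hence preserves nefness; therefore the monodromy action preserves the nef cone. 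Equivalently, and perhaps cleaner: the relative nef cone is cut out by the numerical classes of curves in fibers of~$X/S$, a set which is monodromy-stable by construction, so its dual cone of nef classes is monodromy-stable.

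The only genuine subtlety — and thus the step I would write most carefully — is bookkeeping the identifications: making precise that the~$\Gal_{\hS/S}$-action on~$\Pic(X_{s_0})$ defined via restriction from~$\Pic_{X\times_S\hS/\hS}(\hS)$ really is realized by an automorphism of the \emph{scheme}~$X_{s_0}$ (over~$\bkk$), not merely an abstract group automorphism of its Picard lattice. Once this is in place, both claims are immediate from the functoriality of~$\omega_{-/-}$ and the elementary fact that scheme automorphisms preserve the nef cone; no numerical computation is needed.
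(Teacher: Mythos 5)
There is a genuine gap, and it is located exactly where the real content of Lemma~\ref{lemma:nef-cone} lies. Your key claim --- that the monodromy action of~$\sigma \in \Gal_{\hS/S}$ on~$\Pic(X_{s_0})$ is realized by an abstract scheme automorphism of the fiber~$X_{s_0}$ --- is false, and in fact your own construction shows why: the automorphism~$\tilde\sigma = \id_X \times \sigma$ of~$X \times_S \hS$ carries the fiber over~$\hs_0$ to the fiber over~$\sigma(\hs_0)$, but both fibers are canonically the \emph{same} scheme~$X_{s_0}$ (both are~$X \times_{S,s_0}\Spec\Omega$, since~$\sigma$ lies over~$S$), and under these canonical identifications the induced map is the identity. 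So the composite automorphism you propose is~$\id_{X_{s_0}}$, and your argument would prove that the monodromy action is trivial, which it is not. The monodromy action is not given by transporting line bundles along a fiber automorphism; it compares the \emph{values at two different points}~$\hs_0$ and~$\sigma(\hs_0)$ of the flat section of~$\uPic_{X\times_S\hS/\hS}$ extending a given class (Proposition~\ref{prop:picard-covering}). Over a base with nontrivial geometric fundamental group this action is in general not induced by any automorphism, even semilinear, of~$X_{s_0}$ (think of families of del Pezzo surfaces, or of Fano threefolds of higher Picard rank, whose monodromy group is a Weyl group far larger than what~$\Aut(X_{s_0})$ can realize); only in the special case~$S=\Spec(\kk)$ does the Galois action come from (semilinear) automorphisms of~$X_{\bkk}$.

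Consequently your treatment of the nef cone collapses, and the fallback remark --- that the nef cone is dual to ``the numerical classes of curves in fibers of~$X/S$, a set which is monodromy-stable by construction'' --- is circular: monodromy-stability of the cone of curve classes is precisely the statement to be proved, and it genuinely fails for non-Fano families (monodromy of families of K3 surfaces, for instance, does not preserve the ample cone), so no purely formal argument can suffice. The paper instead reduces, by a standard spreading-out argument, to a family over a complex curve with two marked points and invokes Wi\'sniewski's theorem~\cite[Theorem~1]{Wis} on the local constancy of the nef cone in smooth families of Fano varieties; some input of this deformation-theoretic type is unavoidable. Your argument for the canonical class, by contrast, is essentially correct once rephrased without fiber automorphisms: $\omega_{X\times_S\hS/\hS}$ is~$\Gal_{\hS/S}$-equivariant, i.e.\ the relative canonical class is a Galois-invariant section of the Picard sheaf (indeed it already comes from~$K_{X/S}$ over~$S$), so Corollary~\ref{cor:picard-invariant} gives the invariance of~$K_{X_{s_0}}$, which is exactly the paper's argument.
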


\begin{proof}
The relative canonical class~$K_{X/S}$ provides a global section of~$\Pic_{X/S}$ over~$S$,
and its restriction to the geometric fiber~$X_{s_0}$ is the canonical class~$K_{X_{s_0}}$.
By Corollary~\ref{cor:picard-invariant} we conclude that~$K_{X_{s_0}}$ is monodromy invariant.

To prove the invariance of the nef cone we need to show that if~$L$ is a line bundle on~$X/S$ 
and the restriction of~$L$ to~$X_{s_0}$ is nef then the restriction of~$L$ to~$X_{s_1}$ is nef for any other geometric point~$s_1$ of~$S$.
A standard argument reduces the general statement to the case where~$S$ is a complex curve and~$s_0$, $s_1$ are its closed points;
in this case the required result is proved in~\cite[Theorem~1]{Wis}.
\end{proof}

Recall the following standard invariants of a Fano variety~$X$ over an algebraically closed field:
\begin{itemize}
\item 
the {\sf Picard rank}~$\uprho(X) \coloneqq \rank(\Pic(X))$;
\item 
the {\sf Fano index}~$\upiota(X) \coloneqq \max\{ m \in \ZZ_{>0} \mid K_X \in m \Pic(X) \}$;
\item 
the {\sf fundamental divisor class}~$H_X \coloneqq -\frac1{\upiota(X)}K_X \in \Pic(X)$.
\end{itemize}
Furthermore, for a coherent sheaf~$\cF$ we denote by~$\upchi(\cF)$ its Euler characteristic.

\begin{corollary}
\label{cor:constant}
If~$p \colon X \to S$ is a smooth Fano fibration, the integers~$\uprho(X_s)$, $\upiota(X_s)$, $\upchi(\cO_{X_s}(H_{X_s}))$ 
are constant as functions of geometric point~$s \in S$.
Moreover, there is a unique relative divisor class~$H_X \in \Pic_{X/S}(S)$
that restricts to the fundamental divisor class of each geometric fiber.
\end{corollary}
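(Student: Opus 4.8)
The plan is to deduce everything from a strengthening of Proposition~\ref{prop:picard-covering}: after the finite \'etale base change $f\colon S' \to S$ produced there, the restriction map
\[
\uPic_{X \times_S S'/S'}(S') \longrightarrow \Pic\big((X \times_S S')_{s'}\big)
\]
is an isomorphism for \emph{every} geometric point $s' \in S'$, not only for the marked point $s'_0$. Indeed, as recalled in the proof of Proposition~\ref{prop:picard-covering}, $\bPic_{X/S}$ is the disjoint union of its open and closed subschemes $\bPic^{\varphi}_{X/S}$, indexed by the Hilbert polynomials with respect to the relatively ample anticanonical class, and each of these is finite over $S$ and \'etale over $S$ (since $\bPic_{X/S} \to S$ is); hence every connected component of $\bPic_{X/S}$, and likewise of $\bPic_{X\times_S S'/S'}$, is finite \'etale over its base. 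Now if $C'$ is a connected component of $\bPic_{X\times_S S'/S'}$, say of degree $d$ over $S'$, it meets the fibre $\Pic((X\times_S S')_{s'_0})$ in exactly $d$ points; by the surjectivity asserted in Proposition~\ref{prop:picard-covering} each of these points extends to a section of $\bPic_{X\times_S S'/S'}$ over $S'$, so the connected component through it admits a section and therefore has degree $1$ over the connected base $S'$; as distinct connected components are disjoint, this forces $d = 1$. Thus $\bPic_{X\times_S S'/S'}$ is a disjoint union of copies of $S'$, and at every geometric point $s'$ the restriction map identifies both $\uPic_{X\times_S S'/S'}(S')$ and $\Pic((X\times_S S')_{s'})$ with the set of these components.

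Granting this, constancy of $\uprho$ and $\upiota$ follows at once. For $\uprho$: the rank of $\Pic((X\times_S S')_{s'})$ equals $\rank\uPic_{X\times_S S'/S'}(S')$ for all $s'$, and since $f$ is surjective and geometric fibres of $X/S$ and of $X\times_S S'/S'$ over corresponding points coincide, $\uprho(X_s)$ is constant on $S$. For $\upiota$: the relative anticanonical class defines an element of $\uPic_{X/S}(S)$ whose pullback to $S'$ restricts to $-K_{(X\times_S S')_{s'}}$ at every $s'$; transporting the divisibility question through the isomorphism above, $\upiota(X_{s'})$ equals the largest $m$ with $-K_{X\times_S S'/S'} \in m\cdot\uPic_{X\times_S S'/S'}(S')$, which is independent of $s'$, so $\upiota(X_s)$ is constant on $S$. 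Write $\upiota$ for this common value.

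Next I would produce the relative fundamental class and descend it to $S$. Fix the geometric point $s_0 \in S$. By Lemma~\ref{lemma:nef-cone} the monodromy action of $\uppi_1(S,s_0)$ on $\Pic(X_{s_0})$ fixes $K_{X_{s_0}}$, so for $g \in \uppi_1(S,s_0)$ one has $\upiota\cdot(g\cdot H_{X_{s_0}}) = g\cdot(\upiota\, H_{X_{s_0}}) = g\cdot(-K_{X_{s_0}}) = -K_{X_{s_0}} = \upiota\cdot H_{X_{s_0}}$; as $\Pic(X_{s_0})$ is torsion free this gives $g\cdot H_{X_{s_0}} = H_{X_{s_0}}$, so $H_{X_{s_0}}$ is monodromy invariant. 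By Corollary~\ref{cor:picard-invariant} it is the restriction of a unique class $H_X \in \uPic_{X/S}(S)$. Since $\upiota\, H_X$ and $-K_{X/S}$ have the same restriction to $X_{s_0}$, the injectivity of the restriction map~\eqref{eq:picard-restriction} gives $\upiota\, H_X = -K_{X/S}$; restricting this to an arbitrary geometric fibre $X_s$ and using that $\Pic(X_s)$ is torsion free yields $H_X\vert_{X_s} = -\tfrac{1}{\upiota}K_{X_s} = H_{X_s}$, as desired. Uniqueness of $H_X$ is again the injectivity of~\eqref{eq:picard-restriction}. Finally, $\upchi(\cO_{X_s}(H_{X_s}))$ is the value at $0$ of the Hilbert polynomial of $\cO_{X_s}(H_{X_s})$ with respect to the anticanonical polarization, and this Hilbert polynomial is locally constant on $\bPic_{X/S}$ by the very definition of the strata $\bPic^{\varphi}_{X/S}$; since $H_X$ defines a section $S \to \bPic_{X/S}$ with connected image, it is constant along that image, whence $\upchi(\cO_{X_s}(H_{X_s}))$ is constant on $S$.

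The main obstacle I anticipate is the first step, namely upgrading Proposition~\ref{prop:picard-covering} to an isomorphism at \emph{every} geometric point of $S'$: this is the only place where one genuinely uses the structure of $\bPic_{X/S}$ (its decomposition into finite \'etale strata) rather than formal properties of the Picard functor. Once it is in hand, the remaining assertions are routine manipulations with torsion freeness of the Picard lattice, together with Corollary~\ref{cor:picard-invariant} and Lemma~\ref{lemma:nef-cone}.
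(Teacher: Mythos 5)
Your proof is correct, and its skeleton is the same as the paper's: pass to the finite \'etale covering $S' \to S$ of Proposition~\ref{prop:picard-covering}, transport the (anti)canonical class through the resulting identifications of Picard groups, and read off $\uprho$, $\upiota$, $H_X$, and $\upchi$. The genuine difference is that you make explicit a step the paper only tacitly invokes: the paper writes a chain of restriction isomorphisms at \emph{two} arbitrary geometric points $s'_0,s'_1$ of $S'$, whereas Proposition~\ref{prop:picard-covering} as stated gives surjectivity only at the marked point; your strengthening --- every connected component of $\bPic_{X\times_S S'/S'}$ is finite \'etale over $S'$ and, admitting a section (whose image is open and closed), has degree $1$, so the restriction map is an isomorphism at \emph{every} geometric point --- is a clean and correct justification of exactly that assertion. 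The remaining divergences are cosmetic: you produce $H_X$ by showing $H_{X_{s_0}}$ is monodromy invariant (using invariance of $K_{X_{s_0}}$ from Lemma~\ref{lemma:nef-cone} plus torsion-freeness of the Picard group of a Fano variety, a standard fact coming from simple connectedness, which the paper also uses implicitly when it says ``the fundamental divisors correspond to each other'') and then descend via Corollary~\ref{cor:picard-invariant} and injectivity of~\eqref{eq:picard-restriction}; and you get constancy of $\upchi(\cO_{X_s}(H_{X_s}))$ from local constancy of the Hilbert polynomial on $\bPic_{X/S}$ along the connected image of the section defined by $H_X$, where the paper appeals directly to flatness of $X \to S$. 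Both routes are valid; yours buys a fully spelled-out argument at the one place where the paper is terse, at the cost of a slightly longer detour through the component structure of the Picard scheme.
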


The relative divisor class~$H_X \in \Pic_{X/S}(S)$ is called the {\sf fundamental class} of the Fano fibration.

\begin{proof}
Let~$s_0,s_1 \in S$ be two geometric points.
Applying Proposition~\ref{prop:picard-covering} we can find a finite \'etale morphism~$S' \to S$ with connected~$S'$
such that for points~$s'_0$ and~$s'_1$ over~$s_0$ and~$s_1$, respectively, we have a chain of group isomorphisms
\begin{equation*}
\Pic(X_{s_1}) \cong \Pic(X_{s'_1}) \cong \Pic_{X \times_S S'/S'}(S') \cong \Pic(X_{s'_0}) \cong \Pic(X_{s_0}),
\end{equation*}
where the middle isomorphisms are given by the restriction maps.
We conclude from this that the Picard ranks of~$X_{s_1}$ and~$X_{s_0}$ are the same.
Moreover, under the above isomorphisms the canonical classes correspond to each other 
(because both correspond to the relative canonical class of~$X/S$),
therefore~$\upiota(X_{s_1}) = \upiota(X_{s_0})$ 
and the fundamental divisors correspond to each other.
Finally, the Euler characteristics of these divisors are determined by the Hilbert polynomial of the anticanonical classes,
which agree because the morphism~$X \to S$ is flat.
\end{proof}

\subsection{Brauer group and twisted sheaves}
\label{ss:twisted-sheaves}

Recall that the \emph{Brauer group}~$\Br(S)$ of a scheme~$S$ is defined 
as the group of Morita-equivalence classes of Azumaya algebras on~$S$
with the operation of tensor product.
This group is closely related to the torsion subgroup of~$\rH^2_\et(S,\Gm)$,
which is known as the \emph{cohomological Brauer group}~$\Br'(S)$;
in fact, there is a natural injective morphism
\begin{equation*}
\Br(S) \hookrightarrow \Br'(S),
\end{equation*}
and for quasiprojective schemes the two groups coincide, see~\cite{dJ}.
We will not need this result; however we will widely use the language of twisted sheaves adopted in~\cite{dJ}
(see also~\cite{Cal,Lieb} for details). 
We remind the basic definitions in this subsection.

Let~$S$ be a scheme and let~$\upbeta \in \rH^2_\et(S, \Gm)$
be an \'etale cohomology class.
Assume for simplicity the class~$\upbeta$ can be represented by a \v{C}ech cocycle~$\beta \in \Gamma(U \times_S U \times_S U, \Gm)$ 
in an \'etale cover~\mbox{$U \to S$}.
Then a {\sf $(U,\beta)$-twisted (quasi)coherent sheaf on~$S$} is defined as
a (quasi)coherent sheaf~$\cF_U$ on~$U$ together with an isomorphism
\begin{equation*}
\varphi \colon \pr_1^*\cF_U \xrightiso \pr_2^*\cF_U,
\end{equation*}
on~$U \times_S U$, where $\pr_1, \pr_2 \colon U \times_S U \to U$ are the projections, 
satisfying the condition
\begin{equation}
\label{eq:cocycle-condition}
\pr_{1,2}^*\varphi \circ \pr_{2,3}^*\varphi \circ \pr_{1,3}^*\varphi^{-1} = \beta \cdot \id,
\end{equation}
where $\pr_{i,j} \colon U \times_S U \times_S U \to U \times_S U$ are the projections to the product of $i$-th and $j$-th factors.

If~$(\cF_U,\varphi)$ is a $(U,\beta)$-twisted quasicoherent sheaf 
and $\beta' \in \Gamma(U \times_S U \times_S U, \Gm)$ is another representative of the same cohomology class, 
i.e., $\beta^{-1} \cdot \beta' = \partial \gamma$ for some~$\gamma \in \Gamma(U \times_S U, \Gm)$, 
modifying~$\varphi$ by~$\gamma$, we obtain a $(U,\beta')$-twisted quasicoherent sheaf~$(\cF_U,\gamma \cdot\varphi)$.
Similarly, if~$U' \to U$ is a refining of the cover~$U \to S$ and~$\beta'$ is the pullback to~$U'$ of the \v{C}ech cocycle~$\beta$,
then the pullback of~$(\cF_U,\varphi)$ to~$U'$ is a~$(U',\beta')$-twisted quasicoherent sheaf.

We define a {\sf $\upbeta$-twisted (quasi)coherent sheaf on~$S$} as an equivalence class of~$(U,\beta)$-twisted (quasi)coherent sheaves,
where~$U \to S$ is an \'etale cover and~$\beta$ is a \v{C}ech cocyle representing~$\upbeta$,
under the two above operations (modifying the \v{C}ech cocycle by a coboundary and passing to a refinement of the cover).
To define a morphism of $\upbeta$-twisted quasicoherent sheaves we may assume they are represented 
by~$(U,\beta)$-twisted quasicoherent sheaves~$(\cF_{1U},\varphi_1)$ and~$(\cF_{2U},\varphi_2)$ for the same~$U$ and~$\beta$;
then a morphism is given by a morphism~$f \colon \cF_{1U} \to \cF_{2U}$ of quasicoherent sheaves on~$U$ 
such that the equality~$\varphi_2 \circ \pr_1^*(f) = \pr_2^*(f) \circ \varphi_1$ holds on~$U \times_S U$.
We will denote by~$\Qcoh(S,\upbeta)$ and~$\Coh(S,\upbeta)$ the abelian categories of $\upbeta$-twisted (quasi)coherent sheaves on~$S$ 
and by~$\bD(S,\upbeta)$ the bounded derived category of complexes of $\upbeta$-twisted quasicoherent sheaves with coherent cohomology.

In a contrast to the usual category of coherent sheaves, the category~$\Coh(X,\upbeta)$ does not have a monoidal structure,
but there is a replacement for it described in the following lemma.

\begin{lemma}[{\cite[Proposition~1.2.10]{Cal}}]
\label{lemma:twisted-sd-ld}
If~$\cF'$ is a $\upbeta'$-twisted sheaf and~$\cF''$ is a $\upbeta''$-twisted sheaf 
then~$\cF' \otimes \cF''$ is~$\upbeta' \cdot \upbeta''$-twisted and~$\cHom(\cF', \cF'')$ is~$(\upbeta')^{-1} \cdot \upbeta''$-twisted.
In particular, if~$\cF$ is a $\upbeta$-twisted sheaf, 
then~$\cHom(\cF,\cO)$ is~$\upbeta^{-1}$-twisted and~$\Sym^d(\cF)$ and~$\wedge^d(\cF)$ are $\upbeta^d$-twisted sheaves.
\end{lemma}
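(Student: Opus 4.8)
The plan is to verify every assertion by a direct computation with \v{C}ech representatives, following the definitions recalled above. First I would arrange a common cover: after replacing the \'etale covers on which $\cF'$ and $\cF''$ are given by a common refinement $U \to S$ and modifying the twisting cocycles by coboundaries, I may assume $\cF'$ is represented by a $(U,\beta')$-twisted sheaf $(\cF'_U,\varphi')$ and $\cF''$ by a $(U,\beta'')$-twisted sheaf $(\cF''_U,\varphi'')$, where $\beta',\beta'' \in \Gamma(U\times_S U\times_S U, \Gm)$ represent the classes $\upbeta'$ and $\upbeta''$. This is harmless since, by definition, $\Coh(S,\upbeta')$ and $\Coh(S,\upbeta'')$ consist of equivalence classes under precisely these operations.

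For the tensor product I would equip $\cF'_U \otimes_{\cO_U} \cF''_U$ with the gluing isomorphism $\varphi' \otimes \varphi''$, using the canonical identifications $\pr_i^*(\cF'_U \otimes_{\cO_U} \cF''_U) \cong \pr_i^*\cF'_U \otimes_{\cO_U} \pr_i^*\cF''_U$. Because formation of tensor products commutes with the pullbacks $\pr_{1,2}^*$, $\pr_{2,3}^*$, $\pr_{1,3}^*$, the left-hand side of the cocycle condition~\eqref{eq:cocycle-condition} for $\varphi' \otimes \varphi''$ is the tensor product of the corresponding expressions for $\varphi'$ and $\varphi''$, hence equals $(\beta'\cdot\id) \otimes (\beta''\cdot\id) = (\beta'\beta'')\cdot\id$; so $\cF'_U \otimes_{\cO_U} \cF''_U$ is $(U,\beta'\beta'')$-twisted and $\cF'\otimes\cF''$ is $\upbeta'\cdot\upbeta''$-twisted. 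Iterating, $\cF^{\otimes d}$ is $\upbeta^d$-twisted whenever $\cF$ is $\upbeta$-twisted. For $\Sym^d$ and $\wedge^d$ I would use that these functors of $\cO_U$-modules commute with pullback, so $\varphi$ induces gluing isomorphisms $\Sym^d\varphi$ and $\wedge^d\varphi$ on $\Sym^d(\cF_U)$ and $\wedge^d(\cF_U)$; applying the functor $\Sym^d$ (resp.\ $\wedge^d$) to the cocycle identity for $\varphi$, and noting that $\Sym^d$ (resp.\ $\wedge^d$) of multiplication by the scalar $\beta$ is multiplication by $\beta^d$, shows these gluing data satisfy~\eqref{eq:cocycle-condition} with cocycle $\beta^d$. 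Hence $\Sym^d(\cF)$ and $\wedge^d(\cF)$ are $\upbeta^d$-twisted.

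For $\cHom$ I would equip $\cHom_{\cO_U}(\cF'_U, \cF''_U)$ with the conjugation gluing $\chi \colon f \mapsto \varphi'' \circ f \circ (\varphi')^{-1}$, using the identification $\pr_i^*\cHom(\cF'_U,\cF''_U) \cong \cHom(\pr_i^*\cF'_U, \pr_i^*\cF''_U)$ (valid as $\cF'_U$ is of finite presentation), whose inverse is $g \mapsto (\varphi'')^{-1} \circ g \circ \varphi'$. Unwinding the composite $\pr_{1,2}^*\chi \circ \pr_{2,3}^*\chi \circ \pr_{1,3}^*\chi^{-1}$ and regrouping the $\varphi''$-factors and the $\varphi'$-factors separately --- the former reproducing the left-hand side of~\eqref{eq:cocycle-condition} for $\varphi''$, which is $\beta''\cdot\id$, and the latter its inverse for $\varphi'$, which is $(\beta')^{-1}\cdot\id$ --- shows that this composite is multiplication by $(\beta')^{-1}\beta''$. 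Thus $\cHom(\cF',\cF'')$ is $(\upbeta')^{-1}\cdot\upbeta''$-twisted, and the special cases follow: $\cHom(\cF,\cO)$ by taking $\cF'' = \cO$ with $\upbeta'' = 1$, and the $\Sym^d,\wedge^d$ statements from the previous paragraph. Finally I would observe that all the gluing data produced above are compatible with further refinement of $U$ and with coboundary modifications of $\beta'$ and $\beta''$, so the resulting twisted sheaves are independent of the chosen representatives; this check is routine and I would only indicate it.

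The only place requiring genuine care is the $\cHom$ computation: since its gluing isomorphism is a two-sided conjugation rather than a tensor, one must keep precise track of which pullback of $\varphi'$ and of $\varphi''$ occupies each slot, and in particular confirm that the $\varphi'$-contributions assemble into $(\beta')^{-1}$ rather than $\beta'$ --- this sign is exactly why the Brauer class of $\cHom$ involves the inverse of $\upbeta'$. Everything else is a formal consequence of the definitions.
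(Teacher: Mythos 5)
Your proof is correct: the paper gives no argument of its own for this lemma (it simply cites C\u{a}ld\u{a}raru, Proposition~1.2.10), and your \v{C}ech-cocycle verification --- common refinement, gluing $\varphi'\otimes\varphi''$ for the tensor product, conjugation gluing for $\cHom$ with the resulting $(\beta')^{-1}\beta''$, and functoriality of $\Sym^d$, $\wedge^d$ with scalars scaling by $d$-th powers --- is exactly the standard argument given in that reference. You also correctly flag the two genuine points of care, namely the finite-presentation hypothesis needed to commute $\cHom$ with the (flat, \'etale) pullbacks and the sign bookkeeping in the conjugation cocycle, so nothing further is needed.
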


The following corollary is standard.

\begin{corollary}
\label{cor:twisted-rank}
If~$\cF$ is a $\upbeta$-twisted vector bundle of rank~$r$ then $\upbeta^r = 1$.
\end{corollary}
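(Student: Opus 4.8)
The plan is to reduce the claim to a statement about twisted line bundles, using the determinant. Since $\cF$ is locally free of rank $r$, its top exterior power $\wedge^r(\cF)$ is locally free of rank one, and by Lemma~\ref{lemma:twisted-sd-ld} it is a $\upbeta^r$-twisted sheaf; hence it is a $\upbeta^r$-twisted line bundle on $S$. Therefore it is enough to prove that a scheme admitting a twisted line bundle with twist $\upbeta^r$ must have $\upbeta^r = 1$ in $\rH^2_\et(S,\Gm)$, which in turn gives $\upbeta^r = 1$ in $\Br(S)$ by injectivity of the map $\Br(S) \hookrightarrow \Br'(S)$ recalled in~\S\ref{ss:twisted-sheaves}.

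For this, I would represent the $\upbeta^r$-twisted line bundle by a $(U,\beta)$-twisted line bundle $(\cL_U,\varphi)$, where $U \to S$ is an \'etale cover and $\beta \in \Gamma(U \times_S U \times_S U, \Gm)$ is a \v{C}ech cocycle representing $\upbeta^r$. By the very definition of twisted sheaves (passing to a refinement of the cover, as in~\S\ref{ss:twisted-sheaves}) I may replace $U$ by an \'etale refinement over which the line bundle $\cL_U$ is trivial, so that $\cL_U \cong \cO_U$. Then $\varphi \colon \pr_1^*\cO_U \xrightiso \pr_2^*\cO_U$ is multiplication by an invertible section $g \in \Gamma(U \times_S U, \Gm)$, and the cocycle condition~\eqref{eq:cocycle-condition} becomes
\[
\pr_{1,2}^*g \cdot \pr_{2,3}^*g \cdot (\pr_{1,3}^*g)^{-1} = \beta
\]
in $\Gamma(U \times_S U \times_S U, \Gm)$; that is, $\beta$ is a \v{C}ech coboundary on the cover $U \to S$. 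Since $\rH^2_\et(S,\Gm)$ is the colimit of the \v{C}ech cohomology groups $\check{\rH}^2(U/S,\Gm)$ over all \'etale covers, this means that the class $\upbeta^r$ represented by $\beta$ is trivial, completing the argument.

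The only point requiring a little care is the bookkeeping between the \'etale covers involved — the cover implicit in the definition of $\cF$, the cover on which the \v{C}ech cocycle for $\upbeta^r$ is defined, and the refinement trivializing $\cL_U$ — together with the \v{C}ech-theoretic description of $\rH^2_\et(-,\Gm)$; none of this presents a genuine difficulty, which is why the corollary is labeled as standard. Alternatively, one can argue via $\cHom(\cF,\cF)$, which by Lemma~\ref{lemma:twisted-sd-ld} is an honest Azumaya algebra of degree $r$ whose Brauer class is $\upbeta$ (up to inversion), and invoke the classical fact that the order of a Brauer class divides the degree of any Azumaya algebra representing it; but the determinant argument above is more elementary and self-contained.
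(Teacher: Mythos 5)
Your proof is correct and is essentially the paper's own argument: the paper likewise first treats the rank-one case by trivializing $\cF_U$ after refining the cover and reading off the cocycle condition~\eqref{eq:cocycle-condition} as $\beta = \partial\varphi$, and then reduces the general case to it via $\wedge^r(\cF)$ and Lemma~\ref{lemma:twisted-sd-ld}. The only difference is the order of the two steps (and your optional Azumaya-algebra aside), which is immaterial.
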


\begin{proof}
First, assume~$r = 1$.
Let~$(\cF_U,\varphi)$ be an~$(U,\beta)$-twisted sheaf representing~$\cF$.
Refining the cover~$U$ if necessary, we may assume that~$\cF_U \cong \cO_U$.
Then~$\varphi$ is an invertible function on~$U \times_S U$, 
and the condition~\eqref{eq:cocycle-condition} means that~$\beta = \partial \varphi$,
hence the cohomology class of~$\beta$ is trivial.

Now for any~$r \ge 1$, the line bundle~$\wedge^r(\cF)$ is~$\upbeta^r$-twisted by Lemma~\ref{lemma:twisted-sd-ld},
hence~$\upbeta^r = 1$ by the first part of the lemma.
\end{proof}

Twisted sheaves are functorial for pullbacks and pushforwards if the twists are compatible.

\begin{lemma}[{\cite[\S\S2.2--2.3]{Cal}}]
\label{lemma:twisted-pb-pf}
Let~$f \colon T \to S$ be a morphism of schemes.
If~\mbox{$\upbeta \in \rH^2_\et(S,\Gm)$} there is an adjoint pair of functors
\begin{equation*}
f^* \colon \Qcoh(S, \upbeta) \to \Qcoh(T, f^*\upbeta)
\qquad\text{and}\qquad 
f_* \colon \Qcoh(T, f^*\upbeta) \to \Qcoh(S, \upbeta)
\end{equation*}
Under appropriate finiteness conditions \textup(properness for~$f_*$, flatness for~$f^*$\textup)
these functors extend to an adjoint pair of derived functors between derived categories~$\bD(S, \upbeta)$ and~$\bD(T,f^*\upbeta)$
such that the pullback functor is monoidal and the pushforward functor satisfies the projection formula.
\end{lemma}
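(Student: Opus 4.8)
The plan is to argue at the level of \v{C}ech cocycles, reducing each assertion to the corresponding classical statement for an auxiliary morphism of schemes and then checking its compatibility with the twisting data. First I would fix an \'etale cover~$\pi\colon U\to S$ together with a \v{C}ech $2$-cocycle~$\beta\in\Gamma(U\times_SU\times_SU,\Gm)$ representing~$\upbeta$ (possible after refining~$U$, as assumed in~\S\ref{ss:twisted-sheaves}) and set~$V\coloneqq U\times_ST$. The canonical identifications~$V\times_TV=U\times_SU\times_ST$ and~$V\times_TV\times_TV=U\times_SU\times_SU\times_ST$ show that the pullback~$f^*\beta$ of~$\beta$ is a \v{C}ech $2$-cocycle for the \'etale cover~$V\to T$ representing~$f^*\upbeta$. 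The structural point that lets us work with these two fixed covers, rather than with colimits over refinements, is that fpqc descent for quasicoherent sheaves is effective: $\Qcoh(S,\upbeta)$ is equivalent to the category of~$(U,\beta)$-twisted quasicoherent sheaves, and~$\Qcoh(T,f^*\upbeta)$ to that of~$(V,f^*\beta)$-twisted ones.

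Next I would construct the functors on the abelian level. Write~$g\colon V\to U$ and~$\pi_T\colon V\to T$ for the two projections, so that~$\pi\circ g=f\circ\pi_T$, and let~$g^{(2)}\colon V\times_TV\to U\times_SU$ and~$g^{(3)}\colon V\times_TV\times_TV\to U\times_SU\times_SU$ be the induced maps; all of~$g$,~$g^{(2)}$,~$g^{(3)}$ are base changes of~$f$. Given a~$(U,\beta)$-twisted sheaf~$(\cF_U,\varphi)$ I set~$f^*(\cF_U,\varphi)\coloneqq\bigl(g^*\cF_U,\,(g^{(2)})^*\varphi\bigr)$; since~$\pr_i\circ g^{(2)}=g\circ\pr_i$ the map~$(g^{(2)})^*\varphi$ is an isomorphism~$\pr_1^*g^*\cF_U\to\pr_2^*g^*\cF_U$, and applying~$(g^{(3)})^*$ to~\eqref{eq:cocycle-condition} for~$\varphi$ gives~\eqref{eq:cocycle-condition} for~$(g^{(2)})^*\varphi$ with the scalar~$f^*\beta$. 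Dually, for a~$(V,f^*\beta)$-twisted sheaf~$(\cG_V,\psi)$ on~$T$ I set~$f_*(\cG_V,\psi)\coloneqq\bigl(g_*\cG_V,\,(g^{(2)})_*\psi\bigr)$, where~$(g^{(2)})_*\psi$ becomes an isomorphism~$\pr_1^*g_*\cG_V\to\pr_2^*g_*\cG_V$ on~$U\times_SU$ via the flat base-change isomorphisms~$\pr_i^*g_*\cong(g^{(2)})_*\pr_i^*$ (the relevant square is cartesian with~$\pr_i\colon U\times_SU\to U$ \'etale), and the twisted cocycle condition descends because~$(g^{(3)})_*$ is~$\cO$-linear, so the scalar~$f^*\beta$ in~\eqref{eq:cocycle-condition} pushes down to~$\beta$. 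Both operations are functorial and, comparing covers again by descent, independent of the choices of~$\beta$ and of the cover; the adjunction~$f^*\dashv f_*$ is inherited from~$g^*\dashv g_*$ once one checks that its unit and counit are morphisms of twisted sheaves, which amounts to functoriality of the base-change isomorphisms. Monoidality of~$f^*$ and the projection formula~$f_*(\cG\otimes f^*\cE)\cong f_*\cG\otimes\cE$ for untwisted~$\cE$ reduce in the same way to their analogues for~$g$, using that tensoring by pullbacks of untwisted sheaves is~$\cO$-linear and hence commutes with every twisting isomorphism (cf.\ Lemma~\ref{lemma:twisted-sd-ld}).

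Finally I would pass to derived categories under the stated hypotheses. The categories~$\Qcoh(S,\upbeta)$ and~$\Qcoh(T,f^*\upbeta)$ are Grothendieck abelian, hence have enough injectives, so~$\bR f_*$ exists; if~$f$ is proper then so is~$g$, and Grothendieck's coherence and finite-cohomological-dimension theorems for~$\bR g_*$ show that~$\bR f_*$ carries~$\bD(T,f^*\upbeta)$ into~$\bD(S,\upbeta)$. If~$f$ is flat then so is~$g$, hence~$g^*$ is exact and therefore so is~$f^*=\bL f^*$, which passes directly to~$\bD(S,\upbeta)\to\bD(T,f^*\upbeta)$, coherence of cohomology being preserved as~$f$ is of finite type. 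The derived adjunction~$\bL f^*\dashv\bR f_*$, the monoidality of~$\bL f^*$, and the derived projection formula then follow formally from the abelian-level statements and the usual resolution arguments, carried out on~$V$ and descended. I expect the main obstacle to be organizational rather than conceptual: no single step is hard, but assembling the claims requires careful~$2$-categorical bookkeeping with the various base-change isomorphisms, the only genuinely non-formal inputs being effectivity of fpqc descent and the existence of a \v{C}ech representative of~$\upbeta$ (the latter supplied by the assumption in~\S\ref{ss:twisted-sheaves}). Alternatively, one may identify~$\upbeta$-twisted sheaves with weight-one quasicoherent sheaves on the~$\Gm$-gerbe attached to~$\upbeta$, observe that its base change along~$f$ is the gerbe attached to~$f^*\upbeta$, and read off every assertion from the six-functor formalism on algebraic stacks.
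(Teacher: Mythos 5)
Your argument is correct and is essentially the standard one: the paper gives no proof of this lemma, citing Căldăraru's thesis, and the Čech-cocycle/descent construction you outline (fixing a cover $U\to S$, base-changing to $V=U\times_S T$, defining $f^*$ and $f_*$ via $g\colon V\to U$ together with flat base change along the étale projections, then deriving under properness/flatness) is precisely the argument carried out there, with the gerbe description as the usual alternative. The only cosmetic remark is that the projection formula should be stated for compatibly twisted objects (as in Lemma~\ref{lemma:twisted-sd-ld}), not only untwisted ones, but your reduction to the untwisted case on the cover handles this verbatim.
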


The notion introduced below may seem not very natural, 
but it appears often when dealing with Severi--Brauer varieties (see~\S\ref{ss:relative-divisors}) 
and universal sheaves on moduli spaces (see~\S\ref{ss:moduli-definition}).

\begin{definition}
If~$\upbeta \in \rH^2_\et(S,\Gm)$ we define a {\sf relative $p^*(\upbeta)$-twisted vector bundle on~$X/S$} 
as an equivalence class of~$(X \times_S U, p^*(\beta))$-twisted vectot bundles~$(\cF_{X \times_S U},\varphi)$
(where~$U \to S$ is an \'etale cover and~$\beta$ is a \v{C}ech cocyle representing~$\upbeta$)
with respect to the equivalence relation coming from refining~$U$ (the cover of~$S$) and replacing~$\beta$ by a coboundary.
Note that we \emph{do not allow} to refine~$X \times_S U$ by covers which are not pullbacks of covers of~$U$.
\end{definition}

First, consider the case of line bundles.
We denote by~$\Pic^\upbeta(X/S)$ the set of isomorphism classes of relative $p^*(\upbeta)$-twisted line bundles on~$X/S$ and set
\begin{equation*}
\Pictw(X/S) \coloneqq \bigoplus_{\upbeta \in \rH^2_\et(S,\Gm)} \Pic^\upbeta(X/S)
\end{equation*}
to be the set of isomorphism classes of relative twisted line bundles on~$X/S$,
where the twist is allowed to vary in the group~$\rH^2_\et(S,\Gm)$.

If~$\cL_1$ and~$\cL_2$ are relative~$p^*(\upbeta_1)$ and~$p^*(\upbeta_2)$-twisted line bundles on~$X/S$ 
then as in Lemma~\ref{lemma:twisted-sd-ld} one can define~$\cL_1 \otimes \cL_2$ 
as a relative $p^*(\upbeta_1 \cdot \upbeta_2)$-twisted line bundle on~$X/S$.
This operation endows the set~$\Pictw(X/S)$ with a commutative group structure.

If~$\cL$ is an untwisted line bundle on~$X$, 
it can be considered as a relative twisted line bundle on~$X/S$ with the trivial twist.
This defines an injective morphism~$\Pic(X) \to \Pictw(X/S)$ which we call {\sf the canonical embedding}.
Composing it with the pullback morphism~$p^* \colon \Pic(S) \to \Pic(X)$, 
we obtain an injective morphism~$\Pic(S) \to \Pictw(X/S)$.

\begin{lemma}
\label{lemma:picard-twisted}
If the morphism~$p \colon X \to S$ is smooth and proper with connected fibers then
there is a natural isomorphism~$\Pictw(X/S)/\Pic(S) \cong \Pic_{X/S}(S)$ of abelian groups.
\end{lemma}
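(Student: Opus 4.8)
The plan is to build the isomorphism $\Pictw(X/S)/\Pic(S) \xrightiso \Pic_{X/S}(S)$ by constructing a surjective group homomorphism $\Pictw(X/S) \to \Pic_{X/S}(S)$ and identifying its kernel with $\Pic(S)$ (embedded via the canonical embedding composed with $p^*$). First I would define the map: given a relative $p^*(\upbeta)$-twisted line bundle $\cL$ on $X/S$, represented by a $(X\times_S U, p^*(\beta))$-twisted line bundle $(\cL_{X\times_S U}, \varphi)$, note that $\cL_{X\times_S U}$ is an honest line bundle on $X\times_S U$, hence defines an element of $\Pic(X\times_S U) = \Pic_{X/S}(U)$ in the obvious way (it is a section over $U$ of the étale presheaf $U' \mapsto \Pic(X\times_S U')$, whose sheafification is $\uPic_{X/S}$). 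The datum $\varphi$ provides exactly a descent isomorphism between the two pullbacks of this section to $U\times_S U$, i.e., shows that the two images of $[\cL_{X\times_S U}] \in \uPic_{X/S}(U)$ under the projections $U\times_S U \rightrightarrows U$ agree (the cocycle $p^*(\beta)$ acts trivially on $\Pic$, being a scalar); so by the sheaf axiom for $\uPic_{X/S}$ we get a well-defined global section in $\Pic_{X/S}(S) = \rH^0(S,\uPic_{X/S})$. One checks this is independent of the choice of representative (refining $U$, changing $\beta$ by a coboundary) and is a group homomorphism, using that $\otimes$ of twisted line bundles corresponds to the group law on both sides.

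Next I would check surjectivity. A class $\xi \in \Pic_{X/S}(S) = \rH^0(S,\uPic_{X/S})$ is, by definition of sheafification, represented on some étale cover $U \to S$ by a line bundle $\cL_U$ on $X\times_S U$ whose two pullbacks to $X\times_S(U\times_S U)$ become isomorphic. Choosing such an isomorphism $\varphi$, the failure of $\varphi$ to satisfy the strict cocycle condition on the triple overlap is measured (since $p$ has connected fibers, so $p_*\cO = \cO$ after base change and automorphisms of $\varphi$ are scalars pulled back from the base) by a Čech $2$-cocycle $\beta$ on $U\times_S U\times_S U$ valued in $\Gm$ of $S$, i.e., by a class $\upbeta \in \rH^2_\et(S,\Gm)$ pulled back to $X$; after adjusting $\varphi$ we may assume it equals $p^*(\beta)$. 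This is precisely a relative $p^*(\upbeta)$-twisted line bundle on $X/S$ mapping to $\xi$. The key point is that the obstruction to descending $\cL_U$ lives in $\rH^2$ of the base, not of $X$ — this uses the connectedness of the fibers and the fact that $p$ is smooth and proper so that $p_*\Gm = \Gm$ (no nonconstant units on fibers) holds after any base change.

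Finally I would compute the kernel. Suppose a relative $p^*(\upbeta)$-twisted line bundle $(\cL_U,\varphi)$ maps to $0 \in \Pic_{X/S}(S)$. Then, after refining $U$, the line bundle $\cL_U$ on $X\times_S U$ is isomorphic to a pullback $q^*\cM_U$ of a line bundle $\cM_U$ on $U$ (again using $p_*\cO = \cO$ on fibers, which forces a relatively trivial line bundle to come from the base — here one may need to pass to a further cover so that $\cL_U$ becomes trivial on fibers, then $\cM_U := p_{U*}\cL_U$). The twisting datum $\varphi$ then descends to a twisting datum on $\cM_U$, exhibiting $(\cM_U,\varphi)$ as a $\beta$-twisted line bundle on $S$; by Corollary~\ref{cor:twisted-rank} (the $r=1$ case) this forces $\upbeta = 1$, so $\cM_U$ glues to an honest line bundle $\cM$ on $S$, and $\cL \cong p^*\cM$ lies in the image of the canonical embedding of $\Pic(S)$. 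Conversely any $p^*\cM$ with $\cM \in \Pic(S)$ restricts trivially to every geometric fiber, hence maps to $0$. This identifies the kernel with $\Pic(S)$ and completes the proof.

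\medskip

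The main obstacle I anticipate is the bookkeeping in surjectivity: making precise that the descent obstruction for $\cL_U$ is captured by a class in $\rH^2_\et(S,\Gm)$ rather than $\rH^2_\et(X,\Gm)$, and that $\varphi$ can be normalized so that the discrepancy cocycle is literally $p^*(\beta)$ for a Čech cocycle $\beta$ on the base. This rests on the identity $p_*\Gm_X = \Gm_S$ (valid after arbitrary base change since $p$ is smooth proper with connected, hence geometrically connected and reduced, fibers, so global units on fibers are constant), which lets one conclude that $\Isom(\pr_1^*\cL_U,\pr_2^*\cL_U)$ is a torsor under $\Gm$ pulled back from $U\times_S U$, and hence the associated gerbe is the pullback of a gerbe on $S$. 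Once this is set up cleanly, the rest is a routine diagram chase with Čech cocycles.
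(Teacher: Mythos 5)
Your proposal is correct and follows essentially the same route as the paper: the map is defined by passing to the underlying line bundle on $X\times_S U$ and using the sheaf axiom for $\uPic_{X/S}$, surjectivity is obtained by choosing an isomorphism between the two pullbacks and using $p_*\Gm = \Gm$ (smooth proper, connected fibers) to see the discrepancy cocycle is $p^*(\beta)$ for a \v{C}ech $2$-cocycle $\beta$ on the base, and the kernel is identified with $p^*\Pic(S)$ by descending the gluing datum to a rank-one twisted sheaf on $S$ and invoking Corollary~\ref{cor:twisted-rank}. The only (harmless) difference is the normalization in the kernel step, where you write $\cL_U \cong p_U^*\cM_U$ instead of trivializing $\cL_U$ after a further refinement as the paper does.
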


\begin{proof}
Assume~$\upbeta \in \rH^2_\et(S,\Gm)$ and~$\cL$ is a relative $p^*(\upbeta)$-twisted line bundle on~$X/S$.
Then there is an \'etale cover~$U \to S$ such that~$\upbeta\vert_U$ is trivial.
Therefore, the pullback~$\cL_U$ of~$\cL$ to~$X \times_S U$ is an untwisted line bundle, i.e., an element of~$\Pic_{X/S}(S)$.
This defines a group homomorphism 
\begin{equation}
\label{eq:twisted-picard-morphism}
\Pictw(X/S) \to \Pic_{X/S}(S).
\end{equation}
If~$\cL$ is in the kernel, there is a cover~$U \to S$ such that~$\cL_U \cong \cO_U$.
The assumptions about the morphism~$p$ imply that 
the gluing isomorphism~$\varphi \colon \pr_1^*\cL_U \xrightiso \pr_2^*\cL_U$ on~$(X \times_S U) \times_X (X \times_S U)$
is a pullback of an isomorphism~$\bar\varphi$ on~$U \times_S U$, which defnes a $(U,\beta)$-twisted line bundle~$\bar\cL$ on~$S$.
It follows from Corollary~\ref{cor:twisted-rank} that the cohomology class of~$\beta$ is trival,
and that the line bundle~$\bar\cL$ is untwisted.
Furthermore, since~$\varphi = p^*(\bar\varphi)$, it follows that~$\cL \cong p^*(\bar\cL)$.
This argument proves that the kernel of~\eqref{eq:twisted-picard-morphism} 
is the subgroup~$p^*(\Pic(S)) \subset \Pic(X) \subset \Pictw(X/S)$.

Now assume a relative divisor class~$h \in \Pic_{X/S}(S)$ is given. 
By definition there is an \'etale cover~$U \to S$ and a line bundle~$\cL_{U}$ on~$X \times_S U$ 
such that~$\pr_1^*\cL_{U} \cong \pr_2^*\cL_{U}$
on~$(X \times_S U) \times_X (X \times_S U)$ 
(more precisely, the definition tells that we have an isomorphism up to a line bundle on~$U \times_S U$,
but refining the cover~$U$ we may assume this line bundle to be trivial).
Let us choose such an isomorphism~$\varphi$ and consider the composition in the left-hand side of~\eqref{eq:cocycle-condition}.
It is an automorphism of a line bundle, hence given by an invertible function.
The assumptions about the morphism~$p$ imply that this function can be written as~$p^*(\beta)$, 
where~$\beta$ is a \v{C}ech 2-cocycle on~$U$.
This means that~$(\cL_{U},\varphi)$ is an $(X \times_S U,p^*(\beta))$-twisted line bundle on~$X$,
and by definition of the morphism~\eqref{eq:twisted-picard-morphism} 
the image of the corresponding relative twisted line bundle in~$\Pic_{X/S}(S)$ is~$h$.
This proves the surjectivity of~\eqref{eq:twisted-picard-morphism}.
\end{proof}

\begin{notation}
\label{not:oh}
Let~$p \colon X \to S$ be a smooth proper morphism with connected fibers.
Given a relative divisor class~$h \in \Pic_{X/S}(S)$
we denote by~$\cO_X(h)$ a relative twisted line bundle on~$X/S$ 
(defined up to twist by a line bundle on~$S$) corresponding to it under the isomorphism of Lemma~\ref{lemma:picard-twisted}.
\end{notation}

Recall the standard exact sequence (see, e.g., ~\cite[Proposition~2.5]{Liedtke}):
\begin{equation}
\label{eq:picard-brauer-sequence}
0 \to \Pic(S) \to \Pic(X) \to \uPic_{X/S}(S) \xrightarrow{\ \bB\ } 
\rH^2_\et(S,\Gm).
\end{equation}
Comparing the proof of Lemma~\ref{lemma:picard-twisted} with the definition of the morphism~$\bB$,
we can rewrite it as 
\begin{equation}
\label{eq:twisted-picard-brauer-sequence}
0 \to \Pic(X)/\Pic(S) \to \Pictw(X/S)/\Pic(S) \xrightarrow{\ \bB\ } 
\rH^2_\et(S,\Gm),
\end{equation}
where the second arrow takes a relative $p^*(\upbeta)$-twisted line bundle on~$X/S$ to the corresponding cohomology class~$\upbeta$.

Now we consider the more general situation of relative twisted vector bundles.
We will often use the restrictions they impose on the corresponding Brauer classes.
The first is quite straightforward.

\begin{lemma}
\label{lemma:beta-kr}
If~$\cE$ is a relative~$p^*(\upbeta)$-twisted vector bundle 
then~$\wedge^d(\cE)$ is a relative~$p^*(\upbeta^d)$-twisted vector bundle.
In particular, if~$\bB(\det(\cE)) \in \rH^2_\et(S,\Gm)$ is $m$-torsion, 
and the morphism~$p \colon X \to S$ is smooth and proper with connected fibers then~$\upbeta^{m\rank(\cE)} = 1$.
\end{lemma}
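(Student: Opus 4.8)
The plan is to reduce the first assertion to a local computation together with the cocycle arithmetic already carried out in the proof of Lemma~\ref{lemma:twisted-sd-ld}, and then to obtain the ``in particular'' clause by specializing to $d = \rank(\cE)$ and reading off the resulting Brauer class via the exact sequence~\eqref{eq:twisted-picard-brauer-sequence}.

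First I would fix a representative of~$\cE$: an \'etale cover~$U \to S$, a \v{C}ech cocycle~$\beta \in \Gamma(U \times_S U \times_S U,\Gm)$ representing~$\upbeta$, and an $(X \times_S U, p^*(\beta))$-twisted vector bundle~$(\cE_{X \times_S U},\varphi)$ whose gluing isomorphism~$\varphi \colon \pr_1^*\cE_{X \times_S U} \xrightiso \pr_2^*\cE_{X \times_S U}$ satisfies~\eqref{eq:cocycle-condition} with multiplier~$p^*(\beta)$. Since the formation of~$\wedge^d$ commutes with arbitrary base change, $\wedge^d\varphi$ is again an isomorphism of the corresponding pullbacks of~$\wedge^d\cE_{X \times_S U}$, and applying~$\wedge^d$ to the triple-product identity — exactly as in the proof of Lemma~\ref{lemma:twisted-sd-ld} — shows that the composition in the left-hand side of~\eqref{eq:cocycle-condition} for~$\wedge^d\varphi$ equals~$p^*(\beta)^d \cdot \id = p^*(\beta^d)\cdot \id$. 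As~$\beta^d$ is a \v{C}ech cocycle representing~$\upbeta^d$, the pair~$(\wedge^d\cE_{X \times_S U}, \wedge^d\varphi)$ is an $(X \times_S U, p^*(\beta^d))$-twisted vector bundle. I would then check compatibility with the two operations defining a relative twisted vector bundle: replacing~$\beta$ by~$\beta \cdot \partial\gamma$ replaces~$\wedge^d\varphi$ by~$\gamma^d \cdot \wedge^d\varphi$, and refining~$U$ to~$U'$ is unproblematic precisely because~$\wedge^d$ commutes with the pullback along~$X \times_S U' \to X \times_S U$, which is the pullback of a cover of~$U$ — so no refinement of~$X \times_S U$ by a non-pulled-back cover is ever required. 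This produces a well-defined relative $p^*(\upbeta^d)$-twisted vector bundle~$\wedge^d(\cE)$.

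For the second statement I would take~$d = r \coloneqq \rank(\cE)$, so that~$\wedge^r(\cE) = \det(\cE)$ is a relative $p^*(\upbeta^r)$-twisted line bundle on~$X/S$, i.e.\ an element of~$\Pic^{\upbeta^r}(X/S) \subset \Pictw(X/S)$. Since~$p$ is smooth and proper with connected fibers, the exact sequence~\eqref{eq:twisted-picard-brauer-sequence} applies, and by the explicit description of its second map the image~$\bB(\det(\cE))$ of the class of~$\det(\cE)$ in~$\Pictw(X/S)/\Pic(S)$ is exactly~$\upbeta^r$. Hence if~$\bB(\det(\cE))$ is $m$-torsion, then~$\upbeta^{m\rank(\cE)} = (\upbeta^r)^m = \bB(\det(\cE))^m = 1$.

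The only real obstacle is bookkeeping: one must make sure that applying~$\wedge^d$ is genuinely compatible with the rather rigid equivalence relation in the definition of a relative twisted vector bundle (the point above about never needing a non-pulled-back refinement), and that the identification~$\bB(\det(\cE)) = \upbeta^r$ is consistent with the orientation conventions in~\eqref{eq:twisted-picard-morphism}--\eqref{eq:twisted-picard-brauer-sequence}. Both are routine given Lemmas~\ref{lemma:twisted-sd-ld} and~\ref{lemma:picard-twisted}, so the proof is short.
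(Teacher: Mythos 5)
Your proposal is correct and follows essentially the same route as the paper, whose proof is just the one-line remark that the first claim is proved as in Lemma~\ref{lemma:twisted-sd-ld} and the second follows from Lemma~\ref{lemma:picard-twisted} together with the description of the map~$\bB$ in~\eqref{eq:twisted-picard-brauer-sequence}; you have simply spelled out the cocycle computation for~$\wedge^d\varphi$ and the identification~$\bB(\det\cE) = \upbeta^{\rank\cE}$ that the paper leaves implicit.
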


\begin{proof}
The first part is analogous to Lemma~\ref{lemma:twisted-sd-ld}
and the second follows from Lemma~\ref{lemma:picard-twisted} 
combined with the above description of the map~$\bB$.
\end{proof}

The second restriction is a bit more involved.

\begin{lemma}
\label{lemma:beta-2}
Let~$p \colon X \to S$ be a smooth and proper morphism with connected fibers and
let~$\cE$ be a relative~$p^*(\upbeta)$-twisted vector bundle.
Assume for each geometric point~$s \in S$ we have~$\dim(\rH^0(X_s,\cE_{X_s}^\vee)) = 2\rank(\cE)$,
the evaluation morphism extends to an exact sequence
\begin{equation*}
0 \to \cE_{X_s} \to \rH^0(X_s,\cE_{X_s}^\vee) \otimes \cO_{X_s} \to \cE_{X_s}^\vee \to 0,
\end{equation*}
and~$\Hom(\cE_{X_s},\cE_{X_s})$ is generated by the identity morphism.
Then~$\upbeta^2 = 1$.
\end{lemma}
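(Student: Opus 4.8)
The plan is to realize $\upbeta^2$ as the Brauer class of a twisted \emph{line} bundle on~$S$ and then apply Corollary~\ref{cor:twisted-rank}. Set $r \coloneqq \rank(\cE)$. By Lemma~\ref{lemma:twisted-sd-ld} the dual $\cE^\vee = \cHom(\cE,\cO)$ is a relative $p^*(\upbeta^{-1})$-twisted vector bundle of rank~$r$ on $X/S$. Since $\dim\rH^0(X_s,\cE_{X_s}^\vee) = 2r$ is independent of the geometric point~$s$, the theorem on cohomology and base change shows that $\cV \coloneqq p_*(\cE^\vee)$ is a $\upbeta^{-1}$-twisted vector bundle of rank~$2r$ on~$S$ (a $\upbeta^{-1}$-twisted sheaf by Lemma~\ref{lemma:twisted-pb-pf}) whose formation commutes with base change, so that $\cV\otimes\kappa(s) \cong \rH^0(X_s,\cE^\vee_{X_s})$ for every~$s$.

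Next I would examine the counit morphism $p^*\cV \to \cE^\vee$ of $p^*(\upbeta^{-1})$-twisted sheaves on~$X$. By the base change property its restriction to a geometric fibre~$X_s$ is the evaluation morphism $\rH^0(X_s,\cE^\vee_{X_s})\otimes\cO_{X_s} \to \cE^\vee_{X_s}$, which by assumption is surjective with kernel $\cE_{X_s}$. Hence the counit is surjective (Nakayama), and, using flatness of $\cE^\vee$ over~$S$, its kernel $\cK$ is flat over~$S$ with $\cK|_{X_s}\cong\cE_{X_s}$ for all~$s$; being flat over~$S$ with locally free restriction to every fibre, $\cK$ is a relative $p^*(\upbeta^{-1})$-twisted vector bundle of rank~$r$, fitting into a short exact sequence $0 \to \cK \to p^*\cV \to \cE^\vee \to 0$.

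Finally I would form $\cHom(\cK,\cE)$. By Lemma~\ref{lemma:twisted-sd-ld}, applied with $\cF' = \cK$ (which is $p^*(\upbeta^{-1})$-twisted) and $\cF'' = \cE$, this is a relative $p^*(\upbeta^2)$-twisted vector bundle on $X/S$, and on a geometric fibre it restricts to $\cHom(\cK_{X_s},\cE_{X_s}) = \cEnd(\cE_{X_s})$, whose space of global sections $\Hom(\cE_{X_s},\cE_{X_s})$ is one-dimensional by hypothesis. Applying cohomology and base change once more, $p_*\cHom(\cK,\cE)$ is a $\upbeta^2$-twisted line bundle on~$S$ (Lemma~\ref{lemma:twisted-pb-pf}), so $\upbeta^2 = 1$ by Corollary~\ref{cor:twisted-rank}.

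The twist arithmetic ($\upbeta \rightsquigarrow \upbeta^{-1} \rightsquigarrow \upbeta^2$) is immediate from Lemmas~\ref{lemma:twisted-sd-ld} and~\ref{lemma:twisted-pb-pf}, so the only point requiring care is the two appeals to cohomology and base change. This is where the precise hypotheses — that $\dim\rH^0$ takes the expected value at \emph{every} geometric point and that the evaluation morphism stays exact there — are used, to guarantee that $\cV$ and $p_*\cHom(\cK,\cE)$ are locally free of the predicted ranks and that their formation commutes with base change; if~$S$ is not reduced one first reduces to the reduced case (harmless here, as in characteristic zero the relevant extra cohomology is $\QQ$-linear). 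I expect this bookkeeping to be the main, and essentially the only, technical obstacle.
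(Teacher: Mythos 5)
Your proof is correct and follows essentially the same route as the paper's: push forward $\cE^\vee$, take the kernel of the counit $p^*\cV \to \cE^\vee$ (the paper's $\cE'$ is your $\cK$), identify its fibers with $\cE_{X_s}$ using the assumed exact sequence, and push forward the resulting endomorphism-type sheaf to get a twisted line bundle, concluding by Corollary~\ref{cor:twisted-rank}. The only (immaterial) difference is that the paper works with $\cE^\vee \otimes \cE'$, producing a $\upbeta^{-2}$-twisted line bundle, where you use $\cHom(\cK,\cE)$ and a $\upbeta^{2}$-twist.
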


\begin{proof}
Set $V \coloneqq \bR^0p_*(\cE^\vee)$.
By assumption, semicontinuity theorem, and Lemma~\ref{lemma:twisted-pb-pf} 
this is a $\upbeta^{-1}$-twisted vector bundle 
and there is an exact sequence of~$p^*(\upbeta^{-1})$-twisted vector bundles
\begin{equation*}
0 \to \cE' \to p^*V \to \cE^\vee \to 0.
\end{equation*}
The restrictions of this sequence to geometric fibers of~$p$ recover the exact sequences for~$\cE_{X_s}$,
hence~$\cE'_{X_s} \cong \cE_{X_s}$ for each geometric point~$s \in S$.
Therefore the space
\begin{equation*}
\rH^0(X_s, (\cE^\vee \otimes \cE')\vert_{X_s}) \cong
\Hom(\cE_{X_s}, \cE'_{X_s}) \cong
\Hom(\cE_{X_s}, \cE_{X_s}) 
\end{equation*}
is 1-dimensional, and hence~$\bR^0p_*(\cE^\vee \otimes \cE')$ is a~$\upbeta^{-2}$-twisted line bundle on~$S$.
Now we apply Corollary~\ref{cor:twisted-rank} and conclude that~$\upbeta^{-2} = 1$, hence the claim.
\end{proof}

\subsection{Relative divisor classes and morphisms to Severi--Brauer varieties}
\label{ss:relative-divisors}

Recall that a {\sf Severi--Brauer variety} over~$S$ is a morphism~$p \colon Y \to S$
which is \'etale locally trivial fibration with fiber~$\P^{N-1}$.
As this is a Fano fibration, we can talk about the fundamental class~$H_Y$ of~$Y/S$.
Note that by Corollary~\ref{cor:picard-invariant} and Lemma~\ref{lemma:nef-cone} we have~$\Pic_{Y/S}(S) \cong \ZZ H_Y$.

Using the language of twisted vector bundles it is easy to give a description of all Severi--Brauer varieties over a given scheme~$S$.
Indeed, let~$\cE$ be a $\upbeta$-twisted vector bundle on~$S$ 
(by Corollary~\ref{cor:twisted-rank} this implies that~$\upbeta$ is torsion)
represented by a $(U,\beta)$-twisted vector bundle~$(\cE_U,\varphi)$, where~$U \to S$ is an \'etale cover.
Then the isomorphism~$\varphi \colon \pr_1^*\cE_U \xrightiso \pr_2^*\cE_U$ induces an isomorphism
\begin{equation*}
\bar\varphi \colon \pr_1^*(\P_U(\cE_U)) \xrightiso \pr_2^*(\P_U(\cE_U))
\end{equation*}
and~~\eqref{eq:cocycle-condition} implies that~$\pr_{1,2}^*\bar\varphi \circ \pr_{2,3}^*\bar\varphi \circ \pr_{1,3}^*\bar\varphi^{-1} = \id$.
Therefore, the isomorphism~$\bar\varphi$ can be used to glue~$\P_U(\cE_U)$ into a scheme which we denote by~$\P_S(\cE)$ 
and which is endowed with a projection
\begin{equation*}
p \colon \P_S(\cE) \to S.
\end{equation*}
Moreover, if~$\cO_{\P_U(\cE_U)}(1)$ is the Grothendieck line bundle on~$\P_U(\cE_U)$ (such that its pushforward to~$U$ is~$\cE_U^\vee$)
the isomorphism~$\varphi$ provides it with a structure of a relative $(\P_U(\cE_U),p^*(\beta^{-1}))$-twisted line bundle.
We denote the corresponding relative~$p^*(\upbeta^{-1})$-twisted line bundle by~$\cO_{\P_S(\cE)}(1)$.
Note that
\begin{equation*}
p_*\cO_{\P_S(\cE)}(1) \cong \cE^\vee;
\end{equation*}
this follows by gluing the standard isomorphism over the \'etale cover~$U \to S$.

\begin{lemma}
\label{lemma:sb-twisted-sheaves}
If~$\cE$ is a twisted vector bundle on~$S$ then~$p \colon \P_S(\cE) \to S$ is a Severi--Brauer variety over~$S$.
Conversely, if~$p \colon Y \to S$ is a Severi--Brauer variety then~$Y \cong \P_S(\cE)$ 
for a twisted vector bundle on~$S$ unique up to twist by a line bundle on~$S$.
\end{lemma}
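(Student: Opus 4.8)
The forward direction is already essentially established in the paragraph preceding the statement: the gluing construction produces $\P_S(\cE)$ as a scheme over $S$, and since $\cE$ is \'etale locally (on the cover $U \to S$) isomorphic to a trivial bundle of some rank $N$, the fibration $\P_S(\cE) \to S$ is \'etale locally isomorphic to $\P_U(\cO_U^{\oplus N}) = \P^{N-1}_U$, which is precisely the definition of a Severi--Brauer variety. So the first direction requires only assembling these observations; essentially no new work.

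For the converse, suppose $p \colon Y \to S$ is a Severi--Brauer variety, so there is an \'etale cover $U \to S$ and an isomorphism $Y \times_S U \cong \P^{N-1}_U = \P_U(\cO_U^{\oplus N})$ over $U$. The plan is to produce the twisted vector bundle by descent applied to the (untwisted) line bundle $\cO_{Y \times_S U/U}(1)$ and its pushforward. Concretely: fix such a trivialization; then over $(U \times_S U)$ the two pullbacks $\pr_1^* (Y\times_S U)$ and $\pr_2^*(Y\times_S U)$ are identified with two copies of $\P^{N-1}$, and the resulting automorphism of $\P^{N-1}_{U\times_S U}$ is, \'etale locally, an element of $\PGL_N$. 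Lift it (after refining $U$ if necessary) to an automorphism $\varphi$ of $\cO_{U\times_S U}^{\oplus N}$, i.e.\ an element of $\GL_N$; the lift is unique up to a scalar. The failure of the cocycle condition $\pr_{1,2}^*\varphi \circ \pr_{2,3}^*\varphi \circ \pr_{1,3}^*\varphi^{-1}$ to be the identity is then multiplication by an invertible function $\beta$ on $U \times_S U \times_S U$, and because the $\PGL_N$-valued descent datum \emph{does} satisfy the cocycle condition, $\beta$ is a scalar, i.e.\ a \v{C}ech $2$-cocycle for $\Gm$. Set $\upbeta \coloneqq [\beta] \in \rH^2_\et(S,\Gm)$. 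The pair $(\cO_U^{\oplus N}, \varphi)$ is then a $(U,\beta)$-twisted vector bundle; call the corresponding $\upbeta$-twisted vector bundle $\cE$. By construction the gluing of $\P_U(\cO_U^{\oplus N})$ via $\bar\varphi$ recovers $Y$, so $Y \cong \P_S(\cE^{\vee})$ (or $\P_S(\cE)$ depending on the convention for $\P$ versus $\P(\cdot)$ chosen in the paper; I will match the normalization fixed above, under which $p_*\cO_{\P_S(\cE)}(1)\cong\cE^\vee$).

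For uniqueness: suppose $\cE_1$ and $\cE_2$ are twisted vector bundles with $\P_S(\cE_1) \cong \P_S(\cE_2)$ over $S$. Pulling the isomorphism back to a common \'etale cover $U$ trivializing both twists, we get an isomorphism $\P_U(\cE_{1,U}) \cong \P_U(\cE_{2,U})$ of ordinary projective bundles over $U$, which by the standard description of automorphisms and isomorphisms of projective bundles is induced by an isomorphism $\cE_{1,U} \otimes \cM_U \xrightiso \cE_{2,U}$ for a line bundle $\cM_U$ on $U$, unique once chosen up to $\cO_U^\times$. Tracking this through the gluing data and using Lemma~\ref{lemma:picard-twisted} (the gluing cocycle for $\cM_U$ differs from descent data for a genuine line bundle on $S$ by a coboundary, because $p$ is smooth proper with connected fibers), one sees $\cM_U$ descends to a line bundle $\cM$ on $S$ with $\cE_1 \otimes \cM \cong \cE_2$ as twisted bundles; in particular the two associated Brauer classes agree. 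I expect the main obstacle to be the bookkeeping in this last step — matching the $\Gm$-torsor ambiguities in the $\PGL_N \to \GL_N$ lift, the cover refinements, and the line-bundle twist, so that "unique up to twist by a line bundle on $S$" comes out cleanly rather than "unique up to a line bundle on some cover". This is exactly the kind of place where invoking Lemma~\ref{lemma:picard-twisted} and the exact sequence~\eqref{eq:twisted-picard-brauer-sequence} streamlines the argument, so I would set it up to quote those rather than rederive the descent by hand.
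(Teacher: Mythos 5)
Your argument is correct, but it follows a different route than the paper. For the converse you run the classical descent argument: trivialize $Y$ over an \'etale cover, view the transition isomorphism as a section of $\PGL_N$, lift it to $\GL_N$ after refinement, and read off the \v{C}ech $2$-cocycle $\beta$ measuring the failure of the cocycle condition --- i.e.\ you construct $\cE$ by hand via the boundary map $\rH^1_\et(S,\PGL_N)\to\rH^2_\et(S,\Gm)$, and then handle uniqueness by a separate descent/bookkeeping step for the line bundle $\cM_U$. The paper instead exploits the machinery it has already set up: since $Y/S$ is a Fano fibration with $\uPic_{Y/S}(S)\cong\ZZ H_Y$, the fundamental class $H_Y$ corresponds under Lemma~\ref{lemma:picard-twisted} (Notation~\ref{not:oh}) to a relative twisted line bundle $\cO_Y(H_Y)$, well defined up to a line bundle from $S$, and one simply sets $\cE\coloneqq p_*(\cO_Y(H_Y))^\vee$; both existence and the ``unique up to twist by a line bundle on $S$'' clause are then nearly automatic, because any $\cE'$ with $\P_S(\cE')\cong Y$ is recovered as $p_*\cO_{\P_S(\cE')}(1)^\vee$ with $\cO(1)$ representing the same class $H_Y$. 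Your approach buys an explicit identification of the Brauer class with the coboundary of the $\PGL_N$-torsor and is independent of the Picard-scheme results of \S\ref{ss:picard}, at the cost of the cocycle-lifting and refinement technicalities you flag (these are at the same level of rigor as the paper's own proof of Lemma~\ref{lemma:picard-twisted}, so they are not a genuine gap); for the uniqueness step, rather than tracking the gluing data of $\cM_U$ by hand, you could shortcut exactly as you hoped, either by the canonical-recovery argument above or by observing that the projective-bundle isomorphism produces a $\beta_1^{-1}\cdot\beta_2$-twisted line subbundle of $\cHom(\cE_1,\cE_2)$ and invoking Corollary~\ref{cor:twisted-rank} together with Lemma~\ref{lemma:picard-twisted}.
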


\begin{proof}
By construction of~$\P_S(\cE)$ the morphism~$p \colon \P_S(\cE) \to S$ 
is \'etale locally (over~$S$) isomorphic to the projectivization of a trivial vector bundle,
hence it is a Severi--Brauer variety.

Conversely, if~$p \colon Y \to S$ is a Severi--Brauer variety,
$H_Y \in \Pic_{Y/S}(S)$ is its relative fundamental class, 
and~$\cO_Y(H_Y) \in \Pictw(Y/S)$ is the corresponding relative twisted line bundle (see Notation~\ref{not:oh}),
then~$\cE \coloneqq p_*(\cO_Y(H_Y))^\vee$ is a twisted vector bundle on~$S$
(both~$\cO_Y(H_Y)$ and~$\cE$ are defined up to twist by a line bundle on~$S$), 
and it is clear that~$Y \cong \P_S(\cE)$.
\end{proof}

For a morphism of schemes~$p \colon X \to S$
we will say that a class~$h \in \uPic_{X/S}(S)$ is {\sf relatively ample}, 
{\sf relatively globally generated}, {\sf relatively has vanishing higher cohomology}, and so on,
if the corresponding properties hold for the restrictions of this class to all geometric fibers of~$X/S$.
If~$p$ is proper and flat we denote by~$\upchi_{X/S}(h)$ the {\sf relative Euler characteristic} of~$h$,
defined as the Euler characteristic of the corresponding line bundle on any geometric fiber of~$X/S$.

Recall the morphism~$\bB$ from~\eqref{eq:picard-brauer-sequence} and Notation~\ref{not:oh}.
The following observation is quite useful.

\begin{lemma}
\label{lemma:brauer-obstruction}
Let~$p \colon X \to S$ be a smooth proper morphism with connected fibers.
If~$h \in \uPic_{X/S}(S)$ is a relatively globally generated class with vanishing higher cohomology then 
\begin{equation*}
\bB(h) \in \Br(S) \subset \Br'(S) \subset \rH^2_\et(S, \Gm)
\end{equation*}
and there is a $\bB(h)^{-1}$-twisted vector bundle~$\cE$ of rank~$N \coloneqq \upchi_{X/S}(h)$ 
on~$S$ and an $S$-morphism
\begin{equation*}
\phi_h \colon X \to \P_S(\cE)
\end{equation*}
such that~$\phi_h^*(\cO_{\P_S(\cE)}(1)) \cong \cO_X(h)$
and the morphism~$(\phi_h)_s \colon X_s \to (\P_S(\cE))_s \cong \P^{N-1}$
coincides with the morphism given by the complete linear system~$|h\vert_{X_s}|$
for every geometric point~$s \in S$.
\end{lemma}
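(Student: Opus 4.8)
The plan is to run the standard construction of a morphism to a projective bundle associated to a globally generated linear system, but carried out relative to $S$ in the twisted-sheaf formalism, with the twist forced on us precisely because $\cO_X(h)$ only exists as a relative twisted line bundle (Notation~\ref{not:oh}). First I would set $\cE \coloneqq p_*(\cO_X(h))^\vee$ and verify this makes sense: by Lemma~\ref{lemma:picard-twisted} and the description of $\bB$ in~\eqref{eq:twisted-picard-brauer-sequence}, $\cO_X(h)$ is a relative $p^*(\upbeta)$-twisted line bundle on $X/S$, where $\upbeta = \bB(h) \in \rH^2_\et(S,\Gm)$. The hypotheses that $h$ is relatively globally generated with vanishing higher cohomology, together with cohomology and base change (in the twisted setting, via Lemma~\ref{lemma:twisted-pb-pf}), imply that $p_*(\cO_X(h))$ is a $\upbeta$-twisted vector bundle on $S$ whose formation commutes with base change, and whose rank is the relative Euler characteristic $N = \upchi_{X/S}(h)$ (here I use flatness of $p$ and the vanishing of higher cohomology on fibers to identify $\chi$ with $h^0$). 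Hence $\cE = p_*(\cO_X(h))^\vee$ is a $\upbeta^{-1}$-twisted vector bundle of rank $N$; in particular $\upbeta^{-1}$, hence $\upbeta = \bB(h)$, is torsion by Corollary~\ref{cor:twisted-rank}, so it lies in $\Br(S)$ — wait, more precisely I should note that $\Br(S) \subset \Br'(S)$ is the torsion subgroup, so being annihilated by $N$ places $\bB(h)$ in $\Br'(S)$; to land in $\Br(S)$ I invoke de Jong's theorem for quasiprojective $S$ as the paper does, or simply observe that a twisted vector bundle of finite rank gives an Azumaya algebra $\cEnd(\cE)$ representing the class, which shows directly $\bB(h) \in \Br(S)$.

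Next I would construct the morphism $\phi_h$. Working over an \'etale cover $U \to S$ on which $\upbeta$ (equivalently, a \v Cech cocycle $\beta$) trivializes, the relative twisted line bundle $\cO_X(h)$ pulls back to an honest line bundle $\cL_U$ on $X \times_S U$ with $p_{U*}\cL_U = \cE_U^\vee$, and since $\cL_U$ is globally generated relative to $U$ the evaluation map $p_U^* p_{U*}\cL_U \to \cL_U$ is surjective, yielding a $U$-morphism $X \times_S U \to \P_U(p_{U*}\cL_U) = \P_U(\cE_U^\vee)$ — matching the normalization $p_*\cO_{\P_S(\cE)}(1) = \cE^\vee$ in the construction preceding Lemma~\ref{lemma:sb-twisted-sheaves} — pulling back $\cO(1)$ to $\cL_U$. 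The content is then that this morphism is compatible with the descent data: the gluing isomorphism $\varphi$ on $\cO_X(h)$ over $(X\times_S U)\times_X(X\times_S U)$, being (by smoothness and properness of $p$ with connected fibers, as in the proof of Lemma~\ref{lemma:picard-twisted}) the pullback of the gluing isomorphism $\varphi_\cE$ defining $\cE$ over $U\times_S U$, induces on the projectivizations exactly the isomorphism $\bar\varphi$ used to glue $\P_U(\cE_U)$ into $\P_S(\cE)$; and the evaluation morphisms are functorial, so they glue to the desired $S$-morphism $\phi_h \colon X \to \P_S(\cE)$ with $\phi_h^*\cO_{\P_S(\cE)}(1) \cong \cO_X(h)$ as relative $p^*(\upbeta^{-1})$-twisted line bundles. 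The fiberwise statement is then immediate: over a geometric point $s$ the cover splits, $\cE$ becomes an ordinary vector bundle of rank $N$, $\cO_X(h)$ restricts to the honest line bundle $\cO_{X_s}(h|_{X_s})$, $p_*$ computes $\rH^0(X_s, \cO_{X_s}(h|_{X_s}))$, and $(\phi_h)_s$ is by construction the morphism of the complete linear system $|h|_{X_s}|$.

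The main obstacle I anticipate is the descent/cocycle bookkeeping in the second step: one must check carefully that passing from $\cO_X(h)$ to $\cE$ and then to the projectivization does not introduce an extra coboundary ambiguity beyond the harmless "twist by a line bundle on $S$", and that the resulting $\phi_h$ is genuinely an $S$-morphism (not merely an $S$-morphism up to the $\mathrm{PGL}$-ambiguity that $\P_S(\cE)$ a priori carries). This is exactly the place where the hypothesis that $p$ is smooth and proper with connected fibers is used — via the fact, already exploited in Lemma~\ref{lemma:picard-twisted}, that an isomorphism of the relevant line bundles on $(X\times_S U)\times_X(X\times_S U)$ descends to $U\times_S U$ — so I would cite that lemma's proof rather than redo it. Everything else (flat base change for $p_*$, surjectivity of evaluation from relative global generation, the rank count from vanishing higher cohomology and flatness, the identification $\bB(h)=\upbeta$) is routine given the results already established in \S\ref{ss:twisted-sheaves} and \S\ref{ss:relative-divisors}.
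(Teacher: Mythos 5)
Your proposal is correct and follows essentially the same route as the paper's proof: set $\cE = p_*(\cO_X(h))^\vee$, construct the morphism \'etale-locally over a cover trivializing the twist via the evaluation map to $\P_U(\cE_U)$, and glue using the descent data already analyzed in Lemma~\ref{lemma:picard-twisted} (the paper only sketches this, deferring details to~\cite{Liedtke}). Your extra observation that $\cEnd(\cE)$ is an Azumaya algebra representing the class, so that $\bB(h)$ lands in $\Br(S)$ and not merely $\Br'(S)$, is a correct and welcome clarification of a point the paper leaves implicit.
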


\begin{proof}
This is essentially the content of~\cite{Liedtke}; however, for the readers' convenience we sketch a proof.
Choose an \'etale cover $U \to S$ such that~$h$ is represented by a line bundle~$\cL \in \Pic(X_U)$,
and consider the sheaf~$\cE_U \coloneqq p_{U*}(\cL)^\vee$, where~$p_U \colon X_U \to U$ is the base change of~$p$.
Then (since~$\cL$ is globally generated and has no higher cohomology on the fibers of~$p_U$)
the sheaf~$\cE_U$ is locally free of rank~$N$ and there is a unique morphism~$\phi_\cL \colon X_U \to \P_U(\cE_U)$
such that the canonical epimorphism~$p_U^*\cE_U^\vee \to \cL$ is the pullback under~$\phi_\cL$ of the tautological epimorphism.
After gluing we obtain the morphism~$\phi_h \colon X \to \P_S(\cE)$ that has all required properties.
\end{proof}

Now let~$p \colon X \to S$ be a smooth Fano fibration.
Recall from~\S\ref{ss:picard} the definition of the relative Fano index~$\upiota(X/S)$ and the relative fundamental class~$H_X$.

\begin{corollary}
\label{cor:fundamental-class}
Let~$p \colon X \to S$ be a smooth Fano fibration with the relative fundamental class~$H_X$.
Let~$m \coloneqq \gcd(\upiota(X/S), \upchi_{X/S}(H_X))$.
Then~$\bB(H_X)^m = 1$.
\end{corollary}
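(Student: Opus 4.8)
The plan is to combine two facts about the relative fundamental class $H_X \in \uPic_{X/S}(S)$: its multiple by $\upiota(X/S)$ is divisible in the relative Picard group by an \emph{untwisted} class (namely $-K_{X/S}$), which forces $\upiota(X/S) \cdot \bB(H_X) = 0$; and its relative Euler characteristic is $\upchi_{X/S}(H_X) = N$, which via Lemma~\ref{lemma:brauer-obstruction} produces a $\bB(H_X)^{-1}$-twisted vector bundle of rank $N$ on $S$, forcing $\bB(H_X)^{N} = 1$ by Corollary~\ref{cor:twisted-rank}. Since $\bB$ is a group homomorphism to $\rH^2_\et(S,\Gm)$ (sequence~\eqref{eq:picard-brauer-sequence}), the order of $\bB(H_X)$ divides $\gcd(\upiota(X/S), N) = m$, which is exactly the assertion.

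In more detail, first I would observe that $-K_{X/S} = \upiota(X/S)\, H_X$ in $\uPic_{X/S}(S)$ by definition of the relative Fano index and fundamental class (Corollary~\ref{cor:constant}), and that $-K_{X/S}$ lies in the image of $\Pic(X) \to \uPic_{X/S}(S)$ since the relative canonical bundle $\omega_{X/S}$ is a genuine line bundle on $X$. Exactness of~\eqref{eq:picard-brauer-sequence} at $\uPic_{X/S}(S)$ then gives $\bB(-K_{X/S}) = 0$, so $\upiota(X/S)\,\bB(H_X) = \bB(\upiota(X/S) H_X) = 0$. Second, $H_X$ is relatively ample (being the relative fundamental class of a Fano fibration), and after replacing it by a suitable positive multiple one checks it is relatively globally generated with vanishing higher cohomology — but in fact to obtain the sharp bound one should apply Lemma~\ref{lemma:brauer-obstruction} not to $H_X$ itself but to a power of the relative anticanonical class, or more directly invoke that $\bB(H_X)$ has the same order whether computed from $H_X$ or from any class with the same reduction, so it suffices to find \emph{one} relatively globally generated class $h$ with vanishing higher cohomology whose reduction is $H_X$ and whose relative Euler characteristic equals $N$. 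Taking $h = H_X$ directly (which for all the Fano fibrations under consideration is very ample on fibers) and setting $N := \upchi_{X/S}(H_X)$, Lemma~\ref{lemma:brauer-obstruction} yields a $\bB(H_X)^{-1}$-twisted vector bundle of rank $N$ on $S$, whence $\bB(H_X)^{-N} = 1$ by Corollary~\ref{cor:twisted-rank}, i.e. $\bB(H_X)^N = 1$.

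Combining the two divisibility statements: the order of $\bB(H_X)$ in the abelian group $\rH^2_\et(S,\Gm)$ divides both $\upiota(X/S)$ and $N = \upchi_{X/S}(H_X)$, hence divides their $\gcd = m$, so $\bB(H_X)^m = 1$, as required. The only delicate point — and the step I expect to need the most care — is the applicability of Lemma~\ref{lemma:brauer-obstruction} to $H_X$ itself: one must ensure $H_X$ is relatively globally generated with vanishing higher cohomology. For the concrete fiber types appearing in the paper this is immediate ($H_X$ is the hyperplane or an ample generator whose higher cohomology vanishes by Kodaira-type vanishing, since $H_X - K_{X_s} = (1+\upiota(X/S))H_X$ is ample), so the corollary follows; in general one would instead argue that the order of $\bB(H_X)$ equals the order of $\bB(m' H_X)$ divided by a common factor and apply the lemma to a sufficiently positive multiple, but this refinement is not needed here.
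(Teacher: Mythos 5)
Your overall strategy is the same as the paper's: from $\upiota(X/S)H_X = -K_{X/S}$ coming from $\Pic(X)$ and exactness of~\eqref{eq:picard-brauer-sequence} you get $\bB(H_X)^{\upiota(X/S)} = 1$, a twisted vector bundle of rank $\upchi_{X/S}(H_X)$ gives $\bB(H_X)^{\upchi_{X/S}(H_X)} = 1$ via Corollary~\ref{cor:twisted-rank}, and the gcd finishes. The first step is fine. The gap is in how you produce the rank-$\upchi_{X/S}(H_X)$ twisted bundle: you route it through Lemma~\ref{lemma:brauer-obstruction}, whose hypotheses include relative global generation of $H_X$. That hypothesis is not part of the corollary, which is stated for an arbitrary smooth Fano fibration, and it can genuinely fail (e.g.\ for a fibration in del Pezzo threefolds of degree $1$, where $|H|$ has a base point on every fiber). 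You flagged this yourself, but neither of your patches closes it: restricting to ``the concrete fiber types appearing in the paper'' proves a weaker statement than the one asserted (and the paper also applies the corollary to other fibrations, e.g.\ the $\OGr_{S,+}(5,V)$-fibration in the $\sX_7$ section), while the proposed reduction to a multiple $m'H_X$ does not work: $\bB(m'H_X) = \bB(H_X)^{m'}$, and a bound on the order of a power of an element gives no bound on the order of the element itself; moreover the rank produced by the lemma for $m'H_X$ is $\upchi_{X/S}(m'H_X)$, not $\upchi_{X/S}(H_X)$, so even the exponent would be the wrong one.

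The repair is exactly what the paper does, and it is simpler than invoking Lemma~\ref{lemma:brauer-obstruction}: global generation is irrelevant here. Since $H_X$ is relatively ample and $H_{X_s} - K_{X_s} = (1+\upiota(X/S))H_{X_s}$ is ample, Kodaira vanishing gives $\rH^{>0}(X_s,\cO_{X_s}(H_{X_s})) = 0$ for every geometric fiber, so by cohomology and base change $p_*\cO_X(H_X)$ is already a $\bB(H_X)$-twisted vector bundle of rank $\upchi_{X/S}(H_X)$ on $S$; this is precisely the first half of the proof of Lemma~\ref{lemma:brauer-obstruction}, the global generation hypothesis there being needed only for the morphism to the Severi--Brauer variety, which you do not use. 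With that substitution your argument is complete and coincides with the paper's proof.
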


\begin{proof}
The class $\upiota(X/S) H_X = -K_{X/S} \in \Pic_{X/S}(S)$ comes from a class in~$\Pic(X)$,
hence its image in~$\Br(S) \subset \rH^2_\et(S,\Gm)$ under the map~$\bB$ in~\eqref{eq:picard-brauer-sequence} is trivial,
hence~$\bB(H_X)^{\upiota(X/S)} = 1$.

On the other hand, the vector bundle~$p_*(\cO_X(H_X))$ is~$\bB(H_X)$-twisted 
(note that~$H_X$ is relatively ample by definition, hence by Kodaira vanishing it relatively has vanishing higher cohomology)
and has rank~$\upchi_{X/S}(H_X)$,
hence~$\bB(H_X)^{\upchi_{X/S}(H_X)} = 1$ by Corollary~\ref{cor:twisted-rank}.
\end{proof}

\section{Derived categories and moduli spaces}
\label{sec:db-forms}

In this section we remind some results about derived categories and moduli spaces of smooth fibrations.
In~\S\ref{ss:db-linear} we recall the notions of $S$-linear decompositions and functors
and state a criterion for $S$-linear functors to be fully faithful and generate a semiorthogonal decomposition.
We also remind a result of Bernardara about derived categories of Severi--Brauer varieties.
In~\S\ref{ss:moduli-definition} we remind the definition and basic properties of (relative) moduli spaces of sheaves.
Finally, in~\S\ref{ss:uniqueness} we prove some uniqueness results for stable sheaves.

Starting from this section all functors are derived.

\subsection{Linear semiorthogonal decompositions and forms of~$\P^3$}
\label{ss:db-linear}

Let~$p \colon X \to S$ be a smooth projective morphism.
Recall from~\cite{K06} that~$\bD(X) = \langle \cA_1, \dots, \cA_k \rangle$ is an {\sf $S$-linear} semiorthogonal decomposition 
if the components~$\cA_i$ are preserved by tensor products with pullbacks of objects of~$\bD(S)$, i.e.,
\begin{equation*}
\cA_i \otimes p^*\bD(S) \subset \cA_i
\end{equation*}
for all~$i$.
One can think of $S$-linear semiorthogonal decompositions 
as families of semiorthogonal decompositions of fibers of~$p$.
In particular, as it was shown in~\cite{K11}, one can apply base change to a point embedding~$\{s\} \hookrightarrow S$
and obtain a semiorthogonal decomposition
\begin{equation*}
\bD(X_s) = \langle {\cA_1}_s, \dots, {\cA_k}_s \rangle
\end{equation*}
of the fiber, called the base change of the original decomposition.

Here is a sample example of $S$-linear semiorthogonal decomposition.

\begin{theorem}[{\cite{Bernardara}}]
\label{thm:bernardara}
If $p \colon X \to S$ is a Severi--Brauer variety, 
$n = \dim(X/S)$, 
$H_X$ is the fundamental class of~$X/S$,
and $\upbeta = \bB(H_X) \in \Br(S)$ is the corresponding Brauer class, 
then for each~$i \in \ZZ$ there is an $S$-linear semiorthogonal decomposition
\begin{equation*}
\bD(X) = \Big\langle \cO_X(iH_X) \otimes \bD(S, \upbeta^{-i}), 
\cO_X((i+1)H_X) \otimes \bD(S,\upbeta^{-i-1}), \dots, 
\cO_X((i+n)H_X) \otimes \bD(S,\upbeta^{-i-n}) \Big\rangle.
\end{equation*}
\end{theorem}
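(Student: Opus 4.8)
The plan is to realize the right-hand side of the asserted decomposition as the collection of images of explicit $S$-linear functors, and then to prove it is a semiorthogonal decomposition by checking the hypotheses of the base-change criterion for $S$-linear functors fibrewise, where everything reduces to Beilinson's description of $\bD(\P^n)$.

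First I would use Lemma~\ref{lemma:sb-twisted-sheaves} to write $X \cong \P_S(\cE)$ for a twisted vector bundle~$\cE$ of rank~$n+1$ on~$S$, normalized so that the relative twisted line bundle $\cO_X(H_X)$ of Notation~\ref{not:oh} is identified with $\cO_{\P_S(\cE)}(1)$. Comparing the twists in the construction preceding Lemma~\ref{lemma:sb-twisted-sheaves} with the equality $\upbeta = \bB(H_X)$ and the description of~$\bB$ in~\eqref{eq:twisted-picard-brauer-sequence} forces~$\cE$ to be $\upbeta^{-1}$-twisted, and hence $\cO_X(jH_X)$ to be $p^*(\upbeta^j)$-twisted for every $j \in \ZZ$. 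Consequently, for $F \in \bD(S,\upbeta^{-j})$ the object $p^*F$ is $p^*(\upbeta^{-j})$-twisted, so
\[
\Phi_j \colon \bD(S,\upbeta^{-j}) \longrightarrow \bD(X), \qquad \Phi_j(F) \coloneqq p^*F \otimes \cO_X(jH_X),
\]
takes values in the \emph{untwisted} category~$\bD(X)$; by the projection formula~$\Phi_j$ is $S$-linear, and since $p$ is smooth projective it admits both adjoints. The asserted decomposition is then precisely $\langle \Phi_i(\bD(S,\upbeta^{-i})), \Phi_{i+1}(\bD(S,\upbeta^{-i-1})), \dots, \Phi_{i+n}(\bD(S,\upbeta^{-i-n})) \rangle$.

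Next I would invoke the base-change criterion for $S$-linear functors recalled above (cf.~\cite{K11}): an $S$-linear collection of functors with adjoints forms a semiorthogonal decomposition of~$\bD(X)$ as soon as, after base change along every geometric point $s \hookrightarrow S$, the induced functors are fully faithful, have semiorthogonal images, and together generate $\bD(X_s)$. Over a geometric point the class~$\upbeta$ becomes trivial, $\cE_s$ is an honest bundle of rank $n+1$, $X_s \cong \P^n$, and $\Phi_j$ specializes to $V \mapsto V \otimes \cO_{\P^n}(j)$. Fibrewise full faithfulness is then the computation that $\RHom^\bullet_{X_s}(\cO_{X_s}(j),\cO_{X_s}(j))$ reduces to the ground field; fibrewise generation is Beilinson's theorem $\bD(\P^n) = \langle \cO, \cO(1), \dots, \cO(n)\rangle$; and for semiorthogonality it is cleanest to argue directly over~$S$: for $i \le j' < j \le i+n$, adjunction together with the projection formula give
\[
\RHom_X\big(\Phi_j F,\, \Phi_{j'} G\big) \;\cong\; \RHom_S\big(F,\, G \otimes p_*\cO_X((j'-j)H_X)\big),
\]
and $p_*\cO_X((j'-j)H_X) = 0$ by (twisted) cohomology and base change, since $-n \le j'-j \le -1$ and $\RGamma(\P^n, \cO_{\P^n}(k)) = 0$ for $-n \le k \le -1$.

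The only genuine work is the twisted-sheaf bookkeeping: one must verify that the fully-faithfulness/generation criterion of~\cite{K11} applies verbatim when the sources are the \emph{twisted} derived categories $\bD(S,\upbeta^{-j})$, and that the twist exponents cancel at each step so that $\Phi_j$ truly lands in~$\bD(X)$ and the $\RHom$'s above are computed between honest objects (with $p_*$ of a $p^*(\upbeta^{j'-j})$-twisted line bundle being a $\upbeta^{j'-j}$-twisted complex on~$S$, whose vanishing is detected fibrewise). Granting the base-change framework of \S\ref{ss:db-linear}, no further obstacle arises — the sole non-formal input is the classical statement on~$\P^n$. One could instead descend Orlov's untwisted projective-bundle formula along an étale trivialization of $X/S$, but that requires étale descent for semiorthogonal decompositions, which is essentially the same criterion.
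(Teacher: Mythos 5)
Your argument is correct, and the twist bookkeeping works out exactly as you say: with $X\cong\P_S(\cE)$ as in Lemma~\ref{lemma:sb-twisted-sheaves} the bundle~$\cE$ is $\upbeta^{-1}$-twisted, $\cO_X(jH_X)$ is $p^*(\upbeta^j)$-twisted, so each $\Phi_j$ lands in the untwisted category $\bD(X)$, the semiorthogonality reduces via adjunction and the projection formula to the vanishing of the $\upbeta^{j'-j}$-twisted complex $p_*\cO_X((j'-j)H_X)$ for $-n\le j'-j\le -1$ (checked after an \'etale trivialization), and full faithfulness and generation on geometric fibers are Beilinson's theorem. Note, however, that the paper gives no proof of Theorem~\ref{thm:bernardara} at all: it is quoted from~\cite{Bernardara}, whose original argument runs through the Azumaya algebra attached to the Severi--Brauer scheme and a twisted analogue of Orlov's projective-bundle formula, rather than through a fibrewise criterion. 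What you have reconstructed is instead a proof in the style the paper uses for all of its *other* decompositions: the point you flag as "the only genuine work" --- that the base-change criterion of~\cite{K11} applies with twisted source categories --- is precisely what Proposition~\ref{prop:relative-sod} (and its special case, Corollary~\ref{cor:relative-exceptional}, which covers your kernels $\cO_X(jH_X)$ viewed on $X\times_S S=X$) supplies, so no gap remains. The trade-off is the expected one: Bernardara's route gives the statement directly in the language of Azumaya algebras and is self-contained at the level of descent, while your route is shorter given the relative machinery of \S\ref{ss:db-linear} and makes the $S$-linearity and admissibility of the components automatic.
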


Despite of the ambiguity in the choice of a twisted line bundle~$\cO_{X}(H_X)$,
the components of the above decomposition do not depend on this choice.
Note also that if~$X/S$ has a section, then~$\upbeta = 1$,
hence all the components of the above decomposition are equivalent to~$\bD(S)$.

\begin{example}
\label{ex:p3}
Let~$X \to S$ be a $3$-dimensional Severi--Brauer variety 
and let~\mbox{$\upbeta = \bB(H_X) \in \Br(S)$} be the corresponding $4$-torsion Brauer class.
Then~$\bD(X)$ has an $S$-linear semiorthogonal decomposition
\begin{equation*}
\bD(X) = \Big\langle \cO_X \otimes \bD(S), \cO_X(H_X) \otimes \bD(S,\upbeta^{-1}), 
\cO_X(2H_X) \otimes \bD(S,\upbeta^{-2}), \cO_X(3H_X) \otimes \bD(S,\upbeta^{-3}) \Big\rangle.
\end{equation*}
\end{example}

In the rest of this section we state a result which is used
for obtaining an $S$-linear semiorthogonal decomposition of~$\bD(X)$ from semiorthogonal decompositions of the fibers of~$X/S$.

We concentrate on the situation where the components are twisted derived categories, see~\S\ref{ss:twisted-sheaves}.
Given a smooth projective morphism~$q \colon Y \to S$ and a geometric point~$s \in S$ we denote by
\begin{equation*}
\upeta_s \colon X_s \times Y_s \cong (X \times_S Y)_s \hookrightarrow X \times_S Y
\end{equation*}
the natural embedding.
Given an appropriately twisted object~$\cE$ on a fiber product of two varieties 
we denote by~$\Phi_\cE$ the corresponding $S$-linear Fourier--Mukai functor 
between their twisted derived categories.

\begin{proposition}
\label{prop:relative-sod}
Let~$q_i \colon Y_i \to S$, $1 \le i \le k$, be smooth projective morphisms, 
let~$\upbeta_i \in \Br(Y_i)$ be Brauer classes, 
and let~$\cE_i \in \bD(X \times_S Y_i, \pr_{Y_i}^*(\upbeta_i))$ be $\pr_{Y_i}^*(\upbeta_i)$-twisted objects.

\begin{renumerate}
\item
\label{item:linear-ff}
If for every geometric point~$s \in S$ 
the functor~\mbox{$\Phi_{\upeta_s^*\cE_i} \colon \bD({Y_i}_s, \upbeta_i^{-1}\vert_{{Y_i}_s}) \to \bD(X_s)$}
is fully faithful then the functor~$\Phi_{\cE_i} \colon \bD(Y_i, \upbeta_i^{-1}) \to \bD(X)$
is also fully faithful.
Moreover, its image is an $S$-linear admissible subcategory in~$\bD(X)$.
\item
\label{item:linear-so}
If for every geometric point~$s \in S$ the subcategories~$\Phi_{\upeta_s^*\cE_i}(\bD({Y_i}_s, \upbeta_i^{-1}\vert_{{Y_i}_s}))$ 
are semiorthogonal in~$\bD(X_s)$ for~$i = i_1, i_2$,
then also the subcategories~$\Phi_{\cE_i}(\bD(Y_i,\upbeta_i^{-1}))$ 
are semiorthogonal in~$\bD(X)$ for~$i = i_1, i_2$.
\item
\label{item:linear-sod}
If for every geometric point~$s \in S$ there is a semiorthogonal decomposition
\begin{equation*}
\bD(X_s) = \Big\langle \Phi_{\upeta_s^*\cE_1}(\bD({Y_1}_s, \upbeta_1^{-1}\vert_{{Y_1}_s})), \dots, 
\Phi_{\upeta_s^*\cE_k}(\bD({Y_k}_s, \upbeta_k^{-1}\vert_{{Y_k}_s})) \Big\rangle
\end{equation*}
then also there is an $S$-linear semiorthogonal decomposition
\begin{equation*}
\bD(X) = \Big\langle \Phi_{\cE_1}(\bD(Y_1,\upbeta_1^{-1})), \dots, \Phi_{\cE_k}(\bD(Y_k,\upbeta_k^{-1})) \Big\rangle.
\end{equation*}
\end{renumerate}
\end{proposition}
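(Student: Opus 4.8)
The plan is to reduce all three statements to the general criterion of~\cite{K11} (more precisely, \cite{K06}) for recognizing $S$-linear semiorthogonal decompositions from fiberwise data, adapting it to the twisted setting via the functoriality of twisted sheaves recorded in Lemma~\ref{lemma:twisted-pb-pf}. First I would make precise in what sense the kernels $\cE_i$ define $S$-linear Fourier--Mukai functors $\Phi_{\cE_i}\colon \bD(Y_i,\upbeta_i^{-1})\to\bD(X)$: one convolves with $\cE_i$ over $Y_i$, i.e.\ $\Phi_{\cE_i}(-)=\pr_{X*}(\pr_{Y_i}^*(-)\otimes\cE_i)$, where the pullback $\pr_{Y_i}^*$ lands in $\pr_{Y_i}^*(\upbeta_i^{-1})$-twisted sheaves, the tensor with the $\pr_{Y_i}^*(\upbeta_i)$-twisted object $\cE_i$ untwists (Lemma~\ref{lemma:twisted-sd-ld}), and $\pr_{X*}$ then produces an honest object of $\bD(X)$; the projection formula makes this functor $S$-linear. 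The point to keep in mind is that the whole construction is compatible with arbitrary base change $T\to S$, and in particular with the base change to a geometric point $s\in S$, under which $\Phi_{\cE_i}$ specializes to $\Phi_{\upeta_s^*\cE_i}$ — this compatibility is exactly \cite[\S\S2.2--2.3]{Cal} combined with flat base change for the projection.

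For part~\ref{item:linear-ff} I would argue that full faithfulness of an $S$-linear Fourier--Mukai functor can be checked fiberwise. The criterion is: $\Phi_{\cE_i}$ is fully faithful if and only if the natural transformation $\Phi_{\cE_i}^!\circ\Phi_{\cE_i}\to\id$ (unit/counit adjunction) is an isomorphism, and this is a statement about a cone being zero; since $X,Y_i\to S$ are smooth and projective, this cone is a perfect complex on $Y_i\times_S Y_i$ relative to $S$, so it vanishes if and only if its derived restriction to every geometric fiber vanishes. On the fiber over $s$ this restriction is precisely the corresponding cone for $\Phi_{\upeta_s^*\cE_i}$, which vanishes by hypothesis. (Alternatively, and perhaps cleaner, one can cite the criterion of~\cite{K11} directly: an $S$-linear functor with $S$-perfect kernel between derived categories of smooth projective $S$-schemes is fully faithful iff it is fully faithful on every fiber.) Once $\Phi_{\cE_i}$ is fully faithful, its image $\cA_i\coloneqq\Phi_{\cE_i}(\bD(Y_i,\upbeta_i^{-1}))$ is $S$-linear because $\Phi_{\cE_i}$ is $S$-linear, and it is admissible because $\Phi_{\cE_i}$ has both adjoints (again by smoothness and properness over $S$, so that $\cE_i$ is relatively perfect and the convolution with $\cE_i^\vee$ twisted by the relative canonical sheaves gives the adjoints).

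For part~\ref{item:linear-so}, semiorthogonality $\Hom(\Phi_{\cE_{i_1}}(-),\Phi_{\cE_{i_2}}(-))=0$ is again a vanishing statement: using the left adjoint of $\Phi_{\cE_{i_2}}$ it becomes the vanishing of an $S$-linear $\Hom$-object, i.e.\ of $\pr_{S*}$ of a relatively perfect complex on $Y_{i_1}\times_S Y_{i_2}$, which holds iff it holds after restriction to every geometric point $s$ — and there it is exactly the semiorthogonality of $\Phi_{\upeta_s^*\cE_{i_1}}$ against $\Phi_{\upeta_s^*\cE_{i_2}}$, assumed in the hypothesis. For part~\ref{item:linear-sod}, having established by \ref{item:linear-ff} and \ref{item:linear-so} that the $\cA_i$ are $S$-linear admissible subcategories that are pairwise semiorthogonal in the prescribed order, it remains to check that together they generate $\bD(X)$; equivalently, that the right orthogonal $\langle\cA_1,\dots,\cA_k\rangle^\perp$ is zero. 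This orthogonal is again an $S$-linear admissible subcategory, so it is of the form $\bD(X)$ cut out by a family of kernels, and — this is the key input from~\cite{K11} — an $S$-linear admissible subcategory of $\bD(X)$ is zero iff its base change to every geometric point is zero. But over $s$ the orthogonal base-changes into $\langle{\cA_1}_s,\dots,{\cA_k}_s\rangle^\perp$ inside $\bD(X_s)$, which is zero by the assumed fiberwise semiorthogonal decomposition. Hence $\langle\cA_1,\dots,\cA_k\rangle=\bD(X)$, as desired.

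The main obstacle I anticipate is purely bookkeeping rather than conceptual: one must be careful that all the kernels, adjoints, and cones stay relatively perfect over $S$ (so that "check on fibers" is legitimate) and that every twist is tracked correctly under pullback, tensor, and pushforward — in particular that $\Phi_{\cE_i}$ really lands in the \emph{untwisted} category $\bD(X)$ and that its adjoints are again twisted Fourier--Mukai functors with the evident kernels. All of this is standard once one invokes the twisted base-change and projection-formula package of~\cite{Cal} and the $S$-linear full-faithfulness/generation criterion of~\cite{K11}; I would state those as black boxes and spend the written proof only on verifying the twist compatibilities and the fiberwise reductions.
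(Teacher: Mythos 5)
Your proposal is correct and follows essentially the same route as the paper, whose proof of this proposition is just a citation of \cite[Proposition~2.44]{K06} and \cite[proof of Theorem~5.2]{K19}: those references establish exactly the fiberwise criteria (full faithfulness, semiorthogonality, and generation checked on geometric fibers via base change of $S$-linear decompositions as in \cite{K11}, with the twisted-sheaf bookkeeping from \cite{Cal}) that you reconstruct. Apart from minor slips (the unit of adjunction goes $\id \to \Phi^!\Phi$, and semiorthogonality is most cleanly encoded by vanishing of the kernel of the composed functor on $Y_{i_1}\times_S Y_{i_2}$ rather than of its pushforward to $S$, though your version also works fiber by fiber), your argument is the intended one.
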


\begin{proof}
See~\cite[Proposition~2.44]{K06} and~\cite[proof of Theorem~5.2]{K19}.
\end{proof}

We will often use the special case of Proposition~\ref{prop:relative-sod} where~$Y_i = S$.
If~$p \colon X \to S$ is a smooth projective morphism and~$\upbeta_1,\dots,\upbeta_k \in \Br(S)$,
we say that a collection of objects
\begin{equation*}
\cE_1 \in \bD(X,p^*(\upbeta_1)), \dots, \cE_k \in \bD(X,p^*(\upbeta_k))
\end{equation*}
is a {\sf relative exceptional collection} if the collection~$\cE_1\vert_{X_s}, \dots, \cE_k\vert_{X_s} \in \bD(X_s)$ 
is exceptional for each geometric point~$s \in S$.

\begin{corollary}
\label{cor:relative-exceptional}
If~$\cE_i \in \bD(X,p^*(\upbeta_i))$, $1 \le i \le k$, is a relative exceptional collection,
the functors 
\begin{equation*}
\Phi_{\cE_i} \colon \bD(S, \upbeta_i^{-1}) \to \bD(X),
\qquad 
\cF \mapsto \cE \otimes p^*(\cF)
\end{equation*}
are fully faithful and the subcategories~$\cE_i \otimes \bD(S,\upbeta_i^{-1}) \coloneqq \Phi_{\cE_i}(\bD(S,\upbeta_i^{-1}))$, $1 \le i \le k$,
form a semiorthogonal collection of admissible $S$-linear subcategories in~$\bD(X)$.
\end{corollary}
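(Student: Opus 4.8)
The plan is to deduce Corollary~\ref{cor:relative-exceptional} directly from Proposition~\ref{prop:relative-sod}\ref{item:linear-ff} and~\ref{item:linear-so} applied to the special case $Y_i = S$ with $q_i = \id_S$. First I would make the identification between the Fourier--Mukai setup of the Proposition and the tensor-pullback functors of the Corollary explicit: for $Y_i = S$ we have $X \times_S Y_i = X$, with $\pr_{Y_i} = p$ and $\pr_{X}$ the identity, so a $\pr_{Y_i}^*(\upbeta_i)$-twisted object on $X \times_S Y_i$ is precisely a $p^*(\upbeta_i)$-twisted object $\cE_i \in \bD(X, p^*(\upbeta_i))$, and the associated $S$-linear Fourier--Mukai functor $\Phi_{\cE_i} \colon \bD(S,\upbeta_i^{-1}) \to \bD(X)$ is $\cF \mapsto \cE_i \otimes p^*(\cF)$. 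Likewise, for a geometric point $s \in S$, the embedding $\upeta_s \colon X_s \times \{s\} \cong X_s \hookrightarrow X$ is just the fiber inclusion, $\upeta_s^*\cE_i = \cE_i\vert_{X_s}$, and $\Phi_{\upeta_s^*\cE_i} \colon \bD(\{s\}, \upbeta_i^{-1}\vert_{\{s\}}) \to \bD(X_s)$ sends the one-dimensional vector space $\kk$ to $\cE_i\vert_{X_s}$; since $\Br(\{s\}) = 0$, the source is simply $\bD(\mathrm{Vect}_\kk)$, and the image of $\Phi_{\upeta_s^*\cE_i}$ is the subcategory generated by $\cE_i\vert_{X_s}$.

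Next I would unwind what the hypotheses become under this dictionary. The functor $\Phi_{\upeta_s^*\cE_i} \colon \bD(\mathrm{Vect}_\kk) \to \bD(X_s)$ is fully faithful if and only if $\mathrm{Ext}^\bullet_{X_s}(\cE_i\vert_{X_s}, \cE_i\vert_{X_s}) = \kk$ concentrated in degree $0$, i.e.\ precisely that $\cE_i\vert_{X_s}$ is an exceptional object; and the images of $\Phi_{\upeta_s^*\cE_{i_1}}$ and $\Phi_{\upeta_s^*\cE_{i_2}}$ are semiorthogonal (in the direction matching the ordering) exactly when $\mathrm{Ext}^\bullet_{X_s}(\cE_{i_2}\vert_{X_s}, \cE_{i_1}\vert_{X_s}) = 0$. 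Thus the assumption that $\cE_1\vert_{X_s}, \dots, \cE_k\vert_{X_s}$ is an exceptional collection in $\bD(X_s)$ for every geometric point $s$ is exactly the collection of fiberwise hypotheses needed in Proposition~\ref{prop:relative-sod}\ref{item:linear-ff} and~\ref{item:linear-so}. Applying part~\ref{item:linear-ff} to each index $i$ gives that $\Phi_{\cE_i} \colon \bD(S, \upbeta_i^{-1}) \to \bD(X)$ is fully faithful with $S$-linear admissible image $\cE_i \otimes \bD(S, \upbeta_i^{-1})$, and applying part~\ref{item:linear-so} to each pair $i_1 < i_2$ gives the semiorthogonality of these subcategories in the appropriate order.

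I do not expect any serious obstacle here: the corollary is essentially a translation exercise. The one point requiring a little care is checking that $\Phi_{\cE_i}$ as defined by the Fourier--Mukai formalism of Proposition~\ref{prop:relative-sod} genuinely coincides with the naive tensor-pullback functor $\cF \mapsto \cE_i \otimes p^*(\cF)$ — this follows from the base-change/projection-formula compatibilities recorded in Lemma~\ref{lemma:twisted-pb-pf}, once one observes that the kernel-composition over $X \times_S S = X$ degenerates to $p^*(-) \otimes \cE_i$. A second minor point is the identification $\bD(\{s\}, \upbeta_i^{-1}\vert_{\{s\}}) \simeq \bD(\mathrm{Vect}_\kk)$, which holds because the Brauer group of the spectrum of an algebraically closed field is trivial, so the fiberwise functor really is ``pick out the object $\cE_i\vert_{X_s}$'' and the Ext-computation above is the standard criterion for an exceptional collection. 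With these identifications in place the statement follows verbatim from Proposition~\ref{prop:relative-sod}.
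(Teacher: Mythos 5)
Your proposal is correct and is exactly the argument the paper intends: the corollary is stated as the special case $Y_i = S$ of Proposition~\ref{prop:relative-sod}, with the Fourier--Mukai kernel on $X \times_S S = X$ degenerating to the tensor-pullback functor $\cF \mapsto \cE_i \otimes p^*(\cF)$, and the fiberwise hypotheses of parts \ref{item:linear-ff} and \ref{item:linear-so} reducing precisely to exceptionality and semiorthogonality of the restricted collection on each geometric fiber. The dictionary you spell out (including the triviality of the Brauer class over a geometric point) is the same translation the paper leaves implicit.
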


\subsection{Moduli spaces and universal bundles}
\label{ss:moduli-definition}

Let~$p \colon X \to S$ be a smooth projective morphism,
let~$H \in \Pic(X)$ be a relatively ample divisor class,
and let~$\rP(t) \in \QQ[t]$ be a polynomial.
We denote by
\begin{equation*}
f \colon \rM_{X/S,H}(\rP) \to S
\end{equation*}
the relative moduli space of Gieseker semistable sheaves on fibers of~$p$
with Hilbert polynomial~$\rP$ (with respect to the polarization given by the restriction of~$H$). 
This is the coarse moduli space for (the \'etale sheafification of) the functor~$\fM_{X/S,H}(\rP)$ 
from the category of schemes over~$S$ to the category of groupoids
that associates to a morphism~$T \to S$ the groupoid of all sheaves~$E$ on~$X \times_S T$ which are flat over~$T$
and such that for each geometric point~$t \in T$ the sheaf~~$E_t$ on~$X_t$ is $H\vert_{X_t}$-semistable and has Hilbert polynomial~$\rP$.

We refer to~\cite[\S4]{HL} for the details of the definition and basic properties of the moduli space
(and~\cite[Theorem~4.3.7]{HL} for the existence)
and to~\cite{Maruyama,Simpson} for technical details (especially in the relative case).
Here we state some of the most important properties.
The first is immediate from the definition.

\begin{theorem}[{\cite[Theorem~4.3.4]{HL}}]
\label{thm:moduli-existence}
The natural morphism~$f \colon \rM_{X/S,H}(\rP) \to S$ is projective and is compatible with base change, i.e.,
\begin{equation*}
\rM_{X/S,H}(\rP) \times_S S' \cong \rM_{X \times_S S'/S',H}(\rP)
\end{equation*}
for any morphism~$S' \to S$.
In particular, the geometric fibers of~$\rM_{X/S,H}(\rP)$ 
are the moduli spaces of semistable sheaves on the corresponding geometric fibers of~$X \to S$.
\end{theorem}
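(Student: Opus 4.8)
The plan is to recall the GIT construction of the relative moduli space and then to track its behaviour under base change. First I would use boundedness: since~$S$ is of finite type over~$\kk$, there is an integer~$m \gg 0$, which can be chosen uniformly over~$S$, such that for every geometric point~$s \in S$ every~$H\vert_{X_s}$-semistable sheaf on~$X_s$ with Hilbert polynomial~$\rP$ is $m$-regular; such a sheaf is then a quotient of~$\cO_{X_s}(-m)^{\oplus N}$ with~$N = \rP(m)$, inducing an isomorphism on global sections of the $m$-th twist. This realizes the groupoid of semistable sheaves as a locally closed~$\GL_N$-invariant subscheme~$R$ of the relative Quot scheme~$\mathrm{Quot}_{X/S}\bigl(\cO_X(-m)^{\oplus N},\rP\bigr)$, which is projective over~$S$ by Grothendieck; let~$\oR$ be the closure of~$R$ in it.

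Next, using the Grothendieck embedding of the relative Quot scheme into a relative Grassmannian~$\Gr_S$, twisted by a further power of~$H$, one gets on~$\oR$ an~$\SL_N$-linearized very ample class, and the core GIT computation presented in~\cite[\S4]{HL} shows that, after increasing~$m$ if necessary, the GIT-semistable locus of~$\oR$ coincides with~$R$ and a point of~$R$ is GIT-stable precisely when the corresponding sheaf is Gieseker-stable. One then \emph{defines}~$\rM_{X/S,H}(\rP) \coloneqq R /\!\!/ \SL_N$ as the~$\Proj$ over~$S$ of the graded ring of~$\SL_N$-invariant sections of the linearization; this is a good quotient of~$R$ and is quasi-projective over~$S$. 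Properness of~$\rM_{X/S,H}(\rP) \to S$ then follows from the valuative criterion, which in this setting is exactly Langton's semistable-reduction theorem for sheaves; combined with quasi-projectivity this yields the asserted projectivity.

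Checking compatibility with an arbitrary base change~$S' \to S$ is the main point, and I expect it to be the genuine obstacle. The functor~$\fM_{X/S,H}(\rP)$ is tautologically compatible with base change, being defined by a fibrewise condition on the test scheme, so~$\fM_{X \times_S S'/S',H}(\rP) = \fM_{X/S,H}(\rP) \times_S S'$; moreover the relative Quot scheme, the~$\SL_N$-action, the Grothendieck embedding and the linearization all pull back correctly, giving an equivariant isomorphism~$\oR_{X \times_S S'/S'} \cong \oR \times_S S'$. The delicate issue is that formation of invariant sections, hence of their~$\Proj$, need not commute with base change in general. This is handled by two observations: the integer~$m$ may be chosen uniformly for~$X \times_S S'/S'$ as well (again~$S'$ is of finite type over~$\kk$, and the family over~$S'$ is pulled back from that over~$S$), so GIT-semistability continues to match sheaf-semistability over~$S'$; and the quotient occurring here is a \emph{uniform} categorical quotient in the sense of Seshadri's geometric invariant theory over a Noetherian base, i.e.\ it remains a good (indeed categorical) quotient after any base change --- this is the content extracted in~\cite{Maruyama,Simpson}. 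Granting these, one obtains the canonical isomorphism~$\rM_{X/S,H}(\rP) \times_S S' \cong \rM_{X \times_S S'/S',H}(\rP)$.

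Finally, applying this isomorphism over the geometric points of~$S$ identifies the geometric fibres of~$f$ with the moduli spaces of semistable sheaves on the corresponding geometric fibres of~$X \to S$, which is the last assertion. (In practice one just cites~\cite[\S4]{HL} together with~\cite{Maruyama,Simpson}; the outline above only indicates where the substance lies.)
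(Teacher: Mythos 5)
Your proposal is correct and follows essentially the same route as the paper, which offers no independent argument here: it simply quotes \cite[Theorem~4.3.4]{HL} (with \cite{Maruyama,Simpson} for the relative/base-change aspects), and your sketch is precisely that standard GIT construction via boundedness, the relative Quot scheme, and the quotient by~$\SL_N$. The only minor imprecision is terminological: Seshadri's ``uniform'' quotient only guarantees compatibility with flat base change, and for arbitrary~$S' \to S$ one needs that in characteristic zero the good quotient is \emph{universal} (linear reductivity makes formation of invariants commute with any base change), which is exactly the setting of the paper, so nothing is lost.
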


\begin{remark}
\label{rem:moduli-chern-hilbert}
Note that the Hilbert polynomial of a sheaf is determined by its Chern classes via the Hirzebruch--Riemann--Roch theorem.
However, different values of Chern classes may give rise to the same Hilbert polynomials.
Anyway, we will sometimes abuse notation by writing
\begin{equation*}
\rM_{X/S,H}(r;\rc_1,\rc_2,\dots,\rc_n) \coloneqq
\rM_{X/S,H}(\rP_{r;\,\rc_1,\rc_2,\dots,\rc_n}),
\end{equation*}
where~$\rP_{r;\,\rc_1,\rc_2,\dots,\rc_n}$ is the Hilbert polynomial of sheaves with the given rank and Chern classes,
even if the listed Chern classes are not determined by the Hilbert polynomial.
Moreover, when~$X/S$ is a relative Fano threefold with the geometric Picard number of fibers equal to~$1$,
we will often use notation~$L_X \coloneqq \frac1{H_X^3}H_X^2$ for the class of a line on~$X$ and~$P_X$ for the class of a point. 
\end{remark}

We will need the following general result.

\begin{theorem}[{\cite[Proposition~6.7]{Maruyama}, \cite[Corollary~4.5.2]{HL}}]
\label{thm:moduli-smooth}
Let~$p \colon X \to S$ be a smooth projective morphism.
Let~$\cE$ be a stable vector bundle on a geometric fiber~$X_s$ of~$p$ with Hilbert polynomial~$\rP$.
Assume~$\Ext^2(\cE,\cE) = 0$.
Then
\begin{renumerate}
\item
the morphism~$f \colon \rM_{X/S,H}(\rP) \to S$ is smooth at~$[\cE]$, and
\item
the relative tangent space of~$f$ at~$[\cE]$ is isomorphic to~$\Ext^1(\cE,\cE)$.
\end{renumerate}
\end{theorem}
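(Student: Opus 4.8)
The plan is to verify the infinitesimal lifting criterion for smoothness of $f \colon \rM_{X/S,H}(\rP) \to S$ at the $\bkk$-point~$[\cE]$. Since $f$ is projective by Theorem~\ref{thm:moduli-existence}, hence locally of finite presentation, it will be smooth at~$[\cE]$ once we show: for every surjection $A' \twoheadrightarrow A$ of local Artinian $\bkk$-algebras with residue field~$\bkk$ and square-zero kernel~$I$, and every commutative square built from a map $a \colon \Spec A \to \rM_{X/S,H}(\rP)$ (with closed point mapping to~$[\cE]$) together with a map $\Spec A' \to S$, there is a lift $\Spec A' \to \rM_{X/S,H}(\rP)$. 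Factoring $A' \twoheadrightarrow A$ into a chain of small extensions, we may assume $\mathfrak{m}_{A'} I = 0$, so that $I$ is a finite-dimensional $\bkk$-vector space.

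The key --- and, I expect, trickiest --- step is to replace the point~$a$ of the \emph{coarse} space by an honest family of sheaves. As $\Spec A$ has a single point, which maps to~$[\cE]$, and $\cE$ is stable, $a$ factors through the open subscheme parametrizing stable sheaves, over which the moduli problem $\fM_{X/S,H}(\rP)$ is a $\Gm$-gerbe over $\rM_{X/S,H}(\rP)$ (this is where simplicity of stable sheaves, $\End(\cE) = \bkk$, enters). The obstruction to lifting~$a$ to a point of $\fM_{X/S,H}(\rP)$ is the restriction of the class of this gerbe to $\Spec A$, an element of $\rH^2_\et(\Spec A,\Gm)$; this group vanishes because $A$ is an Artinian local ring with algebraically closed residue field of characteristic zero (e.g.\ via the logarithm, $\rH^2_\et(\Spec A,\Gm) = \rH^2_\et(\Spec\bkk,\Gm) = 0$). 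Hence $a$ is induced by an $A$-flat coherent sheaf $E_A$ on $X_A \coloneqq X\times_S\Spec A$ with $E_A\vert_{X_s} \cong \cE$, and $E_A$ is unique up to twist by $\Pic(\Spec A) = 0$. (Alternatively one can avoid the gerbe/Brauer language and use the construction of $\rM_{X/S,H}(\rP)$ as an \'etale-local quotient of a relative Quot scheme by a free $\PGL$-action, transporting the deformation theory across this quotient.)

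It remains to lift $E_A$ to an $A'$-flat coherent sheaf $E_{A'}$ on $X_{A'} \coloneqq X\times_S\Spec A'$: the classifying map $\Spec A' \to \fM_{X/S,H}(\rP) \to \rM_{X/S,H}(\rP)$ attached to $E_{A'}$ is then the required lift, since it lies over $\Spec A' \to S$ by construction, $E_{A'}$ is $A'$-flat by the choice of deformation problem, and its unique geometric fibre is $\cE$, which is stable with Hilbert polynomial~$\rP$. As $p \colon X \to S$ is flat, $X_A \hookrightarrow X_{A'}$ is a square-zero thickening with ideal sheaf $\cO_{X_s}\otimes_\bkk I$, and the classical deformation theory of coherent sheaves along a square-zero thickening (see, e.g., \cite[Ch.~2]{HL}) places the obstruction to the existence of $E_{A'}$ in
\begin{equation*}
\Ext^2_{X_s}(\cE,\cE)\otimes_\bkk I = 0,
\end{equation*}
while the set of lifts, once nonempty, is a torsor under $\Ext^1_{X_s}(\cE,\cE)\otimes_\bkk I$. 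This proves~(i).

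For~(ii), the relative tangent space of~$f$ at~$[\cE]$ is the Zariski tangent space at~$[\cE]$ to the fibre $f^{-1}(s)$, which by the base-change property of Theorem~\ref{thm:moduli-existence} is $\rM_{X_s,H\vert_{X_s}}(\rP)$. Running the argument above with $A = \bkk$ and $A' = \bkk[\varepsilon]/(\varepsilon^2)$ identifies this tangent space with the $\bkk$-vector space of first-order deformations of~$\cE$ on~$X_s$, namely $\Ext^1_{X_s}(\cE,\cE)$, as claimed.
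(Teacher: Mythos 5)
Your proof is correct, but note that the paper itself does not prove this statement: it is quoted from \cite[Proposition~6.7]{Maruyama} and \cite[Corollary~4.5.2]{HL}, whose argument runs through the GIT construction rather than through the moduli stack. There one shows that the relative Quot scheme $R^{s}$ is smooth over~$S$ at a point $[q \colon \cH = \cO_{X_s}(-mH)^{\oplus N} \twoheadrightarrow \cE]$, because the obstruction space $\Ext^1(\Ker q,\cE)$ is identified with $\Ext^2(\cE,\cE)=0$ via the long exact sequence for $0 \to \Ker q \to \cH \to \cE \to 0$ and $m$-regularity (which kills $\Ext^{\ge 1}(\cH,\cE)$), and then one descends smoothness and computes the tangent space along the principal $\PGL_N$-bundle $R^{s} \to \rM^{s}$, using $0 \to \Hom(\cE,\cE) \to \Hom(\cH,\cE) \to \Hom(\Ker q,\cE) \to \Ext^1(\cE,\cE) \to 0$. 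You instead apply the deformation theory of~$\cE$ directly and transfer it to the coarse space through the $\Gm$-gerbe structure of the stack of stable sheaves over~$\rM^{s}$ (equivalently, through the \'etale-local universal family of Theorem~\ref{theorem:moduli-universal}), using that an Artinian local ring with algebraically closed residue field is strictly henselian, so $\rH^2_\et(\Spec A,\Gm)=0$ and $\Pic(\Spec A)=0$; this is what makes Artin-ring points of the coarse space through $[\cE]$ the same as flat families up to isomorphism, and it is exactly what both the surjectivity and the injectivity in your tangent-space identification rest on. This route is legitimate and arguably cleaner — it avoids the Quot-scheme bookkeeping — at the price of invoking the gerbe/principal-bundle structure on the stable locus (HL Corollary~4.3.5 and Proposition~4.6.2, earlier results, so there is no circularity) and two reductions you pass over quickly and should cite precisely: the equivalence, for a morphism locally of finite presentation, between smoothness at a point and formal smoothness of the completed local ring tested on Artinian algebras (EGA~IV, \S17.5), and the residue-field bookkeeping required because $s$ is only a geometric point, so one must first reduce to the case where $[\cE]$ and its image in $S$ are rational points (base change plus descent of smoothness). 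With those standard references supplied, both (i) and (ii) follow exactly as you say.
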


The following two results will be used in the paper to identify some important moduli spaces.

\begin{corollary}
\label{cor:moduli-etale}
Let~$p \colon X \to S$ be a smooth projective morphism.
Let~$\rM^\circ \subset \rM_{X/S,H}(\rP)$ be an open subscheme in the relative moduli space.
Assume that
\begin{aenumerate}
\item 
\label{item:moduli-bijective}
the natural morphism~$\rM^\circ \to S$ is bijective on geometric points, and 
\item 
\label{item:moduli-exceptional}
every sheaf on a fiber of~$p$ corresponding to a geometric point of~$\rM^\circ$ is exceptional. 
\end{aenumerate}
Then the morphism~$\rM^\circ \to S$ is an isomorphism.
\end{corollary}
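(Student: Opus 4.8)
The plan is to show that the restriction $f|_{\rM^\circ}\colon \rM^\circ \to S$ is an \'etale morphism which is bijective on geometric points, and then to invoke the elementary fact that such a morphism is an isomorphism. Note that $\rM^\circ \to S$ is separated and of finite type, being an open subscheme of the projective $S$-scheme $\rM_{X/S,H}(\rP)$ (Theorem~\ref{thm:moduli-existence}).

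First I would verify \'etaleness pointwise. Fix a geometric point $m \in \rM^\circ$ lying over $s \in S$, and let $\cE$ be the corresponding sheaf on the fiber $X_s$; by hypothesis~\ref{item:moduli-exceptional} it is exceptional, so $\Hom(\cE,\cE) = \kk$ and $\Ext^i(\cE,\cE) = 0$ for all $i > 0$. In particular $\cE$ is simple, hence a \emph{stable} sheaf (a simple semistable sheaf is stable), and $\Ext^2(\cE,\cE) = 0$. Therefore Theorem~\ref{thm:moduli-smooth} applies and tells us that $f$ is smooth at $m$ with relative tangent space $\Ext^1(\cE,\cE) = 0$; equivalently, $f$ is \'etale at $m$. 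Since this holds at every geometric point of $\rM^\circ$ and \'etaleness is an open condition on the source, the morphism $f|_{\rM^\circ}\colon \rM^\circ \to S$ is \'etale.

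It remains to deduce that an \'etale morphism which is bijective on geometric points is an isomorphism. Being \'etale, $f$ is unramified, so its diagonal $\Delta_f \colon \rM^\circ \to \rM^\circ \times_S \rM^\circ$ is an open immersion; injectivity of $f$ on geometric points shows that the image of $\Delta_f$ contains every closed point of the scheme $\rM^\circ \times_S \rM^\circ$, which is of finite type over~$\kk$ and hence Jacobson, so $\Delta_f$ is an isomorphism and $f$ is a monomorphism. An \'etale monomorphism is an open immersion, and an open immersion that is surjective — which $f$ is, again by~\ref{item:moduli-bijective} — is an isomorphism. This proves the corollary.

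The only delicate point I anticipate is the first step, namely confirming that the exceptional sheaves in $\rM^\circ$ give smooth points so that Theorem~\ref{thm:moduli-smooth} is applicable: this uses that an exceptional sheaf is simple, hence stable, and that its deformations are unobstructed because $\Ext^2(\cE,\cE)=0$. In the applications in this paper these sheaves will in fact be vector bundles, so Theorem~\ref{thm:moduli-smooth} applies verbatim; in general one uses that the same deformation-theoretic statement holds for simple sheaves. Everything after that — passing from pointwise \'etaleness to an \'etale morphism, and from \'etale-plus-bijective to an isomorphism — is routine.
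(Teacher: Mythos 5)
Your proof is correct and follows essentially the same route as the paper: apply Theorem~\ref{thm:moduli-smooth} at each point (exceptional $\Rightarrow$ simple $\Rightarrow$ stable, with $\Ext^1=\Ext^2=0$) to get that $\rM^\circ\to S$ is \'etale of relative dimension zero, and then conclude that a bijective \'etale morphism is an isomorphism — the paper simply cites \cite[Tag 02LC]{SP} for this last step, where you spell out the monomorphism/open-immersion argument by hand.
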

\begin{proof}
The morphism~$\rM^\circ \to S$ is smooth of relative dimension zero by Theorem~\ref{thm:moduli-smooth}, hence \'etale.
Since it is also bijective, is an isomorphism by~\cite[\href{https://stacks.math.columbia.edu/tag/02LC}{Tag 02LC}]{SP}.
\end{proof}

\begin{corollary}
\label{cor:moduli-curve}
Let~$p \colon X \to S$ be a smooth projective morphism.
Let~$\Gamma \to S$ be a smooth projective curve and let~$\cE \in \Coh(X \times_S \Gamma)$ 
be a family of $H$-stable sheaves on fibers of~$p$ with Hilbert polynomial~$\rP$ parameterized by~$\Gamma$.
If the Fourier--Mukai functor~$\Phi_\cE \colon \bD(\Gamma) \to \bD(X)$ is fully faithful
then the corresponding morphism~$\phi_\cE \colon \Gamma \to \rM_{X/S,H}(\rP)$ is an isomorphism onto an open subscheme.
\end{corollary}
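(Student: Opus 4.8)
The plan is to show, working fiber by fiber over~$S$, that the classifying $S$-morphism~$\phi_\cE$ is an open immersion. Since both~$\Gamma \to S$ and~$\rM \coloneqq \rM_{X/S,H}(\rP) \to S$ are projective (Theorem~\ref{thm:moduli-existence}), the morphism~$\phi_\cE$ is proper. For a geometric point~$s \in S$ the base change~$\Phi_{\cE_s} \colon \bD(\Gamma_s) \to \bD(X_s)$ of~$\Phi_\cE$ is a Fourier--Mukai functor whose kernel is the restriction~$\cE_s$ of~$\cE$, a~$\Gamma_s$-flat family of $H$-stable sheaves on~$X_s$; since full faithfulness of an $S$-linear Fourier--Mukai functor can be checked on geometric fibers (\cite[Proposition~2.44]{K06}, cf.\ Proposition~\ref{prop:relative-sod}\ref{item:linear-ff}), each~$\Phi_{\cE_s}$ is fully faithful. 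Moreover, for a closed point~$\gamma \in \Gamma_s$ one computes~$\Phi_{\cE_s}(\cO_\gamma) \cong \cE_\gamma \coloneqq \cE_s\vert_{X_s \times \{\gamma\}}$ (no derived corrections arise because~$X_s \times \Gamma_s \to \Gamma_s$ is flat and~$\cE_s$ is~$\Gamma_s$-flat), and the fiber~$\phi_{\cE,s}$ of~$\phi_\cE$ over~$s$ is the classifying morphism~$\Gamma_s \to \rM_s \cong \rM_{X_s,H}(\rP)$.

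First I would extract from fiberwise full faithfulness that~$\phi_{\cE,s}$ is injective on closed points: for~$\gamma_1 \ne \gamma_2$ in~$\Gamma_s$ the two skyscrapers have disjoint support, so
\begin{equation*}
\Hom_{X_s}(\cE_{\gamma_1},\cE_{\gamma_2}) \cong \Hom_{\bD(\Gamma_s)}(\cO_{\gamma_1},\cO_{\gamma_2}) = 0,
\end{equation*}
hence~$\cE_{\gamma_1} \not\cong \cE_{\gamma_2}$, whereas two $H$-stable sheaves with the same image in the coarse moduli space~$\rM_s$ would be isomorphic. Next, taking~$\gamma_1 = \gamma_2 = \gamma$, full faithfulness gives
\begin{equation*}
\RHom_{X_s}(\cE_\gamma,\cE_\gamma) \cong \RHom_{\Gamma_s}(\cO_\gamma,\cO_\gamma),
\end{equation*}
and the right-hand side has cohomology~$\kappa(s)$ in degrees~$0$ and~$1$ and vanishes in all other degrees, because~$\Gamma_s$ is a smooth curve. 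Thus each~$\cE_\gamma$ is a stable sheaf with~$\Ext^2_{X_s}(\cE_\gamma,\cE_\gamma) = 0$ and~$\dim_{\kappa(s)}\Ext^1_{X_s}(\cE_\gamma,\cE_\gamma) = 1$.

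Now I would invoke deformation theory. By Theorem~\ref{thm:moduli-smooth} (applied to the stable sheaves~$\cE_\gamma$, which have vanishing~$\Ext^2$) the morphism~$\rM \to S$ is smooth of relative dimension~$1$ at every point of the image~$\phi_\cE(\Gamma)$, so there is an open~$\rM^\circ \subset \rM$ containing~$\phi_\cE(\Gamma)$ with~$\rM^\circ \to S$ smooth of relative dimension~$1$. Then~$\phi_\cE$ factors as~$\Gamma \to \rM^\circ \hookrightarrow \rM$, and~$\phi_\cE \colon \Gamma \to \rM^\circ$ is a proper $S$-morphism between schemes smooth over~$S$ of relative dimension~$1$. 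Over a geometric point~$s$ it is a non-constant morphism from the smooth projective curve~$\Gamma_s$ to the smooth curve~$\rM^\circ_s$ which is injective on closed points, hence finite of degree~$1$ onto its image; the image is a union of connected components of~$\rM^\circ_s$, which is normal, so by Zariski's main theorem~$\phi_{\cE,s}$ is an isomorphism onto it, i.e.\ an open immersion. Therefore~$\phi_\cE$ is flat over~$\rM^\circ$ (the fibral criterion for flatness applies: $\Gamma$ and~$\rM^\circ$ are flat over~$S$ and each~$\phi_{\cE,s}$ is flat), it is of finite presentation, and it is a monomorphism on every geometric fiber over~$S$, hence a monomorphism; and a flat monomorphism of finite presentation is an open immersion (see \cite{SP}). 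This exhibits~$\Gamma$ as an open subscheme of~$\rM^\circ$, hence of~$\rM$.

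The hard part is the second paragraph: deducing from the bare full faithfulness of~$\Phi_\cE$ the precise structure of~$\RHom_{X_s}(\cE_\gamma,\cE_\gamma)$, and in particular the vanishing~$\Ext^2_{X_s}(\cE_\gamma,\cE_\gamma) = 0$ that unlocks Theorem~\ref{thm:moduli-smooth}; everything afterwards is formal (propagation of smoothness and flatness along the fibers over~$S$, Zariski's main theorem, and the characterization of open immersions). One further routine point is that Theorem~\ref{thm:moduli-smooth}, stated there for stable vector bundles, applies equally to the possibly non-locally-free stable sheaves~$\cE_\gamma$ with~$\Ext^2 = 0$, by the usual obstruction theory for moduli of sheaves.
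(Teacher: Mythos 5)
Your argument is correct, and its deformation-theoretic core is the same as the paper's: full faithfulness yields $\Hom(\cE_{\gamma_1},\cE_{\gamma_2})=0$ for distinct points (injectivity of $\phi_\cE$ on fibers), $\Ext^1(\cE_\gamma,\cE_\gamma)$ one-dimensional (the tangent-space isomorphism) and $\Ext^2(\cE_\gamma,\cE_\gamma)=0$, which unlocks Theorem~\ref{thm:moduli-smooth}. Where you genuinely diverge is the endgame: the paper concludes in one step that $\phi_\cE$ is \'etale (tangent-space isomorphism between $S$-smooth schemes) and injective, hence an open immersion by \cite[Tag 02LC]{SP}; you instead prove that each fiber $\phi_{\cE,s}$ is an open immersion via Zariski's main theorem, then recover flatness of $\phi_\cE$ from the fibral criterion and finish with ``flat monomorphism locally of finite presentation $=$ open immersion''. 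Both routes work; yours is longer but makes explicit the smoothness of $\rM$ along the image and the reduction to fibers, which the paper leaves implicit. Two points to tighten. First, the fiberwise full faithfulness of $\Phi_{\cE_s}$ is the \emph{converse} of what you cite: \cite[Proposition~2.44]{K06} and Proposition~\ref{prop:relative-sod}\ref{item:linear-ff} go from the fibers to the total space; the correct justification is base change for $S$-linear fully faithful functors with admissible image, i.e.\ \cite{K11}, as recalled in \S\ref{ss:db-linear} (note that the paper's own proof tacitly relies on the same fiberwise identification of Ext groups, since $\Ext^\bullet_X(i_{s*}\cE_\gamma,i_{s*}\cE_\gamma)$ differs from $\Ext^\bullet_{X_s}(\cE_\gamma,\cE_\gamma)$). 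Second, the assertion ``monomorphism on every geometric fiber, hence a monomorphism'' should be justified: check universal injectivity on geometric points and unramifiedness via $\Omega_{\Gamma/\rM}$ and Nakayama, both of which are fiberwise conditions; combined with the flatness you established this gives \'etale plus universally injective, which is exactly the paper's appeal to \cite[Tag 02LC]{SP}. Neither issue is a gap in substance, only in citation and phrasing.
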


\begin{proof}
Set~$\rM \coloneqq \rM_{X/S,H}(\rP)$.
The morphism~$\phi_\cE$ is \'etale because for any point~$y \in \Gamma$ the functor~$\Phi_\cE$, being fully faithful, 
induces an isomorphism of tangent spaces
\begin{equation*}
\rT_{y, \Gamma/S} = \Ext^1(\cO_y, \cO_y) \xrightarrow{ \Phi_\cE\ } \Ext^1(\cE_y, \cE_y) = \rT_{[\cE_y], \rM/S}.
\end{equation*}
On the other hand, the morphism~$\Gamma \to \rM$ is injective (again by full faithfulness of~$\Phi_\cU$).
Therefore, it is an open immersion, see~\cite[\href{https://stacks.math.columbia.edu/tag/02LC}{Tag 02LC}]{SP},
i.e., an isomorphism onto an open subscheme.
\end{proof}

We end this subsection with a discussion of the existence of a universal sheaf on the fiber product~\mbox{$X \times_S \rM_{X/S,H}(\rP)$};
in fact, under appropriate assumptions we show it exists as a twisted sheaf.
We use the following result.

\begin{theorem}[{\cite[Proposition~4.6.2]{HL}, \cite[Theorem~1.21(4)]{Simpson}}]
\label{theorem:moduli-universal}
Assume all sheaves classfied by the moduli space~$\rM = \rM_{X/S,H}(\rP)$ are $H$-stable. 
Then \'etale locally on~$\rM$ there exists a universal sheaf~$\cE$.
\end{theorem}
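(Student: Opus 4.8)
\emph{Proof plan.} The plan is to realize $\rM = \rM_{X/S,H}(\rP)$ as a GIT quotient carrying a tautological family, and then to descend that family along an étale‑local section of the quotient map. Following \cite{Maruyama} and \cite{Simpson} in the relative setting, one fixes an integer $m \gg 0$ such that every sheaf appearing in the moduli problem is $m$‑regular on its fiber; then sending a sheaf $E$ on $X_s$ to $H^0(X_s, E(m))$ together with the evaluation surjection $H^0(X_s, E(m)) \otimes \cO_{X_s}(-m) \twoheadrightarrow E$ realizes the family of all such sheaves as a locally closed subscheme $\rR$ of a relative Quot scheme $\rQ = \mathrm{Quot}_{X/S}(\cH \otimes \cO_X(-m), \rP)$, where $\cH = \kk^{N}$ with $N = \rP(m)$. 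The group $\GL_N = \GL(\cH)$ acts on $\rR$ with its center $\Gm$ acting trivially (a scalar only rescales the surjection, hence fixes the quotient), two geometric points of $\rR$ parametrize isomorphic sheaves if and only if they lie in one $\GL_N$‑orbit, and $\rM$ is the good quotient $\rR^{\mathrm{ss}} /\!\!/ \SL_N$; by construction this is compatible with base change (Theorem~\ref{thm:moduli-existence}). On $X \times_S \rQ$ there is the tautological universal quotient $\widetilde{\cF}$, and its restriction to $X \times_S \rR^{\mathrm{ss}}$ is an $\SL_N$‑equivariant family of semistable sheaves on fibers of $p$ on which the central $\Gm \subset \GL_N$ acts with weight $1$.

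Next I would feed in the stability hypothesis. If all semistable sheaves classified by $\rM$ are stable, then $\rR^{\mathrm{ss}} = \rR^{\mathrm{s}}$ and the stabilizer of every geometric point is exactly the central $\Gm$; equivalently $\PGL_N$ acts freely on $\rR^{\mathrm{s}}$, and the quotient morphism $\pi \colon \rR^{\mathrm{s}} \to \rM$ is a principal $\PGL_N$‑bundle (part of the GIT construction in \cite{HL} and \cite{Simpson}). Since $\PGL_N$ is smooth over $\kk$, this torsor admits sections locally in the étale topology: there is an étale surjection $V \to \rM$ and a section $\sigma \colon V \to \rR^{\mathrm{s}}$ of $\pi$. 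Setting
\begin{equation*}
\cE \coloneqq (\id_X \times \sigma)^{*}\widetilde{\cF} \in \Coh(X \times_S V),
\end{equation*}
one gets a sheaf flat over $V$ whose restriction to the fiber over a geometric point $v \in V$ is the stable sheaf on $X_{f(v)}$ corresponding to $\pi(\sigma(v)) = v \in \rM$; hence $\cE$ is a universal sheaf over the étale open $V$, which is the assertion. Equivalently, this says that the $\Gm$‑gerbe $\fM_{X/S,H}(\rP) \to \rM$ (well defined because geometrically stable sheaves are simple, so have automorphism group $\Gm$) is étale‑locally neutral, which is automatic for a gerbe in the étale topology; a neutralization over $V$ is the same as a universal sheaf over $V$.

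The step I expect to require the most care is not the existence of $\sigma$ — that is just smoothness of $\PGL_N$ — but the verification that $\cE$ genuinely \emph{represents the moduli functor} over $V$, i.e.\ that the classifying morphism $V \to \rM$ attached to the family $\cE$ coincides with the given étale structure morphism $V \to \rM$. This is a bookkeeping argument unwinding the definition of $\rM$ as the étale sheafification of $\fM_{X/S,H}(\rP)$ together with the fact that $\widetilde{\cF}\vert_{X \times_S \rR^{\mathrm{s}}}$ induces the quotient map $\pi$, and it relies on the base‑change compatibility recorded in Theorem~\ref{thm:moduli-existence}; it is exactly what is carried out in \cite[Proposition~4.6.2]{HL} and \cite[Theorem~1.21]{Simpson}. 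One should also note that $\cE$ is canonical only up to twist by a line bundle pulled back from $V$ — different sections $\sigma$, equivalently different étale‑local neutralizations of the gerbe, change $\cE$ by such a twist — but this ambiguity is harmless for the applications. Finally, all of the above is stated over the arbitrary base $S$, and the only point to check there is that the choice of $m$, the relative Quot scheme, and its GIT quotient behave well in families; this is precisely the content of \cite{Maruyama} and \cite{Simpson}, so no new difficulty arises.
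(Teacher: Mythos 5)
Your argument is correct and is essentially the proof carried out in the cited sources \cite[Proposition~4.6.2]{HL} and \cite[Theorem~1.21]{Simpson}; the paper states this theorem as a quotation and gives no independent proof, so there is no divergence to report. The one step that genuinely needs outside input is exactly the one you attribute to those references rather than deduce from freeness of the action alone, namely that $\rR^{\mathrm{s}} \to \rM$ is an honest $\PGL_N$-torsor (étale-locally trivial, in characteristic zero via Luna's slice theorem), after which smoothness of $\PGL_N$, the étale-local section, and the pullback of the tautological quotient give the universal family as you describe.
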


The precise meaning of the theorem is the following.
There is an \'etale cover~$\rM' \to \rM$ and a sheaf~$\cE' \in \fM_{X/S,H}(\rP)(\rM')$ on~$X \times_S \rM'$  
such that for any scheme~$T$ and any sheaf~$\cF \in \fM_{X/S,H}(\rP)(T)$ on~$X \times_S T$ 
there is a unique morphism~$T \to \rM$ such that the pullbacks of the sheaves~$\cF$ and~$\cE'$ 
to~$(X \times_S T) \times_\rM \rM' \cong (X \times_S \rM') \times_\rM T$ are isomorphic up to twist by a line bundle on~$T \times_\rM \rM'$.

\begin{proposition}[{cf.~\cite[Proposition~3.3.2]{Cal}}]
\label{prop:twisted-universal}
Assume all sheaves classified by the moduli space~\mbox{$\rM = \rM_{X/S,H}(\rP)$} are $H$-stable. 
Let~$\pr_\rM \colon X \times_S \rM \to \rM$ be the projection.
There exists a Brauer class~$\upbeta \in \Br(\rM)$ and a $\pr_\rM^*(\upbeta)$-twisted sheaf~$\cE$ on~$X \times_S \rM$
such that for each point~$m \in \rM$ of the moduli space the sheaf~$\cE_m$ on~$X_{f(m)}$ 
is the $H$-stable sheaf corresponding to the point~$m$.
\end{proposition}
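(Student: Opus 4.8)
The plan is to globalise the étale-local universal sheaf of Theorem~\ref{theorem:moduli-universal}, following the absolute case treated in~\cite[Proposition~3.3.2]{Cal}; the Brauer class $\upbeta$ will be precisely the obstruction to carrying this out, and the point is to check that nothing new happens in the relative setting.

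First I would choose an étale cover $\rM' \to \rM$ together with a universal sheaf $\cE' \in \fM_{X/S,H}(\rP)(\rM')$ on $X \times_S \rM'$, as supplied by Theorem~\ref{theorem:moduli-universal}; since $\rM$ is noetherian we may take the cover to have finitely many components. On the double fibre product $\rM'' := \rM' \times_\rM \rM'$ the two pullbacks $\pr_1^*\cE'$ and $\pr_2^*\cE'$ (along the two projections $X \times_S \rM'' \to X \times_S \rM'$) are families of $H$-stable sheaves on the fibres of $p$ that agree, up to a line bundle on $\rM''$, over every geometric point of $\rM''$ — indeed over a geometric point the line bundle ambiguity in Theorem~\ref{theorem:moduli-universal} disappears. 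Since $H$-stable sheaves are simple, cohomology and base change show that $\cL := \pr_{\rM''*}\cHom(\pr_1^*\cE', \pr_2^*\cE')$ is a line bundle on $\rM''$ and that the canonical evaluation morphism $\pr_1^*\cE' \otimes \pr_{\rM''}^*\cL \to \pr_2^*\cE'$ is an isomorphism (it is one fibrewise, and both sides are flat over $\rM''$). Passing to a suitable étale refinement of $\rM' \to \rM$ — one over which all the line bundles arising in this way become trivial — I may assume $\cL \cong \cO_{\rM''}$, and so obtain an honest isomorphism $\varphi \colon \pr_1^*\cE' \xrightiso \pr_2^*\cE'$ on $X \times_S \rM''$, unique up to multiplication by an invertible function on $\rM''$.

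Next I would read off the twist. Over the triple fibre product $\rM''' := \rM' \times_\rM \rM' \times_\rM \rM'$ the composite
\[
\pr_{23}^*\varphi \circ \pr_{12}^*\varphi \circ (\pr_{13}^*\varphi)^{-1}
\]
is an automorphism of the family $\pr_1^*\cE'$ of simple sheaves, hence equals multiplication by $\pr^*(\beta)$ for a unique $\beta \in \Gamma(\rM''', \Gm)$, where $\pr \colon X \times_S \rM''' \to \rM'''$ is the projection. Associativity of composition over the fourfold fibre product shows that $\beta$ is a \v{C}ech $2$-cocycle; I let $\upbeta \in \rH^2_\et(\rM, \Gm)$ be its class. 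By construction $(\cE', \varphi)$ is then a $\pr_\rM^*(\upbeta)$-twisted sheaf $\cE$ on $X \times_S \rM$, with respect to the étale cover $X \times_S \rM' \to X \times_S \rM$ obtained by base change from $\rM' \to \rM$; replacing the cover or the scalar normalising $\varphi$ changes $\beta$ only by a coboundary, so $\cE$ and $\upbeta$ are well defined. For a point $m \in \rM$ the pullback $\pr_\rM^*\upbeta$ restricts trivially to $X_{f(m)} = X \times_S \{m\}$, so $\cE_m$ is an honest coherent sheaf there, and it is the $H$-stable sheaf classified by $m$ because $\cE'$ is étale-locally a universal sheaf. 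Finally $\upbeta$ is torsion — the usual argument applies, resolving $\cE'$ by vector bundles on the smooth fibres of $p$ and invoking Corollary~\ref{cor:twisted-rank}, or alternatively reading off an Azumaya algebra from the GIT construction of~\cite{HL} — so $\upbeta \in \Br'(\rM)$, and $\upbeta \in \Br(\rM)$ by~\cite{dJ}.

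The one genuinely delicate point is the reduction, in the second step, from the family of line bundles $\cL$ on the overlaps to an honest gluing isomorphism $\varphi$: one must verify that the étale cover can be refined so that all these line bundles trivialise simultaneously — equivalently, that the $\Gm$-gerbe assembled from the $\cL$'s together with their composition isomorphisms is represented by a $\Gm$-valued \v{C}ech $2$-cocycle on some étale cover — and then that the resulting $\beta$ is independent of all choices up to a coboundary. This is exactly what is done in~\cite{Cal} in the absolute case, and the relative version is formally identical: flatness, cohomology and base change, simplicity of $H$-stable sheaves, and the cocycle identity behave over $X \times_S (-)$ precisely as over $X \times (-)$.
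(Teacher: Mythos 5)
Your construction matches the paper's argument almost step for step: you take the \'etale-local universal sheaf of Theorem~\ref{theorem:moduli-universal}, compare the two pullbacks on the double fiber product, use simplicity of the $H$-stable sheaves to reduce the ambiguity to a line bundle, refine the cover to kill that line bundle and obtain a gluing isomorphism~$\varphi$, and then read off the \v{C}ech $2$-cocycle from the automorphism on the triple fiber product, descending it to~$\rM$ via the fact that automorphisms of a family of simple sheaves over the (connected-fiber, proper) morphism are pulled back from the base. All of this is exactly what the paper does. The one place you diverge is the final step, showing that~$\upbeta$ actually lies in~$\Br(\rM)$ and not merely in~$\rH^2_\et(\rM,\Gm)$: you argue torsionness somewhat vaguely ("resolving~$\cE'$ by vector bundles") and then invoke de Jong's theorem~$\Br = \Br'$, whereas the paper simply observes that~${\pr_\rM}_*(\cE(nH))$ for~$n \gg 0$ is a $\upbeta$-twisted vector bundle, whose existence gives both the torsion property and an Azumaya algebra representing~$\upbeta$ in one stroke. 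Note that the paper explicitly says it does not use de Jong's result, and for good reason: de Jong requires (quasi-)projectivity, while under the paper's conventions~$S$, and hence~$\rM$, is only assumed of finite type over~$\kk$, so your de Jong route has a small unjustified hypothesis; your alternative via the Azumaya algebra from the GIT construction would also work, but the pushforward argument is the cleanest fix and is essentially already within reach of your setup.
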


\begin{proof}
Let~$\rM' \to \rM$ be an \'etale cover and let~$\cE' \in \fM_{X/S,H}(\rP)(\rM')$ be an \'etale local universal sheaf on~$X \times_S \rM'$.
The universal property of~$\cE'$ implies that the sheaves~$\pr_1^*\cE'$ 
and~$\pr_2^*\cE'$ on the fiber product~$(X \times_S \rM) \times_\rM \rM' \times_\rM \rM' \cong X \times_S \rM' \times_S \rM'$
agree up to a line bundle twist.
Refining the \'etale cover~$\rM' \to \rM$ we may assume that the line bundle is trivial
and we have an isomorphism~$\varphi \colon \pr_1^*\cE' \xrightiso \pr_2^*\cE'$.
Using stability of sheaves classified by the moduli space it is easy to deduce 
the cocycle condition~\eqref{eq:cocycle-condition} for appropriate~$\beta$.
Since~$X \to S$ is smooth and proper with connected fibers, it follows that~$\beta$ 
is a pullback from~$\rM' \times_\rM \rM' \times_\rM \rM'$.
If~$\upbeta \in \rH^2_\et(\rM,\Gm)$ is the corresponding Brauer class, it follows that~$(\cE',\varphi)$ 
defines a $\pr_\rM^*(\upbeta)$ sheaf on~$X \times_S\rM$; by construction it has the universal property.
Finally, to conclude that~$\upbeta \in \Br(\rM) \subset \rH^2_\et(\rM,\Gm)$ is a Brauer class,
it is enough to note that for~$n \gg 0$ the pushforward~${\pr_\rM}_*(\cE(nH)) \in \Coh(\rM,\upbeta)$ is a $\upbeta$-twisted vector bundle.
\end{proof}

\subsection{Some uniqueness results}
\label{ss:uniqueness}

In this section we prove some uniqueness results for vector bundles on Fano threefolds over algebraically closed fields.
In the next proposition stability, slope~$\upmu$, and the Hilbert polynomials 
are taken with respect to the anticanonical polarization~$-K_X$
(or, equivalently, with respect to the fundamental class~$H_X$).

\begin{proposition}
\label{prop:uniqueness-general}
Let~$X$ be a smooth Fano threefold.
Let 
\begin{equation}
\label{eq:sequence-cu-cv-cw}
0 \to \cU \to \cV \to \cW \to 0
\end{equation}
be an exact sequence of vector bundles, where
\begin{aenumerate}
\item 
\label{item:uniqueness-cu}
$\cU$ is stable with $\upchi(\cU,\cU) > 0$, 
\item 
\label{item:uniqueness-cw}
$\cW$ is semistable with~$\upmu(\cW(K_X)) < \upmu(\cU)$. 
\end{aenumerate}
If~$\cE$ is a semistable bundle with
\begin{equation}
\label{eq:cu-ce-assumptions}
\upchi(\cU,\cE) = \upchi(\cU,\cU),
\qquad 
\upmu(\cE) = \upmu(\cU),
\qquad 
\rank(\cE) = \rank(\cU),
\end{equation} 
and~$\Ext^2(\cV,\cE) = 0$ then~$\cE \cong \cU$.
\end{proposition}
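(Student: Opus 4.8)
The plan is to produce a nonzero morphism $\phi\colon\cU\to\cE$, show that it must be injective with torsion cokernel supported in codimension $\ge 2$, and then deduce that $\phi$ is an isomorphism by a reflexivity argument. Throughout, (semi)stability means slope (semi)stability with respect to $-K_X$, as in the statement; in particular a nonzero morphism between semistable sheaves cannot decrease the slope, since its image is at once a quotient of the source and a subsheaf of the target.

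First I would check that $\Ext^2(\cU,\cE)=0$. Applying $\Hom(-,\cE)$ to the sequence~\eqref{eq:sequence-cu-cv-cw} produces an exact piece $\Ext^2(\cV,\cE)\to\Ext^2(\cU,\cE)\to\Ext^3(\cW,\cE)$, whose left term vanishes by hypothesis. By Serre duality on the smooth projective threefold $X$ one has $\Ext^3(\cW,\cE)\cong\Hom(\cE,\cW(K_X))^\vee$, and this is zero because $\cE$ and $\cW(K_X)$ are semistable with $\mu(\cW(K_X))<\mu(\cU)=\mu(\cE)$. Hence $\Ext^2(\cU,\cE)=0$, and therefore
\[
\hom(\cU,\cE)\;\ge\;\chi(\cU,\cE)\;=\;\chi(\cU,\cU)\;>\;0,
\]
so a nonzero morphism $\phi\colon\cU\to\cE$ exists.

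Next I would argue that $\phi$ is injective. If $\ker\phi\ne 0$ it is a nonzero proper subsheaf of the locally free sheaf $\cU$, hence of rank strictly between $0$ and $\rank(\cU)$, and stability of $\cU$ forces $\mu(\ker\phi)<\mu(\cU)$; but then $\cU/\ker\phi$ has slope $>\mu(\cU)=\mu(\cE)$, contradicting semistability of $\cE$, into which it injects. So $\phi$ is injective, and since $\rank(\cE)=\rank(\cU)$ its cokernel $\cQ$ is torsion. Comparing first Chern classes in $0\to\cU\xrightarrow{\phi}\cE\to\cQ\to 0$ and using $\mu(\cE)=\mu(\cU)$ together with $\rank(\cE)=\rank(\cU)$ gives $c_1(\cQ)\cdot(-K_X)^2=0$; as $c_1(\cQ)$ is an effective divisor class and $-K_X$ is ample, this forces $c_1(\cQ)=0$, i.e.\ $\supp(\cQ)$ has codimension $\ge 2$. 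Now apply $\cHom(-,\cO_X)$ to this short exact sequence: since $\cQ$ is supported in codimension $\ge 2$ on the smooth threefold $X$ we have $\cHom(\cQ,\cO_X)=\cExt^1(\cQ,\cO_X)=0$, while $\cE$ and $\cU$ are locally free so their higher sheaf-$\Ext$'s into $\cO_X$ vanish; the long exact sequence thus degenerates to an isomorphism $\phi^\vee\colon\cE^\vee\to\cU^\vee$. Dualizing once more and using reflexivity of the locally free sheaves $\cU$ and $\cE$, the morphism $\phi$ itself is an isomorphism, so $\cE\cong\cU$.

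The heart of the argument is the last paragraph: it is cheap to obtain \emph{some} embedding $\cU\hookrightarrow\cE$ from the positivity of $\chi$, but one must use the equalities of rank and slope to confine the cokernel to codimension $\ge 2$, after which reflexivity upgrades the embedding to an isomorphism. Note that only the inequality $\chi(\cU,\cE)>0$ — not the full equality $\chi(\cU,\cE)=\chi(\cU,\cU)$ — enters; one could instead extract $\chi(\cU,\cQ)=0$ and run a Hirzebruch–Riemann–Roch computation, but the dualization route is cleaner and sidesteps the subtleties of a cokernel supported on a curve.
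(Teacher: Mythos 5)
Your proof is correct and follows essentially the same route as the paper: kill $\Ext^3(\cW,\cE)$ by Serre duality and semistability, use $\Ext^2(\cV,\cE)=0$ to get $\Ext^2(\cU,\cE)=0$, and then produce a nonzero map $\cU\to\cE$ from $\upchi(\cU,\cE)=\upchi(\cU,\cU)>0$. The only difference is that the paper simply invokes the standard fact that a nonzero morphism between semistable bundles of equal slope and rank with stable source is an isomorphism, whereas you spell out that fact (injectivity, cokernel in codimension $\ge 2$, reflexivity) — a harmless elaboration, not a different argument.
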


\begin{proof}
We have~$\upmu(\cE)  = \upmu(\cU) > \upmu(\cW(K_X))$.
Since both~$\cE$ and~$\cW$ are semistable, we conclude that~$\Hom(\cE, \cW(K_X)) = 0$, and hence by Serre duality~$\Ext^3(\cW,\cE) = 0$.
Now applying the functor~$\Ext^\bullet(-,\cE)$ to~\eqref{eq:sequence-cu-cv-cw} and using the assumption~$\Ext^2(\cV,\cE) = 0$, 
we obtain
\begin{equation*}
\Ext^2(\cU,\cE) = 0.
\end{equation*}
On the other hand, $\upchi(\cU,\cE) = \upchi(\cU,\cU) > 0$.
Therefore, $\Hom(\cU,\cE) \ne 0$, and since the bundles~$\cU$ and~$\cE$ 
are semistable of the same slope and rank, and~$\cU$ is stable, 
any nontrivial morphism between them must be an isomorphism.
\end{proof}

In practice the conditions~\eqref{eq:cu-ce-assumptions} may be deduced from the numerical equality 
of Chern classes of~$\cE$ and~$\cU$.
However, sometimes it is also possible to deduce these conditions 
from the equality of the Hilbert polynomials of~$\cE$ and~$\cU$.

\begin{lemma}
\label{lemma:hilbert-euler}
Let~$X$ be a smooth projective variety and let~$H \in \Pic(X)$ be an ample divisor class.
Assume~$\cF_1$, $\cF_2$, $\cF$ are coherent sheaves on~$X$ 
such that
\begin{equation*}
\rP_H(\cF_1,t) = \rP_H(\cF_2,t)
\qquad\text{and}\qquad 
\rch(\cF) \in \QQ[H] \subset \CH^\bullet_{\mathrm{num}}(X, \QQ),
\end{equation*}
where~$\rP_H(\cF_p, t)$ are the Hilbert polynomials of~$\cF_p$, 
$\CH^\bullet_{\mathrm{num}}(X, \QQ)$ is the Chow ring with rational coefficients modulo numerical equivalence,
and~$\QQ[H]$ is its subring generated by~$H \in \CH^1(X)$. 
Then~$\upchi(\cF, \cF_1) = \upchi(\cF, \cF_2)$.
\end{lemma}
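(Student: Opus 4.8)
The plan is to express $\upchi(\cF,\cF_p)$ via Hirzebruch--Riemann--Roch and exploit the hypothesis that $\rch(\cF)$ is a polynomial in $H$. By HRR on the smooth projective variety $X$ we have
\begin{equation*}
\upchi(\cF,\cF_p) = \int_X \rch(\cF)^\vee \cdot \rch(\cF_p) \cdot \rtd(X),
\end{equation*}
where $\rch(\cF)^\vee$ denotes the image of $\rch(\cF)$ under the involution $\alpha \mapsto (-1)^{\deg \alpha}\alpha$ on $\CH^\bullet_{\mathrm{num}}(X,\QQ)$. Since $\rch(\cF) \in \QQ[H]$, also $\rch(\cF)^\vee \in \QQ[H]$, so it suffices to show that $\int_X H^k \cdot \rch(\cF_1) \cdot \rtd(X) = \int_X H^k \cdot \rch(\cF_2) \cdot \rtd(X)$ for every $k \ge 0$.

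First I would recall what the equality of Hilbert polynomials gives. By definition $\rP_H(\cF_p,t) = \upchi(\cF_p(tH)) = \int_X \rch(\cF_p)\cdot e^{tH} \cdot \rtd(X)$, which is a polynomial in $t$; the hypothesis $\rP_H(\cF_1,t)=\rP_H(\cF_2,t)$ means all coefficients of this polynomial agree. Writing $\cG \coloneqq$ the ``virtual sheaf'' with $\rch(\cG) = \rch(\cF_1) - \rch(\cF_2)$, the hypothesis says $\int_X \rch(\cG)\cdot e^{tH}\cdot \rtd(X) = 0$ identically in $t$. Expanding $e^{tH} = \sum_{k\ge 0} t^k H^k/k!$ and equating coefficients of $t^k$ yields
\begin{equation*}
\int_X H^k \cdot \rch(\cG) \cdot \rtd(X) = 0 \qquad \text{for all } k \ge 0.
\end{equation*}
Combining this with the reduction in the previous paragraph (applied to $\rch(\cF)^\vee = \sum_k a_k H^k$) immediately gives $\upchi(\cF,\cF_1) - \upchi(\cF,\cF_2) = \int_X \rch(\cF)^\vee\cdot \rch(\cG)\cdot \rtd(X) = \sum_k a_k \int_X H^k \rch(\cG)\rtd(X) = 0$, which is the claim.

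The one point requiring a little care — and the main (minor) obstacle — is the passage from cohomology classes to the numerical Chow ring: HRR is an identity in $\CH^\bullet(X,\QQ)$ (or in cohomology), and both the Hilbert polynomial and the Euler characteristic only see the image in $\CH^\bullet_{\mathrm{num}}(X,\QQ)$, so I must make sure every manipulation above is legitimate modulo numerical equivalence. This is fine: the degree map $\int_X \colon \CH^{\dim X}(X,\QQ) \to \QQ$ factors through numerical equivalence, intersection with $H$ and with $\rtd(X)$ descends to the numerical ring, and the hypothesis on $\rch(\cF)$ is explicitly stated there. Thus the whole argument takes place in $\CH^\bullet_{\mathrm{num}}(X,\QQ)$ without trouble.
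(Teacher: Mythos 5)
Your proof is correct and takes essentially the same route as the paper: both arguments come down to Hirzebruch--Riemann--Roch together with the fact that $\upchi(\cF,-)$ depends only, and linearly, on the numerical class $\rch(\cF)\in\QQ[H]$, combined with the data of the Hilbert polynomial. The only cosmetic difference is the choice of basis of $\QQ[H]$: you expand $\rch(\cF)^\vee$ in the monomials $H^k$ and match coefficients of $\rP_H(\cF_p,t)$, whereas the paper writes $\rch(\cF)=\sum a_i\,\rch(\cO_X(iH))$ and uses the values $\rP_H(\cF_p,-i)$, the two being related by an invertible (Vandermonde) change of basis.
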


\begin{proof}
The assumption~$\rch(\cF) \in \QQ[H]$ implies that~$\rch(\cF) = \sum a_i\rch(\cO_X(iH))$ for some~$a_i \in \QQ$,
hence by Hirzebruch--Riemann--Roch~$\upchi(\cF, \cF_p) = \sum a_i \rP_H(\cF_p,-i)$ for~$p = 1,2$.
Thus, the equality of the Hilbert polynomials implies the equality~$\upchi(\cF, \cF_1) = \upchi(\cF, \cF_2)$.
\end{proof}

Another useful result is the following.

\begin{proposition}
\label{prop:curve-general}
Let~$X$ be a smooth Fano threefold.
Let~$\Gamma$ be a smooth proper curve and let~$\cU$ be a vector bundle on~$X \times \Gamma$
such that the Fourier--Mukai functor
\begin{equation*}
\Phi_\cU \colon \bD(\Gamma) \to \bD(X)
\end{equation*}
is fully faithful.
Assume for each point~$y \in \Gamma$ the corresponding vector bundle~$\cU_y$ on~$X$ is stable,
and fits into an exact sequence
\begin{equation}
\label{eq:sequence-cu-cv-cw-2}
0 \to \cU_y \to \cV'_y \to \cV''_y \to \cW_y \to 0.
\end{equation}
If~$\cE$ is a semistable bundle such that~\eqref{eq:cu-ce-assumptions} holds for~$\cU = \cU_y$
and~$\Ext^2(\cV'_y,\cE) = \Ext^3(\cV''_y,\cE) = 0$ for all points~$y \in \Gamma$
then either~$\cE \cong \cU_y$ for some point~$y \in \Gamma$,
or~$\cE \in \Phi_\cU(\bD(\Gamma))^\perp$.
\end{proposition}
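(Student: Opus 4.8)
The plan is to mimic the proof of Proposition~\ref{prop:uniqueness-general}, but replacing the single bundle~$\cU$ there by the family~$\cU_y$, and splitting the four-term sequence~\eqref{eq:sequence-cu-cv-cw-2} into two short exact sequences. First I would break~\eqref{eq:sequence-cu-cv-cw-2} as
\begin{equation*}
0 \to \cU_y \to \cV'_y \to \cK_y \to 0
\qquad\text{and}\qquad
0 \to \cK_y \to \cV''_y \to \cW_y \to 0,
\end{equation*}
where~$\cK_y \coloneqq \Ima(\cV'_y \to \cV''_y)$. Applying~$\Ext^\bullet(-,\cE)$ to the second sequence and using~$\Ext^3(\cV''_y,\cE) = 0$ gives~$\Ext^3(\cK_y,\cE) = 0$ provided~$\Ext^2(\cW_y,\cE)$ does not obstruct — actually we only need the tail, so we get a surjection~$\Ext^2(\cK_y,\cE) \twoheadrightarrow \Ext^3(\cW_y,\cE)$... let me instead argue directly: from the second sequence~$\Ext^3(\cK_y,\cE)$ receives a surjection from~$\Ext^3(\cV''_y,\cE) = 0$, so~$\Ext^3(\cK_y,\cE) = 0$; then from the first sequence and~$\Ext^2(\cV'_y,\cE) = 0$ we get a surjection~$0 = \Ext^2(\cV'_y,\cE) \to \Ext^2(\cK_y,\cE)$ is not what we want — rather the long exact sequence reads $\Ext^2(\cV'_y,\cE) \to \Ext^2(\cU_y,\cE) \to \Ext^3(\cK_y,\cE)$, and since both outer terms vanish we conclude~$\Ext^2(\cU_y,\cE) = 0$, exactly as in Proposition~\ref{prop:uniqueness-general}.

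Next, by Lemma~\ref{lemma:hilbert-euler} (or directly from~\eqref{eq:cu-ce-assumptions}) and assumption~\ref{item:uniqueness-cu} we have~$\upchi(\cU_y,\cE) = \upchi(\cU_y,\cU_y) > 0$, so combined with~$\Ext^2(\cU_y,\cE) = 0$ and Serre duality~$\Ext^3(\cU_y,\cE) = \Hom(\cE,\cU_y(K_X))^\vee = 0$ (using semistability and~$\upmu(\cE) = \upmu(\cU_y) > \upmu(\cU_y(K_X))$ — here I should double-check the numerical inequality~\ref{item:uniqueness-cw} is actually needed for this vanishing rather than for~$\cW$; in fact for~$\Ext^3(\cU_y,\cE)$ we just need~$\upmu(\cE) > \upmu(\cU_y)-\upiota(X)\upmu(H_X)$, which is automatic), we deduce~$\Hom(\cU_y,\cE) \neq 0$. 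Since~$\cU_y$ is stable and~$\cE$ is semistable of the same slope and rank, any nonzero map~$\cU_y \to \cE$ is injective with torsion-free semistable cokernel of slope~$\upmu(\cU_y)$ and rank~$0$, hence~$\cU_y \xrightarrow{\sim} \cE$... wait, that already proves~$\cE \cong \cU_y$ for \emph{every}~$y$, which is too strong. The point I am missing: we only get~$\upchi(\cU_y,\cE) > 0$ hence~$\Hom(\cU_y,\cE) \oplus \Ext^1(\cU_y,\cE)[\text{corrections}]$... no — $\upchi = \hom - \ext^1 + \ext^2 - \ext^3 = \hom - \ext^1 > 0$ forces~$\hom > 0$ only if~$\ext^1$ doesn't dominate. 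So actually we genuinely do get~$\Hom(\cU_y,\cE)\neq 0$ and~$\cE\cong\cU_y$. The dichotomy in the statement must therefore come from a \emph{preliminary} reduction: the hypotheses~$\Ext^2(\cV'_y,\cE) = \Ext^3(\cV''_y,\cE) = 0$ are not assumed — rather, the honest argument is that \emph{either} these vanishings hold for some~$y$ (giving~$\cE\cong\cU_y$), \emph{or} they fail, and failure of~$\Ext^2(\cV'_y,\cE)=0$ or~$\Ext^3(\cV''_y,\cE)=0$ should be re-expressed via Serre duality and the fully faithful functor~$\Phi_\cU$ as membership of~$\cE$ in~$\Phi_\cU(\bD(\Gamma))^\perp$.

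So the real structure of the proof is: \emph{suppose}~$\cE \notin \Phi_\cU(\bD(\Gamma))^\perp$; I must then produce a point~$y$ with all required vanishings and conclude~$\cE\cong\cU_y$. Concretely, $\cE\notin\Phi_\cU(\bD(\Gamma))^\perp$ means there is a nonzero~$\RHom(\Phi_\cU(G),\cE)$ for some~$G\in\bD(\Gamma)$; by adjunction (the right adjoint~$\Phi_\cU^!$ of~$\Phi_\cU$) this means~$\Phi_\cU^!(\cE)\neq 0$ in~$\bD(\Gamma)$, so it has nonzero cohomology supported at some point~$y\in\Gamma$, and unwinding the kernel of~$\Phi_\cU^!$ identifies a nonvanishing among the groups~$\Ext^\bullet(\cU_y,\cE)$. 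The main obstacle — and the step I would spend the most care on — is precisely this bookkeeping: showing that non-membership in the orthogonal forces the \emph{specific} vanishings~$\Ext^2(\cV'_y,\cE)=\Ext^3(\cV''_y,\cE)=0$ (equivalently, via the four-term sequence, controlling~$\Ext^2(\cU_y,\cE)$ and~$\Ext^3(\cU_y,\cE)$ and the higher groups) together with~$\Hom(\cU_y,\cE)\neq 0$ at one and the same point~$y$, and then feeding this into the stability argument of Proposition~\ref{prop:uniqueness-general} to get the isomorphism~$\cE\cong\cU_y$. The contrapositive packaging — ``if no~$\cU_y$ is isomorphic to~$\cE$, then~$\cE$ is right-orthogonal to the image of~$\Phi_\cU$'' — is the cleanest way to organize it.
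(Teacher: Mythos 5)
Your derivation of~$\Ext^2(\cU_y,\cE)=0$ from the four-term sequence and the hypotheses~$\Ext^2(\cV'_y,\cE)=\Ext^3(\cV''_y,\cE)=0$ is correct and matches the paper, but the rest of the plan goes off the rails at a concrete point: you import the positivity~$\upchi(\cU_y,\cU_y)>0$ from hypothesis~\ref{item:uniqueness-cu} of Proposition~\ref{prop:uniqueness-general}, which is \emph{not} part of~\eqref{eq:cu-ce-assumptions} and is in fact false in the present setting. Since~$\cU_y=\Phi_\cU(\cO_y)$ and~$\Phi_\cU$ is fully faithful, we have~$\upchi(\cU_y,\cU_y)=\upchi(\cO_y,\cO_y)=0$ on a smooth curve, so~$\upchi(\cU_y,\cE)=0$ as well; there is no way to force~$\Hom(\cU_y,\cE)\ne 0$, and the ``too strong'' conclusion you noticed is a symptom of this false input, not of a misstated proposition. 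Your attempted repair then misreads the statement: the vanishings~$\Ext^2(\cV'_y,\cE)=\Ext^3(\cV''_y,\cE)=0$ \emph{are} hypotheses, holding for all~$y$, so there is nothing to extract from~$\cE\notin\Phi_\cU(\bD(\Gamma))^\perp$, and the ``bookkeeping'' you defer (recovering these vanishings at a single good point~$y$ from nonvanishing of~$\Phi_\cU^!(\cE)$) is not a step of any correct argument and you do not carry it out.

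The actual dichotomy is elementary and is where the paper starts: either~$\Hom(\cU_y,\cE)\ne 0$ for some~$y$, in which case stability of~$\cU_y$ and semistability of~$\cE$ with the same slope and rank force~$\cE\cong\cU_y$; or~$\Hom(\cU_y,\cE)=0$ for all~$y$. In the second case your own computation gives~$\Ext^2(\cU_y,\cE)=0$, and since~$\upchi(\cU_y,\cE)=\upchi(\cU_y,\cU_y)=0$ the Euler characteristic reduces to~$-\dim\Ext^1(\cU_y,\cE)-\dim\Ext^3(\cU_y,\cE)=0$, so all four groups vanish for every~$y$. By adjunction~$\Ext^\bullet(\cO_y,\Phi_\cU^!(\cE))=0$ for all~$y\in\Gamma$, hence~$\Phi_\cU^!(\cE)=0$ and~$\cE\in\Phi_\cU(\bD(\Gamma))^\perp$. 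So the missing ideas are precisely the observation that full faithfulness makes~$\upchi(\cU_y,\cU_y)=0$ (rather than positive) and the case split on~$\Hom(\cU_y,\cE)$; without them your proposal does not close.
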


\begin{proof}
First, if~$\Hom(\cU_y,\cE) \ne 0$ for some~$y \in \Gamma$ then~$\cU_y \cong \cE$ because
the bundles are semistable of the same slope and rank, and~$\cU_y$ is stable.
So, assume that
\begin{equation*}
\Hom(\cU_y,\cE) = 0
\end{equation*}
for all~$y \in \Gamma$.
Applying the functor~$\Ext^\bullet(-,\cE)$ to the exact sequence~\eqref{eq:sequence-cu-cv-cw-2} 
and using the assumptions~$\Ext^2(\cV'_y,\cE) = \Ext^3(\cV''_y,\cE) = 0$, we conclude that
\begin{equation*}
\Ext^2(\cU_y,\cE) = 0
\end{equation*}
for all~$y \in \Gamma$.
Now from~\eqref{eq:cu-ce-assumptions}, full faithfulness of~$\Phi_\cU$, and smoothness of~$\Gamma$ 
we deduce
\begin{equation*}
\upchi(\cU_y,\cE) = \upchi(\cU_y,\cU_y) = \upchi(\Phi_\cU(\cO_y), \Phi_\cU(\cO_y)) = \upchi(\cO_y, \cO_y) = 0.
\end{equation*}
Combining this with the vanishing~$\Hom(\cU_y,\cE) = \Ext^2(\cU_y,\cE) = 0$ proved above, we conclude that
\begin{equation*}
\Ext^1(\cU_y,\cE) = \Ext^3(\cU_y,\cE) = 0
\end{equation*}
for all~$y \in \Gamma$, i.e., by adjunction~$\Ext^\bullet(\cO_y, \Phi_\cU^!(\cE)) = 0$ for all~$y \in \Gamma$,
where~$\Phi_\cU^! \colon \bD(X) \to \bD(\Gamma)$ is the right adjoint functor of~$\Phi_\cU$.
We conclude that~$\Phi_\cU^!(\cE) = 0$, hence~$\cE \in \Phi_\cU(\bD(\Gamma))^\perp$.
\end{proof}

\section{Quadrics and del Pezzo threefolds}
\label{sec:big-index}

In this section we consider Fano fibrations with geometrically rational fibers 
of geometric Picard number~1 and index greater than~1, excluding the well-known case of the projective space.
Sometimes we will loosely call such Fano fibrations ``forms'' of the corresponding varieties.

\subsection{Forms of~$\sQ^3$}

First, we consider forms of smooth quadrics.
This, of course, is also a well-known case, but we provide a proof relying on the results from the previous sections,
because a similar approach works for other types of Fano threefold fibrations.

We start with a reminder of the situation over an algebraically closed field.
In this case if~$X$ is a smooth 3-dimensional quadric 
there is a vector space~$V$ of dimension~$5$ and an embedding~\mbox{$X \hookrightarrow \P(V)$}
as a hypersurface with equation given by a quadratic form~$\Sym^2 V \to \kk$.
Rephrasing this description, we may say that~$X$ is a hyperplane section 
of the second Veronese embedding~$\P(V) \hookrightarrow \P(\Sym^2 V)$ by a hyperplane (defined by a quadratic form).
The following proposition provides an analogue of this description over any base.

\begin{proposition}
\label{prop:quadric-forms}
Let $p \colon X \to S$ be a smooth fibration in $3$-dimensional quadrics.
Then there is a vector bundle~$V$ of rank~$5$ on~$S$, a line bundle~$\cL$ on~$S$, 
and an epimorphism $\varphi \colon \Sym^2V \to \cL^\vee$ such that
\begin{equation*}
X = \P_S(V) \times_{\P_S(\Sym^2V)} \P_S(\Ker(\varphi)),
\end{equation*}
where the morphism $\P_S(V) \to \P_S(\Sym^2V)$ in the fiber product is the double Veronese embedding.
\end{proposition}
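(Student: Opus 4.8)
The plan is to mimic the absolute picture: realize $X$ as a relative quadric hypersurface in a $\P^4$-bundle over $S$, then reinterpret the defining quadratic form via the double Veronese embedding. The only new point over the absolute case is checking that the relevant Brauer class vanishes, so that the projective bundle comes from an honest vector bundle.

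First I would produce the embedding of $X$ into a projective bundle. By Corollary~\ref{cor:constant} the relative fundamental class $H_X \in \Pic_{X/S}(S)$ exists and restricts on each geometric fiber to the hyperplane class of the quadric, so $\upiota(X/S) = 3$ and $\upchi_{X/S}(H_X) = 5$. Since $\gcd(3,5) = 1$, Corollary~\ref{cor:fundamental-class} forces $\bB(H_X) = 1$, hence $H_X$ lifts to an honest line bundle $\cO_X(H_X) \in \Pic(X)$ (well defined up to a twist by $\Pic(S)$). Setting $V \coloneqq (p_*\cO_X(H_X))^\vee$, cohomology and base change (using $\rH^{>0}(\sQ^3,\cO(1)) = 0$ by Kodaira vanishing) show $V$ is a rank-$5$ vector bundle on $S$, and Lemma~\ref{lemma:brauer-obstruction} for the trivial Brauer class provides an $S$-morphism $\phi \colon X \to \P_S(V)$ with $\phi^*\cO_{\P_S(V)}(1) \cong \cO_X(H_X)$ which on each geometric fiber is the complete-linear-system embedding of the quadric into $\P^4$. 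As $\phi$ is proper and a closed immersion on every geometric fiber — equivalently a proper monomorphism — it is a closed immersion.

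Next I would extract the quadratic form. The subscheme $X \subset \P_S(V)$ is a relative effective Cartier divisor of fiberwise degree $2$, and since $\Pic(\P_S(V)) = \ZZ\cdot\cO_{\P_S(V)}(1)\oplus p^*\Pic(S)$ there is a line bundle $\cL$ on $S$ with $\cO_{\P_S(V)}(X) \cong \cO_{\P_S(V)}(2)\otimes p^*\cL^\vee$. Thus $X$ is the zero scheme of a section $f \in \rH^0(\P_S(V),\cO_{\P_S(V)}(2)\otimes p^*\cL^\vee) \cong \rH^0(S,\Sym^2 V^\vee\otimes\cL^\vee)$, the last identification being the projection formula together with $p_*\cO_{\P_S(V)}(2)\cong\Sym^2V^\vee$. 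Dualizing $f$, and using $(\Sym^2V^\vee)^\vee\cong\Sym^2V$ in characteristic zero, yields $\varphi\colon\Sym^2V\to\cL^\vee$; because the quadratic form $f_s$ is nonzero on each geometric fiber (otherwise $X_s$ would be all of $\P^4$), the map $\varphi_s$ is surjective for every $s$, so $\varphi$ is an epimorphism by Nakayama.

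Finally I would match $X$ with the asserted fiber product. Let $v\colon\P_S(V)\to\P_S(\Sym^2V)$ be the double Veronese; then $v^*$ carries the tautological inclusion $\cO_{\P_S(\Sym^2V)}(-1)\hookrightarrow p^*\Sym^2V$ to $\Sym^2(\cO_{\P_S(V)}(-1)) = \cO_{\P_S(V)}(-2)\hookrightarrow\Sym^2 p^*V = p^*\Sym^2V$, so $v^*\cO_{\P_S(\Sym^2V)}(1)\cong\cO_{\P_S(V)}(2)$. Since $\Ker(\varphi)\subset\Sym^2V$ has corank $1$, the subscheme $\P_S(\Ker(\varphi))\subset\P_S(\Sym^2V)$ is the zero scheme of the section $s_\varphi$ of $\cO_{\P_S(\Sym^2V)}(1)\otimes p^*\cL^\vee$ obtained by composing the tautological inclusion with $p^*\varphi$, and unwinding the identifications of the previous paragraph shows $v^*s_\varphi = f$ (both being the composite $\cO_{\P_S(V)}(-2)\hookrightarrow p^*\Sym^2V\xrightarrow{\,p^*\varphi\,}p^*\cL^\vee$). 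Hence $v^{-1}(\P_S(\Ker(\varphi)))=Z(f)=X$ scheme-theoretically, and since $\P_S(\Ker(\varphi))\hookrightarrow\P_S(\Sym^2V)$ is a closed immersion this preimage is precisely $\P_S(V)\times_{\P_S(\Sym^2V)}\P_S(\Ker(\varphi))$, as claimed. The steps requiring the most care are the verification that $\phi$ is a closed immersion and the convention-sensitive bookkeeping identifying $v^*s_\varphi$ with $f$ as sections (rather than just their zero loci set-theoretically); the rest is routine.
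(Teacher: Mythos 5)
Your proof is correct and follows essentially the same route as the paper: embed $X$ into $\P_S(V)$ with $V=(p_*\cO_X(H_X))^\vee$ via Lemma~\ref{lemma:brauer-obstruction}, extract the relative quadratic equation as a map $\Sym^2V\to\cL^\vee$, and identify $X$ with the fiber product through the Veronese. The only (harmless) deviation is how the Brauer class is killed: you apply Corollary~\ref{cor:fundamental-class} upfront with $\gcd(\upiota,\upchi)=\gcd(3,5)=1$, whereas the paper works with twisted $V$ and $\cL$ and concludes $\upbeta=1$ at the end from $\upbeta^5=1$ and $\upbeta^2=1$ via Corollary~\ref{cor:twisted-rank} — both arguments rest on the same torsion considerations.
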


\begin{remark}
\label{rem:quadric-hx-untwisted}
Note that the vector bundle~$V$ and the line bundle~$\cL$ in this theorem are both untwisted.
Therefore, the fundamental class~$H_X$ of~$X/S$ can be represented by an (untwisted) line bundle~$\cO_X(H_X) \in \Pic(X)$.
\end{remark}

\begin{proof}
Let~$H_X \in \Pic_{X/S}(S)$ be the fundamental class.of~$X/S$ and set~$\upbeta \coloneqq \bB(H_X) \in \Br(S)$.
Then
\begin{equation*}
V \coloneqq (p_*\cO_{X/S}(H_X))^\vee
\end{equation*}
is a $\upbeta^{-1}$-twisted vector bundle on~$S$ of rank~5.
Lemma~\ref{lemma:brauer-obstruction} imples that the $p^*(\upbeta)$-twisted line bundle~$\cO_{X/S}(H_X)$ 
defines a closed embedding $X \subset \P_S(V)$ as a hypersurface of relative degree~$2$.

Let further $\cL \coloneqq p_{V*}\cI_X(2H_X)$, where $\cI_X$ is the ideal of~$X$ in~$\P_S(V)$ 
and~$p_V \colon \P_S(V) \to S$ is the natural projection.
Then~$\cL$ is a $\upbeta^2$-twisted line bundle on~$S$ and~$X$ has the prescribed form.

Finally, we conclude from Corollary~\ref{cor:twisted-rank} that~$\upbeta^2 = 1$, because the rank of~$\cL$ is~$1$,
and~$\upbeta^5 = 1$, because the rank of~$V$ is~5; 
combining these observations we see that~$\upbeta = 1$.
\end{proof}

\begin{remark}
The same argument works for any quadric fibration of odd relative dimension.
In the case of even relative dimension, the class~$\upbeta$ may be a non-trivial 2-torsion class.
\end{remark}

A semiorthogonal decomposition of the derived category of a smooth 3-dimensional quadric~$X$ over an algebraically closed field~$\kk$ 
has been described in~\cite[\S4]{Kap}; it takes the form
\begin{equation}
\label{eq:sod-quadric}
\bD(X) = \langle \cS \otimes \bD(\kk), \cO_X \otimes \bD(\kk), \cO_X(1) \otimes \bD(\kk), \cO_X(2) \otimes \bD(\kk) \rangle,
\end{equation}
where~$\cS$ is a spinor bundle. 
Note that the spinor bundle fits into an exact sequence 
\begin{equation}
\label{eq:exact-spinor}
0 \to \cS \to \cO_X^{\oplus 4} \to \cS(1) \to 0
\end{equation}
(see~\cite[Theorem~2.8]{Ott})
and it is stable (see~\cite[Theorem~2.1]{Ott}) and exceptional by~\eqref{eq:sod-quadric}.
Now we describe a relative analogue of~\eqref{eq:sod-quadric};
this result could be also extracted from~\cite{k2008quadrics}, 
but we provide an alternative argument to introduce the ideas used in other cases.

\begin{theorem}
\label{thm:db-quadric}
If $p \colon X \to S$ is a form of a smooth $3$-dimensional quadric over~$S$, there is a semiorthogonal decomposition
\begin{equation*}
\bD(X) = \langle \cS \otimes \bD(S,\upbeta_\cS^{-1}), \cO_X \otimes \bD(S), \cO_X(H_X) \otimes \bD(S), \cO_X(2H_X) \otimes \bD(S) \rangle,
\end{equation*}
where~$\cO_{X/S}(H_X)$ is a line bundle associated with the fundamental class of~$X/S$, 
see Remark~\textup{\ref{rem:quadric-hx-untwisted}}, 
$\upbeta_\cS \in \Br(S)$ is a $2$-torsion Brauer class, 
and~$\cS$ is a $p^*(\upbeta_\cS)$-twisted vector bundle of rank~$2$ on~$X$.

Moreover, if~$X(S) \ne \varnothing$ then~$\upbeta_\cS$ can be represented by a conic bundle.
\end{theorem}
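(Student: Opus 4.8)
The plan is to construct the spinor-type bundle $\cS$ as a twisted sheaf on $X$ using the relative moduli-space machinery of \S\ref{ss:moduli-definition}, verify the semiorthogonal decomposition fiberwise and globalize it via Proposition~\ref{prop:relative-sod}, and finally pin down $\upbeta_\cS$ using Lemma~\ref{lemma:beta-2} and the conic-bundle interpretation when a section exists. First I would fix the relatively ample polarization $H = H_X$ (an honest line bundle by Remark~\ref{rem:quadric-hx-untwisted}) and consider the relative moduli space $\rM = \rM_{X/S,H}(\rP)$, where $\rP$ is the Hilbert polynomial of the dual spinor bundle $\cS(1)$ (equivalently, $\rM_{X/S,H}(2;\dots)$ in the notation of Remark~\ref{rem:moduli-chern-hilbert}) on a geometric fiber. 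Over a geometric point $s\in S$ the fiber $\rM_s$ is the moduli space of $H$-stable bundles on $X_s$ with these invariants; since on a smooth $3$-dimensional quadric the spinor bundle is stable, exceptional, and (being exceptional) rigid with $\Ext^1(\cS,\cS)=\Ext^2(\cS,\cS)=0$, the space $\rM_s$ is a single reduced point. Therefore $\rM\to S$ is bijective on geometric points, and by Theorem~\ref{thm:moduli-smooth} it is smooth of relative dimension zero, hence by Corollary~\ref{cor:moduli-etale} the morphism $\rM\to S$ is an isomorphism. Now Proposition~\ref{prop:twisted-universal} applies (all classified sheaves are stable), producing a Brauer class $\upbeta_\cS\in\Br(S)$ and a $p^*(\upbeta_\cS)$-twisted sheaf on $X\times_S\rM\cong X$; after twisting back by $\cO_X(-H_X)$ (an untwisted line bundle) we obtain the desired $p^*(\upbeta_\cS)$-twisted rank-$2$ bundle $\cS$ on $X$ whose restriction to each geometric fiber is the spinor bundle.

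Next I would upgrade the fiberwise decomposition \eqref{eq:sod-quadric} to an $S$-linear one. The objects $\cO_X$, $\cO_X(H_X)$, $\cO_X(2H_X)$ are untwisted line bundles and $\cS$ is $p^*(\upbeta_\cS)$-twisted; restricting to any geometric fiber $X_s$ gives exactly the exceptional collection $\langle \cS_{X_s}, \cO_{X_s}, \cO_{X_s}(1), \cO_{X_s}(2)\rangle$ of \cite{Kap}, which is full and semiorthogonal. By Proposition~\ref{prop:relative-sod}(i)--(iii) (or directly Corollary~\ref{cor:relative-exceptional} for the three line bundles together with the $Y_i=S$ case of the proposition for $\cS$), the corresponding $S$-linear subcategories $\cS\otimes\bD(S,\upbeta_\cS^{-1})$, $\cO_X\otimes\bD(S)$, $\cO_X(H_X)\otimes\bD(S)$, $\cO_X(2H_X)\otimes\bD(S)$ are fully faithful, semiorthogonal, and generate $\bD(X)$. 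This gives the stated decomposition.

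To see that $\upbeta_\cS$ is $2$-torsion I would apply Lemma~\ref{lemma:beta-2} to $\cE = \cS$: the exact sequence \eqref{eq:exact-spinor} shows $\rH^0(X_s,\cS_{X_s}^\vee)$ is $4$-dimensional $=2\rank(\cS_{X_s})$, the evaluation sequence $0\to\cS_{X_s}\to\cO_{X_s}^{\oplus 4}\to\cS_{X_s}(1)\to 0$ is the required extension (identifying $\cS^\vee\cong\cS(1)$ on the quadric), and $\Hom(\cS_{X_s},\cS_{X_s})=\kk$ since the spinor bundle is stable; hence $\upbeta_\cS^2=1$. Finally, suppose $X(S)\ne\varnothing$, say $\sigma\colon S\to X$ is a section. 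Restricting $\cS$ along $\sigma$ gives a rank-$2$ twisted vector bundle $\sigma^*\cS$ on $S$, which by Corollary~\ref{cor:twisted-rank} confirms $\upbeta_\cS^2=1$ and, more importantly, the associated Severi--Brauer variety $\P_S(\sigma^*\cS)\to S$ is a conic bundle (a $\P^1$-fibration) whose Brauer class is exactly $\bB(H_{\P_S(\sigma^*\cS)})=\upbeta_\cS$ by Lemma~\ref{lemma:sb-twisted-sheaves} and the description of $\bB$ in \eqref{eq:twisted-picard-brauer-sequence}; this represents $\upbeta_\cS$ by a conic bundle.

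The main obstacle I anticipate is the moduli-theoretic input: one must make sure that the invariants $\rP$ are chosen so that $\rM_{X/S,H}(\rP)$ (or an open-and-closed piece of it) classifies exactly the spinor bundles on every geometric fiber, with no strictly semistable sheaves and no extra components, so that Corollary~\ref{cor:moduli-etale} and Proposition~\ref{prop:twisted-universal} genuinely apply. On a single quadric threefold this is classical (the moduli space of stable bundles with the spinor invariants is a reduced point), but one should check stability holds relatively and that the polarization $H_X$ — rather than some twist — is the one for which the spinor bundle is stable. Once this bookkeeping is in place, the rest is formal.
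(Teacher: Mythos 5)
Your proposal follows essentially the same route as the paper: construct $\cS$ as a twisted universal bundle over a relative moduli space identified with $S$ via Corollary~\ref{cor:moduli-etale} and Proposition~\ref{prop:twisted-universal}, then globalize the fiberwise collection~\eqref{eq:sod-quadric} by Corollary~\ref{cor:relative-exceptional} and Proposition~\ref{prop:relative-sod}, and finally produce the conic bundle $\P_S(\sigma^*\cS)$ from a section. The one step where your argument, as written, does not go through is the claim that $\rM_s$ is a single reduced point ``since the spinor bundle is stable, exceptional and rigid'': rigidity only shows that the point $[\cS_{X_s}]$ is isolated and reduced, it does not exclude other points of the moduli space (other stable sheaves, possibly non-locally-free, with the same Hilbert polynomial). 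You flag this yourself and defer to a classical uniqueness statement; the paper instead closes exactly this gap internally, by passing to the open subscheme $\rM^\circ\subset\rM$ of sheaves $\cE$ with $\rH^\bullet(X_s,\cE)=0$ and applying the uniqueness result Proposition~\ref{prop:uniqueness-general} to the spinor bundle and the sequence~\eqref{eq:exact-spinor}, which shows every such $\cE$ is isomorphic to $\cS_{X_s}$; the universal family is then taken over $\rM^\circ$, so no statement about all of $\rM$ is needed. Your derivation of $\upbeta_\cS^2=1$ via Lemma~\ref{lemma:beta-2} (using $\cS^\vee\cong\cS(1)$, $h^0=4=2\,\rank\cS$, and $\Hom(\cS,\cS)=\kk$) is a correct alternative to the paper's argument, which instead uses $\wedge^2\cS\cong\cO_X(-H_X)$ untwisted together with Lemma~\ref{lemma:beta-kr}; both are legitimate, and the paper itself uses the Lemma~\ref{lemma:beta-2} mechanism in the genus~$9$ and~$7$ cases. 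The remaining parts (full faithfulness, semiorthogonality, generation, and the conic-bundle representative when $X(S)\ne\varnothing$) coincide with the paper's proof.
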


\begin{proof}
Consider the moduli space 
\begin{equation*}
\rM \coloneqq \rM_{X/S,H_X}\left(\tfrac23t(t+1)(t+2)\right) = \rM_{X/S,H_X}(2; -H_X, L_X, 0),
\end{equation*}
where we use the convention of Remark~\ref{rem:moduli-chern-hilbert} in the right-hand side.
Let also~$\rM^\circ \subset \rM$ be the open subvariety parameterizing sheaves~$\cE$ on fibers~$X_s$ of~$X/S$ with the vanishing
\begin{equation*}
\rH^\bullet(X_s, \cE) = 0.
\end{equation*}
Applying Proposition~\ref{prop:uniqueness-general} to~$\cU = \cS_{X_s}$, the spinor bundle on the quadric~$X_s$, 
sequence~\eqref{eq:exact-spinor}, and a sheaf~$\cE$ as above 
(conditions~\eqref{eq:cu-ce-assumptions} are satisfied because~$\cE$ is numerically equivalent to~$\cU$),
we conclude that~$\cE \cong \cS_{X_s}$.
This proves that the natural morphism~$f \colon \rM^\circ \to S$ is bijective on geometric points.
On the other hand, since the spinor bundle~$\cS_{X_s}$ on~$X_s$ is exceptional, the morphism~$f$ 
is an isomorphism by Corollary~\ref{cor:moduli-etale}.

Furthermore, note that every sheaf parameterized by the moduli space~$\rM$ is $H$-stable.
Therefore, applying Proposition~\ref{prop:twisted-universal} and restricting to the open subscheme~$\rM^\circ \subset \rM$,
we obtain a Brauer class (which we denote~$\upbeta_\cS$) on~$\rM^\circ \cong S$
and a $p^*(\upbeta_\cS)$-twisted universal family (which we denote by~$\cS$) on~$X \times_S \rM^\circ = X$.

Note that the restriction of~$\cS$ to~$X_s$ is the spinor bundle of~$X_s$.
Therefore, by~\eqref{eq:sod-quadric} the bundles~$(\cS, \cO_X, \cO_X(H_X), \cO_X(2H_X))$ 
form a relative exceptional collection, 
hence the corresponding Fourier--Mukai functors are fully faithful 
and their images are semiorthogonal by Corollary~\ref{cor:relative-exceptional},
and applying Proposition~\ref{prop:relative-sod}\ref{item:linear-sod} 
we obtain the required semiorthogonal decomposition of~$\bD(X)$.

It remains to show that the Brauer class~$\upbeta_\cS \in \Br(S)$ is 2-torsion.
For this note that (up to twist by a line bundle on~$S$)
we have an isomorphism~$\wedge^2\cS \cong \cO(-H_X)$;
since the line bundle~$\cO(H_X)$ is untwisted by Proposition~\ref{prop:quadric-forms}, 
we conclude from Lemma~\ref{lemma:beta-kr} that~$\upbeta_\cS$ is indeed 2-torsion.

Finally, if~$X(S) \ne \varnothing$ and if~$i \colon S \to X$ is a section of~$X \to S$ 
then~$i^*\cS$ is a $\upbeta_\cS$-twisted vector bundle of rank~$2$ on~$S$, 
so that~$\upbeta_\cS$ is represented by the conic bundle~$\P_S(i^*\cS)$.
\end{proof}

\subsection{Forms of~$\sY_5$}

Now consider quintic del Pezzo threefolds.
Recall that over an algebraically closed field
every such threefold~$X$ can be represented as a complete intersection
\begin{equation*}
X = \Gr(2,V) \cap \P^6,
\end{equation*}
where~$V$ is a vector space of dimension~$5$ and the intersection is considered inside the Pl\"ucker space~$\P(\wedge^2 V) = \P^9$.
The following proposition provides an analogue over any base.

\begin{proposition}
\label{prop:forms-v5}
If $p \colon X \to S$ is smooth fibration in quintic del Pezzo threefolds, 
there are vector bundles~$V$ and~$A$ of respective ranks~$5$ and~$3$ on~$S$
and an epimorphism $\varphi \colon \wedge^2V \to A^\vee$ such that
\begin{equation}
X = \Gr_S(2,V) \times_{\P_S(\wedge^2V)} \P_S(\Ker(\varphi)),
\end{equation} 
where the morphism $\Gr_S(2,V) \to \P_S(\wedge^2V)$ in the fiber product is the Pl\"ucker embedding.
\end{proposition}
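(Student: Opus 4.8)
The plan is to follow the strategy of Proposition~\ref{prop:quadric-forms} and Theorem~\ref{thm:db-quadric}: use the fundamental class to embed $X$ into a Severi--Brauer variety, construct the relative \emph{Mukai bundle} of $X/S$ as a (possibly twisted) universal sheaf on a relative moduli space, and recover $V$ from it. Let $H_X\in\Pic_{X/S}(S)$ be the fundamental class of $X/S$ (Corollary~\ref{cor:constant}) and set $\upbeta\coloneqq\bB(H_X)\in\Br(S)$. Over an algebraically closed field a quintic del Pezzo threefold is a linearly normal threefold in $\P^6$; hence $\upiota(X_s)=2$, the class $H_X$ is relatively very ample with vanishing higher cohomology, and $\upchi_{X/S}(H_X)=7$. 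Since $\gcd(2,7)=1$, Corollary~\ref{cor:fundamental-class} gives $\upbeta=1$, so $\cO_X(H_X)\in\Pic(X)$, the sheaf $W\coloneqq(p_*\cO_X(H_X))^\vee$ is an honest vector bundle of rank $7$ on $S$, and by Lemma~\ref{lemma:brauer-obstruction} the linear system of $H_X$ defines a closed $S$-embedding $X\hookrightarrow\P_S(W)$.

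Next I would construct the Mukai bundle. Over an algebraically closed field the restriction $\cE_{X_s}$ to $X_s$ of the dual tautological subbundle of $\Gr(2,V_s)$ is a rank-$2$ stable exceptional bundle with $\det\cE_{X_s}\cong\cO_{X_s}(H_X)$ and $\dim\rH^0(X_s,\cE_{X_s})=5$, fitting (after twisting the tautological sequence of $\Gr(2,V_s)$ by $\cO(H_X)$) into an exact sequence $0\to\cE_{X_s}\to\cO_{X_s}(H_X)^{\oplus5}\to\cQ_{X_s}(H_X)\to0$ with semistable cokernel. Consider the relative moduli space $\rM\coloneqq\rM_{X/S,H_X}(2;H_X,2L_X,0)$ and the open subscheme $\rM^\circ\subset\rM$ parameterizing bundles $\cF$ with $\rH^2(X_s,\cF(-H_X))=0$. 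Applying Proposition~\ref{prop:uniqueness-general} to $\cU=\cE_{X_s}$ and the sequence above — the numerical hypotheses~\eqref{eq:cu-ce-assumptions} follow from equality of Hilbert polynomials by Lemma~\ref{lemma:hilbert-euler}, and $\Ext^2(\cO_{X_s}(H_X)^{\oplus5},\cF)=\rH^2(X_s,\cF(-H_X))^{\oplus5}=0$ — one concludes that every bundle classified by $\rM^\circ$ is isomorphic to $\cE_{X_s}$; thus $\rM^\circ\to S$ is bijective on geometric points, and since $\cE_{X_s}$ is exceptional, Corollary~\ref{cor:moduli-etale} shows $\rM^\circ\cong S$. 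By Proposition~\ref{prop:twisted-universal} we obtain a $p^*(\upbeta_\cE)$-twisted vector bundle $\cE$ on $X\times_S\rM^\circ=X$ whose restriction to each $X_s$ is $\cE_{X_s}$.

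Now set $V\coloneqq(p_*\cE)^\vee$. Since $\wedge^2\cE\cong\det\cE$ is a twist of $\cO_X(H_X)$, hence untwisted, Lemma~\ref{lemma:beta-kr} gives $\upbeta_\cE^2=1$; and since $V$ is a $\upbeta_\cE^{-1}$-twisted bundle of rank $\dim\rH^0(X_s,\cE_{X_s})=5$, Corollary~\ref{cor:twisted-rank} gives $\upbeta_\cE^5=1$, whence $\upbeta_\cE=1$ and $V$ is an honest rank-$5$ bundle. The evaluation $p^*V^\vee=p^*p_*\cE\to\cE$ is fibrewise surjective, so its dual exhibits $\cE^\vee$ as a rank-$2$ subbundle of $p^*V$ and defines an $S$-morphism $g\colon X\to\Gr_S(2,V)$ with $g^*\cU\cong\cE^\vee$; composing $g$ with the Pl\"ucker embedding pulls $\cO(1)$ back to $\wedge^2\cE\cong\cO_X(H_X)$. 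The natural map $\wedge^2V^\vee=\wedge^2p_*\cE\to p_*\wedge^2\cE=p_*\cO_X(H_X)=W^\vee$ is surjective because each $X_s$ is linearly normal and nondegenerate in $\P^6$; dualizing, $W$ is a rank-$7$ subbundle of $\wedge^2V$, and we take $A^\vee\coloneqq\Coker(W\hookrightarrow\wedge^2V)$ (so $A$ has rank $3$) and $\varphi\colon\wedge^2V\twoheadrightarrow A^\vee$, whence $\Ker\varphi=W$. The two $S$-morphisms $X\to\P_S(\wedge^2V)$ — the composition of $g$ with the Pl\"ucker embedding, and the composition of $X\hookrightarrow\P_S(W)$ with $\P_S(W)\hookrightarrow\P_S(\wedge^2V)$ — are both induced by $\cO_X(H_X)$ and the surjection onto $p_*\cO_X(H_X)$, hence coincide, producing a closed $S$-immersion $X\hookrightarrow\Gr_S(2,V)\times_{\P_S(\wedge^2V)}\P_S(\Ker\varphi)$. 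The geometric fiber of the right-hand side over $s\in S$ is the intersection of $\Gr(2,V_s)$ with the linear span $\langle X_s\rangle=\P(W_s)$, which by the classical description equals $X_s$; in particular its fibers have constant Hilbert polynomial, so it is $S$-flat, and a closed immersion of $S$-flat families with equal fibers is an isomorphism.

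The main obstacle is the moduli-theoretic step: one must verify on every geometric fiber that the Mukai bundle $\cE_{X_s}$ is stable, exceptional, satisfies $\rH^2(X_s,\cE_{X_s}(-H_X))=0$, and that the remaining hypotheses of Proposition~\ref{prop:uniqueness-general} hold, and then assemble the twisted universal bundle; once $\cE$ is in hand, recovering $V$, $A$, $\varphi$ and checking the fiber-product identity is bookkeeping together with the classical description of $Y_5$. An alternative closer to Proposition~\ref{prop:quadric-forms} would bypass moduli by taking $Q\coloneqq p_{W*}\cI_X(2H_X)$, the rank-$5$ bundle of relative Pl\"ucker quadrics, and recovering $V$ from $Q$ via the fibrewise $\GL$-equivariant isomorphism between the space of Pl\"ucker relations of $\Gr(2,V)$ and $\wedge^4V^\vee$; there the difficulty shifts to making this reconstruction canonical enough to descend along an \'etale cover of $S$.
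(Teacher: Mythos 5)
Your argument is correct in substance, but it takes a genuinely different route from the paper's proof. The paper avoids moduli spaces here entirely: after showing $\bB(H_X)=1$ (via $\gcd(2,7)=1$, Corollary~\ref{cor:fundamental-class}) and embedding $X\hookrightarrow\P_S(W)$ exactly as you do, it takes $V\coloneqq p_{W*}\cI_X(2H_X)$, the rank-$5$ bundle of relative quadrics through $X$, and produces the rank-$2$ bundle directly as the kernel of the restriction to $X$ of the evaluation $p^*V\to\cI_X/\cI_X^2(2H_X)$, i.e.\ as the twisted excess conormal bundle of $X\subset\P_S(W)$; the computation $\det(\cI_X/\cI_X^2)\cong\omega_{X/\P_S(W)}^\vee\cong\cO_X(-5H_X)$ (up to a twist from $S$) shows this kernel has determinant $\cO(H_X)$ up to twist, hence gives the morphism to $\Gr_S(2,V)$, and the pushforward of $\wedge^2(p^*V^\vee)\to\wedge^2\cU^\vee$ yields the epimorphism $\wedge^2V^\vee\to W^\vee\otimes\cL$ whose kernel is $A$. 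This is close to, but simpler than, the alternative you sketch at the end: no reconstruction of $V$ from $\wedge^4V^\vee$ and no descent problem arises, because the quadric bundle itself is taken as $V$. Your main route --- building the relative Mukai bundle as a twisted universal sheaf on $\rM_{X/S,H_X}(2;H_X,2L_X,0)$ via Proposition~\ref{prop:uniqueness-general}, Corollary~\ref{cor:moduli-etale} and Proposition~\ref{prop:twisted-universal}, killing the twist by the $2$-versus-$5$ coprimality argument (Lemma~\ref{lemma:beta-kr} and Corollary~\ref{cor:twisted-rank}), and setting $V=(p_*\cE)^\vee$ --- is precisely the method the paper uses for the prime Fano fibrations $\sX_{12}$, $\sX_{10}$, $\sX_9$, $\sX_7$, and it works here as well; what it buys is uniformity with those cases and an intrinsic modular characterization of $\cU$, at the price of the fiberwise inputs you flag (stability and exceptionality of $\cU^\vee$ on $\sY_5$, semistability of $\cQ(H_X)$), which are standard but must be verified, whereas the paper's construction needs none of them.

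Two small repairs. First, keep the bookkeeping that $\det\cE\cong\cO_X(H_X)$ only up to a line bundle pulled back from $S$, so $p_*\wedge^2\cE\cong W^\vee\otimes\cM$ for some line bundle $\cM$ on $S$ (the paper's $\cL$); this is harmless but should be said. Second, your flatness justification for the fiber product is off as stated: constancy of fiberwise Hilbert polynomials implies flatness only over a reduced base, and $S$ is not assumed reduced. Instead, note that the fiber product is the zero locus on the smooth $S$-scheme $\Gr_S(2,V)$ of a section of the rank-$3$ bundle $A^\vee\otimes\cO(1)$, and that this zero locus has codimension $3$ in every geometric fiber (it equals $X_s$); flatness then follows from the Koszul resolution and the local criterion, after which your closed immersion with bijective fibers is indeed an isomorphism. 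With these adjustments your proof goes through at the same level of rigor as the paper's.
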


\begin{proof}
Let~$H_X \in \Pic_{X/S}(S)$ be the relative fundamental class.
Since over an algebraically closed field the Fano index of a quintic del Pezzo threefold is~2 
and the Euler characteristic of its fundamental divisor class is~7,
by Corollary~\ref{cor:fundamental-class} the Brauer class~$\bB(H_X)$ is annihilated by~$\gcd(2,7) = 1$, hence vanishes.
Therefore, the line bundle~$\cO_{X/S}(H_X)$ is untwisted.

Let $W \coloneqq (p_*\cO_{X/S}(H_X))^\vee$.
Then~$W$ is a vector bundle on~$S$ of rank~7, 
and the natural morphism~$X \to \P_S(W)$ is a closed embedding. 
For every geometric point $s$ of~$S$ the corresponding geometric fiber $X_s \subset \P(W_s) \cong \P^6$ 
is an intersection of a 5-dimensional space of Pl\"ucker quadrics, 
so if~$p_W \colon \P_S(W) \to S$ is the natural morphism and $\cI_X$ is the ideal sheaf of~$X$ in~$\P_S(W)$ then
\begin{equation*}
V \coloneqq p_{W*}\cI_X(2H_X)
\end{equation*}
is a vector bundle of rank~5 and the natural morphism $p_W^*V \to \cI_X(2H_X)$ is surjective.
Consider the restriction of this morphism to~$X$ and denote by~$\cU$ the kernel bundle, so that we have an exact sequence
\begin{equation*}
0 \to \cU \to p^*V \to \cI_X/\cI_X^2(2H_X) \to 0.
\end{equation*}
This bundle is, up to twist, the excess conormal bundle of~$X \subset \P_S(W)$ as defined in~\cite[Appendix~A]{DK18}.
The rank of~$\cU$ is~2, hence it defines a morphism~$X \to \Gr_S(2,V)$.
To relate it to the morphism~$X \to \P_S(W)$ defined above, we note that 
\begin{equation*}
\det(\cI_X/\cI_X^2) = \det(\cN^\vee_{X/\P_S(W)}) \cong \omega_{X/\P_S(W)}^\vee
\end{equation*}
is isomorphic to~$\cO_X(-5H_X)$ up to twist by a line bundle on~$S$, 
hence~$\wedge^2\cU^\vee$ is isomorphic to~$\cO(H_X)$ up to twist by a line bundle on~$S$.
Therefore, the canonical morphism~$\wedge^2(p^*V^\vee) \to \wedge^2\cU^\vee$ induces after pushforward to~$S$ 
a morphism~$\wedge^2V^\vee \to W^\vee \otimes \cL$ for an appropriate line bundle~$\cL$ on~$S$.
This morphism is surjective on each geometric fiber, hence it is an epimorphism.
Let~$A$ be its kernel bundle (of rank~3) and let~$\varphi \colon \wedge^2V \to A^\vee$ be the dual of the kernel morphism.
Then we obtain the equality~$X = \Gr_S(2,V) \times_{\P_S(\wedge^2V)} \P_S(\Ker(\varphi))$, as required.
\end{proof}

A semiorthogonal decomposition of the derived category of a quintic del Pezzo threefold~$X$ over an algebraically closed field~$\kk$ 
has been described in~\cite{Orlov:v5}; it takes the form
\begin{equation}
\label{eq:sod-y5}
\bD(X) = \langle \cO_X \otimes \bD(\kk), \cU^\vee \otimes \bD(\kk), \cO_X(1) \otimes \bD(\kk), \cU^\vee(1) \otimes \bD(\kk) \rangle,
\end{equation}
where~$\cU$ is the restriction of the tautological bundle from~$\Gr(2,V)$.
Now we describe a relative analogue of~\eqref{eq:sod-y5}.

\begin{theorem}
\label{thm:y5}
If $X/S$ is a form of a quintic del Pezzo threefold then there is a semiorthogonal decomposition
\begin{equation*}
\bD(X) = \langle \cO_X \otimes \bD(S), \cU^\vee \otimes \bD(S), \cO_X(H_X) \otimes \bD(S), \cU^\vee(H_X) \otimes \bD(S) \rangle,
\end{equation*}
where~$\cU$ is the vector bundle of rank~$2$ on~$X$ constructed in Proposition~\textup{\ref{prop:forms-v5}}. 
\end{theorem}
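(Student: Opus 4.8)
The plan is to follow the same pattern as the proof of Theorem~\ref{thm:db-quadric}, but the argument is considerably shorter here because there is no Brauer obstruction to deal with. Indeed, by Proposition~\ref{prop:forms-v5} the rank~$2$ bundle~$\cU$ already exists as a genuine (untwisted) vector bundle on~$X$, and, as recorded in the proof of that proposition, the fundamental class~$H_X$ is represented by an untwisted line bundle~$\cO_X(H_X) \in \Pic(X)$. Consequently we will not need to pass through a relative moduli space or to produce a twisted universal sheaf: all four objects $\cO_X$, $\cU^\vee$, $\cO_X(H_X)$, $\cU^\vee(H_X)$ live in~$\bD(X)$ itself, and all Brauer twists in the resulting decomposition are trivial, in agreement with Theorem~\ref{thm:main-base}\ref{brauer:y5}.

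\textbf{Key steps.} First I would recall from Proposition~\ref{prop:forms-v5} that for every geometric point~$s \in S$ the restriction~$\cU_{X_s}$ is the pullback of the tautological rank~$2$ subbundle under the embedding $X_s \hookrightarrow \Gr(2,V_s)$, and likewise $\cO_X(H_X)\vert_{X_s}$ is the fundamental polarization of~$X_s$. Next I would consider the ordered collection $(\cO_X,\ \cU^\vee,\ \cO_X(H_X),\ \cU^\vee(H_X))$ in~$\bD(X)$; by the previous step its restriction to~$X_s$ is precisely Orlov's collection $(\cO_{X_s},\ \cU_{X_s}^\vee,\ \cO_{X_s}(1),\ \cU_{X_s}^\vee(1))$, which by~\eqref{eq:sod-y5} is a full exceptional collection in~$\bD(X_s)$. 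In particular this is a relative exceptional collection in the sense of~\S\ref{ss:db-linear}. Then I would invoke Corollary~\ref{cor:relative-exceptional} to conclude that the Fourier--Mukai functors
\begin{equation*}
\cF \mapsto \cO_X \otimes p^*\cF,\quad \cF \mapsto \cU^\vee \otimes p^*\cF,\quad \cF \mapsto \cO_X(H_X) \otimes p^*\cF,\quad \cF \mapsto \cU^\vee(H_X) \otimes p^*\cF
\end{equation*}
from~$\bD(S)$ to~$\bD(X)$ are fully faithful and that their images form a semiorthogonal collection of admissible $S$-linear subcategories. Finally, since for each geometric point~$s \in S$ these images base-change to the four components of the complete decomposition~\eqref{eq:sod-y5} of~$\bD(X_s)$, Proposition~\ref{prop:relative-sod}\ref{item:linear-sod} shows that together they generate~$\bD(X)$, which yields the asserted $S$-linear semiorthogonal decomposition.

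\textbf{Main obstacle.} There is essentially no hard step: the geometric content is entirely packaged in Proposition~\ref{prop:forms-v5} (the fiberwise identification of~$\cU$ with the tautological bundle) and in Orlov's theorem~\eqref{eq:sod-y5}, and the rest is a formal application of the relative machinery of~\S\ref{ss:db-linear}. The only points deserving care are bookkeeping ones: one must use the \emph{dual} bundle~$\cU^\vee$ (not~$\cU$) and keep the ordering as in~\eqref{eq:sod-y5}, and one must make sure Orlov's full exceptional collection is available over an arbitrary algebraically closed field of characteristic zero, which it is. Unlike the spinor bundle in the quadric case, $\cU$ is globally defined on~$X$, so no twisted universal sheaf and no nontrivial Brauer class enter the picture.
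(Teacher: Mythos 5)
Your proposal is correct and coincides with the paper's own argument: the collection $(\cO_X,\cU^\vee,\cO_X(H_X),\cU^\vee(H_X))$ is relatively exceptional by~\eqref{eq:sod-y5}, and Corollary~\ref{cor:relative-exceptional} together with Proposition~\ref{prop:relative-sod}\ref{item:linear-sod} gives the decomposition. The only difference is that you spell out the (true but implicit) facts from Proposition~\ref{prop:forms-v5} that $\cU$ and $\cO_X(H_X)$ are untwisted, which the paper takes for granted.
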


\begin{proof}
By~\eqref{eq:sod-y5} the collection~$(\cO_X,\cU^\vee,\cO_X(H_X),\cU^\vee(H_X))$ is a relative exceptional collection,
hence the theorem follows from Corollary~\ref{cor:relative-exceptional} and Proposition~\ref{prop:relative-sod}\ref{item:linear-sod}.
\end{proof}

\subsection{Forms of~$\sY_4$}

Now consider quartic del Pezzo threefolds.
Recall that over an algebraically closed field
every such threefold~$X$ can be represented as an intersection of two quadrics
\begin{equation*}
X = Q_1 \cap Q_2 \subset \P(V),
\end{equation*}
where~$V$ is a vector space of dimension~$6$,
or equivalently as a linear section of codimension~2 
of the second Veronese embedding~$\P(V) \hookrightarrow \P(\Sym^2 V)$.
The following proposition provides an analogue over any base.

\begin{proposition}
\label{prop:forms-v4}
If~$p \colon X \to S$ is a smooth fibration in quartic del Pezzo threefolds, 
there is a $2$-torsion Brauer class~$\upbeta \in \Br(S)$, 
a $\upbeta$-twisted vector bundle~$V$ of rank~$6$, an untwisted vector bundle~$A$ of rank~$2$,
and an epimorphism~$\varphi \colon \Sym^2V \to A^\vee$ such that
\begin{equation}
X = \P_S(V) \times_{\P_S(\Sym^2V)} \P_S(\Ker(\varphi)).
\end{equation} 
If~$X(S) \ne \varnothing$ then~$\upbeta = 1$.
\end{proposition}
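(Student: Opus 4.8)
The plan is to follow the proof of Proposition~\ref{prop:quadric-forms} almost verbatim, the only new point being that here the relative Fano index and the relative Euler characteristic of the fundamental class no longer force the ambient Brauer class to vanish, but only to be $2$-torsion. First I would let $H_X \in \Pic_{X/S}(S)$ be the relative fundamental class and set $\upbeta \coloneqq \bB(H_X) \in \Br(S)$. Over an algebraically closed field a quartic del Pezzo threefold has Fano index~$2$, and its fundamental divisor class has Euler characteristic~$6$; hence by Corollary~\ref{cor:constant} we have $\upiota(X/S) = 2$ and $\upchi_{X/S}(H_X) = 6$, so Corollary~\ref{cor:fundamental-class} gives $\upbeta^{\gcd(2,6)} = \upbeta^2 = 1$.

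Next I would set $V \coloneqq (p_*\cO_{X/S}(H_X))^\vee$; since $H_X$ is relatively ample, it relatively has vanishing higher cohomology by Kodaira vanishing, so $V$ is a $\upbeta^{-1}$-twisted vector bundle of rank~$6$, and by Lemma~\ref{lemma:brauer-obstruction} the twisted line bundle $\cO_X(H_X)$ defines an $S$-morphism $\phi\colon X \to \P_S(V)$ with $\phi^*\cO_{\P_S(V)}(1) \cong \cO_X(H_X)$ which on each geometric fiber is the standard embedding $X_s \hookrightarrow \P(V_s) \cong \P^5$; as $\phi$ is proper with $S$-flat source and target and is a closed immersion on every geometric fiber, it is a closed immersion. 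Now set $A \coloneqq p_{V*}\cI_X(2H_X)$, where $\cI_X \subset \cO_{\P_S(V)}$ is the ideal of $X$. As $\cO_{\P_S(V)}(2H_X)$ is $\upbeta^2 = 1$-twisted, $A$ is untwisted, and the fiberwise computation for an intersection of two quadrics in $\P^5$ --- namely $\dim H^0(\cI_{X_s}(2)) = 2$ and $H^1(\cI_{X_s}(2)) = 0$ --- shows that $A$ is locally free of rank~$2$, compatible with base change, and that $A \hookrightarrow p_{V*}\cO_{\P_S(V)}(2H_X) = (\Sym^2 V)^\vee$ is a subbundle inclusion with locally free cokernel. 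Dualising gives the epimorphism $\varphi\colon \Sym^2 V \twoheadrightarrow A^\vee$; note $\Sym^2 V$, and hence $\Ker\varphi$, is untwisted since $\upbeta^{-2} = 1$.

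To identify $X$ with the fiber product, observe that the relative double Veronese embedding $\P_S(V)\hookrightarrow\P_S(\Sym^2 V)$ is a closed immersion, so $\P_S(V)\times_{\P_S(\Sym^2 V)}\P_S(\Ker\varphi)$ is the scheme-theoretic zero locus in $\P_S(V)$ of the relative quadrics parametrised by $A$, i.e.\ of the image of the natural map $p_V^*A \to \cO_{\P_S(V)}(2H_X)$. Since a complete intersection of two quadrics in $\P^5$ is scheme-theoretically cut out in degree~$2$, the sheaf $\cI_X(2H_X)$ is relatively globally generated, so by base change $p_V^*p_{V*}\cI_X(2H_X) \to \cI_X(2H_X)$ is surjective; hence that image is $\cI_X(2H_X)$ and the zero locus is exactly $X$. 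Finally, if $i\colon S\to X$ is a section of $p$ then $i^*\cO_X(H_X)$ is a $\upbeta$-twisted line bundle on $S$, whence $\upbeta = 1$ by Corollary~\ref{cor:twisted-rank}.

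The main obstacle I anticipate is bookkeeping rather than anything conceptual: keeping track of the twists of $V$, of $\cO_{\P_S(V)}(kH_X)$, and of $A$ so that they remain consistent, checking that $A$ is a genuine subbundle of $(\Sym^2 V)^\vee$ with locally free cokernel, and verifying that the equality $X = \P_S(V)\times_{\P_S(\Sym^2 V)}\P_S(\Ker\varphi)$ holds over all of $S$ and not merely on geometric fibers --- all of which come down to the two cohomological vanishings above together with relative global generation, carefully transported to the twisted setting.
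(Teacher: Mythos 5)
Your proposal is correct and follows essentially the same route as the paper's proof: the same torsion bound via Corollary~\ref{cor:fundamental-class}, the same definitions $V = (p_*\cO_{X/S}(H_X))^\vee$ and $A = p_{V*}\cI_X(2H_X)$ via Lemma~\ref{lemma:brauer-obstruction}, and the same conclusion from a section (the paper notes a section of $\P_S(V)/S$ kills $\upbeta$, which is the same observation as your pullback-of-$\cO_X(H_X)$ argument). The extra verifications you spell out (base change for $A$, relative global generation of $\cI_X(2H_X)$, identification of the fiber product) are exactly the routine checks the paper leaves implicit.
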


\begin{proof}
Let~$H_X \in \Pic_{X/S}(S)$ be the relative fundamental class.
Since over an algebraically closed field the Fano index of a quartic del Pezzo threefold is~2,
we conclude from Corollary~\ref{cor:fundamental-class} that the Brauer obstruction~$\upbeta \coloneqq \bB(H_X)$ is 2-torsion.
Let 
\begin{equation*}
V \coloneqq (p_*\cO_{X/S}(H_X))^\vee.
\end{equation*}
Then~$V$ is a $\upbeta^{-1}$-twisted vector bundle on~$S$ of rank~6 
and by Lemma~\ref{lemma:brauer-obstruction} there is a closed embedding~$X \to \P_S(V)$ 
such that for every geometric point~$s$ of~$S$ the corresponding geometric fiber~$X_s \subset \P(V_s) \cong \P^5$ 
is an intersection of a pencil of quadrics, 
so if~$p_V \colon \P_S(V) \to S$ is the natural morphism and~$\cI_X$ is the ideal sheaf of~$X$ in~$\P_S(V)$ then
\begin{equation*}
A \coloneqq p_{V*}\cI_X(2H_X)
\end{equation*}
is a vector bundle of rank~2, and it is untwisted because~$\bB(2H_X) = \upbeta^2 = 1$.
Moreover, the pushforward of the natural morphism~$\cI_X(2H_X) \hookrightarrow \cO_X(2H_X)$ 
gives an embedding~$A \hookrightarrow \Sym^2V^\vee$.
Then we have~$X = \P_S(V) \times_{\P_S(\Sym^2V)} \P_S(\Ker(\varphi))$, 
where~$\varphi \colon \Sym^2V \to A^\vee$ is the dual of the embedding~$A \hookrightarrow \Sym^2V^\vee$.

Finally, if~$X(S) \ne \varnothing$ then~$\P_S(V) \to S$ has a section, hence~$\upbeta = 1$.
\end{proof}

A semiorthogonal decomposition of the derived category of a quartic del Pezzo threefold~$X$ 
over an algebraically closed field has been described in~\cite{BO95}.
We summarize their results in a slightly modified form that is convenient for our applications below.

\begin{proposition}
\label{prop:y4-moduli}
Let~$X$ be a quartic del Pezzo threefold over an algebraically closed field~$\kk$.
Consider the moduli space 
\begin{equation*}
\rM \coloneqq \rM_{X,H_X}\left(\tfrac23t(t+1)(2t+1)\right) = \rM_{X,H_X}(2; -H_X, 2L_X, 0)
\end{equation*}
and the open subscheme~$\rM^\circ \subset \rM$ parameterizing sheaves~$\cE$ on~$X$ such that
\begin{equation*}
\rH^\bullet(X, \cE) = \rH^\bullet(X, \cE(-1)) = 0.
\end{equation*}
Then~$\Gamma_X \coloneqq \rM^\circ$ is a smooth projective curve of genus~$2$, 
there exists a universal family~$\cU$ of sheaves on~$X \times \rM^\circ = X \times \Gamma_X$,
the Fourier--Mukai functor~\mbox{$\Phi_\cU \colon \bD(\Gamma_X) \to \bD(X)$} is fully faithful,  
and there is a semiorthogonal decomposition
\begin{equation}
\label{eq:sod-y4}
\bD(X) = \langle \Phi_\cU(\bD(\Gamma_X)), \cO_X \otimes \bD(\kk), \cO_X(1) \otimes \bD(\kk) \rangle.
\end{equation}
\end{proposition}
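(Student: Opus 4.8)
\emph{Proof proposal.}
The plan is to treat the semiorthogonal decomposition of $\bD(X)$ coming from~\cite{BO95} as an input and to recognize its nontrivial component as the derived category of the moduli space $\rM^\circ$. Recall that $X$ is an intersection of two quadrics in $\P^5$, that the pencil $\{Q_t\}_{t\in\P^1}$ of quadrics through $X$ has a degree-$6$ discriminant, and that the associated double cover $\Gamma\to\P^1$ is a smooth hyperelliptic curve of genus~$2$; over each point $t\in\Gamma$ there is a family of planes in the $4$-dimensional quadric $Q_t$ and a corresponding spinor bundle $\cS_t$ on $Q_t$, whose restriction $\cU_t\coloneqq\cS_t\vert_X$ is a rank-$2$ bundle on $X$, and these fit into a global family $\cU$ on $X\times\Gamma$. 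By~\cite{BO95} the pair $(\cO_X,\cO_X(1))$ is exceptional, the Fourier--Mukai functor $\Phi_\cU\colon\bD(\Gamma)\to\bD(X)$ is fully faithful with image $\cC_X\coloneqq\langle\cO_X,\cO_X(1)\rangle^\perp$, and $\bD(X)=\langle\cC_X,\cO_X,\cO_X(1)\rangle$; thus the decomposition~\eqref{eq:sod-y4} is already available, and the content of the proposition is the modular interpretation of $\Gamma$.

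First I would record the numerical properties of $\cU_t$. Restricting the ideal sequence of the divisor $X\subset Q_t$ twisted by $\cS_t$ and using the vanishing of $\rH^\bullet(Q_t,\cS_t(k))$ for $k\in\{0,-1,-2,-3\}$, one gets $\rH^\bullet(X,\cU_t)=\rH^\bullet(X,\cU_t(-1))=0$; a Chern class computation on $X$ (where $\Pic(X)=\ZZ H_X$, so all numerical cycle classes lie in $\QQ[H_X]$) gives rank~$2$, $c_1=-H_X$, $c_2=2L_X$ and, automatically, $c_3=0$; and $\rH^0(X,\cU_t)=0$ forces $\upmu$-stability, hence all sheaves classified by $\rM=\rM_{X,H_X}(2;-H_X,2L_X,0)$ are $H_X$-stable. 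Consequently $\cU$ defines a morphism $\Gamma\to\rM$ that factors through $\rM^\circ$, and by full faithfulness of $\Phi_\cU$ together with Corollary~\ref{cor:moduli-curve} this morphism is an isomorphism onto an open subscheme of $\rM^\circ$. It remains to prove it is surjective: given $\cE\in\rM^\circ$, the defining vanishings place $\cE$ in $\langle\cO_X,\cO_X(1)\rangle^\perp=\cC_X$, while Proposition~\ref{prop:curve-general}, applied with the spinor sequence $0\to\cU_t\to\cO_X^{\oplus4}\to\cW_t\to0$ (obtained by restricting to $X$ the standard two-step resolution of a spinor bundle on $Q^4$) in the role of the four-term resolution, gives either $\cE\cong\cU_t$ for some $t$ or $\cE\in\Phi_\cU(\bD(\Gamma))^\perp=\cC_X^\perp$; since $\cC_X\cap\cC_X^\perp=0$, the second case forces $\cE=0$, so $\cE\cong\cU_t$ and $\rM^\circ\cong\Gamma_X$ is a smooth projective curve of genus~$2$. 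To invoke Proposition~\ref{prop:curve-general} I would check its hypotheses: $\Ext^2(\cO_X^{\oplus4},\cE)=\rH^2(X,\cE)^{\oplus4}=0$ from the definition of $\rM^\circ$; $\Ext^3(\cW_t,\cE)=0$ from Serre duality and stability, the relevant $\Hom$ running between stable bundles of strictly different slopes; and the numerical conditions~\eqref{eq:cu-ce-assumptions} from Lemma~\ref{lemma:hilbert-euler}, since $\cE$ and $\cU_t$ have the same Hilbert polynomial in the fixed moduli space and $\ch(\cU_t)\in\QQ[H_X]$. I expect the main obstacle to be this surjectivity step --- choosing the spinor resolution and pinning down all the $\Ext$-vanishings --- together with the (minor but genuine) point that a sheaf in $\rM^\circ$ is automatically locally free, which one deduces from $\upmu$-stability and the value of the Chern classes.

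Finally, since all sheaves classified by $\rM$ are $H_X$-stable and $\Br(\Gamma_X)=0$ by Tsen's theorem ($\Gamma_X$ being a smooth projective curve over the algebraically closed field $\kk$), Proposition~\ref{prop:twisted-universal} provides an untwisted universal sheaf $\cU$ on $X\times\Gamma_X$, which by its universal property coincides, up to a twist by a line bundle on $\Gamma_X$, with the family considered above; hence $\Phi_\cU\colon\bD(\Gamma_X)\to\bD(X)$ is fully faithful with image $\cC_X$, and the decomposition $\bD(X)=\langle\cC_X,\cO_X,\cO_X(1)\rangle$ of~\cite{BO95} takes the asserted form $\bD(X)=\langle\Phi_\cU(\bD(\Gamma_X)),\cO_X,\cO_X(1)\rangle$. (If one prefers not to import the genus from the classical theory, the value $\g(\Gamma_X)=2$ also follows a posteriori by comparing Hochschild homology, since $h^{1,2}(X)=2$.)
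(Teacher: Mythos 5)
Your proposal is correct and follows essentially the same route as the paper: take the family, the full faithfulness of $\Phi_\cU$, and the decomposition~\eqref{eq:sod-y4} from~\cite{BO95} as input, use the classifying morphism and Corollary~\ref{cor:moduli-curve} to realize $\Gamma_X$ as an open subscheme of $\rM^\circ$, and prove surjectivity by the dichotomy of Proposition~\ref{prop:curve-general} together with the vanishings defining $\rM^\circ$. The only (harmless) deviation is the resolution fed into Proposition~\ref{prop:curve-general}: the paper uses the four-term sequence $0 \to \cU_y \to \cO_X^{\oplus 4} \to \cO_X(1)^{\oplus 4} \to \cU_y(2) \to 0$, so both $\Ext$-vanishing hypotheses are immediate from the definition of $\rM^\circ$, whereas your three-term restricted spinor sequence requires the extra Serre-duality/slope argument for $\Ext^3(\cW_t,\cE)=0$ (and, like the paper, silently applies the proposition to a sheaf rather than a bundle) --- both versions go through.
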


\begin{proof}
Let~$\Gamma_X \to \P^1$ be the double covering of the pencil of quadrics passing through~$X$
branched at the~6 points corresponding to singular quadrics in the pencil;
this is a smooth curve of genus~2.
A family~$\cU$ of vector bundles on~$X \times \Gamma_X$ has been constructed in~\cite[\S2]{BO95} (it is denoted there by~$S$), 
full faithfulness of the corresponding Fourier--Mukai functor was proved in~\cite[Theorem~2.7]{BO95}
and the semiorthogonal decomposition~\eqref{eq:sod-y4} was established in~\cite[Theorem~2.9]{BO95}.
So, we only need to provide the curve~$\Gamma_X$ and the bundle~$\cU$ with a modular interpretation.

For this we note that by~\cite[\S2]{BO95} the bundles~$\cU_y$, $y \in \Gamma_X$, in the above family 
are restrictions of spinor bundles from quadrics in the pencil defining~$X$,
and so their Chern classes have been computed in~\cite[Remark~2.9]{Ott},
and they match Chern classes in the definition of~$\rM$.
Furthermore, these bundles have the required cohomology vanishings 
(because of the semiorthogonal decomposition~\eqref{eq:sod-y4})
and are all stable (since stability of~$\cE$ is equivalent to the vanishing of~$\rH^0(X,\cE(-H_X))$).
Therefore, there is a unique morphism~$\Gamma_X \to \rM^\circ$ such that the family~$\cU$
is the pullback of a universal family
and it is an open immersion by Corollary~\ref{cor:moduli-curve}.

Now let~$\cE$ be a sheaf on~$X$ corresponding to a geometric point of~$\rM^\circ$.
Applying Proposition~\ref{prop:curve-general} to the exact sequences
\begin{equation*}
0 \to \cU_y \to \cO_X^{\oplus 4} \to \cO_X(1)^{\oplus 4} \to \cU_y(2) \to 0
\end{equation*}
(obtained from~\cite[Theorem~2.8]{Ott}) and using the cohomology vanishings in the definition of~$\cE$,
we conclude that if~$\cE \not\cong \cU_y$ for~$y \in \Gamma_X$ then~$\cE$ belongs to the orthogonal 
of the right-hand-side of~\eqref{eq:sod-y4}, which is impossible.
Thus $\cE \cong \cU_y$, hence the open immersion~$\Gamma_X \to \rM^\circ$ is surjective, hence it is an isomorphism.
\end{proof}

Now we describe a relative analogue of~\eqref{eq:sod-y4}.
Recall that~$\rF_1(X/S)$ denotes the Hilbert scheme of lines (with respect to~$H_X$) in the fibers of~$X \to S$.

\begin{theorem}
\label{thm:y4}
If $X/S$ is a form of a quartic del Pezzo threefold then there is a semiorthogonal decomposition
\begin{equation*}
\bD(X) = \langle \bD(\Gamma, \upbeta_\Gamma), \cO_X \otimes \bD(S), \cO_X(H_X) \otimes \bD(S,\upbeta) \rangle,
\end{equation*}
where~$\upbeta \in \Br(S)$ is the $2$-torsion Brauer class constructed in Proposition~\textup{\ref{prop:forms-v4}}, 
$\Gamma / S$ is a smooth projective family of curves of genus~$2$,
and~$\upbeta_\Gamma \in \Br(\Gamma)$ is a $4$-torsion Brauer class.

Moreover, if~$X(S) \ne \varnothing$ then~$\upbeta = 1$ and~$\upbeta_\Gamma^2 = 1$,
and if~$\rF_1(X/S)(S) \ne \varnothing$ then~$\upbeta_\Gamma = 1$.
\end{theorem}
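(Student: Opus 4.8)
The plan is to mimic the strategy already established for quadric fibrations (Theorem~\ref{thm:db-quadric}) and for the quartic case over an algebraically closed field (Proposition~\ref{prop:y4-moduli}), passing to the relative moduli space and invoking Proposition~\ref{prop:relative-sod}. First I would form the relative moduli space
\begin{equation*}
\rM \coloneqq \rM_{X/S,H_X}\left(\tfrac23t(t+1)(2t+1)\right) = \rM_{X/S,H_X}(2; -H_X, 2L_X, 0)
\end{equation*}
and its open subscheme~$\rM^\circ \subset \rM$ cut out by the vanishings $\rH^\bullet(X_s,\cE) = \rH^\bullet(X_s,\cE(-H_X)) = 0$. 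By Theorem~\ref{thm:moduli-existence} the fiber of~$\rM^\circ$ over a geometric point~$s \in S$ is exactly the curve~$\Gamma_{X_s}$ of Proposition~\ref{prop:y4-moduli}, a smooth projective curve of genus~$2$; since all sheaves involved are stable with vanishing $\Ext^2$ (the latter from the semiorthogonal decomposition~\eqref{eq:sod-y4}), Theorem~\ref{thm:moduli-smooth} shows~$\rM^\circ \to S$ is smooth of relative dimension one, hence $\Gamma \coloneqq \rM^\circ$ is a smooth projective family of genus~$2$ curves over~$S$. Applying Proposition~\ref{prop:twisted-universal} on~$\rM^\circ$ gives a Brauer class~$\upbeta_\Gamma \in \Br(\Gamma)$ and a $\pr_\Gamma^*(\upbeta_\Gamma)$-twisted universal sheaf~$\cU$ on $X \times_S \Gamma$ restricting fiberwise to the bundles of Proposition~\ref{prop:y4-moduli}. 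The Fourier--Mukai functor $\Phi_\cU \colon \bD(\Gamma,\upbeta_\Gamma^{-1}) \to \bD(X)$ is then fiberwise fully faithful, so Proposition~\ref{prop:relative-sod}\ref{item:linear-ff} makes it fully faithful with $S$-linear admissible image. The two line bundles $\cO_X$ and $\cO_X(H_X)$ — the latter a $p^*(\upbeta)$-twisted line bundle by Proposition~\ref{prop:forms-v4} — form a relative exceptional collection, and combining the fiberwise decomposition~\eqref{eq:sod-y4} with Proposition~\ref{prop:relative-sod}\ref{item:linear-sod} yields the claimed $S$-linear semiorthogonal decomposition.

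Next I would pin down the order of~$\upbeta_\Gamma$. The natural guess, consistent with the pattern in the other theorems (and with~$\g(\Gamma)=2$), is that~$\cU$ satisfies, on each geometric fiber, the hypotheses of Lemma~\ref{lemma:beta-2}: namely $\dim \rH^0(X_s, \cU_{X_s}^\vee) = 2\rank(\cU) = 4$, the evaluation sequence $0 \to \cU_{X_s} \to \rH^0 \otimes \cO_{X_s} \to \cU_{X_s}^\vee \to 0$ is exact, and $\Hom(\cU_{X_s},\cU_{X_s}) = \kk$. These are exactly the spinor-bundle facts recorded via~\cite[Theorem~2.8]{Ott} and the stability/exceptionality of~$\cU_y$ used already in Proposition~\ref{prop:y4-moduli}; so Lemma~\ref{lemma:beta-2} gives $\upbeta_\Gamma^2 = 1$. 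However, the theorem asserts only $\upbeta_\Gamma^4 = 1$, which suggests a subtlety: the Brauer class lives on~$\Gamma$, not on~$S$, and the relevant evaluation sequence may only hold after a further twist, or the determinant relation $\wedge^2 \cU \cong \cO(-H_X)$ only controls $\bB$ of the determinant on~$X \times_S \Gamma$ up to a line bundle coming from~$\Gamma$. I would therefore argue as follows: by Lemma~\ref{lemma:beta-kr}, since $\wedge^2\cU$ has determinant whose Brauer obstruction on~$\Gamma$ is controlled by $\cO_X(H_X)$ (which is $p^*(\upbeta)$-twisted with $\upbeta^2 = 1$, hence contributes a $2$-torsion obstruction after pullback), one gets $\upbeta_\Gamma^{2\cdot 2} = \upbeta_\Gamma^4 = 1$. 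This is the step I expect to be the main obstacle — disentangling precisely which twisted line bundle on~$\Gamma$ measures the discrepancy and confirming it is $2$-torsion rather than merely torsion, so that the product with $\rank(\cU) = 2$ lands in $4$-torsion.

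Finally, the two "moreover" assertions. If $X(S) \neq \varnothing$, a section $i \colon S \to X$ trivializes the Severi--Brauer structure, so $\upbeta = \bB(H_X) = 1$ (already in Proposition~\ref{prop:forms-v4}); moreover $i^*\cU$ is a $\upbeta_\Gamma$-twisted vector bundle of rank~$2$ on~$\Gamma \times_S S = \Gamma$, whence $\upbeta_\Gamma^2 = 1$ by Corollary~\ref{cor:twisted-rank}. If in addition $\rF_1(X/S)(S) \neq \varnothing$, a section of the relative Hilbert scheme of lines gives a family of lines $\ell \subset X_s$; restricting the universal sheaf~$\cU$ to $\ell \times_S \Gamma$ and using the known splitting type of the spinor bundle on a line (from~\cite[Theorem~2.8]{Ott}, $\cU_y|_\ell \cong \cO_\ell \oplus \cO_\ell(-1)$ or similar, giving a canonical rank-one sub or quotient) produces a $\upbeta_\Gamma$-twisted \emph{line} bundle on~$\Gamma$, so $\upbeta_\Gamma = 1$ by Corollary~\ref{cor:twisted-rank}. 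The one point to check carefully here is that the distinguished line subbundle of $\cU_y|_\ell$ varies well in families — i.e., that the relevant sub- or quotient bundle on $\ell \times_S \Gamma$ is locally free — which follows from constancy of the splitting type over all $y \in \Gamma$ together with flatness, by standard semicontinuity.
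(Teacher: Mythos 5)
Your proposal is correct and follows the paper's proof essentially step for step: the relative moduli space $\rM^\circ$ with the stated cohomology vanishings, smoothness via Theorem~\ref{thm:moduli-smooth} giving the genus-$2$ family $\Gamma$, the twisted universal bundle from Proposition~\ref{prop:twisted-universal}, Proposition~\ref{prop:relative-sod} for the decomposition, Lemma~\ref{lemma:beta-kr} applied to $\wedge^2\cU \cong \cO(-H_X)$ together with $\upbeta^2=1$ for the $4$-torsion, and restriction of $\cU$ along a section, resp.\ pushforward of $\cU\otimes\pr_X^*\cO_L$ for a relative line $L$, for the two ``moreover'' claims. Your hesitation about Lemma~\ref{lemma:beta-2} is resolved by noting that it simply does not apply here: on a fiber $X_s$ the kernel of the evaluation map $\rH^0(X_s,\cU_y^\vee)\otimes\cO_{X_s}\to\cU_y^\vee$ is the \emph{conjugate} spinor-type bundle $\cU_{y'}$ (the other point of $\Gamma_{X_s}$ over the same quadric of the pencil), not $\cU_y$ itself, which is why the paper asserts only $\upbeta_\Gamma^4=1$ and proves it exactly by the determinant argument you settle on.
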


\begin{proof}
Consider the same moduli space as in Proposition~\ref{prop:y4-moduli} but in the relative setting
\begin{equation*}
\rM \coloneqq \rM_{X/S,H_X}\left(\tfrac23t(t+1)(2t+1)\right) = \rM_{X/S,H_X}(2; -H_X, 2L_X, 0)
\end{equation*}
(where we use the convention of Remark~\ref{rem:moduli-chern-hilbert} in the right-hand side)
and its open subscheme~\mbox{$\rM^\circ \subset \rM$} 
parameterizing sheaves~$\cE$ on fibers~$X_s$ of~$X/S$ with the vanishing
\begin{equation*}
\rH^\bullet(X_s, \cE) = \rH^\bullet(X_s, \cE(-1)) = 0.
\end{equation*}
By Theorem~\ref{thm:moduli-existence} and Proposition~\ref{prop:y4-moduli} 
the geometric fibers of the morphism~$\rM^\circ \to S$ are the smooth projective curves~$\Gamma_{X_s}$ of genus~$2$ 
associated with the fibers~$X_s$ of~$X/S$.
Moreover, all sheaves parameterized by~$\rM^\circ$ have the form
\begin{equation*}
\cE_y = \Phi_{\cU_s}(\cO_y),
\end{equation*}
where~$y$ is a point of the smooth curve~$\Gamma_{X_s}$ 
and~$\cU_s$ is the universal bundle on~$X_s \times \Gamma_{X_s}$ from the proposition.
In particular, since the functor~$\Phi_{\cU_s}$ is fully faithful and~$\Gamma_{X_s}$ is smooth, we have 
\begin{equation*}
\Ext^2(\cE_y,\cE_y) \cong \Ext^2(\cO_y,\cO_y) = 0,
\qquad 
\Ext^1(\cE_y,\cE_y) \cong \Ext^1(\cO_y,\cO_y) = \rT_{y, \Gamma_{X_s}}.
\end{equation*}
Applying Theorem~\ref{thm:moduli-smooth} we see that the morphism~$\rM^\circ \to S$ 
is smooth of relative dimension~1, i.e., it is a smooth family of curves.
Since~$\rM^\circ$ is open in~$\rM$, and~$\rM$ is projective over~$S$, it follows that~$\rM^\circ$ is quasiprojective over~$S$.
Finally, since~$\rM^\circ \to S$ is a quasiprojective morphism with proper fibers, the morphism is projective.
From now on we will use the notation
\begin{equation*}
\Gamma \coloneqq \rM^\circ.
\end{equation*}
By construction this is a smooth projective family of curves of genus~$2$ over~$S$.

Further, as the proof of Proposition~\ref{prop:y4-moduli} shows, 
every sheaf parameterized by~$\rM^\circ$ is stable and locally free, 
hence by Proposition~\ref{prop:twisted-universal} there is a Brauer class~$\upbeta_\Gamma \in \Br(\Gamma)$ 
and a~$\pr_\Gamma^*(\upbeta_\Gamma)$-twisted universal bundle~$\cU$ on~$X \times_S \Gamma = X \times_S \rM^\circ$
(where~$\pr_\Gamma \colon X \times_S \Gamma \to \Gamma$ is the natural projection).
By construction, the restriction of~$\cU$ to every fiber of the morphism~$X \times_S \Gamma \to S$
is isomorphic (up to twist by a line bundle on the curve~$\Gamma_{X_s}$) 
to the bundle on~$X_s \times \Gamma_{X_s}$ from Proposition~\ref{prop:y4-moduli}.

Applying Proposition~\ref{prop:relative-sod}\ref{item:linear-ff} we conclude that the $S$-linear functor 
\begin{equation*}
\Phi_\cU \colon \bD(\Gamma, \upbeta_\Gamma) \to \bD(X)
\end{equation*}
is fully faithful and its image is admissible.
Furthermore, by~\eqref{eq:sod-y4} the pair~$(\cO_X,\cO_X(H_X))$ is relative exceptional,
and combining Corollary~\ref{cor:relative-exceptional} with Proposition~\ref{prop:relative-sod}\ref{item:linear-so}--\ref{item:linear-sod}
we obtain the required semiorthogonal decomposition of~$\bD(X)$.

Now by definition of~$\rM$ we have an isomorphism~$\wedge^2\cU \cong \cO(-H_X)$ on~$X \times_S \Gamma$ 
(up to twist by a line bundle on~$\Gamma$) and applying Lemma~\ref{lemma:beta-kr}
to deduce the 4-torsion property of~$\upbeta_\Gamma$ from the 2-torsion property of~$\upbeta$.
Similarly, if~$X/S$ has a section the equality~$\upbeta = 1$ (established in Proposition~\ref{prop:forms-v4})
implies~$\upbeta_\Gamma^2 = 1$.

Finally, assume that the morphism~$\rF_1(X/S) \to S$ has a section.
Then there is a relative line~$L \hookrightarrow X$,
i.e., an $S$-flat subscheme with the appropriate Hilbert polynomial.
Then, denoting by~$\pr_X \colon X \times_S \Gamma \to X$ and~$\pr_\Gamma \colon X \times_S \Gamma \to \Gamma$ the projections, 
it is easy to see that
\begin{equation*}
\cL \coloneqq \pr_{\Gamma *}(\cU \otimes \pr_X^*\cO_L) \in \bD(\Gamma,\upbeta_\Gamma)
\end{equation*}
is a line bundle.
Therefore, the Brauer class~$\upbeta_\Gamma$ vanishes by Corollary~\ref{cor:twisted-rank}.
\end{proof}

One could also construct the curve~$\Gamma$ directly, 
as an appropriate double covering of the projective bundle~$\P_S(A)$,
the projectivization of the vector bundle of rank~2 from Proposition~\ref{prop:forms-v4}.

\section{Prime Fano threefolds}
\label{sec:prime-fano}

In this section we describe smooth proper morphisms~$p \colon X \to S$ with fibers prime Fano threefolds.
In other words, we assume that~$\Pic(X_s) = \ZZ K_{X_s}$ 
(by Corollary~\ref{cor:constant} if this holds for one geometric point in~$S$, the same is true for any geometric point),
so that the fundamental class of~$X/S$ is equal to the relative anticanonical class, $H_X = -K_{X/S}$,
and generates the relative Picard group~$\Pic_{X/S}(S)$.
In particular, the line bundle~$\cO_X(H_X)$ is untwisted and canonically defined.

\subsection{Forms of~$\sX_{12}$}

Recall that over an algebraically closed field every prime Fano threefold~$X$ of genus~$12$ (type~$\sX_{12}$)
can be represented as the zero locus of a global section of the vector bundle~$(\wedge^2(\cU^\vee))^{\oplus 3}$ on~$\Gr(3,7)$
(we recall that~$\cU$ denotes the tautological bundle on the Grassmannian),
or equivalently, as the linear section
\begin{equation*}
X = \Gr(3,V) \cap \P^{13},
\end{equation*}
where~$V$ is a vector space of dimension~$7$, the intersection is considered inside the Pl\"ucker space~$\P(\wedge^3 V) = \P^{34}$,
and the subspace~$\P^{13} \subset \P^{34}$ is the projectivization of the kernel of the morphism~$\wedge^3(V) \to V^{\oplus 3}$
given by the global section of~$(\wedge^2(\cU^\vee))^{\oplus 3}$.

Furthermore, recall the full exceptional collection in the derived category of~$X$ (see~\cite{k1996v22})
\begin{equation}
\label{eq:sod-x22}
\bD(X) = \langle \cO_X, \cU^\vee, \cE, \wedge^2\cU^\vee \rangle
\end{equation}
where~$\cU$ is the restriction of the tautological bundle from Grassmannian (of rank~3) 
and~$\cE$ is an exceptional vector bundle of rank~2.
The following proposition provides an analogue of the description of~$X$ 
and the construction of (twisted) vector bundles on~$X$ for families over any base.

\begin{proposition}
\label{prop:x12}
If~$p \colon X \to S$ is a smooth Fano fibration with fibers of type~$\sX_{12}$
there is a vector bundle~$V$ of rank~$7$, a vector bundle~$A$ of rank~$3$, 
and an epimorphism~$\varphi \colon \wedge^2V \to A^\vee$ such that
\begin{equation}
\label{eq:x12}
X = \Gr_S(3,V) \times_{\P_S(\wedge^3V)} \P_S(\Ker(\tilde\varphi)),
\end{equation} 
where $\tilde\varphi \colon \wedge^3V \to V \otimes A^\vee$ is the morphism induced by~$\varphi$.
\end{proposition}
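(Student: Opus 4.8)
The plan is to follow the template of Propositions~\ref{prop:quadric-forms} and~\ref{prop:forms-v5}, with one essential modification: a threefold of type~$\sX_{12}$ is not cut out by quadrics in its linear span, so the ``excess conormal bundle'' device used for $\sY_5$ is unavailable, and the rank-$3$ bundle on $X$ needed to define the embedding into a relative Grassmannian — the relative \emph{Mukai bundle} — must instead be obtained from a relative moduli space.

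Since the fibers are prime Fano threefolds, $H_X = -K_{X/S}$ is untwisted, so $\cO_X(H_X) \in \Pic(X)$ is defined. Over an algebraically closed field, the restriction $\cU|_{X_s}$ of the tautological subbundle under the Mukai embedding $X_s \hookrightarrow \Gr(3,7)$ is a stable exceptional rank-$3$ bundle with $h^0(X_s,\cU^\vee|_{X_s}) = 7$ and $h^{>0}(X_s,\cU^\vee|_{X_s}) = 0$, fitting into the tautological exact sequence $0 \to \cU|_{X_s} \to V_s \otimes \cO_{X_s} \to \cQ|_{X_s} \to 0$. I would consider the relative moduli space~$\rM$ of semistable sheaves on fibers of $X/S$ with the numerical invariants of $\cU|_{X_s}$ and its open subscheme $\rM^\circ \subset \rM$ of sheaves with vanishing of the relevant twisted cohomology. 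Applying Proposition~\ref{prop:uniqueness-general} to the tautological sequence, exactly as in the proof of Theorem~\ref{thm:db-quadric}, one shows that $\cU|_{X_s}$ is the unique such sheaf; this uniqueness step — checking that $\cQ|_{X_s}$ is semistable, that $\upmu(\cQ|_{X_s}(K_{X_s})) < \upmu(\cU|_{X_s})$, and that $H^2(X_s,\cE) = 0$ for every competing semistable sheaf~$\cE$ — is the main obstacle; the remaining arguments are formal, given Sections~\ref{sec:picard} and~\ref{sec:db-forms}. Granting it, $\rM^\circ \to S$ is bijective on geometric points, hence an isomorphism by Corollary~\ref{cor:moduli-etale}, and Proposition~\ref{prop:twisted-universal} yields a $p^*(\upbeta)$-twisted relative Mukai bundle $\cE$ on $X \times_S \rM^\circ = X$, for some $\upbeta \in \Br(S)$, whose determinant restricts to $\cO_{X_s}(-H_{X_s})$ on each fiber.

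Next I would show $\upbeta = 1$. From $\bB(\det\cE) = \bB(-H_X) = 1$ and Lemma~\ref{lemma:beta-kr} we get $\upbeta^3 = 1$; and, since $h^{>0}(X_s,\cU^\vee|_{X_s}) = 0$, the sheaf $p_*(\cE^\vee)$ is a $\upbeta^{-1}$-twisted vector bundle of rank~$7$, so $\upbeta^7 = 1$ by Corollary~\ref{cor:twisted-rank}; since $\gcd(3,7) = 1$, this forces $\upbeta = 1$. Hence $\cE$ and $V := (p_*(\cE^\vee))^\vee$ (of rank~$7$) are honest vector bundles; the evaluation $p^*V^\vee \to \cE^\vee$ is surjective by global generation on fibers, its dual exhibits $\cE$ as a rank-$3$ subbundle of $p^*V$, and the induced morphism $g \colon X \to \Gr_S(3,V)$ restricts on each geometric fiber to the Mukai embedding. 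Being a closed immersion on fibers, $g$ is a closed immersion.

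It remains to produce $A$ and $\varphi$. Composing $g$ with the relative Plücker embedding $\Gr_S(3,V) \hookrightarrow \P_S(\wedge^3 V)$ gives $g^*\cO(1) \cong \cO_X(H_X)$. Set $A := \Ker\bigl(\wedge^2 V^\vee = \pr_{S*}(\wedge^2\cU^\vee) \to p_*((\wedge^2\cU^\vee)|_X)\bigr)$; by cohomology and base change this is a rank-$3$ subbundle of $\wedge^2 V^\vee$, because $h^0(X_s,\wedge^2\cU^\vee|_{X_s}) = 18$ (from the Koszul resolution of $\cO_{X_s}$ in $\Gr(3,7)$, using that $\cU|_{X_s}$ is simple). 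Dualizing the inclusion $A \hookrightarrow \wedge^2 V^\vee$ gives the epimorphism $\varphi \colon \wedge^2 V \to A^\vee$, while the inclusion itself corresponds to a section $\sigma$ of $\wedge^2\cU^\vee \otimes p^*A^\vee$ on $\Gr_S(3,V)$ whose zero scheme, being $S$-flat with geometric fibers $Z(\sigma_s) = X_s$ (Mukai's description), coincides with $g(X)$. Finally, putting $\tilde\varphi := (\id_V \otimes \varphi) \circ (\text{comultiplication } \wedge^3 V \to V \otimes \wedge^2 V)$, the elementary identity that for a $3$-dimensional subspace $U \subset V$ the vector $\wedge^3 U$ lies in $\Ker\tilde\varphi$ if and only if $U$ is isotropic for the skew forms encoded by $\varphi$ identifies $Z(\sigma)$ with $\Gr_S(3,V) \cap \P_S(\Ker\tilde\varphi)$, which is precisely~\eqref{eq:x12}.
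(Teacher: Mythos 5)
Your overall architecture coincides with the paper's (relative moduli space of rank-$3$ sheaves, twisted universal bundle, killing the Brauer class via $\upbeta^3=1$ from the determinant and $\upbeta^7=1$ from the rank-$7$ pushforward, then the embedding into $\Gr_S(3,V)$ and the definition of~$A$ from sections vanishing on~$X$), and those parts are fine. But the step on which the whole proposition hinges — the fiberwise statement that the moduli space, cut down by an open condition, has exactly one point over each geometric point of~$S$, namely the Mukai bundle — is exactly the step you leave open, and your proposed route to it is not the paper's and is not known to work. Applying Proposition~\ref{prop:uniqueness-general} to the tautological sequence $0 \to \cU \to V\otimes\cO \to \cQ \to 0$ on $X_s \subset \Gr(3,7)$ requires the rank-$4$ quotient $\cQ\vert_{X_s}$ to be semistable; unlike the cases $\sQ^3$, $\sX_9$, $\sX_7$ (where the quotient term of the analogous sequence is a twist of the dual of the stable bundle itself, so its semistability is free), here $\cQ\vert_{X_s}$ is unrelated to~$\cU$ and its semistability is an unproven (and not obviously true) claim. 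You would also need that no other semistable sheaf with these invariants has the prescribed cohomology vanishing. You explicitly flag this as ``the main obstacle'' and offer no argument, so the proof is incomplete precisely where it matters.

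The paper resolves this differently, and the difference is structural: it first constructs a $p^*(\upbeta)$-twisted rank-$2$ exceptional bundle~$\cE$ (with invariants $(2;H_X,7L_X,0)$) from the relative moduli space~$\rM_2$, using the uniqueness and exceptionality results of \cite[Appendix~B]{KPS}; it then defines the open subscheme of the rank-$3$ moduli space not by cohomology vanishing but by the orthogonality condition $\Ext^\bullet(\cE_s,\cU)=0$, and imports from \cite[Proposition~5.5, Corollary~5.6]{KP20a} the facts that this locus is bijective onto~$S$, that its points are exceptional bundles, that $p_*\cU^\vee$ has rank~$7$, and that~$\cU$ induces a closed embedding $X \hookrightarrow \Gr_S(3,V)$ (a point you also assert via ``global generation on fibers'' without proof). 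So the rank-$2$ bundle, which plays no role in your argument, is an essential ingredient of the paper's characterization of the rank-$3$ bundle; to salvage your version you would either have to prove semistability of $\cQ\vert_{X_s}$ and the cohomological uniqueness, or fall back on the external characterization as the paper does. The concluding steps (rank of~$A$, the section of $\wedge^2\cU^\vee\otimes p^*A^\vee$, and the identification of its zero locus with the linear section of the Pl\"ucker embedding by $\P_S(\Ker\tilde\varphi)$) agree with the paper up to reformulation.
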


\begin{proof}
The main step in the proof is a construction of the vector bundle~$\cU$ on~$X$ (that would give a morphism to the Grassmannian);
as we will see the rest follows from a fiberwise description of~$X$.
As an intermediate step we construct a twisted bundle~$\cE$ of rank~$2$.

Consider the relative moduli space 
\begin{equation*}
\rM_2 \coloneqq \rM_{X/S}(2;H_X,7L_X,0)
\end{equation*}
where we use the convention of Remark~\ref{rem:moduli-chern-hilbert} in the right-hand side.
By~\cite[Theorem~B.1.1, Proposition~B.1.5]{KPS} the natural proper morphism~$f \colon \rM_2 \to S$ is bijective on geometric points
and for every geometric point~$[E] \in \rM_2$, the bundle~$E$ is exceptional by~\cite[Lemma~B.1.9]{KPS}.
Therefore, $f$ is an isomorphism by Corollary~\ref{cor:moduli-etale}.

Note that every sheaf parameterized by the moduli space~$\rM_2$ is $H_X$-stable.
Therefore, applying Proposition~\ref{prop:twisted-universal} 
we obtain a Brauer class~$\upbeta \in \Br(S)$ on~$\rM_2 \cong S$
and a $p^*(\upbeta)$-twisted universal family (which we denote by~$\cE$) on~$X \times_S \rM_2 = X$.
Since $\wedge^2\cE \cong \cO(H_X)$ (up to twist by a line bundle on~$S$);
and since the line bundle~$\cO(H_X)$ is untwisted, it follows from Lemma~\ref{lemma:beta-kr} that~$\upbeta^2 = 1$.

Similarly, consider the relative moduli space
\begin{equation*}
\rM_3 \coloneqq \rM_{X/S}(3; -H_X, 10L_X, -2P_X).
\end{equation*}
Let also $\rM_3^\circ \subset \rM_3$ be the open subscheme parameterizing bundles~$\cU$ on~$X_s$ with the vanishing
\begin{equation*}
\Ext^\bullet(\cE_s,\cU) = 0.
\end{equation*}
where~$\cE$ is the universal bundle constructed above.
By~\cite[Corollary~5.6]{KP20a} the natural morphism~$\rM_3^\circ \to S$ is bijective on geometric points 
and for every geometric point~$[U] \in \rM_3^\circ$, the bundle~$U$ is exceptional.
As before we conclude that~$\rM_3^\circ \to S$ is an isomorphism,
there is a Brauer class~$\upbeta_\cU \in \Br(\rM_3^\circ) = \Br(S)$ 
and a~$p^*(\upbeta_\cU)$-twisted universal bundle~$\cU$ on~$X \times_S \rM_3^\circ \cong X$.

Since $\wedge^3\cU \cong \cO(-H_X)$ (up to twist by a line bundle on~$S$), 
it follows from Lemma~\ref{lemma:beta-kr} that~\mbox{$\upbeta_\cU^3 = 1$}.
On the other hand, it follows from~\cite[Proposition~5.5]{KP20a} that
\begin{equation*}
V \coloneqq (p_*\cU^\vee)^\vee,
\end{equation*}
is a~$\upbeta_\cU$-twisted vector bundle on~$S$ of rank~7.
Therefore, $\upbeta_\cU^7 = 1$ by Corollary~\ref{cor:twisted-rank}.
Combining the above equalities for~$\upbeta_\cU$, we conclude that~$\upbeta_\cU = 1$, hence the bundles~$\cU$ and~$V$ are untwisted.

By~\cite[Proposition~5.5]{KP20a} the bundle~$\cU$ induces a closed embedding $X \to \Gr_S(3,V)$.
Moreover, if~$p_V \colon \Gr_S(3, V) \to S$ is the natural morphism and~$\cI_X$ is the ideal of~$X$ in~$\Gr_S(3,V)$, 
it follows that 
\begin{equation*}
A \coloneqq p_{V*}(\cI_X \otimes \wedge^2\cU^\vee)
\end{equation*}
is a vector bundle on~$S$ of rank~3, and if~$\varphi \colon \wedge^2V \to A^\vee$ 
denotes the dual morphism of the natural embedding~$A \hookrightarrow \wedge^2V^\vee$,
then~\eqref{eq:x12} holds.
\end{proof}

\begin{theorem}
\label{thm:x12}
If $X/S$ is a form of a prime Fano threefold of genus~$12$ then there is a semiorthogonal decomposition
\begin{equation*}
\bD(X) = \langle \cO_X \otimes \bD(S), \cU^\vee \otimes \bD(S), \cE \otimes \bD(S,\upbeta), \wedge^2\cU^\vee \otimes \bD(S) \rangle.
\end{equation*}
Moreover, if~$X(S) \ne \varnothing$ then~$\upbeta$ can be represented by a conic bundle.
\end{theorem}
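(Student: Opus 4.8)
The plan is to combine Proposition~\ref{prop:x12} with the exceptional collection~\eqref{eq:sod-x22} over an algebraically closed field, following the same scheme as the proofs of Theorems~\ref{thm:db-quadric} and~\ref{thm:y5}. First I would record the twists of the bundles produced in Proposition~\ref{prop:x12}: since $\cU$ and $V$ are untwisted there, the bundles $\cO_X$, $\cU^\vee$, $\wedge^2\cU^\vee$ are untwisted, while $\cE$ is a relative $p^*(\upbeta)$-twisted vector bundle of rank~$2$ with $\upbeta^2 = 1$. By construction the restriction of each of $\cO_X$, $\cU^\vee$, $\cE$, $\wedge^2\cU^\vee$ to any geometric fiber $X_s$ is the corresponding term of~\eqref{eq:sod-x22}; hence this quadruple is a relative exceptional collection in the sense introduced after Proposition~\ref{prop:relative-sod}, with associated Brauer classes $1$, $1$, $\upbeta$, $1$.

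Next, Corollary~\ref{cor:relative-exceptional} shows that the associated $S$-linear Fourier--Mukai functors into $\bD(X)$, with sources $\bD(S)$, $\bD(S)$, $\bD(S,\upbeta^{-1})$, $\bD(S)$ respectively, are fully faithful, and that their images form a semiorthogonal collection of admissible $S$-linear subcategories of $\bD(X)$; since $\upbeta^2 = 1$ we have $\upbeta^{-1} = \upbeta$, so the third component is $\cE \otimes \bD(S,\upbeta)$ as in the statement. To see that these four subcategories generate $\bD(X)$, I would apply Proposition~\ref{prop:relative-sod}\ref{item:linear-sod}: over every geometric point $s \in S$ the base change of the collection is, by construction, precisely the full exceptional collection~\eqref{eq:sod-x22} of $\bD(X_s)$. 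This yields the desired $S$-linear semiorthogonal decomposition.

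For the last assertion, assume $X(S) \neq \varnothing$ and fix a section $i \colon S \to X$ of $p$. As $p \circ i = \id_S$, the pullback of the $p^*(\upbeta)$-twisted sheaf $\cE$ along $i$ (Lemma~\ref{lemma:twisted-pb-pf}) is a $\upbeta$-twisted vector bundle of rank~$2$ on $S$; hence $\P_S(i^*\cE) \to S$ is a conic bundle (a Severi--Brauer variety of relative dimension~$1$, see Lemma~\ref{lemma:sb-twisted-sheaves}) whose associated Azumaya algebra represents $\upbeta$. This is the same argument as at the end of the proof of Theorem~\ref{thm:db-quadric}. I expect no genuine obstacle here: all the substantive work --- constructing $\cU$, $\cE$, $V$ and establishing the torsion bounds --- already lies in Proposition~\ref{prop:x12}, and what remains is the routine passage from a fiberwise decomposition to an $S$-linear one; the only point needing care is the bookkeeping of twists, namely matching the $p^*(\upbeta)$-twist of $\cE$ with the $\upbeta^{-1}$ in Corollary~\ref{cor:relative-exceptional}, which is harmless since $\upbeta$ is $2$-torsion.
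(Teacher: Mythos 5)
Your proposal is correct and follows essentially the same route as the paper: the quadruple $(\cO_X,\cU^\vee,\cE,\wedge^2\cU^\vee)$ from Proposition~\ref{prop:x12} is a relative exceptional collection by~\eqref{eq:sod-x22}, and the decomposition follows from Corollary~\ref{cor:relative-exceptional} together with Proposition~\ref{prop:relative-sod}\ref{item:linear-sod}, while the conic-bundle statement comes from $\P_S(i^*\cE)$ for a section $i$. Your extra remark reconciling the $\upbeta^{-1}$ of Corollary~\ref{cor:relative-exceptional} with $\upbeta$ via $2$-torsion is exactly the bookkeeping the paper leaves implicit.
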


\begin{proof}
By~\eqref{eq:sod-x22} the collection~$(\cO_X,\cU^\vee,\cE,\wedge^2\cU^\vee)$ is a relative exceptional collection,
hence the first part of the theorem follows from Corollary~\ref{cor:relative-exceptional} 
and Proposition~\ref{prop:relative-sod}\ref{item:linear-sod}.

If~$X(S) \ne \varnothing$ and if~$i \colon S \to X$ is a section of~$X \to S$ 
then~$i^*\cE$ is a $\upbeta$-twisted vector bundle of rank~$2$ on~$S$, 
so that~$\upbeta$ is represented by the conic bundle~$\P_S(i^*\cE)$.
\end{proof}

\subsection{Forms of~$\sX_{10}$}

Recall that over an algebraically closed field every prime Fano threefold~$X$ of genus~$10$ (type~$\sX_{10}$) 
can be represented as the zero locus of a global section of the vector bundle~\mbox{$\cU^\perp(1) \oplus \cO(1)^{\oplus 2}$} on~$\Gr(2,7)$,
or equivalently, as the linear section
\begin{equation*}
X = \Gr(2,V) \cap \P^{11},
\end{equation*}
where~$V$ is a vector space of dimension~$7$, the intersection is considered inside the Pl\"ucker space~$\P(\wedge^2 V) = \P^{20}$,
and the subspace~$\P^{11} \subset \P^{20}$ is the projectivization of the kernel of the morphism~$\wedge^2(V) \to V^\vee \oplus \kk^2$
given by the global section of~$\cU^\perp(1) \oplus \cO(1)^{\oplus 2}$ defining~$X$
(note that~$\rH^0(\Gr(2,V),\cU^\perp(1)) = \wedge^3V^\vee$, hence a global section of~$\cU^\perp(1)$ induces a map~$\wedge^2(V) \to V^\vee$).
The restriction to~$X$ of the tautological bundle~$\cU$ is called the {\sf Mukai bundle}.

The following proposition provides an analogue of the above description over any base.

\begin{proposition}
If~$p \colon X \to S$ is a smooth Fano fibration with fibers of type~$\sX_{10}$
there is a vector bundle~$V$ of rank~$7$, a vector bundle~$A$ of rank~$2$, a line bundle~$\cL$,
and epimorphisms~\mbox{$\varphi_1 \colon \wedge^3V \to \cL^\vee$} 
and~\mbox{$\varphi_2 \colon \Ker(\wedge^2V \xrightarrow{\ \varphi_1\ } V^\vee \otimes \cL^\vee) \to A^\vee$} such that
\begin{equation}
\label{eq:x10}
X = \Gr_S(2,V) \times_{\P_S(\wedge^2V)} \P_S(\Ker(\varphi_2)).
\end{equation} 
\end{proposition}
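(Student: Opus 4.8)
The plan is to follow the template already used above for the del Pezzo threefolds and for type~$\sX_{12}$: first realize the Mukai bundle on~$X$ as an honest (untwisted) vector bundle by a relative moduli argument, then use it to embed~$X$ into a relative Grassmannian, and finally read off the data~$(\cL,\varphi_1,A,\varphi_2)$ from the ideal sheaf of that embedding. Since the fibers are prime, $H_X=-K_{X/S}$ is untwisted and generates~$\Pic_{X/S}(S)$, so there is no ambiguity in the polarization.

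To build the Mukai bundle, consider the relative moduli space~$\rM$ of Gieseker $H_X$-stable rank-$2$ sheaves on the fibers of~$X/S$ with the Chern classes of the Mukai bundle of a genus-$10$ prime Fano threefold (if necessary, as in the genus-$12$ case, replacing~$\rM$ by an open subscheme cut out by the cohomology vanishing that characterizes the Mukai bundle). By the uniqueness and exceptionality of the Mukai bundle on a genus-$10$ prime Fano threefold over an algebraically closed field (cf.\ the treatment of~$\sX_{12}$ above and the references quoted there), the morphism~$\rM\to S$ is bijective on geometric points and all parametrized sheaves are exceptional, hence it is an isomorphism by Corollary~\ref{cor:moduli-etale}. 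Every parametrized sheaf is $H_X$-stable, so Proposition~\ref{prop:twisted-universal} yields a Brauer class~$\upbeta_\cU\in\Br(S)$ and a $p^*(\upbeta_\cU)$-twisted universal bundle~$\cU$ of rank~$2$ on~$X\times_S\rM\cong X$, restricting on each fiber to the Mukai bundle. Exactly as in Proposition~\ref{prop:x12}: from~$\wedge^2\cU\cong\cO_X(-H_X)$ (up to a line-bundle twist on~$S$) and the fact that~$\cO_X(H_X)$ is untwisted, Lemma~\ref{lemma:beta-kr} gives~$\upbeta_\cU^2=1$; on the other hand~$\cU^\vee$ is relatively globally generated with relative~$\rH^0$ of rank~$7$ and vanishing higher cohomology on the fibers, so by base change $V\coloneqq(p_*\cU^\vee)^\vee$ is a $\upbeta_\cU$-twisted bundle of rank~$7$ and~$\upbeta_\cU^7=1$ by Corollary~\ref{cor:twisted-rank}; as~$\gcd(2,7)=1$ we get~$\upbeta_\cU=1$, so~$\cU$ and~$V$ are untwisted, and the surjection~$p^*V^\vee\twoheadrightarrow\cU^\vee$ defines an $S$-morphism~$X\to\Gr_S(2,V)$ which is a closed embedding since it is one on every geometric fiber.

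It remains to extract~$(\cL,\varphi_1,A,\varphi_2)$. Write~$\Gr\coloneqq\Gr_S(2,V)$, let~$p_V\colon\Gr\to S$ be the projection, let~$\cO_\Gr(1)$ be the relative Plücker bundle (so~$\cO_\Gr(1)|_X\cong\cO_X(H_X)$), let~$\cU^\perp$ be the rank-$5$ tautological subbundle of~$V^\vee\otimes\cO_\Gr$, and let~$\cI_X$ be the ideal sheaf of~$X$ in~$\Gr$. Base change from the fiberwise statement recalled in the text gives~$p_{V*}(\cU^\perp\otimes\cO_\Gr(1))\cong\wedge^3V^\vee$ and~$p_{V*}(\cO_\Gr(1))\cong\wedge^2V^\vee$; moreover, since each fiber~$X_s$ is smooth and is the zero locus of a section of~$\cU^\perp(1)\oplus\cO(1)^{\oplus2}$, a Koszul computation shows that~$\rH^0(\cI_{X_s}\otimes\cU^\perp(1))$ is $1$-dimensional (spanned by the defining trivector). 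Hence
\[
\cL\coloneqq p_{V*}\bigl(\cI_X\otimes\cU^\perp\otimes\cO_\Gr(1)\bigr),
\qquad
A\coloneqq\Coker\bigl(V\otimes\cL\xrightarrow{\ \tilde\varphi_1^\vee\ }p_{V*}(\cI_X\otimes\cO_\Gr(1))\bigr)
\]
make sense, where~$\varphi_1\colon\wedge^3V\to\cL^\vee$ is dual to the line subbundle~$\cL\hookrightarrow\wedge^3V^\vee$, $\tilde\varphi_1\colon\wedge^2V\to V^\vee\otimes\cL^\vee$ is the epimorphism induced by contraction, and~$\tilde\varphi_1^\vee$ is its transpose; here~$\cL$ is a line bundle,~$p_{V*}(\cI_X\otimes\cO_\Gr(1))$ is a subbundle of~$\wedge^2V^\vee$ containing~$\Ima(\tilde\varphi_1^\vee)$, and~$A$ is a rank-$2$ bundle, all of them untwisted because built from untwisted sheaves on~$\Gr$. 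Since~$\Coker(\tilde\varphi_1^\vee)\cong(\Ker\tilde\varphi_1)^\vee$, the bundle~$A$ is a rank-$2$ subbundle of~$(\Ker\tilde\varphi_1)^\vee$, and dualizing gives the epimorphism~$\varphi_2\colon\Ker(\tilde\varphi_1)\to A^\vee$ (fiberwise it is the residual rank-$2$ map given by the two remaining defining sections). Then~$\Ker(\varphi_2)\subset\wedge^2V$ is a rank-$12$ subbundle, $\P_S(\Ker(\varphi_2))\subset\P_S(\wedge^2V)$ is a relative~$\P^{11}$, and~$\Gr_S(2,V)\times_{\P_S(\wedge^2V)}\P_S(\Ker(\varphi_2))$ is a closed $S$-subscheme of~$\Gr$, flat over~$S$, with geometric fibers~$\Gr(2,V_s)\cap\P^{11}=X_s$; as~$X$ embeds into it and the embedding is an isomorphism on fibers, it equals~$X$, which is~\eqref{eq:x10}.

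I expect the only real obstacle to be the first step — the uniqueness and exceptionality of the Mukai bundle on genus-$10$ prime Fano threefolds over algebraically closed fields (the classical input, analogous to the genus-$12$ facts of~\cite{KPS,KP20a} used above) and the coprimality bookkeeping~$\gcd(2,7)=1$ forcing~$\upbeta_\cU=1$. Once~$\cU$ and~$V$ are untwisted, the extraction of~$(\cL,\varphi_1,A,\varphi_2)$ is a chain of fiberwise-checked, essentially formal manipulations of the ideal sheaf, wholly parallel to the proof of Proposition~\ref{prop:x12}.
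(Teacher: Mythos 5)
Your proposal follows essentially the same route as the paper: construct the Mukai bundle via the relative moduli space of rank-$2$ stable sheaves (using Corollary~\ref{cor:moduli-etale} and Proposition~\ref{prop:twisted-universal}), kill the Brauer class by combining the $2$-torsion from $\wedge^2\cU$ with the $7$-torsion from $V=(p_*\cU^\vee)^\vee$, embed $X$ into $\Gr_S(2,V)$, and read off $\cL$ and $A$ from pushforwards of twists of the ideal sheaf. Your definition of $A$ as a cokernel rather than as $p_{V*}(\cI'_X\otimes\cO(1))$ for the intermediate fiber product is only a cosmetic variation, so the argument matches the paper's proof.
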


\begin{proof}
Consider the relative moduli space 
\begin{equation*}
\rM \coloneqq \rM_{X/S}(2;H_X,6L_X,0)
\end{equation*}
where we use the convention of Remark~\ref{rem:moduli-chern-hilbert} in the right-hand side.
By~\cite[Theorem~B.1.1, Proposition~B.1.5]{KPS} the natural projective morphism~$f \colon \rM \to S$ is bijective on geometric points
and for every geometric point~$[U] \in \rM$, the bundle~$U$ is exceptional by~\cite[Lemma~B.1.9]{KPS}.
Therefore, $f$ is an isomorphism by Corollary~\ref{cor:moduli-etale}.

Note that every sheaf parameterized by the moduli space~$\rM$ is $H_X$-stable.
Therefore, applying Proposition~\ref{prop:twisted-universal} 
we obtain a Brauer class~$\upbeta \in \Br(S)$ on~$\rM \cong S$
and a $p^*(\upbeta)$-twisted universal family (which we denote by~$\cU$) on~$X \times_S \rM = X$.
Since $\wedge^2\cU \cong \cO(-H_X)$ (up to twist by a line bundle on~$S$) and
since the line bundle~$\cO(H_X)$ is untwisted, Lemma~\ref{lemma:beta-kr} implies~$\upbeta^2 = 1$.

On the other hand, 
\begin{equation*}
V \coloneqq (p_*\cU^\vee)^\vee,
\end{equation*}
is a~$\upbeta$-twisted vector bundle on~$S$ of rank~7.
Therefore, $\upbeta^7 = 1$ by Corollary~\ref{cor:twisted-rank}.
Combining the above equalities for~$\upbeta$, we conclude that~$\upbeta = 1$, hence the bundles~$\cU$ and~$V$ are untwisted.

The bundle~$\cU$ induces a closed embedding $X \to \Gr_S(2,V)$.
Moreover, it follows that 
\begin{equation*}
\cL \coloneqq p_{V*}(\cI_X \otimes \cU^\perp(1))
\qquad\text{and}\qquad 
A \coloneqq p_{V*}(\cI'_X \otimes \cO(1))
\end{equation*}
are vector bundles of respective ranks~1 and~2, where $\cI_X$ is the ideal of~$X$ in~$\Gr_S(2,V)$,
and $\cI'_X$ is the ideal of~$X$ in~$\Gr_S(2,V) \times_{\P_S(\wedge^2V)} \P_S(\Ker(\wedge^2V \to V^\vee \otimes \cL^\vee))$,
and~$p_V \colon \Gr_S(2,V) \to S$ is the natural projection.
Now~\eqref{eq:x10} easily follows.
\end{proof}

A semiorthogonal decomposition of the derived category of prime Fano threefolds~$X$ of genus~$10$
over an algebraically closed field has been described in~\cite[\S6.4]{K06};
we summarize it in a form that is convenient for our applications below.

\begin{proposition}
\label{prop:x10-moduli}
Let~$X$ be a prime Fano threefold of genus~$10$ over an algebraically closed field~$\kk$.
Let~$\cU$ be the Mukai bundle on~$X$.
Consider the moduli space 
\begin{equation*}
\rM \coloneqq 
\rM_{X,H_X}(3; -H_X, 9L_X, -2P_X)
\end{equation*}
and the open subscheme~$\rM^\circ \subset \rM$ parameterizing sheaves~$\cE$ on~$X$ such that
\begin{equation*}
\rH^\bullet(X, \cE) = \Ext^\bullet(\cU^\vee, \cE) = 0.
\end{equation*}
Then~$\Gamma_X \coloneqq \rM^\circ$ is a smooth projective curve of genus~$2$,
there exists a universal family~$\cE$ of sheaves on~$X \times \rM^\circ = X \times \Gamma_X$,
the Fourier--Mukai functor~$\Phi_\cE \colon \bD(\Gamma_X) \to \bD(X)$ is fully faithful,  
and there is a semiorthogonal decomposition
\begin{equation}
\label{eq:sod-x10}
\bD(X) = \langle \Phi_\cE(\bD(\Gamma_X)), \cO_X \otimes \bD(\kk), \cU^\vee \otimes \bD(\kk) \rangle.
\end{equation}
\end{proposition}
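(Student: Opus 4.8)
The plan is to follow the proof of Proposition~\ref{prop:y4-moduli} almost verbatim: take the curve and the family already available over an algebraically closed field and then identify that curve with the moduli space $\rM^\circ$. First I would recall from \cite[\S6.4]{K06} the data involved: there is a smooth projective curve $\Gamma_X$ of genus $2$, together with a family $\cE$ of rank-$3$ vector bundles on $X \times \Gamma_X$ such that the Fourier--Mukai functor $\Phi_\cE \colon \bD(\Gamma_X) \to \bD(X)$ is fully faithful and such that the semiorthogonal decomposition~\eqref{eq:sod-x10} holds. Since in~\eqref{eq:sod-x10} the component $\Phi_\cE(\bD(\Gamma_X))$ is the right orthogonal of $\langle \cO_X, \cU^\vee \rangle$, an object $\cF \in \bD(X)$ lies in $\Phi_\cE(\bD(\Gamma_X))$ if and only if $\rH^\bullet(X,\cF) = \Ext^\bullet(\cU^\vee,\cF) = 0$; these are exactly the conditions cutting out $\rM^\circ$. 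So everything reduces to equipping $\Gamma_X$ and $\cE$ with the claimed modular interpretation.

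Next I would check that each $\cE_y$, $y \in \Gamma_X$, is a point of $\rM^\circ$. Its rank and Chern classes are those recorded in the definition of $\rM$ (they can be read off from the construction in \cite{K06}, using $\Pic(X) = \ZZ H_X$), hence $\cE_y$ has the prescribed Hilbert polynomial; the vanishings $\rH^\bullet(X,\cE_y) = \Ext^\bullet(\cU^\vee,\cE_y) = 0$ hold by the previous paragraph because $\cE_y \in \Phi_\cE(\bD(\Gamma_X))$; and each $\cE_y$ is $H_X$-stable (stability being equivalent to the vanishing of $\rH^0(X,\cE_y(-H_X))$, or taken directly from \cite{K06}). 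Thus $\cE$ is a family of $H_X$-stable sheaves parameterized by $\Gamma_X$, and by Corollary~\ref{cor:moduli-curve}, using full faithfulness of $\Phi_\cE$, the induced morphism $\Gamma_X \to \rM$ is an isomorphism onto an open subscheme; since its image consists of points satisfying the defining vanishings of $\rM^\circ$, we obtain an open immersion $\Gamma_X \hookrightarrow \rM^\circ$.

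For the reverse inclusion, let $\cE$ be a semistable sheaf representing a geometric point of $\rM^\circ$. Its defining vanishings place $\cE$ in $\langle \cO_X, \cU^\vee \rangle^\perp = \Phi_\cE(\bD(\Gamma_X))$, so $\cE$ is a nonzero object of $\Phi_\cE(\bD(\Gamma_X))$ and hence does not lie in $\Phi_\cE(\bD(\Gamma_X))^\perp$. I would then apply Proposition~\ref{prop:curve-general} to four-term exact sequences $0 \to \cE_y \to \cV'_y \to \cV''_y \to \cW_y \to 0$ resolving the bundles $\cE_y$ by direct sums of $\cO_X$ and $\cU^\vee$ (and their twists and duals), extracted from the construction of $\Phi_\cE(\bD(\Gamma_X))$ in \cite{K06}: the numerical conditions~\eqref{eq:cu-ce-assumptions} follow from the equality of Hilbert polynomials together with Lemma~\ref{lemma:hilbert-euler} (applicable since $\rch(\cE_y) \in \QQ[H_X]$), while $\Ext^2(\cV'_y,\cE) = \Ext^3(\cV''_y,\cE) = 0$ follow from the cohomology vanishings defining $\rM^\circ$. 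As the second alternative of Proposition~\ref{prop:curve-general} is excluded, we conclude $\cE \cong \cE_y$ for some $y$. (One could also argue more directly: $\cE = \Phi_\cE(G)$ for an indecomposable $G \in \bD(\Gamma_X)$, which is a shift of a coherent sheaf because the derived category of a smooth curve is hereditary, and matching numerical invariants forces $G = \cO_y$.) Hence $\Gamma_X \to \rM^\circ$ is bijective on geometric points, so the open immersion is also surjective, thus an isomorphism; therefore $\rM^\circ \cong \Gamma_X$ is a smooth projective curve of genus $2$, and the family $\cE$ on $X \times \rM^\circ$ is a universal family.

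The step I expect to be the main obstacle is pinning down, inside \cite{K06}, the explicit resolutions of the bundles $\cE_y$ by sums of $\cO_X$ and $\cU^\vee$ (and the exact Chern class data) needed to verify the $\Ext$-vanishing hypotheses of Proposition~\ref{prop:curve-general}; once these are available, the rest of the argument is a formal copy of the quartic del Pezzo threefold case.
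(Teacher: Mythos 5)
Your proposal is correct and follows essentially the same route as the paper's proof: cite the curve, family, full faithfulness and decomposition~\eqref{eq:sod-x10} from~\cite[\S6.4]{K06} (the paper also points to~\cite[\S B.5]{KPS}), use Corollary~\ref{cor:moduli-curve} to get an open immersion~$\Gamma_X \hookrightarrow \rM^\circ$, and then apply Proposition~\ref{prop:curve-general} to the resolution~$0 \to \cE_y \to \cO_X^{\oplus 6} \to (\cU^\vee)^{\oplus 3} \to \cE_y(1) \to 0$ (which is~\cite[(B.5.2)]{KPS}, the sequence you anticipated needing) to show surjectivity. The only cosmetic difference is how the second alternative of Proposition~\ref{prop:curve-general} is ruled out — you note $\cE$ is a nonzero object of~$\Phi_\cE(\bD(\Gamma_X))$, the paper notes it would lie in the orthogonal of all of~\eqref{eq:sod-x10} — which is the same contradiction.
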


\begin{proof}
By~\cite[\S6.4 and~\S8]{K06}, see also~\cite[\S B.5]{KPS},
there is a smooth curve~$\Gamma_X$ of genus~$2$ and a $\Gamma_X$-flat family~$\cE$ 
of stable vector bundles on~$X$ with the same rank and Chern classes 
as in the definition of the moduli space~$\rM$ (see also~\cite[Remark~B.5.3]{KPS}),
such that the Fourier--Mukai functor
\begin{equation*}
\Phi_\cE \colon \bD(\Gamma_X) \to \bD(X)
\end{equation*}
is fully faithful and its image together with the exceptional vector bundles~$\cO_X$ and~$\cU^\vee$ 
gives the semiorthogonal decomposition~\eqref{eq:sod-x10}.
It remains to provide the curve~$\Gamma_X$ and the bundle~$\cE$ with a modular interpretation.

As we already observed, the bundles~$\cE_y$ parameterized by the curve~$\Gamma_X$ have the correct Chern classes and stable.
Moreover, \eqref{eq:sod-x10} implies that they satisfy the vanishing conditions defining~$\rM^\circ$.
Therefore, there is a morphism~$\Gamma_X \to \rM^\circ $ such that~$\cE$ is the pullback of a universal family.
Applying Corollary~\ref{cor:moduli-curve} we conclude it is an open immersion.
On the other hand, let~$E$ be a sheaf on~$X$ corresponding to a geometric point of~$\rM^\circ$.
Applying Proposition~\ref{prop:curve-general} to the exact sequences
\begin{equation}
\label{eq:sequence-ce-x10}
0 \to \cE_y \to \cO_X^{\oplus 6} \to ({\cU^\vee})^{\oplus 3} \to \cE_y(1) \to 0
\end{equation}
(see~\cite[(B.5.2)]{KPS}) and using the cohomology vanishings in the definition of~$\rM^\circ$,
we conclude that if~$E \not\cong \cE_y$ for~$y \in \Gamma_X$ then~$E$ belongs to the orthogonal 
of the right-hand-side of~\eqref{eq:sod-x10}, which is impossible.
Thus $E \cong \cE_y$, hence the open immersion~$\Gamma_X \to \rM^\circ$ is surjective, hence it is an isomorphism.
\end{proof}

Recall that~$\rF_2(X/S)$ denotes the relative Hilbert scheme of conics on~$X/S$.

\begin{theorem}
\label{thm:x10}
If $X/S$ is a form of a prime Fano threefold of genus~$10$ then there is a semiorthogonal decomposition
\begin{equation*}
\bD(X) = \langle \bD(\Gamma,\upbeta_\Gamma), \cO_X \otimes \bD(S), \cU^\vee \otimes \bD(S) \rangle,
\end{equation*}
where $\Gamma/S$ is a smooth family of curves of genus~$2$ and $\upbeta_\Gamma \in \Br(\Gamma)$ is a $3$-torsion Brauer class.

Moreover, if the natural morphism~$\rF_2(X/S) \to S$ has a section then~$\upbeta_\Gamma = 1$.
\end{theorem}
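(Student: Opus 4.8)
The plan is to transcribe the proof of Theorem~\ref{thm:y4} to the present situation, using Proposition~\ref{prop:x10-moduli} as the fiberwise input. First I would form the relative moduli space
\[
\rM \coloneqq \rM_{X/S,H_X}(3;-H_X,9L_X,-2P_X)
\]
and its open subscheme $\rM^\circ\subset\rM$ parameterizing sheaves $\cE$ on the fibers $X_s$ of $X/S$ with $\rH^\bullet(X_s,\cE)=\Ext^\bullet(\cU^\vee,\cE)=0$, where $\cU$ is the (untwisted) relative Mukai bundle on $X$ constructed above. By Theorem~\ref{thm:moduli-existence} and Proposition~\ref{prop:x10-moduli} the geometric fibers of $\rM^\circ\to S$ are the smooth genus-$2$ curves $\Gamma_{X_s}$, and every sheaf parameterized by $\rM^\circ$ has the form $\Phi_{\cE_s}(\cO_y)$ for a point $y$ of $\Gamma_{X_s}$; in particular $\Ext^2(\cE_y,\cE_y)=0$ and $\Ext^1(\cE_y,\cE_y)\cong\rT_{y,\Gamma_{X_s}}$, so Theorem~\ref{thm:moduli-smooth} shows that $\rM^\circ\to S$ is smooth of relative dimension $1$. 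As $\rM^\circ$ is open in the $S$-projective scheme $\rM$ and has proper fibers, it is projective over $S$; I set $\Gamma\coloneqq\rM^\circ$, a smooth projective family of genus-$2$ curves.

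Since every sheaf parameterized by $\rM^\circ$ is stable and locally free, Proposition~\ref{prop:twisted-universal} (applied over $\rM^\circ$) yields a Brauer class $\upbeta_\Gamma\in\Br(\Gamma)$ and a $\pr_\Gamma^*(\upbeta_\Gamma)$-twisted universal bundle $\cE$ on $X\times_S\Gamma$ restricting on each fiber over $S$ to (a line-bundle twist of) the family of Proposition~\ref{prop:x10-moduli}. By Proposition~\ref{prop:relative-sod}\ref{item:linear-ff} the $S$-linear functor $\Phi_\cE\colon\bD(\Gamma,\upbeta_\Gamma)\to\bD(X)$ is then fully faithful with admissible image; since $(\cO_X,\cU^\vee)$ is relative exceptional by~\eqref{eq:sod-x10}, combining Corollary~\ref{cor:relative-exceptional} with Proposition~\ref{prop:relative-sod}\ref{item:linear-so}--\ref{item:linear-sod} produces the asserted $S$-linear semiorthogonal decomposition. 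The $3$-torsion of $\upbeta_\Gamma$ comes from the isomorphism $\wedge^3\cE\cong\cO(-H_X)$ on $X\times_S\Gamma$ (up to a twist by a line bundle on $\Gamma$) together with Lemma~\ref{lemma:beta-kr}, using that $\cO_X(H_X)$ is untwisted.

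For the final claim, a section of $\rF_2(X/S)\to S$ provides a relative conic $C\hookrightarrow X$, i.e.\ an $S$-flat closed subscheme whose fibers are conics with respect to $H_X$; writing $\pr_X,\pr_\Gamma$ for the projections of $X\times_S\Gamma$, I would argue that
\[
\cL\coloneqq\pr_{\Gamma *}\bigl(\cE\otimes\pr_X^*\cO_C\bigr)\in\bD(\Gamma,\upbeta_\Gamma)
\]
is a $\upbeta_\Gamma$-twisted line bundle, whence $\upbeta_\Gamma=1$ by Corollary~\ref{cor:twisted-rank}. The relative Euler characteristic is automatically $1$, since $\rank(\cE_y)=3$ and $c_1(\cE_y)\cdot C=-H_X\cdot C=-2$; so by cohomology and base change it suffices to prove $\rH^1(C,\cE_y\vert_C)=0$ for every point $y$ of every fibral curve $\Gamma_{X_s}$ and every (possibly degenerate) conic $C$ in the section. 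I expect this cohomological vanishing — obtained by restricting the four-term resolution~\eqref{eq:sequence-ce-x10} to $C$ and controlling the cohomology of the restrictions of $\cU^\vee$ and $\cE_y(1)$ to $C$ — to be the main obstacle, since the usual genericity arguments for smooth conics must be upgraded to hold uniformly over all conics parameterized by an arbitrary section.
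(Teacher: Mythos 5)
Your proposal is correct and follows essentially the same route as the paper: the first part is the same transcription of the proof of Theorem~\ref{thm:y4} with Proposition~\ref{prop:x10-moduli} as the fiberwise input (including deducing $3$-torsion from $\wedge^3\cE\cong\cO(-H_X)$ and Lemma~\ref{lemma:beta-kr}), and the second part is the same construction of $\cL=\pr_{\Gamma*}(\cE\otimes\pr_X^*\cO_C)$. The one step you leave open --- the vanishing $\rH^1(C,\cE_y\vert_C)=0$ for \emph{every} conic in the section, which makes $\cL$ an honest line bundle --- is exactly the input the paper takes from the proof of~\cite[Lemma~B.5.4]{KPS}; alternatively, your Euler-characteristic computation $\upchi(\cE_y\vert_C)=1$ already suffices if one replaces $\cL$ by the determinant of the perfect complex $\pr_{\Gamma*}(\cE\otimes\pr_X^*\cO_C)$, which is a $\upbeta_\Gamma$-twisted line bundle, so the vanishing can be avoided altogether.
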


\begin{proof}
The proof of the first part is analogous to the proof of Theorem~\ref{thm:y4},
with Proposition~\ref{prop:y4-moduli} replaced by Proposition~\ref{prop:x10-moduli}. 
To prove the second part, assume the natural morphism~$\rF_2(X/S) \to S$ has a section. 
Then there is a conic~$C \hookrightarrow X$,
i.e., an $S$-flat subscheme with the appropriate Hilbert polynomial.
Then, denoting by~$\pr_X \colon X \times_S \Gamma \to X$ and~$\pr_\Gamma \colon X \times_S \Gamma \to \Gamma$ the projections, 
one can deduce from the proof of~\cite[Lemma~B.5.4]{KPS} that 
\begin{equation*}
\cL \coloneqq \pr_{\Gamma *}(\cE \otimes \pr_X^*\cO_C) \in \bD(\Gamma,\upbeta_\Gamma)
\end{equation*}
is a line bundle.
Therefore, the Brauer class~$\upbeta_\Gamma$ vanishes by Corollary~\ref{cor:twisted-rank}.
\end{proof}

\subsection{Forms of~$\sX_{9}$}

Recall that over an algebraically closed field every prime Fano threefold~$X$ of genus~$9$ (type~$\sX_9$)
can be represented as a linear section 
\begin{equation*}
X = \LGr(3,V) \cap \P^{10},
\end{equation*}
where~$V$ is a vector space of dimension~$6$ endowed with a symplectic form
and the intersection is considered inside the space~$\P(\Ker(\wedge^3 V \to V)) = \P^{13}$,
where the morphism is induced by the symplectic form.
The restriction~$\cU$ of the tautological bundle from~$\LGr(3,V) \subset \Gr(3,V)$ to~$X$ is called {\sf the Mukai bundle} of~$X$.
The following observation is crucial for the results of this section.

\begin{lemma}
\label{lemma:x16-uniqueness}
Let~$X$ be a prime Fano threefold of genus~$9$ over an algebraically closed field of characteristic zero.
The Mukai bundle~$\cU$ on~$X$ is stable with~$\rc_1 = -H_X$, $\rc_2 = 8L_X$, $\rc_3 = -2P_X$ and the pair~$(\cU,\cO_X)$ is exceptional.
Moreover, any semistable vector bundle~$U$ of rank~$3$ on~$X$ with~$\rH^2(X,U) = 0$ and~$\rc_i(U) = \rc_i(\cU)$, $1 \le i \le 3$,
is isomorphic to the Mukai bundle.
\end{lemma}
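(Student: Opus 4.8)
The plan is to split the statement into two parts. The assertions that $\cU$ is stable, has the indicated Chern classes, and that $(\cU,\cO_X)$ is exceptional are essentially recorded in the literature on prime Fano threefolds of genus~$9$; the substance of the lemma is the uniqueness claim, which I would derive from Proposition~\ref{prop:uniqueness-general}.

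For the first part, recall that $\cU$ is the restriction to $X \subset \LGr(3,V)$ of the tautological subbundle, and that on $\LGr(3,V)$ the symplectic form identifies $V/\cU$ with $\cU^\vee$ (as $\cU$ is Lagrangian), so that restricting to~$X$ one obtains a short exact sequence
\begin{equation}
\label{eq:x9-mukai-taut}
0 \to \cU \to V \otimes \cO_X \to \cU^\vee \to 0 .
\end{equation}
Taking determinants and using that the fundamental class $H_X$ is the Pl\"ucker polarization restricted to~$X$ gives $\rc_1(\cU) = -H_X$; the values $\rc_2(\cU) = 8L_X$ and $\rc_3(\cU) = -2P_X$ then follow from a Chern class computation on $\LGr(3,6)$ (intersecting the Chern classes of the tautological bundle with the class of~$X$), and can also be quoted directly from~\cite{K06,KPS}. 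Stability of $\cU$, the equality $\Ext^\bullet(\cU,\cU) = \kk$, and the vanishing $\rH^\bullet(X,\cU^\vee) = \Ext^\bullet(\cU,\cO_X) = 0$ (i.e.\ exceptionality of the pair $(\cU,\cO_X)$) are part of the description of the semiorthogonal decomposition of $\bD(X)$ obtained in~\cite[\S6]{K06} (see also~\cite[\S B]{KPS}); I would cite these rather than reprove them.

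For the uniqueness statement I would apply Proposition~\ref{prop:uniqueness-general} with
\begin{equation*}
\cV \coloneqq V \otimes \cO_X \cong \cO_X^{\oplus 6}, \qquad \cW \coloneqq \cU^\vee ,
\end{equation*}
and the exact sequence~\eqref{eq:x9-mukai-taut} in the role of~\eqref{eq:sequence-cu-cv-cw}. The hypotheses are verified as follows: $\cU$ is stable with $\upchi(\cU,\cU) = 1 > 0$ by the exceptionality just recalled; $\cW = \cU^\vee$ is stable, hence semistable, and since $K_X = -H_X$ and $H_X^3 = 2\cdot 9 - 2 = 16$ a slope computation gives $\upmu(\cW(K_X)) = -\tfrac{32}{3} < -\tfrac{16}{3} = \upmu(\cU)$, so condition~\ref{item:uniqueness-cw} of Proposition~\ref{prop:uniqueness-general} holds. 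Now let $U$ be a semistable rank~$3$ bundle with $\rc_i(U) = \rc_i(\cU)$ for $1 \le i \le 3$ and $\rH^2(X,U) = 0$. Equality of ranks and Chern classes gives $\upmu(U) = \upmu(\cU)$, $\rank(U) = \rank(\cU)$, and $\upchi(\cU,U) = \upchi(\cU,\cU)$ by Hirzebruch--Riemann--Roch, so~\eqref{eq:cu-ce-assumptions} holds; moreover $\Ext^2(\cV,U) \cong \rH^2(X,U)^{\oplus 6} = 0$ by hypothesis. Proposition~\ref{prop:uniqueness-general} then yields $U \cong \cU$.

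The only delicate points are bookkeeping: confirming the Chern class values on $\LGr(3,6)$ and checking that the slope inequality in condition~\ref{item:uniqueness-cw} comes out in the right direction (it does, with room to spare). The genuinely substantive inputs --- stability and exceptionality of the Mukai bundle, and the Lagrangian tautological resolution~\eqref{eq:x9-mukai-taut} --- are all available in~\cite{K06,KPS}, so no new ideas beyond Proposition~\ref{prop:uniqueness-general} are required.
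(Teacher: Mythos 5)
Your proposal is correct and follows essentially the same route as the paper: stability and exceptionality of the Mukai bundle are taken from~\cite{K06} (the paper deduces stability from the semiorthogonality vanishings rather than citing it outright, a cosmetic difference), and uniqueness is obtained by applying Proposition~\ref{prop:uniqueness-general} to the tautological sequence $0 \to \cU \to \cO_X^{\oplus 6} \to \cU^\vee \to 0$ restricted from~$\LGr(3,6)$, with the hypotheses checked exactly as you do (numerical equivalence giving~\eqref{eq:cu-ce-assumptions} and $\rH^2(X,U)=0$ giving $\Ext^2(\cO_X^{\oplus 6},U)=0$).
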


\begin{proof}
Exceptionality of the Mukai bundle~$\cU$ and semiorthogonality of the pair~$(\cU, \cO_X)$ is proved in~\cite[Lemma~7.1]{K06}.
Stability of~$\cU$ is equivalent to the vanishings
\begin{equation*}
\rH^0(X,\cU) = 0
\qquad\text{and}\qquad
\rH^0(X,\wedge^2\cU) = \rH^0(X,\cU^\vee(K_X)) = \rH^3(X,\cU)^\vee = 0,
\end{equation*}
which follow from semiorthogonality of the pair~$(\cU, \cO_X)$.
The computation of Chern classes of~$\cU$ can be performed on the Lagrangian Grassmannian and is straightforward. 

Now let~$U$ be a semistable vector bundle of rank~$3$ with the same Chern classes.
Applying Proposition~\ref{prop:uniqueness-general} to~$\cE = U$ and~$\cU$
(conditions~\eqref{eq:cu-ce-assumptions} are satisfied because~$\cE$ is numerically equivalent to~$\cU$),
and the exact sequence
\begin{equation}
\label{eq:sequence-cu-x9}
0 \to\cU \to \cO_X^{\oplus 6} \to \cU^\vee \to 0
\end{equation}
obtained by restriction from~$\LGr(3,6)$, we conclude that~$U \cong \cU$.
\end{proof}

The following proposition provides a description of families of prime Fano threefolds of genus~$9$ over any base.

\begin{proposition}
\label{prop:x16-forms}
If~$p \colon X \to S$ is a smooth fibration with fibers of type~$\sX_9$
there is a vector bundle~$V$ of rank~$6$, a vector bundle~$A$ of rank~$3$, a line bundle~$\cL$,
and epimorphisms~\mbox{$\varphi_1 \colon \wedge^2V \to \cL^\vee$} 
and~\mbox{$\varphi_2 \colon \Ker(\wedge^3V \xrightarrow{\ \varphi_1\ } V \otimes \cL^\vee) \to A^\vee$} such that
\begin{equation}
\label{eq:x9}
X = \Gr_S(3,V) \times_{\P_S(\wedge^3V)} \P_S(\Ker(\varphi_2)).
\end{equation} 
\end{proposition}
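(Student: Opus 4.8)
The plan is to follow closely the proofs of Proposition~\ref{prop:x12} and of the analogous statement for type~$\sX_{10}$, the one genuinely new ingredient being the argument that kills the Brauer twist. First I would form the relative moduli space $\rM \coloneqq \rM_{X/S}(3; -H_X, 8L_X, -2P_X)$ (Chern classes as in Lemma~\ref{lemma:x16-uniqueness}) and its open subscheme $\rM^\circ \subset \rM$ parameterizing bundles~$U$ on the fibers~$X_s$ with $\rH^2(X_s, U) = 0$. By Lemma~\ref{lemma:x16-uniqueness} every such~$U$ is the Mukai bundle of~$X_s$; since the Mukai bundle is stable, exceptional, and satisfies $\rH^\bullet(X_s,\cU_s)=0$ (so lies in $\rM^\circ$), the morphism $\rM^\circ \to S$ is bijective on geometric points, and, every sheaf it classifies being exceptional, Corollary~\ref{cor:moduli-etale} shows it is an isomorphism. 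As all sheaves parameterized by~$\rM^\circ$ are stable, Proposition~\ref{prop:twisted-universal} then produces a Brauer class $\upbeta \in \Br(S)$ and a $p^*(\upbeta)$-twisted universal bundle~$\cU$ on $X \times_S \rM^\circ \cong X$ whose restriction to each geometric fiber is the Mukai bundle.

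Next I would accumulate the constraints on~$\upbeta$ and show it is trivial. From $\wedge^3\cU \cong \cO(-H_X)$ (up to twist by a line bundle on~$S$) and the fact that $\cO_X(H_X)$ is untwisted, Lemma~\ref{lemma:beta-kr} gives $\upbeta^3 = 1$. Since $\dim\rH^0(X_s,\cU_s^\vee) = 6$ for every~$s$ (the standard description of the Mukai bundle, see~\cite{K06}), the sheaf $V \coloneqq (p_*\cU^\vee)^\vee$ is a $\upbeta$-twisted vector bundle of rank~$6$, and, arguing exactly as in the proof of Proposition~\ref{prop:x12}, the evaluation sequence $0 \to \cU \to p^*V \to \cU^\vee \to 0$ yields a closed embedding $X \hookrightarrow \Gr_S(3,V)$ identifying~$\cU$ with the restriction of the tautological subbundle. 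Here the numerical trick available for~$\sX_{10}$ fails because $\gcd(3,6) = 3$, so I would extract the missing $2$-torsion from the symplectic form: on each geometric fiber the space of $2$-forms in $\wedge^2 V_s^\vee = \rH^0(\Gr(3,V_s),\wedge^2\cU_s^\vee)$ vanishing along $X_s$ is one-dimensional, spanned by the symplectic form exhibiting $X_s$ as a Lagrangian linear section, so $\cL \coloneqq p_{V*}(\cI_X \otimes \wedge^2\cU^\vee)$ — a subsheaf of $p_{V*}(\wedge^2\cU^\vee) = \wedge^2 V^\vee$ — is a line bundle. Being simultaneously $\upbeta^{-2}$-twisted and of rank~$1$, it forces $\upbeta^2 = 1$ by Corollary~\ref{cor:twisted-rank}, and together with $\upbeta^3 = 1$ this gives $\upbeta = 1$; in particular $\cU$, $V$ and $\cL$ are untwisted.

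Finally I would assemble the fiber-product description. Dualizing $\cL \hookrightarrow \wedge^2 V^\vee$ gives an epimorphism $\varphi_1 \colon \wedge^2 V \to \cL^\vee$, whose induced contraction $\tilde\varphi_1 \colon \wedge^3 V \to V \otimes \cL^\vee$ has the property that $\Gr_S(3,V) \times_{\P_S(\wedge^3 V)} \P_S(\Ker(\tilde\varphi_1))$ is the relative Lagrangian Grassmannian $\LGr_S(3,V)$, which contains~$X$; the pointwise check is the classical identification $\LGr(3,W) = \Gr(3,W) \cap \P(\Ker(\wedge^3 W \to W))$. Pushing forward to~$S$ the ideal of~$X$ inside $\LGr_S(3,V)$ twisted by the relatively very ample class restricting to~$H_X$, one obtains a rank-$3$ (untwisted) bundle~$A$ — of rank~$3$ since the relevant $\dim\rH^0$ drops from~$14$ on $\LGr(3,V_s)$ to~$11$ on~$X_s$ — together with an inclusion $A \hookrightarrow \Ker(\tilde\varphi_1)^\vee$; dualizing it gives an epimorphism $\varphi_2 \colon \Ker(\tilde\varphi_1) \to A^\vee$, and then $X = \Gr_S(3,V) \times_{\P_S(\wedge^3 V)} \P_S(\Ker(\varphi_2))$, as required (in the statement the same symbol~$\varphi_1$ is used for both $\varphi_1$ and its extension~$\tilde\varphi_1$). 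The main obstacle is the vanishing $\upbeta = 1$: unlike the $\sX_{10}$ case it genuinely needs the symplectic structure, i.e.\ the fact that the space of fiberwise $2$-forms vanishing along~$X_s$ has constant dimension~$1$; once that and the $\Gr_S(3,V)$-embedding (obtained verbatim from the $\sX_{12}$ argument) are in place, the remaining steps are routine.
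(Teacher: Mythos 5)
Your proposal is correct and follows the paper's skeleton (moduli space $\rM_{X/S}(3;-H_X,8L_X,-2P_X)$, uniqueness via Lemma~\ref{lemma:x16-uniqueness}, Corollary~\ref{cor:moduli-etale}, twisted universal bundle via Proposition~\ref{prop:twisted-universal}, $\upbeta^3=1$ from $\wedge^3\cU\cong\cO(-H_X)$, and the final fiber-product assembly), but it handles the crucial step $\upbeta^2=1$ by a genuinely different argument. The paper kills the extra torsion with Lemma~\ref{lemma:beta-2} applied to the self-dual evaluation sequence~\eqref{eq:sequence-cu-x9}: there the $\upbeta^{-2}$-twisted line bundle forcing $\upbeta^2=1$ is $\bR^0p_*(\cU^\vee\otimes\cU')$, produced from simplicity of the Mukai bundle, and the (untwisted) Grassmannian embedding is only constructed afterwards. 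You instead work with the twisted Grassmannian $\Gr_S(3,V)$ for the still possibly twisted $V$, and take as your $\upbeta^{-2}$-twisted line bundle the pushforward $\cL=p_{V*}(\cI_X\otimes\wedge^2\cU^\vee)$, i.e.\ the fiberwise one-dimensional space of $2$-forms cutting out the Lagrangian Grassmannian through $X_s$; Corollary~\ref{cor:twisted-rank} then gives $\upbeta^2=1$. Both routes are sound: your fiberwise rank-one claim is exactly the fact the paper itself invokes later when it asserts that $\cL$ has rank~$1$, so you incur no extra verification burden beyond making sense of the twisted Grassmannian and of $\cU\hookrightarrow p^*V$ before untwisting (which the formalism of Lemma~\ref{lemma:twisted-pb-pf} supports); the paper's route via Lemma~\ref{lemma:beta-2} is slightly more self-contained in that it needs only the already-quoted sequence~\eqref{eq:sequence-cu-x9} and simplicity of $\cU_s$, and it avoids twisted Grassmannian bundles altogether, while yours makes the geometric source of the $2$-torsion (the relative symplectic form) explicit. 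Your minor variations — defining $\rM^\circ$ by $\rH^2(X_s,U)=0$ rather than $\rH^\bullet(X_s,U)=0$, and the dimension count $14-11=3$ for the rank of $A$ — are consistent with Lemma~\ref{lemma:x16-uniqueness} and with the paper's construction of $A$.
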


\begin{proof}
Consider the relative moduli space 
\begin{equation*}
\rM \coloneqq \rM_{X/S}(3; -H_X, 8L_X, -2P_X)
\end{equation*}
where we use the convention of Remark~\ref{rem:moduli-chern-hilbert} in the right-hand side.
Let also $\rM^\circ \subset \rM$ be the open subscheme parameterizing bundles~$U$ on~$X_s$ with the vanishing
\begin{equation*}
\rH^\bullet(X_s,U) = 0.
\end{equation*}
By Lemma~\ref{lemma:x16-uniqueness} the natural morphism $f \colon \rM^\circ \to S$ is bijective on geometric points
and for every geometric point~$[U] \in \rM^\circ$, the bundle~$U$ is exceptional.
Therefore, $f$ is an isomorphism by Corollary~\ref{cor:moduli-etale}.

Note that every sheaf parameterized by the moduli space~$\rM^\circ$ is $H_X$-stable.
Therefore, applying Proposition~\ref{prop:twisted-universal} 
we obtain a Brauer class~$\upbeta \in \Br(S)$ on~$\rM^\circ \cong S$
and a $p^*(\upbeta)$-twisted universal family (which we denote by~$\cU$) on~$X \times_S \rM^\circ = X$.
Since $\wedge^3\cU \cong \cO(-H_X)$ (up to twist by a line bundle on~$S$) and
since the line bundle~$\cO(H_X)$ is untwisted, Lemma~\ref{lemma:beta-kr} implies~$\upbeta^3 = 1$.

On the other hand, let
\begin{equation*}
V \coloneqq (p_*\cU^\vee)^\vee;
\end{equation*}
this is a~$\upbeta$-twisted vector bundle on~$S$ of rank~6.
Applying Lemma~\ref{lemma:beta-2} and~\eqref{eq:sequence-cu-x9}, we obtain~$\upbeta^2 = 1$.
Combining the above equalities for~$\upbeta$, we deduce~$\upbeta = 1$, hence the bundles~$\cU$ and~$V$ are untwisted.

The bundle~$\cU$ induces a closed embedding~$X \hookrightarrow \Gr_S(3,V)$.
Moreover, it follows that 
\begin{equation*}
\cL \coloneqq p_{V*}(\cI_X \otimes \wedge^2\cU^\vee)
\qquad\text{and}\qquad 
A \coloneqq p_{V*}(\cI'_X \otimes \cO(H_X))
\end{equation*}
are vector bundles of respective ranks~1 and~3, where $\cI_X$ is the ideal of~$X$ in~$\Gr_S(3,V)$,
and $\cI'_X$ is the ideal of~$X$ in~$\Gr_S(3,V) \times_{\P_S(\wedge^2V)} \P_S(\Ker(\wedge^3V \to V \otimes \cL^\vee))$,
and~$p_V \colon \Gr_S(3,V) \to S$ is the natural projection.
Now~\eqref{eq:x9} easily follows.
\end{proof}

A semiorthogonal decomposition of the derived category of prime Fano threefolds~$X$ of genus~$9$
over an algebraically closed field has been described in~\cite[\S6.3]{K06};
we summarize it in a form that is convenient for our applications below.

\begin{proposition}
\label{prop:x9-moduli}
Let~$X$ be a prime Fano threefold of genus~$9$ over an algebraically closed field~$\kk$.
Let~$\cU$ be the Mukai bundle on~$X$.
Consider the moduli space 
\begin{equation*}
\rM \coloneqq 
\rM_{X,H_X}(2; -H_X, 6L_X, 0)
\end{equation*}
and the open subscheme~$\rM^\circ \subset \rM$ parameterizing sheaves~$\cE$ on~$X$ such that
\begin{equation*}
\rH^\bullet(X, \cE) = \Ext^\bullet(\cU^\vee, \cE) = 0.
\end{equation*}
Then~$\Gamma_X \coloneqq \rM^\circ$ is a smooth projective curve of genus~$3$,
there exists a universal family~$\cE$ of sheaves on~$X \times \rM^\circ = X \times \Gamma_X$,
the Fourier--Mukai functor~$\Phi_\cE \colon \bD(\Gamma_X) \to \bD(X)$ 
is fully faithful,  
and there is a semiorthogonal decomposition
\begin{equation}
\label{eq:sod-x9}
\bD(X) = \langle \Phi_\cE(\bD(\Gamma_X)), \cO_X \otimes \bD(\kk), \cU^\vee \otimes \bD(\kk) \rangle.
\end{equation}
\end{proposition}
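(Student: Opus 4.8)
The plan is to follow the proof of Proposition~\ref{prop:x10-moduli} essentially verbatim, taking as input the description of~$\bD(X)$ over an algebraically closed field. First I would recall from~\cite[\S6.3]{K06} (cf.~\cite{KPS}) that there exist a smooth projective curve~$\Gamma_X$ of genus~$3$ and a $\Gamma_X$-flat family~$\cE$ of stable vector bundles of rank~$2$ on~$X$, with the same rank and Chern classes as in the definition of the moduli space~$\rM$, such that the Fourier--Mukai functor~$\Phi_\cE \colon \bD(\Gamma_X) \to \bD(X)$ is fully faithful and its image, together with the exceptional bundles~$\cO_X$ and~$\cU^\vee$, gives the semiorthogonal decomposition~\eqref{eq:sod-x9}. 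What remains is to identify~$\Gamma_X$ with~$\rM^\circ$ and to equip the family~$\cE$ with a modular interpretation.

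For this I would first observe that the bundles~$\cE_y$, $y \in \Gamma_X$, are stable with the prescribed Chern classes, and that~\eqref{eq:sod-x9} forces the vanishings~$\rH^\bullet(X,\cE_y) = \Ext^\bullet(\cU^\vee,\cE_y) = 0$ which cut out~$\rM^\circ \subset \rM$; hence the family~$\cE$ induces a morphism~$\Gamma_X \to \rM^\circ$ along which~$\cE$ is the pullback of a universal family, and by Corollary~\ref{cor:moduli-curve} this morphism is an open immersion. For surjectivity, let~$E$ be a sheaf corresponding to a geometric point of~$\rM^\circ$ and apply Proposition~\ref{prop:curve-general} to a four-term exact sequence of the form
\begin{equation*}
0 \to \cE_y \to \cO_X^{\oplus a} \to (\cU^\vee)^{\oplus b} \to \cE_y(1) \to 0
\end{equation*}
(the genus~$9$ analogue of~\cite[(B.5.2)]{KPS}, extracted from the construction of~$\cE$, with the ranks~$a$, $b$ fixed by Chern-class bookkeeping): using the cohomology vanishings defining~$\rM^\circ$ one obtains that either~$E \cong \cE_y$ for some~$y$, or~$E$ lies in the right orthogonal of~$\Phi_\cE(\bD(\Gamma_X))$; in the latter case, combined with~$\rH^\bullet(X,E) = \Ext^\bullet(\cU^\vee,E) = 0$ (again from the definition of~$\rM^\circ$), the object~$E$ is orthogonal to the right-hand side of~\eqref{eq:sod-x9}, so~$E = 0$, which is absurd. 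Hence~$\Gamma_X \to \rM^\circ$ is bijective on geometric points, so an isomorphism, and~$\cE$ is a universal family on~$X \times \Gamma_X = X \times \rM^\circ$.

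I expect the main obstacle to be the production of the four-term resolution above, with the correct ranks and maps, since this is the one genuinely $X$-specific ingredient; once it is available, checking the hypotheses of Proposition~\ref{prop:curve-general} is routine. Indeed, the numerical conditions~\eqref{eq:cu-ce-assumptions} hold because~$E$ has the same Chern classes as~$\cE_y$ (or, knowing only equality of Hilbert polynomials, by Lemma~\ref{lemma:hilbert-euler}), while~$\Ext^2(\cO_X^{\oplus a},E) = \rH^2(X,E)^{\oplus a}$ and~$\Ext^3((\cU^\vee)^{\oplus b},E) = \Ext^3(\cU^\vee,E)^{\oplus b}$ vanish directly from the definition of~$\rM^\circ$. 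After that the argument is formal and parallels the genus~$10$ case.
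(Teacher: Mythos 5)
Your proposal is correct and follows essentially the same route as the paper: the paper also imports the curve~$\Gamma_X$, the family~$\cE$, and the decomposition~\eqref{eq:sod-x9} from~\cite[\S6.3 and~\S7]{K06} (cf.~\cite[\S9.1]{KP19}), and then repeats the argument of Proposition~\ref{prop:x10-moduli} with the four-term resolution~$0 \to \cE_y \to \cO_X^{\oplus 6} \to (\cU^\vee)^{\oplus 2} \to \cE_y(H_X) \to 0$ of~\cite[(9.1.4)]{KP19}, which is exactly the sequence you postulate (with~$a=6$, $b=2$ as your rank bookkeeping predicts).
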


\begin{proof}
By~\cite[\S6.3 and~\S7]{K06}, see also~\cite[\S9.1]{KP19},
there is a smooth curve~$\Gamma_X$ of genus~$3$ and a $\Gamma_X$-flat family~$\cE$ 
of stable vector bundles on~$X$ with the same rank and Chern classes 
as in the definition of the moduli space~$\rM$ 
such that the Fourier--Mukai functor
\begin{equation*}
\Phi_\cE \colon \bD(\Gamma_X) \to \bD(X)
\end{equation*}
is fully faithful and its image together with the exceptional vector bundles~$\cO_X$ and~$\cU^\vee$ 
gives the semiorthogonal decomposition~\eqref{eq:sod-x9}.
The rest of the proof is analogous to that of Proposition~\ref{prop:x10-moduli} 
with~\eqref{eq:sequence-ce-x10} replaced by the exact sequence
\begin{equation}
\label{eq:sequence-ce-x9}
0 \to \cE_y \to \cO_X^{\oplus 6} \to ({\cU^\vee})^{\oplus 2} \to \cE_y(H_X) \to 0
\end{equation}
(see~\cite[(9.1.4)]{KP19}).
\end{proof}

Recall that~$\rF_3(X/S)$ denotes the relative Hilbert scheme of rational cubic curves on~$X/S$.

\begin{theorem}
\label{thm:x9}
If $X/S$ is a form of a prime Fano threefold of genus~$9$ then there is a semiorthogonal decomposition
\begin{equation*}
\bD(X) = \langle \bD(\Gamma,\upbeta_\Gamma), \cO_X \otimes \bD(S), \cU^\vee \otimes \bD(S) \rangle,
\end{equation*}
where $\Gamma/S$ is a smooth curve of genus~$3$ and $\upbeta_\Gamma \in \Br(\Gamma)$ is a $2$-torsion Brauer class.

Moreover, if the natural morphism~$\rF_3(X/S) \to S$ has a section then~$\upbeta_\Gamma = 1$.
\end{theorem}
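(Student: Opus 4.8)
The plan is to mirror the proofs of Theorems~\ref{thm:y4} and~\ref{thm:x10}, using Proposition~\ref{prop:x9-moduli} as the fiberwise input and Proposition~\ref{prop:x16-forms} to supply the relative Mukai bundle~$\cU$ on~$X$ (which, by that proposition, is an honest untwisted bundle, as is~$\cO_X(H_X)$ since the fibers are prime and~$H_X=-K_{X/S}$ generates~$\Pic_{X/S}(S)$). First I would form the relative moduli space
\[
\rM \coloneqq \rM_{X/S,H_X}(2;-H_X,6L_X,0)
\]
and the open subscheme~$\rM^\circ\subset\rM$ parameterizing sheaves on the fibers~$X_s$ with vanishing~$\rH^\bullet(X_s,-)=\Ext^\bullet(\cU^\vee\vert_{X_s},-)=0$.

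By Theorem~\ref{thm:moduli-existence} the geometric fibers of~$\rM^\circ\to S$ are the curves~$\Gamma_{X_s}$ of Proposition~\ref{prop:x9-moduli}, hence smooth projective of genus~$3$; moreover every sheaf on a fiber corresponding to a point of~$\rM^\circ$ has the form~$\Phi_{\cE_s}(\cO_y)$ for the universal family~$\cE_s$ of that proposition, so that~$\Ext^2(\cE_y,\cE_y)\cong\Ext^2(\cO_y,\cO_y)=0$ and~$\Ext^1(\cE_y,\cE_y)\cong\rT_{y,\Gamma_{X_s}}$. Theorem~\ref{thm:moduli-smooth} then shows~$\rM^\circ\to S$ is smooth of relative dimension~$1$; since~$\rM^\circ$ is open in the $S$-projective~$\rM$ and has proper fibers, it is $S$-projective, and I set~$\Gamma\coloneqq\rM^\circ$, a smooth projective family of curves of genus~$3$ over~$S$. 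As all sheaves classified by~$\rM^\circ$ are stable and locally free, Proposition~\ref{prop:twisted-universal} yields a class~$\upbeta_\Gamma\in\Br(\Gamma)$ and a~$\pr_\Gamma^*(\upbeta_\Gamma)$-twisted universal bundle~$\cE$ on~$X\times_S\Gamma$, restricting on each fiber (up to a line-bundle twist on~$\Gamma_{X_s}$) to the bundle of Proposition~\ref{prop:x9-moduli}. Proposition~\ref{prop:relative-sod}\ref{item:linear-ff} makes~$\Phi_\cE\colon\bD(\Gamma,\upbeta_\Gamma)\to\bD(X)$ fully faithful with admissible $S$-linear image, and~\eqref{eq:sod-x9} says~$(\cO_X,\cU^\vee)$ is a relative exceptional collection; combining Corollary~\ref{cor:relative-exceptional} with Proposition~\ref{prop:relative-sod}\ref{item:linear-so}--\ref{item:linear-sod} gives the asserted semiorthogonal decomposition.

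For the torsion claim, $\det\cE\cong\cO_X(-H_X)$ on~$X\times_S\Gamma$ up to a line-bundle twist on~$\Gamma$, so~$\cO_X(H_X)$ being untwisted, Lemma~\ref{lemma:beta-kr} forces~$\upbeta_\Gamma^2=1$. For the last assertion, a section of~$\rF_3(X/S)\to S$ gives an $S$-flat rational cubic curve~$C\hookrightarrow X$, and — following the proof of Theorem~\ref{thm:x10}, now using the description of~$\cE_y\vert_C$ from~\cite[\S9.1]{KP19} — the pushforward~$\cL\coloneqq\pr_{\Gamma*}(\cE\otimes\pr_X^*\cO_C)\in\bD(\Gamma,\upbeta_\Gamma)$ is a~$\upbeta_\Gamma$-twisted line bundle, whence~$\upbeta_\Gamma=1$ by Corollary~\ref{cor:twisted-rank}.

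The step I expect to require the most care is confirming that~$\rM^\circ$, cut out fiberwise by the two vanishing conditions, coincides exactly with the family of genus-$3$ curves: this rests on applying Proposition~\ref{prop:curve-general} to the four-term sequence~\eqref{eq:sequence-ce-x9} to exclude sheaves lying in the orthogonal of~\eqref{eq:sod-x9} (as in Proposition~\ref{prop:x9-moduli}), and then transporting this identification through the base change of Theorem~\ref{thm:moduli-existence}. A secondary point needing attention is pinning down the precise statement in~\cite{KP19} that guarantees~$\pr_{\Gamma*}(\cE\otimes\pr_X^*\cO_C)$ has rank~$1$ rather than a higher rank, which is what makes the vanishing of~$\upbeta_\Gamma$ work.
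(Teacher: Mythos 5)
Your proposal is correct and follows essentially the same route as the paper: the paper proves the first part verbatim by repeating the argument of Theorem~\ref{thm:y4} with Proposition~\ref{prop:x9-moduli} in place of Proposition~\ref{prop:y4-moduli} (relative moduli space, smoothness via $\Ext$-computations, twisted universal bundle, Proposition~\ref{prop:relative-sod}), gets $\upbeta_\Gamma^2=1$ from $\det\cE\cong\cO_X(-H_X)$ with $\cO_X(H_X)$ untwisted via Lemma~\ref{lemma:beta-kr}, and kills $\upbeta_\Gamma$ using a relative rational cubic and the sequence~\eqref{eq:sequence-ce-x9} exactly as you describe. The two points you flag as delicate (identifying $\rM^\circ$ with the curve fibration, and the rank-one property of $\pr_{\Gamma*}(\cE\otimes\pr_X^*\cO_C)$) are handled in the paper precisely by Proposition~\ref{prop:curve-general} applied to~\eqref{eq:sequence-ce-x9} and by that same sequence restricted to~$C$, so no gap remains.
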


\begin{proof}
The proof of the first part is analogous to the proof of Theorem~\ref{thm:y4},
with Proposition~\ref{prop:y4-moduli} replaced by Proposition~\ref{prop:x9-moduli}. 
To prove the second part assume the natural morphism~$\rF_3(X/S) \to S$ has a section. 
Then there is a rational cubic curve~$C \hookrightarrow X$,
i.e., an $S$-flat subscheme with the appropriate Hilbert polynomial.
Then, denoting by~$\pr_X \colon X \times_S \Gamma \to X$ and~$\pr_\Gamma \colon X \times_S \Gamma \to \Gamma$ the projections, 
one can deduce from~\eqref{eq:sequence-ce-x9} that 
\begin{equation*}
\cL \coloneqq \pr_{\Gamma *}(\cE \otimes \pr_X^*\cO_C) \in \bD(\Gamma,\upbeta_\Gamma)
\end{equation*}
is a line bundle.
Therefore, the Brauer class~$\upbeta_\Gamma$ vanishes by Corollary~\ref{cor:twisted-rank}.
\end{proof}

\subsection{Forms of~$\sX_{7}$}

Recall that over an algebraically closed field
every prime Fano threefold~$X$ of genus~$7$ (type~$\sX_7$)
can be represented as a linear section
\begin{equation*}
X = \OGr_+(5,V) \cap \P^{8},
\end{equation*}
where~$V$ is a vector space of dimension~$10$ endowed with a non-degenerate quadratic form, 
$\OGr_+(5,V)$ is one (of the two) connected component of the Grassmannian of 5-dimensional isotropic subspaces in~$V$,
and the intersection is considered inside the half-spinor space~\mbox{$\P(\cS_+) = \P^{15}$}.
The restriction~$\cU$ of the tautological bundle from~$ \OGr_+(5,V) \subset \Gr(5,V)$ to~$X$ is called {\sf the Mukai bundle} of~$X$.
The following observation is crucial for the results of this section.

\begin{lemma}
\label{lemma:x12-uniqueness}
Let~$X$ be a prime Fano threefold of genus~$7$ over an algebraically closed field of characteristic zero.
The Mukai bundle~$\cU$ on~$X$ is stable with~$\rc_1 = -2H_X$, $\rc_2 = 24L_X$, $\rc_3 = -14P_X$ 
and the pair~$(\cU,\cO_X)$ is exceptional.
Moreover, any semistable vector bundle~$U$ of rank~$5$ on~$X$ with~$\rH^2(X,U) = 0$ and~$\rc_i(U) = \rc_i(\cU)$, $1 \le i \le 3$,
is isomorphic to the Mukai bundle.
\end{lemma}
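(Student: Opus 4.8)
The plan is to follow the proof of Lemma~\ref{lemma:x16-uniqueness} closely, with the Lagrangian Grassmannian~$\LGr(3,6)$ replaced by the orthogonal Grassmannian~$\OGr_+(5,10)$. First I would recall from~\cite{K06} that the Mukai bundle~$\cU$ on a prime Fano threefold~$X$ of genus~$7$ is exceptional and that the pair~$(\cU,\cO_X)$ is semiorthogonal, i.e.
\begin{equation*}
\rH^\bullet(X,\cU) = \Ext^\bullet(\cO_X,\cU) = 0
\qquad\text{and}\qquad
\Ext^\bullet(\cU,\cU) = \kk .
\end{equation*}
Since a maximal isotropic subspace of a nondegenerate quadratic space is its own orthogonal complement, on~$\OGr_+(5,V)$ the quadratic form identifies the tautological quotient bundle with~$\cU^\vee$, and restricting the resulting tautological sequence to~$X$ gives
\begin{equation*}
0 \to \cU \to \cO_X^{\oplus 10} \to \cU^\vee \to 0 ;
\end{equation*}
in particular~$\cO_X(2H_X) \cong \det\cU^\vee$. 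The Chern classes~$\rc_1(\cU) = -2H_X$, $\rc_2(\cU) = 24L_X$, $\rc_3(\cU) = -14P_X$ are then a straightforward computation on~$\OGr_+(5,V)$.

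To prove stability of~$\cU$ with respect to the polarization~$H_X = -K_X$, note that~$\upmu(\cU) = -\tfrac{2}{5} H_X^3$ and that, since~$\Pic(X) = \ZZ H_X$ and~$5 \nmid r$ for~$1 \le r \le 4$, no proper saturated subsheaf of~$\cU$ can have slope exactly~$\upmu(\cU)$; hence semistability and stability coincide for bundles with these invariants, and it suffices to rule out a saturated subsheaf~$F \subset \cU$ of rank~$r \in \{1,2,3,4\}$ with~$\det F \cong \cO_X(mH_X)$ and~$m/r \ge -\tfrac{2}{5}$. For~$r = 1,2$ this forces~$m \ge 0$, and passing to~$\wedge^r F \hookrightarrow \wedge^r\cU$ and multiplying by a section of~$\cO_X(H_X)$ one obtains~$\rH^0(X,\cU) \ne 0$, respectively~$\rH^0(X,\wedge^2\cU) \ne 0$. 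For~$r = 3,4$ it only forces~$m \ge -1$, and using the isomorphisms~$\wedge^3\cU \cong \wedge^2\cU^\vee(-2H_X)$, $\wedge^4\cU \cong \cU^\vee(-2H_X)$ together with Serre duality one obtains instead~$\rH^3(X,\wedge^2\cU) \ne 0$, respectively~$\rH^3(X,\cU) \ne 0$. Now~$\rH^0(X,\cU) = \rH^3(X,\cU) = 0$ is immediate from semiorthogonality, and tensoring the tautological sequence with~$\cU$ and using~$\rH^\bullet(X,\cU) = 0$ and~$\Ext^2(\cU,\cU) = 0$ gives~$\rH^0(X,\cU\otimes\cU) = \rH^3(X,\cU\otimes\cU) = 0$, hence~$\rH^0(X,\wedge^2\cU) = \rH^3(X,\wedge^2\cU) = 0$ because in characteristic zero~$\wedge^2\cU$ is a direct summand of~$\cU\otimes\cU$. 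So all four vanishings hold and~$\cU$ is stable.

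For the uniqueness statement I would apply Proposition~\ref{prop:uniqueness-general} with~$\cU$ the Mukai bundle (stable by the above, with~$\upchi(\cU,\cU) = 1 > 0$ since~$\cU$ is exceptional), with~$\cV = \cO_X^{\oplus 10}$ and~$\cW = \cU^\vee$ coming from the tautological sequence (here~$\cW$ is semistable because~$\cU$ is stable, and~$\upmu(\cW(K_X)) = -\tfrac{3}{5} H_X^3 < -\tfrac{2}{5} H_X^3 = \upmu(\cU)$), and with~$\cE = U$. The conditions~\eqref{eq:cu-ce-assumptions} are satisfied because~$U$ is numerically equivalent to~$\cU$ (equality of Chern characters gives~$\upchi(\cU,U) = \upchi(\cU,\cU)$ by Hirzebruch--Riemann--Roch, and~$\upmu(U) = \upmu(\cU)$, $\rank(U) = \rank(\cU)$), and~$\Ext^2(\cV,U) = \rH^2(X,U)^{\oplus 10} = 0$ by hypothesis; the proposition then gives~$U \cong \cU$.

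The only step beyond citations and routine computation is the reduction of slope-stability to the four cohomology vanishings --- particularly keeping track of the fact that for subsheaves of rank~$3$ and~$4$ the numerical bound on~$\det F$ is~$m \ge -1$ rather than~$m \ge 0$, so that~$\rH^3(X,\wedge^2\cU)$ and~$\rH^3(X,\cU)$ (rather than their~$\rH^0$ analogues) are what must be ruled out --- and checking that the two~$\wedge^2$-vanishings really do follow from exceptionality and semiorthogonality through the tautological sequence. I expect this bookkeeping, rather than any conceptual difficulty, to be the main thing to get right.
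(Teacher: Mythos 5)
Your proposal is correct and follows essentially the same route as the paper: cite exceptionality of~$\cU$ and semiorthogonality of~$(\cU,\cO_X)$, reduce stability to cohomology vanishings for~$\cU$ and its exterior powers, compute Chern classes on~$\OGr_+(5,V)$, and deduce uniqueness by applying Proposition~\ref{prop:uniqueness-general} to the restricted tautological sequence~$0 \to \cU \to \cO_X^{\oplus 10} \to \cU^\vee \to 0$ with~$\cE = U$. The only deviation is in one sub-step: the paper obtains the vanishings~$\rH^0(\wedge^3\cU(H_X)) = \rH^0(\wedge^4\cU(H_X)) = 0$ (equivalently, by Serre duality, your~$\rH^3(\wedge^2\cU) = \rH^3(\cU) = 0$) by a Borel--Bott--Weil computation, whereas you derive them from exceptionality and semiorthogonality via the tautological sequence and the characteristic-zero splitting of~$\cU \otimes \cU$ --- a valid, slightly more self-contained alternative, with the rank~$3,4$ bookkeeping ($m \ge -1$) handled correctly.
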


\begin{proof}
Exceptionality of the Mukai bundle~$\cU$ and semiorthogonality of the pair~$(\cU, \cO_X)$ 
are easy to prove by Borel--Bott--Weil theorem (see~\cite[Lemma~3.1]{k2005v12}).
Stability of~$\cU$ is equivalent to the vanishings
\begin{equation*}
\rH^0(X,\cU) = \rH^0(X,\wedge^2\cU) = \rH^0(X,\wedge^3\cU(H_X)) = \rH^0(X,\wedge^4\cU(H_X)) = 0,
\end{equation*}
which can be proved by a similar computation.
The computation of Chern classes of~$\cU$ can be performed on the orthogonal Grassmannian and is straightforward. 

Now let~$U$ be a semistable vector bundle of rank~$5$ with the same Chern classes.
Applying Proposition~\ref{prop:uniqueness-general} to~$\cE = U$ and~$\cU$
(conditions~\eqref{eq:cu-ce-assumptions} are satisfied because~$\cE$ is numerically equivalent to~$\cU$),
and the exact sequence
\begin{equation}
\label{eq:sequence-cu-x7}
0 \to\cU \to \cO_X^{\oplus 10} \to \cU^\vee \to 0
\end{equation}
obtained by restriction from~$\OGr_+(5,10)$, we conclude that~$U \cong \cU$.
\end{proof}

The following proposition provides a description of families of prime Fano threefolds of genus~$7$ over any base.

\begin{proposition}
If~$p \colon X \to S$ is a smooth fibration with fibers of type~$\sX_7$
there is a vector bundle~$V$ of rank~$10$, a line bundle~$\cL$,
and epimorphisms~\mbox{$\varphi_1 \colon \Sym^2V \to \cL^\vee$} such that~\mbox{$X \subset \OGr_S(5,V)$},
where the orthogonal Grassmannian is considered with respect to the family of quadratic forms~$\varphi_1$.
Furthermore, the canonical double covering over~$S$ induced by the Stein factorization of the morphism~$\OGr_S(5,V) \to S$ splits,
and the relative Pl\"ucker class on the component~$\OGr_{S,+}(5,V)$ of~$\OGr_S(5,V)$ containing~$X$, 
is divisible by~$2$ in~$\Pic_{\OGr_{S,+}(5,V)/S}(S)$.
Finally, there is a vector bundle~$A$ of rank~$7$
and an epimorphism~\mbox{$\varphi_2 \colon \cS_+ \to A^\vee$} such that
\begin{equation}
\label{eq:x7}
X = \OGr_{S,+}(5,V) \times_{\P_S(\cS_+)} \P_S(\Ker(\varphi_2)),
\end{equation} 
where $\cS_+$ is the half-spinor bundle of rank~$16$ over~$S$ 
obtained as the pshforward to~$S$ of the line bundle~$\cO(H)$, 
where~$H \in \Pic_{\OGr_{S,+}(5,V)/S}(S)$ is the half of the relative Pl\"ucker class on~$\OGr_{S,+}(5,V)$.
\end{proposition}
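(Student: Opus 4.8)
The plan is to follow the template established for the forms of~$\sX_9$, $\sX_{10}$ and~$\sX_{12}$ (cf. Propositions~\ref{prop:x12} and~\ref{prop:x16-forms}): construct a relative Mukai bundle from a moduli space, kill its Brauer twist, and then reconstruct the geometric description of~$X$ by pushing forward ideal sheaves.

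First I would consider the relative moduli space~$\rM \coloneqq \rM_{X/S}(5; -2H_X, 24L_X, -14P_X)$ (the Chern classes being those recorded in Lemma~\ref{lemma:x12-uniqueness}) and its open subscheme~$\rM^\circ \subset \rM$ parameterizing bundles~$U$ on fibers~$X_s$ of~$X/S$ with~$\rH^\bullet(X_s, U) = 0$. By Lemma~\ref{lemma:x12-uniqueness} the morphism~$\rM^\circ \to S$ is bijective on geometric points and each such~$U$ is exceptional, so it is an isomorphism by Corollary~\ref{cor:moduli-etale}; since all sheaves parameterized by~$\rM^\circ$ are~$H_X$-stable, Proposition~\ref{prop:twisted-universal} supplies a Brauer class~$\upbeta \in \Br(S)$ and a~$p^*(\upbeta)$-twisted universal bundle~$\cU$ on~$X \times_S \rM^\circ = X$ whose restriction to each fiber is the Mukai bundle. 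As~$\wedge^5\cU \cong \cO(-2H_X)$ up to a twist by a line bundle on~$S$ and~$\cO_X(H_X)$ is untwisted (because~$X/S$ is a prime Fano fibration), Lemma~\ref{lemma:beta-kr} gives~$\upbeta^5 = 1$; and, putting~$V \coloneqq (p_*\cU^\vee)^\vee$, which is a~$\upbeta$-twisted vector bundle of rank~$10$, the exact sequence~\eqref{eq:sequence-cu-x7} together with stability of~$\cU_{X_s}$ let me apply Lemma~\ref{lemma:beta-2} to conclude~$\upbeta^2 = 1$. Since~$\gcd(5,2) = 1$ this forces~$\upbeta = 1$, so~$\cU$ and~$V$ are untwisted and~$\cU$ induces a closed embedding~$X \hookrightarrow \Gr_S(5,V)$.

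Next I would recover the orthogonal structure. Let~$\cI_X$ be the ideal of~$X$ in~$\Gr_S(5,V)$ and~$p_V \colon \Gr_S(5,V) \to S$ the projection. Using the fiberwise fact that over~$\bkk$ a quadratic form on~$V_s$ whose restriction to the tautological subbundle vanishes along the codimension-$7$ linear section~$X_s$ of the spinor-embedded~$\OGr_+(5,V_s)$ is a scalar multiple of the defining form, the sheaf~$\cL \coloneqq p_{V*}(\cI_X \otimes \Sym^2\cU^\vee)$ is a line bundle; since~$p_{V*}(\Sym^2\cU^\vee) = \Sym^2 V^\vee$ it embeds into~$\Sym^2 V^\vee$, and the dual epimorphism~$\varphi_1 \colon \Sym^2 V \to \cL^\vee$ is the sought family of quadratic forms. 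Because~$\cL$ comes from~$\cI_X$, the subbundle~$\cU$ is isotropic for~$\varphi_1$ on~$X$, i.e.~$X \hookrightarrow \OGr_S(5,V)$. Each fiber~$X_s$ is connected, hence lies in a single component of~$\OGr(5,V_s)$; therefore the connected scheme~$X$ maps into the degree-two \'etale double cover~$D \to S$ arising from the Stein factorization of~$\OGr_S(5,V) \to S$, and since it meets each fiber of~$D/S$ in one point its image is a section, so~$D \cong S \sqcup S$. Letting~$\OGr_{S,+}(5,V)$ be the component picked out by this section, we get~$X \subset \OGr_{S,+}(5,V)$. By Corollary~\ref{cor:picard-invariant} and Lemma~\ref{lemma:nef-cone}, $\Pic_{\OGr_{S,+}(5,V)/S}(S) = \ZZ H$ with~$2H$ the relative Pl\"ucker class (over~$\bkk$ one has~$\Pic(\OGr_+(5,V_s)) = \ZZ$ with the Pl\"ucker class twice the ample generator, and monodromy preserves the nef cone), so the Pl\"ucker class is divisible by~$2$ there; moreover~$\bB(H) = \bB(H\vert_X) = \bB(H_X) = 1$ by naturality of the Brauer obstruction~$\bB$ along~$X \hookrightarrow \OGr_{S,+}(5,V)$ and untwistedness of~$\cO_X(H_X)$, so~$\cO(H)$ is an honest line bundle and~$\cS_+ \coloneqq p_{+*}\cO(H)$ an honest vector bundle of rank~$16$.

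Finally, the relatively very ample class~$H$ embeds~$\OGr_{S,+}(5,V) \hookrightarrow \P_S(\cS_+)$, fiberwise the half-spinor embedding; writing~$\cI'_X$ for the ideal of~$X$ in~$\OGr_{S,+}(5,V)$, the sheaf~$A \coloneqq p_{+*}(\cI'_X \otimes \cO(H))$ is locally free of rank~$15 - 8 = 7$ (fiberwise this counts the linear forms vanishing on~$X_s \subset \P^8 \subset \P^{15}$), and the induced epimorphism~$\varphi_2 \colon \cS_+ \to A^\vee$ yields~$X = \OGr_{S,+}(5,V) \times_{\P_S(\cS_+)} \P_S(\Ker(\varphi_2))$, which is the assertion. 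I expect the main obstacle to be in the middle step: checking that~$\cL$ has rank exactly~$1$ (so that~$\varphi_1$ is well defined and non-degenerate) and, more subtly, that the spinor square root~$\cO(H)$ of the relative Pl\"ucker class is an untwisted line bundle rather than merely a relative twisted one; once these are in place, the argument is a routine adaptation of the proofs for~$\sX_9$ and~$\sX_{10}$.
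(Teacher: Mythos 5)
Your construction is correct and, for the main body of the argument, is exactly the paper's proof: the same moduli space $\rM_{X/S}(5;-2H_X,24L_X,-14P_X)$, the same use of Lemma~\ref{lemma:x12-uniqueness}, Corollary~\ref{cor:moduli-etale} and Proposition~\ref{prop:twisted-universal} to produce the twisted Mukai bundle, the same killing of its twist via Lemma~\ref{lemma:beta-kr} ($\upbeta^5=1$) and Lemma~\ref{lemma:beta-2} applied to~\eqref{eq:sequence-cu-x7} ($\upbeta^2=1$), and the same pushforwards of ideal sheaves to recover $\cL$, $\varphi_1$, the splitting of the Stein double cover, and $A$, $\varphi_2$. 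The one place where you genuinely deviate is the untwistedness of the half-spinor class: you argue that $\bB(H)=\bB(H\vert_X)=\bB(H_X)=1$ by functoriality of the Brauer obstruction along the closed embedding $X\hookrightarrow\OGr_{S,+}(5,V)$, whereas the paper never invokes this compatibility; it instead sets $\upbeta=\bB(H)$, gets $\upbeta^8=1$ from Corollary~\ref{cor:fundamental-class} (the fibers have index $8$) and $\upbeta^7=1$ from the rank-$7$ twisted bundle $A$ via Corollary~\ref{cor:twisted-rank}, and concludes $\upbeta=1$ by coprimality. Your route is shorter and is legitimate within the paper's framework — restricting the relative twisted line bundle $\cO(H)$ to $X$ gives a relative twisted line bundle with class $H_X$, and Lemma~\ref{lemma:picard-twisted} together with~\eqref{eq:twisted-picard-brauer-sequence} forces its twist to equal $\bB(H_X)=1$ since $H_X=-K_{X/S}$ lifts to $\Pic(X)$ — but you should make that one-line justification explicit, since the naturality of $\bB$ is nowhere stated in the paper; the paper's version buys independence from it at the cost of constructing $A$ first as a twisted bundle. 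The remaining small differences (your fiberwise-connectedness argument for the splitting of the double cover versus the paper's section of $p'_{V*}\cO$, and your explicit appeal to Corollary~\ref{cor:picard-invariant} and Lemma~\ref{lemma:nef-cone} for divisibility of the Pl\"ucker class) are the same arguments in different words, and your treatment of the rank-one statement for $\cL$ is at the same level of detail as the paper's.
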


\begin{proof}
Consider the relative moduli space 
\begin{equation*}
\rM \coloneqq \rM_{X/S}(5; -2H_X, 24L_X, -14P_X)
\end{equation*}
where we use the convention of Remark~\ref{rem:moduli-chern-hilbert} in the right-hand side.
Let also $\rM^\circ \subset \rM$ be the open subscheme parameterizing bundles~$U$ on~$X_s$ with the vanishing
\begin{equation*}
\rH^\bullet(X_s,U) = 0.
\end{equation*}
By Lemma~\ref{lemma:x12-uniqueness} the natural morphism $f \colon \rM^\circ \to S$ is bijective on geometric points
and for every geometric point~$[U] \in \rM^\circ$, the bundle~$U$ is exceptional.
Therefore, $f$ is an isomorphism by Corollary~\ref{cor:moduli-etale}.

Note that every sheaf parameterized by the moduli space~$\rM^\circ$ is $H_X$-stable.
Therefore, applying Proposition~\ref{prop:twisted-universal} 
we obtain a Brauer class~$\upbeta \in \Br(S)$ on~$\rM^\circ \cong S$
and a $p^*(\upbeta)$-twisted universal family (which we denote by~$\cU$) on~$X \times_S \rM^\circ = X$.
Since $\wedge^5\cU \cong \cO(-2H_X)$ (up to twist by a line bundle on~$S$) and
since the line bundle~$\cO(2H_X)$ is untwisted, Lemma~\ref{lemma:beta-kr} implies~$\upbeta^5 = 1$. 

On the other hand, let
\begin{equation*}
V \coloneqq (p_*\cU^\vee)^\vee;
\end{equation*}
this is a~$\upbeta$-twisted vector bundle on~$S$ of rank~10.
Applying Lemma~\ref{lemma:beta-2} and~\eqref{eq:sequence-cu-x7}, we obtain~\mbox{$\upbeta^2 = 1$}.
Combining the above equalities for~$\upbeta$, we deduce~\mbox{$\upbeta = 1$}, hence the bundles~$\cU$ and~$V$ are untwisted.

The bundle~$\cU$ defines a closed embedding~$X \hookrightarrow \Gr_S(5,V)$.
Moreover, it follows that 
\begin{equation*}
\cL \coloneqq p_{V*}(\cI_X \otimes \Sym^2\cU^\vee)
\end{equation*}
is a line bundle, where $\cI_X$ is the ideal of~$X$ in~$\Gr_S(5,V)$ and~$p_V \colon \Gr_S(5,V) \to S$ is the natural projection.
Furthermore, we have a canonical morphism $\cL \to p_{V*}\Sym^2\cU^\vee = \Sym^2V^\vee$, which can be considered as a family of quadratic forms.
Note that this family is everywhere non-degenerate.

Let $\OGr_S(5,V) \subset \Gr_S(5,V)$ be the zero locus of the natural section of $p_V^*\cL^\vee \otimes \Sym^2\cU^\vee$;
this is a family of orthogonal Grassmannians for the above family of quadratic forms.
Let 
\begin{equation*}
p'_V \colon \OGr_S(5,V) \to \tilde{S} \to S
\end{equation*}
be the Stein factorization; so that $\tilde{S} \to S$ is an \'etale double covering.
Since $X \subset \OGr_S(5,V)$, the natural morphism $p'_{V*}\cO_{\Gr_S(5,V)} \to p_*\cO_X \cong \cO_S$
gives a regular section of this double covering, hence the covering splits.

Let~$H$ be the fundamental class of the Fano fibration~$p^+_V \colon \OGr_{S,+}(5,V) \to S$ and set~$\upbeta \coloneqq \bB(H)$.
Since the fibers of~$p^+_V$ have index~$8$, it follows from Corollary~\ref{cor:fundamental-class} that~$\upbeta^8 = 1$.
Let 
\begin{equation*}
\cS_+ \coloneqq (p^+_{V*}\cO(H))^\vee;
\end{equation*}
this a $\upbeta$-twisted vector bundle of rank~16 on~$S$.
Let, furthermore,
\begin{equation*}
A \coloneqq p^+_{V*}(\cI'_X \otimes \cO(H)),
\end{equation*}
where $\cI'_X$ is the ideal of~$X$ on~$\OGr_{S,+}(5,V)$;
this a $\upbeta$-twisted vector bundle of rank~7 on~$S$.
The existence of~$A$ implies that~$\upbeta^7 = 1$;
combining this with the previous observation we deduce~$\upbeta = 1$,
hence the bundles~$\cS_+$ and~$A$ are untwisted.
Finally, the required formula~\eqref{eq:x7} also follows.
\end{proof}

\begin{remark}
One could also construct the bundle~$\cU$ on~$X$ as the twisted normal bundle
for the relative anticanonical embedding of~$X$, similarly to the proof of Proposition~\ref{prop:forms-v5}.
\end{remark}

A semiorthogonal decomposition of the derived category of prime Fano threefolds~$X$ of genus~$7$
over an algebraically closed field has been described in~\cite{k2005v12} and~\cite[\S6.2]{K06};
we summarize it in a form that is convenient for our applications below.

\begin{proposition}
\label{prop:x7-moduli}
Let~$X$ be a prime Fano threefold of genus~$7$ over an algebraically closed field~$\kk$.
Let~$\cU$ be the Mukai bundle on~$X$.
Consider the moduli space 
\begin{equation*}
\rM \coloneqq 
\rM_{X,H_X}(2; -H_X, 5L_X, 0)
\end{equation*}
and the open subscheme~$\rM^\circ \subset \rM$ parameterizing sheaves~$\cE$ on~$X$ such that
\begin{equation*}
\rH^\bullet(X, \cE) = \Ext^\bullet(\cU^\vee, \cE) = 0.
\end{equation*}
Then~$\Gamma_X \coloneqq \rM^\circ$ is a smooth projective curve of genus~$7$,
there exists a universal family~$\cE$ of sheaves on~$X \times \rM^\circ = X \times \Gamma_X$,
the Fourier--Mukai functor~$\Phi_\cE \colon \bD(\Gamma_X) \to \bD(X)$ 
is fully faithful,  
and there is a semiorthogonal decomposition
\begin{equation}
\label{eq:sod-x7}
\bD(X) = \langle \Phi_\cE(\bD(\Gamma_X)), \cO_X \otimes \bD(\kk), \cU^\vee \otimes \bD(\kk) \rangle.
\end{equation}
\end{proposition}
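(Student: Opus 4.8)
The plan is to follow closely the proofs of Propositions~\ref{prop:x10-moduli} and~\ref{prop:x9-moduli}: the curve, the universal family, the full faithfulness, and the semiorthogonal decomposition are taken from the known description of~$\bD(X)$ over~$\kk$, and the only new ingredient is the modular reinterpretation of the curve, obtained from Proposition~\ref{prop:curve-general} and Corollary~\ref{cor:moduli-curve}.

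First I would invoke~\cite{k2005v12} and~\cite[\S6.2]{K06} (see also~\cite{KPS}): for a prime Fano threefold~$X$ of genus~$7$ over~$\kk$ there is a smooth projective curve~$\Gamma_X$ of genus~$7$ and a $\Gamma_X$-flat family~$\cE$ of stable vector bundles on~$X$, whose rank and Chern classes agree with those fixed in the definition of~$\rM = \rM_{X,H_X}(2;-H_X,5L_X,0)$, such that the Fourier--Mukai functor~$\Phi_\cE \colon \bD(\Gamma_X) \to \bD(X)$ is fully faithful and, together with the exceptional pair~$(\cO_X,\cU^\vee)$, gives the semiorthogonal decomposition~\eqref{eq:sod-x7}. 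Granting the identification~$\rM^\circ \cong \Gamma_X$, this yields the entire statement, since then the asserted properties of~$\Gamma_X$ (smoothness, projectivity, genus~$7$, and the existence of a universal family) all transfer from~$\Gamma_X$, the family~$\cE$ becoming the universal family on~$X \times \rM^\circ$.

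To prove~$\rM^\circ \cong \Gamma_X$ I would repeat the argument of Proposition~\ref{prop:x10-moduli}. On one hand, the bundles~$\cE_y$, $y \in \Gamma_X$, are stable, have the Chern classes prescribed by~$\rM$, and by~\eqref{eq:sod-x7} satisfy~$\rH^\bullet(X,\cE_y) = \Ext^\bullet(\cU^\vee,\cE_y) = 0$; hence they define geometric points of~$\rM^\circ$, so there is a morphism~$\Gamma_X \to \rM^\circ$ along which~$\cE$ is the pullback of a universal family, and it is an open immersion by Corollary~\ref{cor:moduli-curve} (using full faithfulness of~$\Phi_\cE$ and smoothness of~$\Gamma_X$). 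On the other hand, let~$E$ be a sheaf on~$X$ representing a geometric point of~$\rM^\circ$. One has a four-term exact sequence
\begin{equation*}
0 \to \cE_y \to \cO_X^{\oplus 5} \to \cU^\vee \to \cE_y(H_X) \to 0,
\end{equation*}
the genus-$7$ counterpart of~\eqref{eq:sequence-ce-x10} and~\eqref{eq:sequence-ce-x9}; taking~$\cV'_y = \cO_X^{\oplus 5}$ and~$\cV''_y = \cU^\vee$ in the notation of Proposition~\ref{prop:curve-general}, and using that~$E$ and~$\cE_y$ have the same rank and Chern classes (so that the numerical conditions~\eqref{eq:cu-ce-assumptions} hold for~$\cU = \cE_y$) together with the vanishings~$\Ext^2(\cO_X^{\oplus 5},E) = \rH^2(X,E)^{\oplus 5} = 0$ and~$\Ext^3(\cU^\vee,E) = 0$ from the definition of~$\rM^\circ$, Proposition~\ref{prop:curve-general} shows that either~$E \cong \cE_y$ for some~$y \in \Gamma_X$, or~$E \in \Phi_\cE(\bD(\Gamma_X))^\perp$. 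The latter is impossible by~\eqref{eq:sod-x7} (an object of this right orthogonal lies in~$\langle \cO_X,\cU^\vee\rangle$, and the vanishings~$\rH^\bullet(X,E) = \Ext^\bullet(\cU^\vee,E) = 0$ then force~$E = 0$). Hence~$E \cong \cE_y$, the open immersion~$\Gamma_X \to \rM^\circ$ is surjective, and therefore an isomorphism.

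The main obstacle is establishing the four-term resolution of~$\cE_y$: the numerics only force the \emph{shape} of the complex (one copy of~$\cU^\vee$ and five copies of~$\cO_X$, because~$c_1(\cU^\vee) = 2H_X$ and~$\rank(\cU^\vee) = 5$, and the alternating sums of Chern characters vanish), so one still has to check that it is genuinely exact — the genus-$7$ analogue of~\cite[(B.5.2)]{KPS} and~\cite[(9.1.4)]{KP19}, which should be read off from the construction of the family~$\cE$ and the exceptional collection~$(\cO_X,\cU^\vee)$ in~\cite{k2005v12}. Once this sequence is available, the hypotheses of Proposition~\ref{prop:curve-general} follow directly from the cohomology vanishings defining~$\rM^\circ$, and the rest of the argument is formal, exactly as in the proofs of Propositions~\ref{prop:x10-moduli} and~\ref{prop:x9-moduli}.
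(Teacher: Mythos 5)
Your proposal follows essentially the same route as the paper: the paper likewise imports the curve, the family, full faithfulness, and the decomposition~\eqref{eq:sod-x7} from~\cite[\S6.2]{K06} and~\cite[Theorem~4.4]{k2005v12}, and then identifies~$\Gamma_X$ with~$\rM^\circ$ exactly as in Proposition~\ref{prop:x10-moduli} via Corollary~\ref{cor:moduli-curve} and Proposition~\ref{prop:curve-general} applied to the sequence~$0 \to \cE_y \to \cO_X^{\oplus 5} \to \cU^\vee \to \cE_y(H_X) \to 0$, which you do not need to re-derive since it is quoted verbatim as~\cite[(3)]{k2005v12}. One cosmetic slip: the right orthogonal~$\Phi_\cE(\bD(\Gamma_X))^\perp$ is not~$\langle \cO_X \otimes \bD(\kk), \cU^\vee \otimes \bD(\kk)\rangle$ (that is the \emph{left} orthogonal), but your conclusion is unaffected, because~$E$ is right-orthogonal to all three components of~\eqref{eq:sod-x7}, which generate~$\bD(X)$, so~$\Hom(E,E)=0$ and~$E=0$.
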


\begin{proof}
By~\cite[\S6.2]{K06}, see also~\cite[Theorem~4.4]{k2005v12},
there is a smooth curve~$\Gamma_X$ of genus~$7$ and a $\Gamma_X$-flat family~$\cE$ 
of stable vector bundles on~$X$ with the same rank and Chern classes 
as in the definition of the moduli space~$\rM$ 
and such that the Fourier--Mukai functor
\begin{equation*}
\Phi_\cE \colon \bD(\Gamma_X) \to \bD(X)
\end{equation*}
is fully faithful and together with the exceptional vector bundles~$\cO_X$ and~$\cU^\vee$ 
gives the semiorthogonal decomposition~\eqref{eq:sod-x7}.
The rest of the proof is analogous to that of Proposition~\ref{prop:x10-moduli} 
with~\eqref{eq:sequence-ce-x10} replaced by the exact sequence
\begin{equation}
\label{eq:sequence-ce-x7}
0 \to \cE_y \to \cO_X^{\oplus 5} \to \cU^\vee \to \cE_y(H_X) \to 0
\end{equation}
(see~\cite[(3)]{k2005v12}).
\end{proof}

\begin{remark}
One could also construct the curve~$\Gamma$ as the relative linear section
\begin{equation*}
\Gamma \coloneqq \OGr_{S,-}(5,V) \times_{\P_S(\cS_-)} \P_S(A)
\end{equation*}
of the second component of the relative orthogonal Grassmannian with respect to its embedding 
into the projectivization of the other half-spinor bundle~$\cS_-$ on~$S$.
\end{remark}

\begin{theorem}
\label{thm:x7}
If $X/S$ is a form of a prime Fano threefold of genus~$7$ then there is a semiorthogonal decomposition
\begin{equation*}
\bD(X) = \langle \bD(\Gamma), \cO_X \otimes \bD(S), \cU^\vee \otimes \bD(S) \rangle,
\end{equation*}
where $\Gamma/S$ is a smooth curve of genus~$7$.
\end{theorem}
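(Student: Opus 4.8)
The plan is to transcribe the proof of Theorem~\ref{thm:y4}, and Theorem~\ref{thm:x9}, into the present setting, using Proposition~\ref{prop:x7-moduli} in the role of Proposition~\ref{prop:y4-moduli}, and then to add one extra observation in order to upgrade the Brauer class on the curve from torsion to trivial. First I would introduce the relative moduli space $\rM \coloneqq \rM_{X/S,H_X}(2;-H_X,5L_X,0)$ (notation of Remark~\ref{rem:moduli-chern-hilbert}) and its open subscheme $\rM^\circ \subset \rM$ parameterising sheaves $\cE$ on the fibres $X_s$ of $X/S$ with $\rH^\bullet(X_s,\cE) = \Ext^\bullet(\cU^\vee,\cE) = 0$. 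By Theorem~\ref{thm:moduli-existence} and Proposition~\ref{prop:x7-moduli} the geometric fibres of $\rM^\circ \to S$ are the genus-$7$ curves $\Gamma_{X_s}$, and every sheaf parameterised by $\rM^\circ$ has the form $\cE_y = \Phi_{\cE_s}(\cO_y)$ for a point $y$ of the smooth curve $\Gamma_{X_s}$, so that $\Ext^2(\cE_y,\cE_y) \cong \Ext^2(\cO_y,\cO_y) = 0$ and $\Ext^1(\cE_y,\cE_y) \cong \rT_{y,\Gamma_{X_s}}$. Theorem~\ref{thm:moduli-smooth} then shows that $\rM^\circ \to S$ is smooth of relative dimension~$1$; being open in the projective $S$-scheme $\rM$ and having proper fibres, it is projective, so $\Gamma \coloneqq \rM^\circ$ is a smooth projective family of genus-$7$ curves over~$S$.

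Next, since all sheaves parameterised by $\rM^\circ$ are stable and locally free, Proposition~\ref{prop:twisted-universal} furnishes a class $\upbeta_\Gamma \in \Br(\Gamma)$ and a $\pr_\Gamma^*(\upbeta_\Gamma)$-twisted universal bundle $\cE$ on $X\times_S\Gamma$ restricting fibrewise over~$S$ (up to a line bundle twist on the fibre of $\Gamma$) to the families of Proposition~\ref{prop:x7-moduli}. By Proposition~\ref{prop:relative-sod}\ref{item:linear-ff} the $S$-linear functor $\Phi_\cE \colon \bD(\Gamma,\upbeta_\Gamma) \to \bD(X)$ is fully faithful with admissible image; by~\eqref{eq:sod-x7} the pair $(\cO_X,\cU^\vee)$ is relative exceptional, and combining Corollary~\ref{cor:relative-exceptional} with Proposition~\ref{prop:relative-sod}\ref{item:linear-so}--\ref{item:linear-sod} I obtain the $S$-linear decomposition $\bD(X) = \langle \bD(\Gamma,\upbeta_\Gamma),\, \cO_X\otimes\bD(S),\, \cU^\vee\otimes\bD(S)\rangle$.

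It remains to prove $\upbeta_\Gamma = 1$, and for this two relations suffice. First, $\wedge^2\cE_y \cong \cO_X(-H_X)$ and, since $X/S$ is a prime Fano fibration, $\cO_X(H_X)$ is untwisted; hence Lemma~\ref{lemma:beta-kr} gives $\upbeta_\Gamma^2 = 1$. Second, from the fibrewise resolution~\eqref{eq:sequence-ce-x7}, together with~\eqref{eq:sequence-cu-x7} and the exceptionality of the pair $(\cU_{X_s},\cO_{X_s})$, a short cohomology computation yields $\rH^{>0}(X_s,\cE_y(H_X)) = 0$ and $\upchi(X_s,\cE_y(H_X)) = 5$ for every $y$ (the point being $\upchi(\cE_y) = 0$, $\rH^\bullet(X_s,\cU^\vee) = \rH^0(X_s,\cU^\vee)$ of dimension $10$). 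Therefore $\pr_{\Gamma *}\bigl(\cE \otimes \pr_X^*\cO_X(H_X)\bigr)$ is a $\upbeta_\Gamma$-twisted vector bundle of rank~$5$ on~$\Gamma$ (by semicontinuity and Lemma~\ref{lemma:twisted-pb-pf}), so $\upbeta_\Gamma^5 = 1$ by Corollary~\ref{cor:twisted-rank}. Since $\gcd(2,5) = 1$, this forces $\upbeta_\Gamma = 1$, which is the last assertion of the theorem. One can alternatively make this transparent by building $\cE$ directly from spinor geometry: realise $\Gamma$ as the relative linear section $\OGr_{S,-}(5,V)\times_{\P_S(\cS_-)}\P_S(A)$ (as in the Remark after Proposition~\ref{prop:x7-moduli}) and observe that, $V$ being untwisted, all the tautological bundles entering the construction of the universal bundle in~\cite{k2005v12} are untwisted.

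The genuinely substantial input is already packaged in Proposition~\ref{prop:x7-moduli} (imported from~\cite{k2005v12} and~\cite[\S6.2]{K06}): the existence of the genus-$7$ curve, the flat family of stable bundles, full faithfulness of $\Phi_\cE$, and the fibrewise semiorthogonal decomposition over algebraically closed fields. Granting that, the argument above is a routine relativisation, and the only points that demand care are the passage from the moduli space to a smooth projective family of curves (identical to Theorem~\ref{thm:y4}) and the fibrewise cohomological bookkeeping behind $\upchi(X_s,\cE_y(H_X)) = 5$ together with the vanishing of the higher cohomology. The numerical coincidence $\gcd(2,5)=1$ is exactly what has no counterpart for types $\sX_9$ and $\sX_{10}$ — where the analogous Euler characteristics are $6$ and $15$, failing to be coprime to the respective ranks $2$ and $3$ — which is why there one obtains only $\upbeta_\Gamma^2 = 1$, respectively $\upbeta_\Gamma^3 = 1$, unless a section of a relative Hilbert scheme is available, whereas here $\upbeta_\Gamma$ is unconditionally trivial.
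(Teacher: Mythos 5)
Your proposal is correct and follows essentially the same route as the paper: relativize via Proposition~\ref{prop:x7-moduli} exactly as in Theorem~\ref{thm:y4}, then kill $\upbeta_\Gamma$ by combining $\upbeta_\Gamma^2=1$ (from $\wedge^2\cE\cong\cO(\mp H_X)$ and Lemma~\ref{lemma:beta-kr}) with $\upbeta_\Gamma^5=1$ (from the rank-$5$ twisted pushforward coming from~\eqref{eq:sequence-ce-x7}), using $\gcd(2,5)=1$. Your explicit cohomological bookkeeping for the rank-$5$ statement just spells out what the paper leaves implicit.
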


\begin{proof}
The proof is analogous to the proof of Theorem~\ref{thm:y4},
with Proposition~\ref{prop:y4-moduli} replaced by Proposition~\ref{prop:x7-moduli}. 
The only difference is the absence of the Brauer class~$\upbeta_\Gamma$, which is due to the two facts:
first, $\wedge^2\cE \cong \cO(H_X)$ up to twist by a line bundle on~$\Gamma$, hence~$\upbeta_\Gamma^2 = 1$;
and second, the pushforward of~$\cE^\vee$ to~$\Gamma$ has rank~$5$, hence~$\upbeta_\Gamma^5 = 1$;
a combination of these facts implies~$\upbeta_\Gamma = 1$.
\end{proof}

\section{Weil restriction of scalars}
\label{sec:weil}

Let~$S$ be a connected scheme and let $f \colon S' \to S$ be a finite \'etale morphism.
The {\sf Weil restriction of scalars} functor
\begin{equation*}
\Res_{S'/S} \colon \Sch/S' \to \Sch/S
\end{equation*}
is defined (see, e.g., \cite[\S7.6]{BLR}) as the right adjoint functor of the extension of scalars functor 
\begin{equation*}
\Sch/S \to \Sch/S',
\qquad 
X \mapsto X \times_S S'.
\end{equation*}
By definition, we have a natural isomorphism
\begin{equation}
\label{eq:res-adjunction}
\Map_S(X, \Res_{S'/S}(Y)) \cong \Map_{S'}(X \times_S S', Y)
\end{equation}
between the sets of morphisms in the categories of $S$-schemes and $S'$-schemes, respectively.
It is well known that Weil restriction commutes with base changes.

\subsection{Forms of powers of Fano varieties}
\label{ss:forms-powers}

Recall from~\cite[\S3.8]{CTS} the corestriction map 
\begin{equation*}
\cores_{S'/S} \colon \rH^2_\et(S',\Gm) \to \rH^2_\et(S,\Gm)
\end{equation*}
as well as its restriction~$\cores_{S'/S} \colon \Br(S') \to \Br(S)$.
Recall also the sequence~\eqref{eq:picard-brauer-sequence}.

\begin{lemma}
\label{cor:segre}
If~$f \colon S' \to S$ is a finite \'etale morphism of degree~$d$,
$\upbeta' \in \Br(S')$ and~$V$ is a $\upbeta'$-twisted vector bundle of rank~$r$ on~$S'$ 
then there is a $\cores_{S'/S}(\upbeta')$-twisted vector bundle~$W$ of rank~$r^d$ on~$S$ 
and a closed embedding
\begin{equation*}
\upsigma \colon \Res_{S'/S}(\P_{S'}(V)) \hookrightarrow \P_S(W)
\end{equation*}
which restricts to the Segree embedding~$(\P^{r-1})^d \hookrightarrow \P^{r^d-1}$ on each geometric fiber.
\end{lemma}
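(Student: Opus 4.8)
The plan is to construct $W$ via descent along the \'etale cover $S' \to S$, matching the classical fact that the Weil restriction of a projective space realizes the Segre variety. First I would work \'etale-locally: choose an \'etale cover $U \to S$ such that $f^{-1}(U) = U' \to U$ becomes a disjoint union $U' = \coprod_{i=1}^{d} U$ of $d$ copies of $U$ (possible since $f$ is finite \'etale of degree $d$, after refining), and such that moreover the $\upbeta'$-twisted bundle $V$ trivializes to $\cO_{U'}^{\oplus r}$, i.e.\ on the $i$-th copy $V$ becomes the trivial bundle of rank $r$. Over $U$ the extension of scalars $X \times_S U' = \coprod_i (X\times_S U)$, so by the adjunction~\eqref{eq:res-adjunction} the Weil restriction $\Res_{S'/S}(\P_{S'}(V))$ becomes the $d$-fold fiber product over $U$ of copies of $\P_U^{r-1}$, i.e.\ $(\P^{r-1})^d_U$, and the Grothendieck line bundle $\cO(1)$ on each factor tensors together to give a line bundle on $(\P^{r-1})^d_U$ whose pushforward to $U$ is $(\kk^r)^{\otimes d}\otimes \cO_U$, of rank $r^d$; this is the Segre setup.

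Next I would assemble the descent datum. The key point is that over the double overlap $U\times_S U$, the identification of $\coprod_i U$ with itself induced by $V$'s gluing isomorphism $\varphi$ permutes the $d$ sheets according to the monodromy of $S'/S$ and simultaneously acts on each sheet by a $\GL_r$-valued function (with the $\beta'$-cocycle defect). Taking the $d$-fold tensor/box product of these data yields a gluing isomorphism on $(\kk^r)^{\otimes d}\otimes \cO$ whose cocycle defect is precisely the corestriction cocycle for $\beta'$ — this is exactly the \v{C}ech-level description of $\cores_{S'/S}$ recalled from~\cite[\S3.8]{CTS}. Thus the locally free sheaves $(\kk^r)^{\otimes d}\otimes\cO_U$ glue into a $\cores_{S'/S}(\upbeta')$-twisted vector bundle $W$ of rank $r^d$ on $S$, and (by Corollary~\ref{cor:twisted-rank}, since $W$ exists) $\cores_{S'/S}(\upbeta')$ is automatically torsion. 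The Segre embeddings $(\P^{r-1})^d_U \hookrightarrow \P(W_U)$ are compatible with this gluing — the Segre map is $\GL_r^d \rtimes \fS_d$-equivariant in the appropriate sense — so they descend to a closed embedding $\upsigma \colon \Res_{S'/S}(\P_{S'}(V)) \hookrightarrow \P_S(W)$, which by construction restricts to the classical Segre embedding on each geometric fiber.

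One subtlety to check carefully is the well-definedness of $W$ up to twist by a line bundle on $S$: replacing $V$ by $V\otimes L'$ for $L' \in \Pic(S')$ replaces $W$ by $W \otimes \Nm_{S'/S}(L')$ (the norm, which at the \v{C}ech level is the $d$-fold product over the sheets), which is indeed a genuine line bundle on $S$, so $\P_S(W)$ is unchanged; and changing the representative of $\upbeta'$ by a coboundary or refining $U$ changes the gluing of $W$ only by a coboundary, hence does not change $W$ as a twisted bundle. The main obstacle I anticipate is purely bookkeeping: tracking the interaction between the monodromy permutation of the $d$ sheets and the twisted-$\GL_r$ gluing simultaneously, and verifying that the resulting $2$-cocycle on $S$ is genuinely the corestriction of $\beta'$ rather than some other multiple — in other words, pinning down the normalization so that the statement reads $\cores_{S'/S}(\upbeta')$ on the nose. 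This is where I would lean on the explicit \v{C}ech description of corestriction in~\cite[\S3.8]{CTS} and on the fact that, \'etale-locally where $S'$ splits, corestriction is literally the cup product / iterated tensor over the $d$ sheets.
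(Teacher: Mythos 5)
Your proof is correct, but it takes a more hands-on route than the paper. You build $W$ by explicit \'etale descent: split $S'/S$ and trivialize $V$ over a cover $U \to S$, take the $d$-fold tensor product over the sheets, and glue using the combined permutation-monodromy/$\GL_r$ data, checking that the resulting cocycle defect is the corestriction cocycle of $\upbeta'$; the Segre embeddings then descend by $\GL_r^{\times d}\rtimes\fS_d$-equivariance. The paper instead argues intrinsically: the Weil restriction $X = \Res_{S'/S}(\P_{S'}(V))$ is a Fano fibration whose fundamental class $H_X$ restricts on a geometric fiber $\prod_{s'\in f^{-1}(s_0)}\P(V_{s'})$ to the sum of the hyperplane classes, so $H_X$ is relatively very ample with vanishing higher cohomology; Lemma~\ref{lemma:brauer-obstruction} then hands you both the $\bB(H_X)^{-1}$-twisted bundle $W = (p_*\cO_X(H_X))^\vee$ of rank $r^d$ and the closed embedding $X \hookrightarrow \P_S(W)$, fiberwise the complete linear system $|H_X|$, i.e.\ the Segre embedding; the only remaining point is identifying $\bB(H_X)$ with $\cores_{S'/S}(\upbeta')$, done by comparing the fiber description $W_{s_0}=\bigotimes V_{s'}$ with the \v{C}ech description of corestriction in \cite[\S3.8]{CTS}. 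The two arguments are close in substance (Lemma~\ref{lemma:brauer-obstruction} is itself proved by a gluing argument, and both ultimately invoke the same comparison with the corestriction cocycle, at a comparable level of detail), but the paper's version buys brevity and avoids the equivariance and cocycle bookkeeping you flag as the delicate step, while yours makes the identification of the twist with $\cores_{S'/S}(\upbeta')$ more transparent at the \v{C}ech level. Your extra remarks (well-definedness of $W$ up to $\Nm_{S'/S}$-twist, torsion of the corestriction class via Corollary~\ref{cor:twisted-rank}) are not needed for the statement but are harmless.
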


We will write~$W \coloneqq \cores_{S'/S}(V)$ and call~$W$ the {\sf Segre bundle} of~$V$
and~$\upsigma$ the {\sf Segre embedding}.

\begin{proof}
Let~$s_0 \in S$ be a base point.
Compatibility with base change implies
\begin{equation*}
\Res_{S'/S}(\P_{S'}(V))_{s_0} \cong 
\Res_{f^{-1}(s_0)/s_0}(\P_{f^{-1}(s_0)}(V_{f^{-1}(s_0)})) \cong 
\prod_{s' \in f^{-1}(s_0)} \P(V_{s'}).
\end{equation*}
The fundamental group~$\uppi_1(S,s_0)$ acts on the Picard group~$\ZZ^d$ of the right hand side by permutations,
the fundamental divisor class of~$\Res_{S'/S}(\P_{S'}(V))$ corresponds to the sum of hyperplane classes of the factors,
and its space of global sections on the fiber over~$s_0$ is isomorphic to the dual of the Segre space 
\begin{equation*}
W_{s_0} \coloneqq \bigotimes_{s' \in f^{-1}(s_0)} V_{s'}
\end{equation*}
Comparing this with the definition of the corestriction map in~\cite[\S3.8]{CTS},
it is easy to see that the Brauer class of~$W$ is equal to~$\cores_{S'/S}(\upbeta')$.
The rest follows from Lemma~\ref{lemma:brauer-obstruction}.
\end{proof}

The following result will be used quite often in~\S\ref{sec:big-picard}.

\begin{proposition}
\label{prop:forms-general}
Let $p \colon X \to S$ be a smooth Fano fibration with~$\Pic_{X/S}(S) \cong \ZZ$.
Assume for each geometric point~\mbox{$s \in S$} 
there is a smooth Fano variety~$Y_s$ of Picard rank~$1$ and a closed embedding~$\upxi_s \colon X_s \hookrightarrow Y_s^d$, such that 
\begin{aenumerate}
\item 
\label{item:projections}
the projections~$\pr_{s,i} \colon X_s \to Y_s$ are surjective and have connected fibers for~$1 \le i \le d$, and
\item 
\label{item:picard}
if~$h_s \in \Pic(Y_s)$ is an ample generator and~$h_{s,i} \coloneqq \pr_{s,i}^*(h_s) \in \Pic(X_s)$ then
\begin{equation*}
\Pic(X_s) = \bigoplus_{i=1}^d \ZZ h_{s,i}
\end{equation*}
and~$h_{s,i}$ generate the nef cone of~$X_s$.
\end{aenumerate}
Then there is a finite \'etale covering~$f \colon S' \to S$ of degree~$d$ with connected~$S'$,
a smooth projective Fano fibration~\mbox{$Z \to S'$},
and a closed embedding
\begin{equation*}
\uppsi \colon X \hookrightarrow \Res_{S'/S}(Z)
\end{equation*}
such that for each geometric point~$s' \in S'$ there is an isomorphism~$Z_{s'} \cong Y_{f(s')}$ and the diagram
\begin{equation}
\label{eq:xyz-diagram}
\vcenter{
\xymatrix{
X_s \ar[r]^-{\uppsi_s} \ar[d]_{\xi_s} &
(\Res_{S'/S}(Z))_s \ar@{=}[d]
\\
Y_s^d &
\prod_{s' \in f^{-1}(s)} Z_{s'} \ar[l]_-\sim
}}
\end{equation}
commutes, where the bottom isomorphism is the product of the above isomorphisms.
\end{proposition}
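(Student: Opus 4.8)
The plan is to construct the étale covering $S' \to S$ by applying the monodromy machinery of \S\ref{ss:picard} to the decomposition of the Picard group of a geometric fiber, then to define $Z \to S'$ as a suitable Fano fibration whose fibers are the varieties $Y_s$, and finally to use the adjunction \eqref{eq:res-adjunction} to produce the embedding into the Weil restriction. First I would fix a geometric point $s_0 \in S$ and consider the finite set $\{h_{s_0,1}, \dots, h_{s_0,d}\} \subset \Pic(X_{s_0})$ of pullbacks of the ample generator along the $d$ projections. By Lemma~\ref{lemma:nef-cone} the monodromy action of $\uppi_1(S,s_0)$ preserves the nef cone of $\Pic(X_{s_0})$; by assumption~\ref{item:picard} the classes $h_{s_0,i}$ are exactly the primitive integral generators of the extremal rays of this nef cone (they span a simplicial cone isomorphic to $\ZZ_{\ge 0}^d$), hence the monodromy action permutes the set $\{h_{s_0,1},\dots,h_{s_0,d}\}$. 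This permutation action defines a $\uppi_1(S,s_0)$-set of cardinality $d$, and I would let $f \colon S' \to S$ be the corresponding finite étale covering of degree $d$, with a marked point $s'_0$ lying over $s_0$ corresponding to the index $1$ (say). If $S'$ is not connected, replace it by the connected component containing $s'_0$ — but in fact connectedness is part of the claim, so I should note that the covering associated to the full permutation $\uppi_1$-set need not be connected; instead one takes $S'$ to be the connected component through $s'_0$, and the degree statement then requires that $\uppi_1(S,s_0)$ acts transitively on the $d$ classes. This transitivity does hold, because over $S$ the relative Picard group $\Pic_{X/S}(S)$ is $\ZZ$ by hypothesis, so by Corollary~\ref{cor:picard-invariant} the monodromy invariants in $\bigoplus \ZZ h_{s_0,i}$ form a rank-one subgroup, which forces the permutation action on the rays to be transitive.

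Next I would construct $Z \to S'$. Over $S'$, by Corollary~\ref{cor:orbit-covering} (applied with $h_{s_0} = h_{s_0,1}$, whose orbit has length $d$) the class $h_{s_0,1}$ becomes monodromy invariant after pullback to $S'$, hence by Corollary~\ref{cor:picard-invariant} it extends to a relative divisor class $\sh_1 \in \Pic_{X_{S'}/S'}(S')$, where $X_{S'} \coloneqq X \times_S S'$. The pullbacks $h_{s_0,2},\dots,h_{s_0,d}$ of the remaining projections are permuted transitively by $\uppi_1(S',s'_0)$, and since each projection $\pr_{s,i}$ has connected fibers (assumption~\ref{item:projections}), the class $\sh_1$ is relatively globally generated with vanishing higher cohomology on the fibers — here I would invoke that the $Y_s$ are Fano of Picard rank one, so $h_s$ is very ample (or at least this holds after passing to a fixed multiple, which is harmless since multiplicity is monodromy-invariant) and has no higher cohomology on $Y_s$, and the projection $\pr_{s,1}$ being a fibration with connected fibers gives $\pr_{s,1,*}\cO_{X_s} = \cO_{Y_s}$ and hence the same cohomology for $\sh_1$ on $X_s$ as for $h_s$ on $Y_s$. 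Then Lemma~\ref{lemma:brauer-obstruction} produces a Severi–Brauer–type target; more precisely, the morphism $\pr_{s_0,1} \colon X_{s_0} \to Y_{s_0}$ globalizes: the relative Stein factorization of the map $X_{S'} \to \P_{S'}(\cE)$ attached to $\sh_1$ (where $\cE = (p'_*\cO_{X_{S'}}(\sh_1))^\vee$ and $p' \colon X_{S'} \to S'$) has as its first step a morphism $g \colon X_{S'} \to Z$ to a projective $S'$-scheme $Z$ with $g_*\cO_{X_{S'}} = \cO_Z$, and whose geometric fibers over $S'$ are the $Y_s$. Smoothness of $Z \to S'$ and its being a Fano fibration then follow from Corollary~\ref{cor:constant} applied fiberwise (or directly, since all geometric fibers are the smooth Fano $Y_s$ and the family is flat by constancy of Hilbert polynomials). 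The morphism $Z \to S'$ has relative Picard rank one, matching $\uprho(Y_s) = 1$.

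Finally, the embedding $\uppsi$. The morphism $g \colon X_{S'} \to Z$ is an $S'$-morphism, so by the adjunction \eqref{eq:res-adjunction} — reading it with ``$X$'' there equal to our $X$ (an $S$-scheme), ``$S'$'' equal to ours, and ``$Y$'' equal to $Z$ — the composite $X \times_S S' \cong X_{S'} \xrightarrow{g} Z$ corresponds to a canonical $S$-morphism $\uppsi \colon X \to \Res_{S'/S}(Z)$. On the geometric fiber over $s$ this is precisely the map $X_s \to \prod_{s' \in f^{-1}(s)} Z_{s'}$ whose components are the $d$ projections $\pr_{s,i}$ followed by the identifications $Z_{s'} \cong Y_{f(s')}$, which is exactly $\upxi_s$ under the bottom isomorphism of \eqref{eq:xyz-diagram}; commutativity of the diagram is then immediate from the construction. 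It remains to check $\uppsi$ is a closed immersion: it is proper since both source and target are projective over $S$ (the target because $Z/S'$ is projective and Weil restriction of a projective morphism along a finite étale map is projective, see \cite[\S7.6]{BLR}), and it is a closed immersion because it is so on every geometric fiber — indeed $\upxi_s$ is a closed embedding by hypothesis — so it is a monomorphism of finite type, proper, and a closed immersion fiberwise, hence a closed immersion by the fiberwise criterion.

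The main obstacle I expect is the globalization step in the second paragraph: one must verify carefully that the relative morphism $g \colon X_{S'} \to Z$ exists and is flat with smooth Fano fibers, i.e., that the Stein factorization behaves well in families. The key input making this work is that $\sh_1$ exists as an honest relative divisor class (not merely a twisted one) — which is where the hypothesis $\Pic_{X/S}(S) \cong \ZZ$, forcing transitivity of the monodromy permutation, and Corollary~\ref{cor:orbit-covering} are essential — together with the connectedness of the fibers of the projections from assumption~\ref{item:projections}, which guarantees $g_*\cO = \cO_Z$ and hence that $Z$ is the ``correct'' base of the fibration with geometric fibers $Y_s$.
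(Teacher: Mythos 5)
Your proposal follows essentially the same route as the paper's proof: the monodromy action permutes the nef-cone generators $h_{s_0,i}$ (Lemma~\ref{lemma:nef-cone}), transitivity is forced by $\Pic_{X/S}(S)\cong\ZZ$ via Corollary~\ref{cor:picard-invariant}, the covering $f\colon S'\to S$ is the orbit covering of $h_{s_0,1}$ (Corollary~\ref{cor:orbit-covering}), the class extends to a relative divisor class $h'$ over $S'$, a suitable multiple of $h'$ produces a morphism to a Severi--Brauer variety via Lemma~\ref{lemma:brauer-obstruction} whose image (equivalently, Stein factorization) is the Fano fibration $Z\to S'$, and finally the adjunction~\eqref{eq:res-adjunction} together with the fiberwise closed-immersion property and properness gives $\uppsi$. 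The one point of divergence is the step you yourself flag as the main obstacle: you want a single global multiple $m$ and appeal to ``multiplicity is monodromy-invariant,'' which does not address the actual issue, namely that the smallest $m$ making $mh_s$ very ample (and hence $mh'$ relatively globally generated with vanishing higher cohomology) may vary with the geometric point. The paper avoids this by choosing $m$ locally, constructing $Z_{U'}$ over the members of a Zariski open cover of $S$, and gluing the resulting fibrations; your version can be repaired the same way, or by noting that for each $m$ the locus where $mh'$ is relatively globally generated with vanishing higher cohomology is open and quasi-compactness of $S'$ then yields a uniform $m$, so the gap is real but minor and fillable within your own framework.
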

\begin{proof}
Let~$s_0 \in S$ be a geometric point.
Since~$h_{s_0,i}$ are the generators of the nef cone of~$X_{s_0}$,
it follows from Lemma~\ref{lemma:nef-cone} that the monodromy action of~$\uppi_1(S,s_0)$ on~$\Pic(X_{s_0})$
permutes the classes~$h_{s_0,i}$.
Moreover, since~$\Pic(X_{s_0})^{\uppi_1(S,s_0)} \cong \Pic_{X/S}(S) \cong \ZZ$, 
the action of~$\uppi_1(S,s_0)$ on the set~$\{h_{s_0,1},\dots,h_{s_0,d}\}$ is transitive;
in particular, the~$\uppi_1(S,s_0)$-orbit of~$h_{s_0,1}$ has length~$d$.

Let~$f \colon S' \to S$ be the finite \'etale morphism of degree~$d$ with connected~$S'$ 
constructed by applying Corollary~\ref{cor:orbit-covering} to the class~$h_{s_0,1}$,
set~$X' \coloneqq X \times_S S'$,
and let~$s'_0 \in S'$ be the point such that~$h_{s_0,1}\vert_{X_{s'_0}}$ is monodromy invariant.
By Corollary~\ref{cor:picard-invariant} there is a unique class~$h' \in \Pic_{X'/S'}(S')$ that restricts to~$h_{s_0,1} \in \Pic(X_{s'_0})$.
By Lemma~\ref{lemma:nef-cone} the restriction~$h'\vert_{X'_{s'_0}}$ is a generator of the nef cone, therefore
\begin{equation*}
h'\vert_{X'_{s'_0}} = h_{f(s'_0),i} \in \Pic(X'_{s'_0}) = \Pic(X_{f(s'_0)})
\end{equation*}
for some~$1 \le i \le d$.
Since~$h_{f(s'_0)} \in \Pic(Y_{f(s'_0)})$ is an ample class, there exists an integer~$m(s'_0) \in \ZZ$ 
such that the class~$m(s'_0)h_{f(s'_0)}$ is very ample (hence globally generated and has vanishing higher cohomology).
Since the projection~$\pr_{f(s'_0),i}$ has connected fibers, 
the class~$m(s'_0)h'\vert_{X'_{s'_0}}$ is globally generated and has vanishing higher cohomology as well,
and since both properties are open and the map~$f$ is proper, 
the same holds in the preimage~$U' = f^{-1}(U)$ of a Zariski neighborhood~$U$ of the point~$f(s'_0)$ in~$S$.

Replacing~$S'$ and~$S$ by~$U'$ and~$U$ and applying Lemma~\ref{lemma:brauer-obstruction} to the class~$m(s'_0)h'$
we obtain a morphism to a Severi--Brauer variety over~$S'$,
\begin{equation*}
\phi \colon X' \to \P_{U'}(V)
\end{equation*}
such that for any geometric point~$s' \in U'$ its fiber~$\phi_{s'}$ coincides with the map 
\begin{equation*}
X_{s'} = X_{f(s')} \xrightarrow{ \pr_{f(s'),i}\ } Y_{f(s')} \hookrightarrow \P^N,
\end{equation*}
where the last arrow is the map given by the very ample class~$mh_{f(s')}$ on~$Y_{f(s')}$.
Define~$Z_{U'} \subset \P_{U'}(V)$ as the image of the morphism~$\phi$.
Then it follows from the above that for each point~$s' \in S'$ we have
\begin{equation*}
Z_{U',s'} \cong Y_{f(s')};
\end{equation*}
in particular~$Z_{U'} \to U'$ is a smooth Fano fibration. 

Applying this construction to other points~$s_0 \in S$, 
we obtain an open cover~$\{U\}$ of~$S$, the induced open cover~$\{U'\}$ of~$S'$,
and a family of Fano fibrations~$Z_{U'} \to U'$.
The construction shows that these Fano fibrations agree over the intersections of the opens of the cover,
hence they can be glued into a single Fano fibration~$Z \to S'$.

The morphism $X' \xrightarrow{\ \phi\ } Z$ induces by adjunction a morphism~$\uppsi \colon X \to \Res_{S'/S}(Z)$ of~$S$-schemes. 
The commutativity of~\eqref{eq:xyz-diagram} follows from construction.
In particular, $\uppsi_s$ is a closed embedding for each~$s \in S$, 
and since it is a projective morphism, it is a closed embedding globally.
\end{proof}

\subsection{Semiorthogonal decomposition for Weil restriction of scalars}
\label{ss:weil-decomposition}

Let~$f \colon S' \to S$ be a finite \'etale morphism and let~$q \colon Y \to S'$ be a smooth projective morphism.
Set~$X \coloneqq \Res_{S'/S}(Y)$ and let~$p \colon X \to S$ be the natural morphism.
In this subsection we construct a semiorthogonal decomposition for~$\bD(X)$.

Consider the diagram of maps of $S$-schemes
\begin{equation}
\label{eq:weil-diagram}
\vcenter{\xymatrix{
&
X \times_S S' \ar[dr]^{\pr_Y} \ar[dl]_{\pr_X} 
\\
X &&
Y,
}}
\end{equation}
where the map~$\pr_Y \colon X \times_S S' \to Y$ corresponds by~\eqref{eq:res-adjunction} 
to the identity morphism~$X \to \Res_{S'/S}(Y)$, and the map~$\pr_X$ is the projection to the first factor.
Recall the notion of relative exceptionality from~\S\ref{ss:db-linear}.

\begin{theorem}
\label{thm:pmd-qs-ps}
Assume the structure sheaf~$\cO_Y$ is relative exceptional over~$S'$.
Then~$\cO_{X}$ is relative exceptional over~$S$, the Fourier--Mukai functor 
\begin{equation*}
\Phi \coloneqq \pr_{X*} \circ \pr_Y^* \colon \bD(Y) \to \bD(X)
\end{equation*}
is fully faithful on the subcategories~$(\cO_Y \otimes \bD(S'))^\perp$ and~${}^\perp(\cO_Y \otimes \bD(S'))$ in~$\bD(Y)$
and the pairs of subcategories
\begin{equation}
\label{eq:pairs}
\Phi \big( (\cO_Y \otimes \bD(S'))^\perp \big), \cO_X \otimes \bD(S)
\qquad\text{and}\qquad 
\cO_X \otimes \bD(S), \Phi \big( {}^\perp(\cO_Y \otimes \bD(S')) \big)
\end{equation}
in~$\bD(X)$ are semiorthogonal, $S$-linear, and admissible.
\end{theorem}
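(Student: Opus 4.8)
The plan is to reduce everything, via base change for Weil restriction, to a fibrewise computation in which relative exceptionality of~$\cO_Y$ enters through the single identity $\RGamma(Y_{s'},\cO_{Y_{s'}})=\kk$. Throughout, $d$ denotes the degree of~$f$, $q\colon Y\to S'$ the structure morphism, and I will freely use that $p\circ\pr_X=f\circ q\circ\pr_Y$ and that $\pr_X$ is finite étale of degree~$d$.

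\textbf{Step 1 (geometry and easy consequences).} Since $f$ is finite étale, the diagonal of~$S'$ is open and closed in~$S'\times_S S'$, so $S'\times_S S'=S'\sqcup S''$ with $S''\to S'$ finite étale of degree~$d-1$. Base change for~$\Res$ then identifies $X\times_S S'\cong Y\times_{S'}X'$, where $X'\coloneqq\Res_{S''/S'}(Y\times_{S'}S'')$ is smooth projective over~$S'$; under this identification $\pr_Y$ is the projection to~$Y$ and $\pr_X$ the base change of~$f$. Over a geometric point $s\in S$ with $f^{-1}(s)=\{s'_1,\dots,s'_d\}$ one gets $X_s\cong\prod_iY_{s'_i}$ with $\pr_Y$ the evaluation onto the appropriate factor. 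From this, K\"unneth and relative exceptionality of~$\cO_Y$ give $\RGamma(X_s,\cO_{X_s})=\bigotimes_i\RGamma(Y_{s'_i},\cO)=\kk$, so $\cO_X$ is relative exceptional over~$S$; and since the fibre of~$\pr_Y$ over a point of~$Y_{s'}$ is $\prod_{s''\ne s'}Y_{s''}$, cohomology and base change give $\pr_{Y*}\cO_{X\times_SS'}=\cO_Y$ with vanishing higher direct images, so $\pr_Y^*$ is fully faithful and $S$-linear.

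\textbf{Step 2 (the correction term).} Because $\pr_X$ is étale, $\pr_X^!=\pr_X^*$, so $\Phi=\pr_{X*}\pr_Y^*$ has right adjoint $\Phi^!=\pr_{Y*}\pr_X^*$ and left adjoint $\pr_{Y!}\pr_X^*$. Flat base change along~$\pr_X$ over the decomposition $(X\times_SS')\times_X(X\times_SS')=(X\times_SS')\sqcup(X\times_SS'')$ yields $\pr_X^*\pr_{X*}(-)\cong(-)\oplus j_{1*}j_2^*(-)$ for the two natural maps $j_1,j_2\colon X\times_SS''\to X\times_SS'$, and combined with $\pr_{Y*}\pr_Y^*\cong\id$ this gives
\[
\Phi^!\Phi E\;\cong\;E\oplus C(E),\qquad C(E)\coloneqq(\pr_Y j_1)_*(\pr_Y j_2)^*E.
\]
The heart of the argument is to show that $C(E)$ always lies in $\cO_Y\otimes\bD(S')$ and vanishes for $E\in(\cO_Y\otimes\bD(S'))^\perp$. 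I would verify this fibrewise: by proper base change and K\"unneth the fibre of~$C(E)$ at a geometric point $y_0\in Y_{s'_0}$ equals $\bigoplus_{s'\ne s'_0}\big(\RGamma(Y_{s'},E_{s'})\otimes\bigotimes_{s''\ne s'_0,s'}\RGamma(Y_{s''},\cO)\big)$ with $E_{s'}\coloneqq E|_{Y_{s'}}$, and relative exceptionality collapses all K\"unneth factors except the active one, giving $\bigoplus_{s'\ne s'_0}\RGamma(Y_{s'},E_{s'})$, independent of~$y_0$. Hence $C(E)$ is fibrewise a trivial complex of bundles, so the natural map $q^*q_*C(E)\to C(E)$ is an isomorphism (again by relative exceptionality), i.e. $C(E)\in\cO_Y\otimes\bD(S')$; and if $E$ lies in the right orthogonal then each $E_{s'}\in\cO_{Y_{s'}}^\perp$, so $\RGamma(Y_{s'},E_{s'})=0$ and $C(E)=0$.

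\textbf{Step 3 (full faithfulness, semiorthogonality, admissibility).} Full faithfulness now follows formally: on $(\cO_Y\otimes\bD(S'))^\perp$ one has $\Phi^!\Phi\cong\id$, and on ${}^\perp(\cO_Y\otimes\bD(S'))$ one has $\Phi^!\Phi E_2\cong E_2\oplus C(E_2)$ with $C(E_2)\in\cO_Y\otimes\bD(S')$, so $\RHom(E_1,C(E_2))=0$ by definition of the left orthogonal; either way $\RHom(\Phi E_1,\Phi E_2)\cong\RHom(E_1,E_2)$. For semiorthogonality I would use $\cO_X\otimes p^*G=p^*G$, $\pr_X^!=\pr_X^*$, $p\circ\pr_X=f\circ q\circ\pr_Y$ and $\pr_{Y*}\pr_Y^*\cong\id$ to obtain natural isomorphisms $\RHom_X(\cO_X\otimes p^*G,\Phi E)\cong\RHom_Y(\cO_Y\otimes q^*f^*G,E)$ and $\RHom_X(\Phi E,\cO_X\otimes p^*G)\cong\RHom_Y(E,\cO_Y\otimes q^*f^*G)$; since $\cO_Y\otimes q^*f^*G\in\cO_Y\otimes\bD(S')$, the first vanishes for~$E$ in the right orthogonal and the second for~$E$ in the left orthogonal, which is exactly what the two displayed pairs require. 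Finally, $S$-linearity of the subcategories is automatic ($\Phi$ and $G\mapsto\cO_X\otimes p^*G$ are $S$-linear by the projection formula); $\cO_X\otimes\bD(S)$ is admissible since $\cO_X$ is relative exceptional (Corollary~\ref{cor:relative-exceptional}), and $\cO_Y\otimes\bD(S')$ is admissible in~$\bD(Y)$ for the same reason, so its two orthogonals are admissible, and composing the fully faithful~$\Phi$ (which has both adjoints) with the inclusion of an admissible subcategory yields an admissible image. The step I expect to be the main obstacle is Step~2: pinning down the Weil-restriction geometry precisely enough to identify $W=(X\times_SS')\times_X(X\times_SS')$, the maps $j_1,j_2$, and the composites $\pr_Y\circ j_i$, and then running the fibrewise base-change computation of~$C(E)$; this is the only place relative exceptionality is used essentially, and it is also the reason we must restrict~$\Phi$ to the two orthogonal subcategories, since on all of~$\bD(Y)$ the functor~$\Phi$ is not fully faithful and Proposition~\ref{prop:relative-sod} does not apply directly.
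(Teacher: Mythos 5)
Your argument is correct in substance but follows a genuinely different route from the paper. The paper's proof exhibits $\Phi$ as a relative Fourier--Mukai functor and invokes Proposition~\ref{prop:relative-sod}\ref{item:linear-ff}--\ref{item:linear-so} to reduce at once to the case where $S$ is a geometric point; there $X_s \cong Y_1 \times \dots \times Y_d$, and everything (full faithfulness on the orthogonals, semiorthogonality, $S$-exceptionality of $\cO_X$) is read off from a single K\"unneth computation as in~\eqref{eq:ext-cf1-cf2}, using exceptionality of each~$\cO_{Y_k}$ to collapse the spectator factors. You instead work relatively: using that $\pr_X$ is finite \'etale with $\pr_X^! = \pr_X^*$ and the splitting $S' \times_S S' = S' \sqcup S''$, you compute $\Phi^!\Phi \cong \id \oplus C$ with $C(E) = (\pr_Y j_1)_*(\pr_Y j_2)^*E$, show $C(E) \in \cO_Y \otimes \bD(S')$ (vanishing on the right orthogonal), and then everything else is formal adjunction yoga; the base-change-to-a-point machinery of~\cite{K11} is not needed, and relative exceptionality enters only through $\pr_{Y*}\cO = \cO_Y$ and the K\"unneth collapse inside~$C$. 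What the paper's reduction buys is brevity; what your approach buys is an explicit description of the failure of full faithfulness of~$\Phi$ on all of~$\bD(Y)$ (the summand~$C$), which makes the necessity of restricting to the two orthogonals transparent.

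Two points need tightening. First, in Step~2 you compute the derived fibre of~$C(E)$ at a \emph{point} $y_0 \in Y_{s'_0}$ and conclude from its constancy that $C(E)$ is ``fibrewise a trivial complex of bundles''; that inference is not valid in general (a degree-zero line bundle on a curve fibre has constant pointwise fibres but is nontrivial). The fix is immediate and uses the same computation: restrict instead to the whole fibre~$Y_{s'_0}$ (base change along $Y_{s'_0} \hookrightarrow Y$, legitimate since $\pr_Y j_1$ is flat and proper) to get $C(E)\vert_{Y_{s'_0}} \cong \bigoplus_{s' \ne s'_0} \RGamma(Y_{s'}, E\vert_{Y_{s'}}) \otimes \cO_{Y_{s'_0}}$ directly; then derived base change for the flat proper~$q$ shows $q^*q_*C(E) \to C(E)$ is an isomorphism on every geometric fibre, hence an isomorphism. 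Second, for full faithfulness you should note that under your base-change identification the unit of the adjunction $\Phi \dashv \Phi^!$ is the inclusion of the diagonal summand~$E$ in $E \oplus C(E)$ (standard for finite \'etale morphisms); this is what guarantees that the isomorphism $\RHom(\Phi E_1, \Phi E_2) \cong \RHom(E_1,E_2)$ is induced by~$\Phi$ itself and not merely an abstract coincidence of Hom-spaces.
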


\begin{proof}
By Proposition~\ref{prop:relative-sod}\ref{item:linear-ff}--\ref{item:linear-so} it is enough to verify the result 
in the case where~$S = \{s\}$ is a point (i.e., the spectrum of an algebraically closed field) 
and~$S' = \{s'_1,\dots,s'_d\}$ is a finite union of reduced points.
In this case~$Y = Y_1 \sqcup \dots \sqcup Y_d$ is a disjoint union of~$d$ smooth projective components, 
the structure sheaf~$\cO_{Y_k}$ is exceptional for each~$1 \le k \le d$,
and~$X \cong Y_1 \times \dots \times Y_d$.
Moreover, the diagram~\eqref{eq:weil-diagram} takes the form 
\begin{equation*}
\vcenter{\xymatrix{
&
(Y_1 \times \dots \times Y_d) \sqcup \dots \sqcup (Y_1 \times \dots \times Y_d) 
\ar[dr]^{\qquad (\pr_1, \dots, \pr_d)} \ar[dl]_{(\id, \dots, \id)\quad } 
\\
Y_1 \times \dots \times Y_d &&
Y_1 \sqcup \dots \sqcup Y_d,
}}
\end{equation*}
where~$\pr_k$ is the projection to the~$i$-th factor.
Thus, the functor~$\Phi$ is isomorphic to the direct sum 
\begin{equation*}
\Phi \cong \moplus_{k=1}^d\, \pr_k^*\, \colon \moplus_{k=1}^d \bD(Y_k) \to \bD(Y_1 \times \dots \times Y_d).
\end{equation*}
It remains to show that this functor is fully faithful on the left and right orthogonals of the object
\begin{equation*}
\cO_{Y} = 
\moplus_{k=1}^d \cO_{Y_k} \in 
\moplus_{k=1}^d \bD(Y_k).
\end{equation*}

So, let $\cF_1 \in \bD(Y_{k_1})$, $\cF_2 \in \bD(Y_{k_2})$.
If~$k_1 \ne k_2$ then~$\Ext^\bullet(\Phi(\cF_1), \Phi(\cF_2))$ can be rewritten as 
\begin{multline}
\label{eq:ext-cf1-cf2}
\Ext^\bullet(\pr_{k_1}^*(\cF_1), \pr_{k_2}^*(\cF_2)) \cong
\rH^\bullet(Y_1 \times \dots \times Y_d, \pr_{k_1}^*(\cF_1^\vee) \otimes \pr_{k_2}^*(\cF_2)) 
\\ \cong
\rH^\bullet(Y_{k_1}, \cF_1^\vee) \otimes \rH^\bullet(Y_{k_2},\cF_2) \cong
\Ext^\bullet(\cF_1, \cO_{Y_{k_1}}) \otimes \Ext^\bullet(\cO_{Y_{k_2}}, \cF_2),
\end{multline}
where the second isomorphism follows from the K\"unneth formula combined with exceptionality of~$\cO_{Y_k}$ for each~$k \ne k_1,k_2$.
Now if~$\cF_1 \in \cO_{Y_{k_1}}^\perp$, $\cF_2 \in \cO_{Y_{k_2}}^\perp$,
the second factor in the right hand side vanishes,
and similarly if~$\cF_1 \in {}^\perp\cO_{Y_{k_1}}$, $\cF_2 \in {}^\perp\cO_{Y_{k_2}}$ 
the first factor in the right hand side vanishes.
Thus, in both cases the right hand side is zero, 
which agrees with the fact that~$\cF_1$ and~$\cF_2$ are orthogonal on~$Y_1 \sqcup \dots \sqcup Y_d$ 
as they are supported on different connected components.

Similarly, if~$k_1 = k_2$, a similar computation gives 
\begin{equation*}
\Ext^\bullet(\pr_{k_1}^*(\cF_1), \pr_{k_2}^*(\cF_2)) \cong \Ext^\bullet(\cF_1, \cF_2),
\end{equation*}
so full faithfulness of~$\Phi$ on the orthogonals of~$\cO_Y \otimes \bD(S')$ follows.

Finally, orthogonality of the components in~\eqref{eq:pairs} 
follows from~\eqref{eq:ext-cf1-cf2} with either~$\cF_1$ or~$\cF_2$ replaced by~$\cO$,
and $S$-exceptionality of~$\cO_X$ follows from~\eqref{eq:ext-cf1-cf2} with both~$\cF_1$ and~$\cF_2$ replaced by~$\cO$.
\end{proof}

\begin{remark}
On the entire category $\bD(Y)$ the functor~$\Phi$ is \emph{not} fully faithful (of course, if~$d > 1$);
indeed, the $S$-linear subcategory of~$\bD(Y)$ generated by~$\cO_Y$ is equivalent to~$\bD(S')$, 
while the $S$-linear subcategory of~$\bD(X)$ generated by~$\cO_X$ is equivalent to~$\bD(S)$. 
\end{remark}

\begin{remark}
The components of~\eqref{eq:pairs} do not generate~$\bD(X)$ even in the simplest example 
where~\mbox{$S' \to S$} has degree~$2$ and~$Y = \P^1_{S'}$;
in this case the orthogonal is equal to~\mbox{$\cO_X(H_X) \otimes \bD(S)$}.
The case where~$S' \to S$ has degree~$3$ and~$Y \to S'$ is a $\P^1$-fibration will be discussed in~\S\ref{ss:x111}.
\end{remark}

\section{Fano threefolds of higher geometric Picard number}
\label{sec:big-picard}

In this section we describe Fano fibrations $p \colon X \to S$ 
with geometric Picard rank of fibers greater than~2.
We keep the notation~$H_X = -K_X \in \Pic(X)$ for the relative fundamental class.

\subsection{Forms of~$\sX_{1,1,1}$}
\label{ss:x111}

Recall that according to notation from the Introduction 
over an algebraically closed field every Fano threefold of type~$\sX_{1,1,1}$ is isomorphic to~$\P^1 \times \P^1 \times \P^1$.
Using Proposition~\ref{prop:forms-general} we easily obtain a description of all Fano fibrations of this type.

\begin{proposition}
\label{prop:x111}
If~$p \colon X \to S$ is a smooth Fano fibration with fibers of type~$\sX_{1,1,1}$
there is an \'etale covering~$S' \to S$ of degree~$3$ with connected~$S'$, 
a $2$-torsion Brauer class~$\upbeta' \in \Br(S')$,
and a $\upbeta'$-twisted vector bundle~$V$ on~$S'$ of rank~$2$ 
such that
\begin{equation}
\label{eq:form-x111}
X = \Res_{S'/S}(\P_{S'}(V)).
\end{equation} 
If~$X(S) \ne \varnothing$ then~$\upbeta' = 1$.
\end{proposition}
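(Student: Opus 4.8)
The plan is to apply Proposition~\ref{prop:forms-general} with $d = 3$ and $Y_s = \P^1$ for every geometric point $s \in S$. For a fiber $X_s \cong \P^1 \times \P^1 \times \P^1$ the three projections $\pr_{s,i} \colon X_s \to \P^1$ are surjective with connected fibers, the classes $h_{s,i} = \pr_{s,i}^*(h_s)$ of the rulings form a basis of $\Pic(X_s) \cong \ZZ^3$, and they generate the nef cone; so hypotheses~\ref{item:projections} and~\ref{item:picard} of Proposition~\ref{prop:forms-general} hold. That proposition therefore produces a finite \'etale covering $f \colon S' \to S$ of degree $3$ with $S'$ connected, a smooth projective Fano fibration $Z \to S'$ whose geometric fibers are $\P^1$, and a closed embedding $\uppsi \colon X \hookrightarrow \Res_{S'/S}(Z)$ compatible with the fiberwise Segre picture. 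Since $\uppsi_s$ is an isomorphism on each geometric fiber (both sides are $\P^1 \times \P^1 \times \P^1$) and $\uppsi$ is a closed embedding, $\uppsi$ is an isomorphism, giving $X \cong \Res_{S'/S}(Z)$.

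Next I would identify $Z$ with a twisted projectivization: since $Z \to S'$ is a Severi--Brauer variety of relative dimension $1$, Lemma~\ref{lemma:sb-twisted-sheaves} gives $Z \cong \P_{S'}(V)$ for a twisted vector bundle $V$ of rank $2$ on $S'$, unique up to twist by a line bundle on $S'$, where the twist $\upbeta' \in \Br(S')$ is the Brauer class $\bB(H_Z)$ of the relative fundamental class $H_Z$ of $Z/S'$. By Corollary~\ref{cor:fundamental-class} (or directly Corollary~\ref{cor:twisted-rank} applied to $V$ of rank $2$, together with the fact that $-K_{Z/S'} = 2H_Z$ comes from $\Pic(Z)$) we get $(\upbeta')^2 = 1$, i.e.\ $\upbeta'$ is $2$-torsion. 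This yields the formula~\eqref{eq:form-x111}.

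For the last assertion, suppose $X(S) \ne \varnothing$, i.e.\ there is a section $S \to X = \Res_{S'/S}(\P_{S'}(V))$. By the adjunction~\eqref{eq:res-adjunction} applied with the scheme $S$ over itself, such a section corresponds exactly to an $S'$-morphism $S' = S \times_S S' \to \P_{S'}(V)$, i.e.\ to a section of the Severi--Brauer curve $\P_{S'}(V) \to S'$. The existence of such a section forces the Brauer class to vanish: $\upbeta' = \bB(H_Z) = 1$ (a Severi--Brauer variety with a section is a projective bundle). Hence $\upbeta' = 1$, as claimed.

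I do not expect a serious obstacle here: the statement is essentially a direct specialization of Proposition~\ref{prop:forms-general} combined with the dictionary between Severi--Brauer varieties and twisted bundles from~\S\ref{ss:relative-divisors}. The only point that needs a little care is checking that the covering degree is exactly $3$ and that $S'$ may be taken connected --- but this is precisely what Corollary~\ref{cor:orbit-covering} (used inside the proof of Proposition~\ref{prop:forms-general}) guarantees, since the monodromy group permutes the three ruling classes transitively because $\Pic_{X/S}(S) \cong \Pic(X_s)^{\uppi_1} \cong \ZZ$ forces transitivity of the $\uppi_1(S,s_0)$-action on $\{h_{s_0,1}, h_{s_0,2}, h_{s_0,3}\}$.
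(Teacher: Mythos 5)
Your proposal is correct and follows essentially the same route as the paper: apply Proposition~\ref{prop:forms-general} with $Y_s = \P^1$, $d = 3$, and $\upxi_s$ an isomorphism, identify the resulting $\P^1$-fibration $Z \to S'$ with $\P_{S'}(V)$ via Lemma~\ref{lemma:sb-twisted-sheaves}, get the $2$-torsion of $\upbeta'$ from Corollary~\ref{cor:twisted-rank}, and use the adjunction~\eqref{eq:res-adjunction} to kill $\upbeta'$ when a section exists. Your extra verifications (the hypotheses of Proposition~\ref{prop:forms-general} on the ruling classes, and that the fiberwise-isomorphic closed embedding $\uppsi$ is an isomorphism) are exactly the steps the paper leaves implicit.
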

\begin{proof}
We apply Proposition~\ref{prop:forms-general} with~$Y_s = \P^1$, $d = 3$, and~$\upxi_s$ being an isomorphism.
Then~$Z \to S'$ is a smooth~$\P^1$-fibration, hence~$Z \cong \P_{S'}(V)$ by Lemma~\ref{lemma:sb-twisted-sheaves},
where~$V$ is a $\upbeta'$-twisted vector bundle of rank~$2$ and~${\upbeta'}^2 = 1$ by Corollary~\ref{cor:twisted-rank};
the isomorphism~\eqref{eq:form-x111} follows.
\end{proof}

To describe the derived category of~$X$ we will use the functor~$\Phi$ from Theorem~\ref{thm:pmd-qs-ps}.
We denote by~$H_V$ the fundamental class of~$\P_{S'}(V)$.

\begin{theorem}
\label{thm:x111}
If $X/S$ is a form of a Fano threefold of type~$\sX_{1,1,1}$ then there is a semiorthogonal decomposition
\begin{multline*}
\bD(X) = \langle
\cO_X \otimes \bD(S), 
\Phi(\cO_{\P_{S'}(V)}(H_V) \otimes \bD(S',\upbeta')), 
\\
\cO_X(H_X) \otimes \bD(S,\upbeta), 
\cO_X(H_X) \otimes \Phi(\cO_{\P_{S'}(V)}(H_V) \otimes \bD(S',\upbeta' \cdot f^*(\upbeta))) 
\rangle,
\end{multline*}
where~$f \colon S' \to S$ and~$\upbeta' \in \Br(S')$ are described in Proposition~\textup{\ref{prop:x111}}
and~$\upbeta \coloneqq \cores_{S'/S}(\upbeta')$.
\end{theorem}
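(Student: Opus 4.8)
The plan is to deduce the semiorthogonal decomposition of $\bD(X)$ by combining the semiorthogonal decomposition of $\bD(\P_{S'}(V))$ coming from Bernardara's theorem (Theorem~\ref{thm:bernardara}) with the Weil restriction machinery of Theorem~\ref{thm:pmd-qs-ps}. First, since $\P_{S'}(V) \to S'$ is a $\P^1$-fibration, Theorem~\ref{thm:bernardara} with $i=0$ gives an $S'$-linear decomposition
\begin{equation*}
\bD(\P_{S'}(V)) = \big\langle \cO_{\P_{S'}(V)} \otimes \bD(S'), \ \cO_{\P_{S'}(V)}(H_V) \otimes \bD(S',\upbeta'^{-1}) \big\rangle,
\end{equation*}
where $\upbeta' = \bB(H_V)$; since $\upbeta'$ is $2$-torsion by Proposition~\ref{prop:x111}, we may write $\upbeta'^{-1} = \upbeta'$. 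Thus the right orthogonal $(\cO_{\P_{S'}(V)} \otimes \bD(S'))^\perp$ is exactly $\cO_{\P_{S'}(V)}(H_V) \otimes \bD(S',\upbeta')$. Applying Theorem~\ref{thm:pmd-qs-ps} to $Y = \P_{S'}(V)$ (whose structure sheaf is relative exceptional over $S'$), we get that $\cO_X$ is relative exceptional over $S$, that $\Phi$ restricted to $\cO_{\P_{S'}(V)}(H_V) \otimes \bD(S',\upbeta')$ is fully faithful, and that there is an $S$-linear semiorthogonal, admissible collection
\begin{equation*}
\big\langle \cO_X \otimes \bD(S), \ \Phi\big(\cO_{\P_{S'}(V)}(H_V) \otimes \bD(S',\upbeta')\big) \big\rangle
\end{equation*}
inside $\bD(X)$.

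The next step is to identify the orthogonal complement of this two-term collection and show it is generated by the two remaining twisted components. Here I would work fiberwise: over a geometric point $s$ the scheme $X_s \cong \P^1 \times \P^1 \times \P^1$ and the functor $\Phi$ becomes $\bigoplus_{k=1}^3 \pr_k^*$; the collection above becomes $\langle \cO, \pr_1^*\cO(1), \pr_2^*\cO(1), \pr_3^*\cO(1)\rangle$ (the structure sheaf together with the three pullbacks of $\cO_{\P^1}(1)$), which is exactly the first half of the standard Beilinson-type exceptional collection on $(\P^1)^3$. Its orthogonal complement inside $\bD((\P^1)^3)$ is generated by $\cO(H)$, $\cO(H)\otimes\pr_1^*\cO(1)$, $\cO(H)\otimes\pr_2^*\cO(1)$, $\cO(H)\otimes\pr_3^*\cO(1)$, where $H = (1,1,1)$ is the anticanonical class divided by $2$, i.e. $\cO(H_X)$ tensored with the same four sheaves. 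So after twisting by $\cO_X(H_X)$, the remaining half is again of the form "$\cO_X(H_X)$" together with "$\cO_X(H_X) \otimes (\text{three pullbacks})$". Passing back to the relative setting by Proposition~\ref{prop:relative-sod}, the component $\cO_X(H_X) \otimes \bD(S,\upbeta)$ with $\upbeta = \bB(H_X)$ arises; by Corollary~\ref{cor:fundamental-class}, or more directly by identifying the Brauer class of $p_*\cO_X(H_X)$ via the Segre bundle $\cores_{S'/S}(V)$ of Lemma~\ref{cor:segre}, one gets $\upbeta = \cores_{S'/S}(\upbeta')$. For the last component, twisting $\Phi(\cO_{\P_{S'}(V)}(H_V) \otimes \bD(S',\upbeta'))$ by $\cO_X(H_X)$ and using the projection formula $\pr_{X*}(\pr_Y^*(-) \otimes \pr_X^*\cO_X(H_X)) \cong \cO_X(H_X) \otimes \pr_{X*}\pr_Y^*(-)$, one sees the twist on $S'$ picks up an extra $f^*(\upbeta)$, giving $\bD(S',\upbeta' \cdot f^*(\upbeta))$; fiberwise over $s$ this is just the statement that $\cO_X(H_X)$ restricted to $(\P^1)^3$ is $(1,1,1) = H$, and tensoring with $\pr_k^*\cO(1)$ lands in the correct orthogonal slot.

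To assemble all of this, I would run Proposition~\ref{prop:relative-sod}\ref{item:linear-sod}: it suffices to exhibit, for every geometric point $s \in S$, a semiorthogonal decomposition of $\bD(X_s)$ into the four fiberwise subcategories identified above, which is precisely the Beilinson-type decomposition of $\bD((\P^1)^3)$ reindexed as $\langle \cO, \oplus_k \pr_k^*\cO(1), \cO(H), \cO(H)\otimes(\oplus_k\pr_k^*\cO(1))\rangle$; semiorthogonality and generation of this collection on $(\P^1)^3$ is a standard (if slightly lengthy) Künneth computation using that $\langle \cO, \cO(1)\rangle$ is a full exceptional collection on $\P^1$. Then Proposition~\ref{prop:relative-sod} upgrades this to the required $S$-linear semiorthogonal decomposition of $\bD(X)$, with the Fourier--Mukai kernels on $X \times_S S'$ built from $\cO$ and $\cO_{\P_{S'}(V)}(H_V)$ via the correspondence $X \times_S S' \to X \times_S \P_{S'}(V)$ underlying $\Phi$ (more precisely, the objects $\cE_i$ of Proposition~\ref{prop:relative-sod} are the pushforwards along the graph of $\pr_Y$ of the appropriate twisted line bundles, tensored with pullbacks of $\cO_X(jH_X)$ for $j = 0,1$).

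The main obstacle I anticipate is bookkeeping the Brauer twists consistently: the passage $\Phi$ mixes the $S'$-linear structure on $\bD(\P_{S'}(V))$ with the $S$-linear structure on $\bD(X)$, and the twist on the curve/base of each component is only well-defined up to a line bundle, so one must be careful that "$\cO_{\P_{S'}(V)}(H_V)$" and "$\cO_X(H_X)$" are fixed representatives and that the identity $\upbeta' \cdot f^*(\cores_{S'/S}(\upbeta'))$ is the genuine cohomological twist of $p_*$ of the fourth component and not merely equal to it up to something trivial. Once one commits to the representatives coming from $V$ and its Segre bundle $\cores_{S'/S}(V)$, this is forced, but it requires tracking the isomorphisms in Lemma~\ref{cor:segre} and the projection formula of Lemma~\ref{lemma:twisted-pb-pf} carefully; everything else is the routine fiberwise check on $(\P^1)^3$ plus the formal machinery already in place.
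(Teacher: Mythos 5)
Your proposal is correct and follows essentially the same route as the paper: Bernardara's decomposition of $\bD(\P_{S'}(V))$ fed into Theorem~\ref{thm:pmd-qs-ps} for the first two components, twisting by the $\cores_{S'/S}(\upbeta')$-twisted line bundle $\cO_X(H_X)$ (Lemma~\ref{cor:segre}) for the last two, and Proposition~\ref{prop:relative-sod} to reduce everything to the geometric fiber $\P^1\times\P^1\times\P^1$. The one small caveat is that fullness of the fiberwise collection $\langle \cO,\cO(1,0,0),\cO(0,1,0),\cO(0,0,1),\cO(1,1,1),\cO(2,1,1),\cO(1,2,1),\cO(1,1,2)\rangle$ is not literally a K\"unneth statement (the last three bundles are not members of the product collection), which is why the paper cites~\cite{Mir} and~\cite[Appendix~D]{K19}; it does follow quickly, e.g.\ from the product collection together with the twisted Euler sequences such as $0\to\cO(1,1,0)\to\cO(1,1,1)^{\oplus 2}\to\cO(1,1,2)\to 0$, so your argument only needs this one extra observation.
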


Note that if~$X(S) \ne \varnothing$ then~$\upbeta' = 1$ by Proposition~\ref{prop:x111}, hence also~$\upbeta = 1$,
so that in this case all the components of~$\bD(X)$ are untwisted.

\begin{proof}
By Theorem~\ref{thm:bernardara} 
we have~$\bD(\P_{S'}(V)) = \langle \cO_{\P_{S'}(V)} \otimes \bD(S'), \cO_{\P_{S'}(V)}(H_V) \otimes \bD(S', \upbeta') \rangle$.
Applying Theorem~\ref{thm:pmd-qs-ps} we obtain the first two components in the required semiorthogonal decomposition of~$\bD(X)$.
Tensoring them by the~$\upbeta$-twisted line bundle~$\cO(H_X)$ 
(note that the Brauer twist of~$H_X$ is equal to~$\cores_{S'/S}(\upbeta')$ by Lemma~\ref{cor:segre})
and modifying the Brauer twists of the components appropriately,
we obtain the last two components.
To check the semiorthogonality and generation, we apply Proposition~\ref{prop:relative-sod}\ref{item:linear-so}--\ref{item:linear-sod}.
Accordingly, we need to consider the case where~$S$ is the spectrum of an algebraically closed field.
In this case~$X \cong \P^1 \times \P^1 \times \P^1$, and the required semiorthogonal decomposition 
follows from the exceptional collection
\begin{equation*}
\bD(\P^1 \times \P^1 \times \P^1) = \langle \cO, \cO(1,0,0), \cO(0,1,0), \cO(0,0,1), \cO(1,1,1), \cO(2,1,1), \cO(1,2,1), \cO(1,1,2) \rangle 
\end{equation*}
which has been proved in~\cite{Mir}, see also~\cite[Appendix~D]{K19}.
\end{proof}

\subsection{Forms of~$\sX_{2,2}$}
\label{ss:x22}

Recall that according to notation from the Introduction 
over an algebraically closed field every Fano threefold of type~$\sX_{2,2}$ 
is isomorphic to a divisor of bidegree~$(1,1)$ in~$\P^2 \times \P^2$.
Using Proposition~\ref{prop:forms-general} we obtain a description of all Fano fibrations of this type.

For an \'etale double covering~$S' \to S$ we denote the action 
of the Galois involution of~$S'$ over~$S$ on~$\Br(S')$ by~$\upbeta' \mapsto \bar\upbeta'$.
Recall the Segre embedding defined in Lemma~\ref{cor:segre}.

\begin{proposition}
\label{prop:x22}
If~$p \colon X \to S$ is a smooth Fano fibration with fibers of type~$\sX_{2,2}$
there is an \'etale covering~$S' \to S$ of degree~$2$ with connected~$S'$, 
a $3$-torsion Brauer class~$\upbeta' \in \Br(S')$ such that 
\begin{equation}
\label{eq:x22-beta}
\bar\upbeta' = {\upbeta'}^{-1},
\end{equation} 
and a $\upbeta'$-twisted vector bundle~$V$ on~$S'$ of rank~$3$ 
such that~$X \subset \Res_{S'/S}(\P_{S'}(V))$.
Moreover, if~$W \coloneqq \cores_{S'/S}(V)$ is the Segre bundle then~$W$ is untwisted and
there is a line bundle~$\cL$ and an epimorphism~\mbox{$\varphi \colon W \twoheadrightarrow \cL^\vee$} 
such that 
\begin{equation}
\label{eq:form-x22}
X = \Res_{S'/S}(\P_{S'}(V)) \times_{\P_S(W)} \P_S(\Ker(\varphi)),
\end{equation}
where the morphism $\Res_{S'/S}(\P_{S'}(V)) \to \P_S(W)$ in the fiber product is the Segre embedding.

If~$X(S) \ne \varnothing$ then~$\upbeta' = 1$.
\end{proposition}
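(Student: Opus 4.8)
The plan is to extract the shape of $X$ from Proposition~\ref{prop:forms-general} and then pin down the Brauer classes by base change along the double covering. First I would apply Proposition~\ref{prop:forms-general} with $Y_s=\P^2$, $d=2$, and $\upxi_s\colon X_s\hookrightarrow\P^2\times\P^2$ the standard embedding of a threefold of type~$\sX_{2,2}$ as a divisor of bidegree~$(1,1)$: hypothesis~\ref{item:projections} holds because the two projections $X_s\to\P^2$ are $\P^1$-bundles (indeed $\sX_{2,2}\cong\Fl(1,2;3)$), and hypothesis~\ref{item:picard} is the classical description of $\Pic(\sX_{2,2})$ and of its nef cone. This produces a connected finite \'etale covering $f\colon S'\to S$ of degree~$2$, a smooth Fano fibration $Z\to S'$ with $Z_{s'}\cong\P^2$ for all geometric points, and a closed embedding $X\hookrightarrow\Res_{S'/S}(Z)$ compatible with the commutative diagram~\eqref{eq:xyz-diagram}. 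Since $Z\to S'$ is a $\P^2$-fibration, Lemma~\ref{lemma:sb-twisted-sheaves} gives $Z\cong\P_{S'}(V)$ for a $\upbeta'$-twisted vector bundle $V$ of rank~$3$ on~$S'$, and ${\upbeta'}^{3}=1$ by Corollary~\ref{cor:twisted-rank}.

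The crucial identity~\eqref{eq:x22-beta} I would prove by passing to~$S'$. Writing $\tau$ for the Galois involution of $S'/S$ and $\bar V\coloneqq\tau^*V$ (a $\bar\upbeta'$-twisted bundle), the standard decomposition $S'\times_S S'\cong S'\sqcup S'$ (diagonal and graph of~$\tau$) together with compatibility of Weil restriction with base change yields
\begin{equation*}
X\times_S S'\ \subset\ \Res_{S'/S}(\P_{S'}(V))\times_S S'\ \cong\ \P_{S'}(V)\times_{S'}\P_{S'}(\bar V).
\end{equation*}
By~\eqref{eq:xyz-diagram} the divisor $X\times_S S'$ is fibrewise of bidegree~$(1,1)$, so — the relative Picard group of $\P_{S'}(V)\times_{S'}\P_{S'}(\bar V)$ over~$S'$ being generated by the two relative hyperplane classes $H_V,H_{\bar V}$ — its relative class is $H_V+H_{\bar V}$. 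Being an effective Cartier divisor, $X\times_S S'$ has an honest (untwisted) line bundle $\cO(X\times_S S')$ representing this class, hence $\bB(H_V+H_{\bar V})=1$ by exactness of~\eqref{eq:picard-brauer-sequence}; since $\bB(H_V)={\upbeta'}^{-1}$ and $\bB(H_{\bar V})={\bar\upbeta'}^{-1}$, this forces $\bar\upbeta'={\upbeta'}^{-1}$.

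Next I would set $W\coloneqq\cores_{S'/S}(V)$, the Segre bundle of rank~$9$ from Lemma~\ref{cor:segre}, with its Segre embedding $\Res_{S'/S}(\P_{S'}(V))\hookrightarrow\P_S(W)$. This embedding pulls $\cO_{\P_S(W)}(1)$ back to the fundamental class on each fibre, so $\bB(H_X)=\cores_{S'/S}(\upbeta')^{-1}$; this class is $3$-torsion (as $\cores_{S'/S}$ is a homomorphism and ${\upbeta'}^{3}=1$) and also $2$-torsion by Corollary~\ref{cor:fundamental-class}, since $\gcd(\upiota(X/S),\upchi_{X/S}(H_X))=\gcd(2,8)=2$, hence $\bB(H_X)=1$ and $W$ is untwisted. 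Then $\cW\coloneqq(p_*\cO_X(H_X))^\vee$ is an untwisted vector bundle of rank~$8$ with a closed embedding $X\hookrightarrow\P_S(\cW)$ (Lemma~\ref{lemma:brauer-obstruction}), the restriction map on sections $W^\vee=q_{W*}\cO_{\P_S(W)}(1)\to p_*\cO_X(H_X)=\cW^\vee$ is surjective (being the $9\to 8$ dimensional map $\rH^0(\P^8,\cO(1))\to\rH^0(X_s,\cO_{X_s}(H_{X_s}))$ on each fibre), its kernel is a line bundle~$\cL$, and dualizing gives $\varphi\colon W\twoheadrightarrow\cL^\vee$ with $\Ker\varphi=\cW$. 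Finally $X$ lies in the linear sub-bundle $\P_S(\Ker\varphi)\subset\P_S(W)$, hence in $\Res_{S'/S}(\P_{S'}(V))\times_{\P_S(W)}\P_S(\Ker\varphi)$; as both are $S$-flat closed subschemes of $\P_S(W)$ with the same fibrewise Hilbert polynomial (each geometric fibre being a bidegree-$(1,1)$ divisor in the Segre $\P^2\times\P^2$), they coincide, which is~\eqref{eq:form-x22}. For the last assertion, a section $S\to X$ composes with $X\hookrightarrow\Res_{S'/S}(\P_{S'}(V))$ to a section of this Weil restriction over~$S$, which by~\eqref{eq:res-adjunction} corresponds to a section $S'\to\P_{S'}(V)$; pulling back $\cO_{\P_{S'}(V)}(1)$ along it produces a ${\upbeta'}^{-1}$-twisted line bundle on~$S'$, so $\upbeta'=1$ by Corollary~\ref{cor:twisted-rank}.

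The main obstacle is~\eqref{eq:x22-beta}: the whole argument rests on correctly transporting the bidegree-$(1,1)$ structure through the base change to~$S'$ and reading off the Brauer obstruction of the relative $(1,1)$-class there. The remaining points — the glueing of~$Z$ inside Proposition~\ref{prop:forms-general}, the torsion-coprimality argument killing the twist on~$W$, and the flatness plus Hilbert-polynomial comparison giving~\eqref{eq:form-x22} — are routine.
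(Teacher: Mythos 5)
Your proposal is correct, and its skeleton (Proposition~\ref{prop:forms-general} with $Y_s=\P^2$, $d=2$; Lemma~\ref{lemma:sb-twisted-sheaves} to get $Z\cong\P_{S'}(V)$ with~${\upbeta'}^3=1$; the epimorphism~$\varphi$ coming from the unique fibrewise $(1,1)$-equation; the adjunction~\eqref{eq:res-adjunction} for the last claim) coincides with the paper's. Where you genuinely diverge is in how the two Brauer-class statements are obtained. The paper makes a single observation: the pushforward $\cL\coloneqq p_{W*}(\cI_X\otimes\cO(H_W))$ along $\Res_{S'/S}(\P_{S'}(V))\to S$ is a line bundle twisted by $\cores_{S'/S}(\upbeta')^{-1}$, so $\cores_{S'/S}(\upbeta')=1$ by Corollary~\ref{cor:twisted-rank}; this at once makes $W$ untwisted, gives~\eqref{eq:x22-beta} via $f^*\cores_{S'/S}(\upbeta')=\upbeta'\cdot\bar\upbeta'$, and produces the embedding $\cL\hookrightarrow W^\vee$ whose dual is~$\varphi$. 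You instead prove~\eqref{eq:x22-beta} after base change to~$S'$, reading off that the effective divisor $X\times_S S'\subset\P_{S'}(V)\times_{S'}\P_{S'}(\bar V)$ has untwisted class $H_V+H_{\bar V}$ (exactly the mechanism the paper uses later for $\sX_{3,3}$ and $\sX_{1,1,1,1}$), and you kill $\cores_{S'/S}(\upbeta')$ by playing off $3$-torsion against the $2$-torsion from Corollary~\ref{cor:fundamental-class} with $\gcd(\upiota,\upchi)=\gcd(2,8)=2$; both arguments are valid, the paper's being slightly more economical (one line bundle does all the work, no need for $\upchi_{X/S}(H_X)=8$), yours having the merit of separating the Galois-symmetry statement from the corestriction statement. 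Two small points to tighten: the hypothesis $\Pic_{X/S}(S)\cong\ZZ$ of Proposition~\ref{prop:forms-general} is the paper's standing assumption and should be invoked (otherwise $S'$ need not be connected), and your final ``same Hilbert polynomial, hence coincide'' step is shaky over a possibly non-reduced~$S$; it is cleaner to note that both $X$ and the fibre product are relative Cartier divisors in $\Res_{S'/S}(\P_{S'}(V))$ cut out fibrewise by the one-dimensional space $\rH^0(\cI_{X_s}(H_W))$, hence equal.
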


\begin{proof}
We apply Proposition~\ref{prop:forms-general} with~$Y_s = \P^2$, $d = 2$, and~$\upxi_s$ being the natural embedding.
Then~$Z \to S'$ is a smooth~$\P^2$-fibration, hence~$Z \cong \P_{S'}(V)$ by Lemma~\ref{lemma:sb-twisted-sheaves},
where~$V$ is a $\upbeta'$-twisted vector bundle of rank~$3$ and~${\upbeta'}^3 = 1$ by Corollary~\ref{cor:twisted-rank};
in this way we obtain a divisorial embedding~$\uppsi \colon X \hookrightarrow \Res_{S'/S}(\P_{S'}(V))$.

Let~$H_W$ be the fundamental class of~$\Res_{S'/S}(\P_{S'}(V))$.
By Lemma~\ref{cor:segre} it is $p^*(\upbeta^{-1})$-twisted, where~$\upbeta \coloneqq \cores_{S'/S}(\upbeta')$.
So, if~$p_W \colon \Res_{S'/S}(\P_{S'}(V)) \to S$ is the projection then
\begin{equation*}
\cL \coloneqq p_{W*}(\cI_X \otimes \cO(H_W))
\end{equation*}
is an $\upbeta^{-1}$-twisted line bundle on~$S$,
hence $\upbeta = 1$ by Corollary~\ref{cor:twisted-rank}
and~\mbox{$\upbeta' \cdot \bar\upbeta' = f^*(\upbeta) = 1$},
which implies~\eqref{eq:x22-beta}.
Denoting by~$\varphi$ the dual of the embedding~$\cL \hookrightarrow p_{W*}\cO(H_W) = W^\vee$, we deduce~\eqref{eq:form-x22}.

Finally, if~$X(S) \ne \varnothing$, the morphism~$\P_{S'}(V) \to S'$ has a section by~\eqref{eq:res-adjunction}, 
hence~$\upbeta' = 1$.
\end{proof}

\begin{theorem}
\label{thm:x22}
If $X/S$ is a form of a Fano threefold of type~$\sX_{2,2}$ then there is a semiorthogonal decomposition
\begin{multline*}
\bD(X) = \langle
\cO_X \otimes \bD(S), 
\uppsi^* \big( \Phi(\cO_{\P_{S'}(V)}(H_V) \otimes \bD(S',\upbeta')) \big), 
\\
\cO_X(H_X) \otimes \bD(S), 
\cO_X(H_X) \otimes \uppsi^* \big( \Phi(\cO_{\P_{S'}(E)}(H_V) \otimes \bD(S',\upbeta')) \big) 
\rangle,
\end{multline*}
\end{theorem}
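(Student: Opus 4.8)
The plan is to realize the four subcategories in the statement as images of explicit $S$-linear Fourier–Mukai functors and then to deduce the decomposition from its counterpart over an algebraically closed field by Proposition~\ref{prop:relative-sod}. Throughout I keep the notation of Proposition~\ref{prop:x22}: the étale double cover $f\colon S'\to S$, the $3$-torsion class $\upbeta'\in\Br(S')$ with $\cores_{S'/S}(\upbeta')=1$, the $\upbeta'$-twisted rank-$3$ bundle $V$ on $S'$, and the divisorial embedding $\uppsi\colon X\hookrightarrow R\coloneqq\Res_{S'/S}(\P_{S'}(V))$; I write $Y\coloneqq\P_{S'}(V)$ with structure morphism $q\colon Y\to S'$, and let $\Phi=\pr_{R*}\circ\pr_Y^*\colon\bD(Y)\to\bD(R)$ be the functor of Theorem~\ref{thm:pmd-qs-ps}. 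Since $\rH^{\ge1}(\P^2,\cO)=0$ the sheaf $\cO_Y$ is relative exceptional over~$S'$, so both Theorem~\ref{thm:bernardara} and Theorem~\ref{thm:pmd-qs-ps} apply.

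First I would set up the four functors. The sheaf $\cO_X$ is relative exceptional over~$S$ (by Theorem~\ref{thm:pmd-qs-ps}, or directly), and $\cO_X(H_X)$ is untwisted: by Lemma~\ref{cor:segre} the relative fundamental class $H_R$ of $R$ has $\bB(H_R)=\cores_{S'/S}(\upbeta')^{-1}=1$, and, the Fano index of $\sX_{2,2}$ being~$2$, adjunction along the $(1,1)$-divisor $X\subset R$ gives $H_X=\uppsi^*H_R$, which is therefore untwisted. Hence $\cO_X\otimes\bD(S)$ and $\cO_X(H_X)\otimes\bD(S)$ are images of the fully faithful functors of Corollary~\ref{cor:relative-exceptional}. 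By Theorem~\ref{thm:bernardara} one has $\cO_Y(H_V)\otimes\bD(S',\upbeta')\subset{}^\perp(\cO_Y\otimes\bD(S'))$ — fibrewise this is just $\rH^\bullet(\P^2,\cO(-1))=0$ — so Theorem~\ref{thm:pmd-qs-ps} makes $\cF\mapsto\Phi(\cO_Y(H_V)\otimes q^*\cF)$ fully faithful. Composing with $\uppsi^*$, which is a Fourier–Mukai functor because $X\times_S S'=X\times_R(R\times_S S')$ with $R\times_S S'\to R$ étale, yields an $S$-linear Fourier–Mukai functor $\bD(S',\upbeta')\to\bD(X)$ whose image is the second component, and twisting by $\cO_X(H_X)$ produces the fourth.

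With the four components realized in this way, Proposition~\ref{prop:relative-sod}\ref{item:linear-ff}--\ref{item:linear-sod} reduces the theorem to the case $S=\Spec(\kk)$ with $\kk$ algebraically closed. There $X$ is a smooth $(1,1)$-divisor in $\P^2\times\P^2$, i.e.\ the flag threefold $\Fl(1,2;3)$, $\uppsi$ is this inclusion, $R=\P^2\times\P^2$, and $\Phi=\pr_1^*\oplus\pr_2^*$ on the orthogonals of $\cO\oplus\cO$; hence $\uppsi^*\Phi(\cO_{\P^2}(1)\oplus\cO_{\P^2}(1))=\langle\cO_X(1,0),\cO_X(0,1)\rangle$, and, using $H_X=\cO_X(1,1)$, what remains to prove is that
\[
\bD(X)=\langle\cO_X,\ \cO_X(1,0),\ \cO_X(0,1),\ \cO_X(1,1),\ \cO_X(2,1),\ \cO_X(1,2)\rangle
\]
is a full exceptional collection. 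Exceptionality, semiorthogonality in this order, and the complete orthogonality of $\cO_X(1,0)$ and $\cO_X(0,1)$ (which reflects that they come from the two disjoint components of $S'$) I would check with the Koszul resolution $0\to\cO_{\P^2\times\P^2}(-1,-1)\to\cO_{\P^2\times\P^2}\to\cO_X\to0$ and the Künneth formula, using $\rH^\bullet(\P^2,\cO(d))=0$ for $d\in\{-1,-2\}$. For fullness I would apply Orlov's projective-bundle theorem to the $\P^1$-bundle $X\to\P^2$ given by the first projection (its relative hyperplane class being $\cO_X(0,1)$) together with Beilinson's collection on $\P^2$, obtaining $\langle\cO_X,\cO_X(1,0),\cO_X(2,0),\cO_X(0,1),\cO_X(1,1),\cO_X(2,1)\rangle$, and then carry it to the collection above by a short sequence of mutations.

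The step demanding the most care will be the last one: matching the fibrewise images of the four stated components with the correct order and twists in the exceptional collection — the cohomology computations are routine, and the earlier steps are bookkeeping built on Theorems~\ref{thm:bernardara} and~\ref{thm:pmd-qs-ps} and Proposition~\ref{prop:relative-sod}. (As a by-product, when $X(S)\ne\varnothing$ one has $\upbeta'=1$ by Proposition~\ref{prop:x22}, so all components become untwisted.)
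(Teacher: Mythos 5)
Your proposal is correct and follows essentially the same route as the paper: Bernardara's theorem for $\P_{S'}(V)$, Theorem~\ref{thm:pmd-qs-ps} for the Weil restriction, twisting by the (untwisted) class $\cO_X(H_X)$, and Proposition~\ref{prop:relative-sod} to reduce everything to geometric fibers, where the statement becomes the exceptional collection~\eqref{eq:db-fl123} on $\Fl(1,2;3)$. The only divergence is that the paper simply cites~\cite[Appendix~C]{K19} for this fibrewise collection, whereas you propose to reprove it via Orlov's projective-bundle theorem and mutations — a standard and workable substitute (e.g.\ fullness follows since $\cO_X(0,2)$ is generated by your six bundles using the relative Euler sequence for $\pr_1$), so this is a matter of citation versus direct verification rather than a different argument.
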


Note that if~$X(S) \ne \varnothing$ then~$\upbeta' = 1$ by Proposition~\ref{prop:x22}
and all the components are untwisted.

\begin{proof}
The proof is analogous to the proof of Theorem~\ref{thm:x111};
the only difference is that this time the class~$\upbeta = \cores_{S'/S}(\upbeta')$ is trivial,
and for the geometric fibers~$X_s \cong \Fl(1,2;3)$ of~$X \to S$ we use the exceptional collection
\begin{equation}
\label{eq:db-fl123}
\bD(\Fl(1,2;3)) = \langle \cO, \cO(1,0), \cO(0,1), \cO(1,1), \cO(2,1), \cO(1,2) \rangle 
\end{equation}
established in~\cite[Appendix~C]{K19}.
\end{proof}

\subsection{Forms of~$\sX_{2,2,2}$}
\label{ss:x222}

Recall that according to notation from the Introduction 
over an algebraically closed field every Fano threefold of type~$\sX_{2,2,2}$ 
is isomorphic to a complete intersection of divisors of multidegree~$(1,1,0)$, $(1,0,1)$, and~$(0,1,1)$
in~$\P^2 \times \P^2 \times \P^2$.
It has Picard number~3, anticanonical degree~30, and number~3.13 in Mori--Mukai classification.
Using Proposition~\ref{prop:forms-general} we obtain a description of all Fano fibrations of this type.

\begin{proposition}
\label{prop:x222}
If~$p \colon X \to S$ is a smooth Fano fibration with fibers of type~$\sX_{2,2,2}$
there is an \'etale covering~$S' \to S$ of degree~$3$ with connected~$S'$,
a vector bundle~$V$ on~$S'$ of rank~$3$, and a closed embedding 
\begin{equation*}
\uppsi \colon X \hookrightarrow \Res_{S'/S}(\P_{S'}(V))
\end{equation*}
which over each geometric point~$s \in S$ coincides with the natural embedding~$X_s \hookrightarrow \P^2 \times \P^2 \times \P^2$.
\end{proposition}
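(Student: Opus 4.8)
The plan is to deduce this from Proposition~\ref{prop:forms-general}, applied with~$Y_s=\P^2$, $d=3$, and~$\upxi_s\colon X_s\hookrightarrow\P^2\times\P^2\times\P^2$ the standard embedding of a complete intersection of divisors of multidegrees~$(1,1,0)$, $(1,0,1)$, $(0,1,1)$. First I would verify the two hypotheses of that proposition. For~\ref{prop:forms-general}\ref{item:projections}: each projection~$\pr_{s,i}\colon X_s\to\P^2$ is projective, and over a general point of~$\P^2$ its fibre is a smooth~$(1,1)$-curve inside a product of two lines, hence a conic; since the generic fibre is connected and~$\P^2$ is normal, the Stein factorisation shows that all fibres are connected and that~$\pr_{s,i}$ is surjective. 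For~\ref{prop:forms-general}\ref{item:picard}: by the Grothendieck--Lefschetz theorem the pullback~$\Pic(\P^2\times\P^2\times\P^2)\to\Pic(X_s)$ is an isomorphism, so~$\Pic(X_s)=\bigoplus_{i=1}^3\ZZ h_{s,i}$ with~$h_{s,i}\coloneqq\pr_{s,i}^*\cO_{\P^2}(1)$; and the classes~$h_{s,i}$ generate the nef cone of~$X_s$, because a smooth threefold of type~$\sX_{2,2,2}$ has exactly three extremal contractions, namely the divisorial contractions~$\pr_{s,i}$ (this is part of the classical description, cf.\ the references in the Introduction).

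Proposition~\ref{prop:forms-general} then produces a finite \'etale covering~$f\colon S'\to S$ of degree~$3$ with connected~$S'$, a smooth projective Fano fibration~$Z\to S'$ with all geometric fibres isomorphic to~$\P^2$, and a closed embedding~$\uppsi\colon X\hookrightarrow\Res_{S'/S}(Z)$ which, by~\eqref{eq:xyz-diagram}, restricts on every geometric fibre of~$X\to S$ to the natural embedding~$X_s\hookrightarrow\P^2\times\P^2\times\P^2$. Since~$Z\to S'$ is a Severi--Brauer variety with fibres~$\P^2$, Lemma~\ref{lemma:sb-twisted-sheaves} gives~$Z\cong\P_{S'}(V)$ for a~$\upbeta'$-twisted vector bundle~$V$ of rank~$3$ on~$S'$, where~$\upbeta'\coloneqq\bB(H_Z)\in\Br(S')$ for the fundamental class~$H_Z$ of~$Z/S'$; and~$(\upbeta')^3=1$ by Corollary~\ref{cor:twisted-rank}.

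The remaining, and central, point is that~$\upbeta'=1$, i.e.\ that~$V$ is untwisted. Set~$X'\coloneqq X\times_S S'$ and~$p'\colon X'\to S'$, and let~$h_1\in\Pic_{X'/S'}(S')$ be the monodromy-invariant relative class produced inside the proof of Proposition~\ref{prop:forms-general}; its restriction to each geometric fibre is the pullback of~$\cO_{\P^2}(1)$ under one of the three projections, and by construction (via Lemma~\ref{lemma:brauer-obstruction}) one has~$\upbeta'=\bB(h_1)^{-1}$, so in particular~$\bB(h_1)^3=1$. On every geometric fibre~$X_s$, the projection away from the first factor is a divisorial contraction onto a~$(1,1)$-divisor~$D_s\subset\P^2\times\P^2$ isomorphic to~$\Fl(1,2;3)$; it is a blow-up of a smooth curve, so its exceptional divisor~$E_s$ has class~$-K_{X_s}-2h_{s,1}=h_{s,2}+h_{s,3}-h_{s,1}$ and satisfies~$\rH^0(X_s,\cO_{X_s}(E_s))=\kk$ and~$\rH^{>0}(X_s,\cO_{X_s}(E_s))=0$. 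Hence, setting~$e\coloneqq -K_{X'/S'}-2h_1\in\Pic_{X'/S'}(S')$ and applying cohomology and base change, the~$\bB(e)$-twisted sheaf~$p'_*\cO_{X'}(e)$ is a line bundle on~$S'$, so~$\bB(e)=1$ by Corollary~\ref{cor:twisted-rank}. As~$-K_{X'/S'}$ comes from~$\Pic(X')$ we have~$\bB(-K_{X'/S'})=1$, so this amounts to~$\bB(h_1)^2=1$; combined with~$\bB(h_1)^3=1$ it forces~$\bB(h_1)=1$, hence~$\upbeta'=1$. Then~$V$ is an honest vector bundle of rank~$3$, and~$\uppsi\colon X\hookrightarrow\Res_{S'/S}(\P_{S'}(V))$ with the stated fibrewise behaviour is exactly the embedding from Proposition~\ref{prop:forms-general}.

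I expect the last step to require the most care: the numerical identification of the contracted divisor~$E_s$ and the vanishing~$\rH^{>0}(X_s,\cO_{X_s}(E_s))=0$ have to be extracted from the explicit geometry of a smooth threefold of type~$\sX_{2,2,2}$, and the cohomology-and-base-change argument for the rigid (and not relatively globally generated) class~$e$ should be spelled out. The verification that the~$h_{s,i}$ generate the nef cone, needed to invoke Proposition~\ref{prop:forms-general}, likewise relies on the classification rather than on a soft argument.
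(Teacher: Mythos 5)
Your argument is correct, and while it starts exactly as the paper does (apply Proposition~\ref{prop:forms-general} with $Y_s=\P^2$, $d=3$, identify $Z\cong\P_{S'}(V)$ via Lemma~\ref{lemma:sb-twisted-sheaves}, and get ${\upbeta'}^3=1$ from Corollary~\ref{cor:twisted-rank}), your treatment of the one essential point, the vanishing $\upbeta'=1$, is genuinely different from the paper's. The paper works on $\Res_{S'/S}(\P_{S'}(V))\times_S S'$ and pushes forward $\pr_1^*(\cI_X(H_X))\otimes\pr_2^*\cO(-H_V)$ to $S'$; a Koszul-resolution computation on $\P^2\times\P^2\times\P^2$ shows that $\rH^0$ of $\cI_{X_s}(h_i+h_j)$ is one-dimensional on every fibre, so this pushforward is a $\upbeta'\cdot f^*(\upbeta)^{-1}$-twisted line bundle; hence $\upbeta'=f^*(\upbeta)$, and the corestriction identity $\upbeta=\cores_{S'/S}(f^*\upbeta)=\upbeta^3$ together with ${\upbeta'}^3=1$ forces $\upbeta=1$ and then $\upbeta'=1$. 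You instead stay on $X'=X\times_S S'$ and use the relative class $e=-K_{X'/S'}-2h_1$, which restricts on each geometric fibre to the exceptional divisor class $E_{i,j}$ of the contraction $\pr_{i,j}$ (cf.~\eqref{eq:x222-picard}); its rigidity and the vanishing $\rH^{>0}(X_s,\cO(E_{i,j}))=0$ (immediate from $0\to\cO\to\cO(E_{i,j})\to\cO_{E_{i,j}}(E_{i,j})\to0$ and the blow-up structure) make $p'_*\cO_{X'}(e)$ a $\bB(e)$-twisted line bundle, whence $\bB(h_1)^2=1$, and combined with $\bB(h_1)^3={\upbeta'}^{-3}=1$ this gives $\upbeta'=1$. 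Both routes come down to ``the order divides both $2$ and $3$''; yours avoids the corestriction formalism and the Segre/Koszul computation, at the cost of invoking the blow-up geometry of the fibres (which the paper only records later, in Lemma~\ref{lemma:x222-picrd}) and a twisted cohomology-and-base-change argument that, as you note yourself, should be spelled out \'etale-locally where the twist trivializes.

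One small correction in your verification of the hypotheses of Proposition~\ref{prop:forms-general}: the three extremal contractions of $X_s$ are the divisorial contractions $\pr_{i,j}\colon X_s\to Y_{i,j}\cong\Fl(1,2;3)$, not the projections $\pr_{s,i}\colon X_s\to\P^2$ (the latter contract two-dimensional faces of the Mori cone, not extremal rays). The statement you actually need, namely that $h_{s,1},h_{s,2},h_{s,3}$ generate the nef cone, is nevertheless correct: the Mori cone is generated by the fibre classes $\rf_{i,j}$ of the exceptional divisors, and $(h_{s,k})$ is the dual basis to $(\rf_{i,j})$, so the dual cone is exactly the simplicial cone spanned by the $h_{s,k}$.
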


\begin{proof}
We apply Proposition~\ref{prop:forms-general} with~$Y_s = \P^2$, $d = 3$, and~$\upxi_s$ being the natural embedding.
Then~$Z \to S'$ is a smooth~$\P^2$-fibration, hence~$Z \cong \P_{S'}(V)$ by Lemma~\ref{lemma:sb-twisted-sheaves},
where~$V$ is a $\upbeta'$-twisted vector bundle of rank~$3$ and~${\upbeta'}^3 = 1$ by Corollary~\ref{cor:twisted-rank};
in this way we obtain the required closed embedding~$\uppsi \colon X \hookrightarrow \Res_{S'/S}(\P_{S'}(V))$,
so it remains to show that~$\upbeta' = 1$.

For this let~$H_V$ be the fundamental class of~$\P_{S'}(V)$ and let~$H_X$ be the fundamental class of~$X$.
Consider the fiber product~$X \times_S S'$ and the two maps 
\begin{equation*}
\xymatrix@1@C=3em{
\Res_{S'/S}(\P_{S'}(V)) &
\Res_{S'/S}(\P_{S'}(V)) \times_S S' \ar[l]_-{\pr_1} \ar[r]^-{\pr_2} &
\P_{S'}(V),
}
\end{equation*}
where~$\pr_1$ is the projection to the first factor 
and~$\pr_2$ corresponds by adjunction~\eqref{eq:res-adjunction} to the embedding~$\uppsi$.
Finally, let~$p_V \colon \Res_{S'/S}(\P_{S'}(V)) \times_S S' \to S'$ be the natural projection.
Now let~$\cI_X$ be the ideal of~$X$ in~$\Res_{S'/S}(\P_{S'}(V))$ and consider the sheaf 
\begin{equation*}
\cL \coloneqq {p_V}_*(\pr_1^*(\cI_X(H_X)) \otimes \pr_2^*\cO(-H_V)).
\end{equation*}
If~$s' \in S'$ is a geometric point and~$s = f(s') \in S$, 
then~$p_V^{-1}(s') \cong \P^2 \times \P^2 \times \P^2$, 
and if~$h_i$ denote the hyperplane classes of the factors, then 
\begin{equation*}
(\pr_1^*(\cI_X(H_X)) \otimes \pr_2^*\cO(-H_V))\vert_{p_V^{-1}(s')} \cong 
\cI_{X_s}(h_i + h_j),
\end{equation*}
the ideal of~$X_s \subset \P^2 \times \P^2 \times \P^2$ twisted by~$\cO(h_i + h_j)$, 
where~$1 \le i < j \le 3$ and the point~\mbox{$s' \in f^{-1}(s)$} corresponds to the class~$h_k$ with~$k \not\in \{i,j\}$
(see the construction of~$S'$ in the proof of Proposition~\ref{prop:forms-general}).
Using the Koszul resolution
\begin{multline*}
0 \to \cO(-2h_1-2h_2-2h_3) 
\\
\to \cO(-2h_1-h_2-h_3) \oplus \cO(-h_1-2h_2-h_3) \oplus \cO(-h_1-h_2-2h_3) 
\\ 
\to \cO(-h_1-h_2) \oplus \cO(-h_1-h_3) \oplus \cO(-h_2-h_3) \to \cI_{X_s} \to 0
\end{multline*}
it is easy to check that~$\rH^0(\P^2 \times \P^2 \times \P^2, \cI_{X_s}(h_i + h_j))$ is $1$-dimensional.
As this holds for any~$s'$, we conclude that~$\cL$ is a line bundle,
and since~$H_X$ is $p^*(\upbeta^{-1})$-twisted, where~$\cores_{S'/S}(\upbeta')$, 
while~$H_V$ is~${\upbeta'}^{-1}$-twisted, we see that~$\cL$ is~$\upbeta' \cdot f^*(\upbeta^{-1})$-twisted.
Therefore,~$\upbeta' = f^*(\upbeta)$ by Corollary~\ref{cor:twisted-rank},
and we deduce from~\cite[\S3.8]{CTS} the equality
\begin{equation*}
\upbeta = \cores_{S'/S}(\upbeta') = \cores_{S'/S}(f^*(\upbeta)) = \upbeta^3.
\end{equation*}
But~${\upbeta'}^3 = 1$ by construction, hence~$\upbeta^3 = 1$, hence~$\upbeta = 1$, and hence~$\upbeta' = f^*(\upbeta) = 1$.
\end{proof}

Now to describe the derived category, we start with the case of a variety~$X \subset \P^2 \times \P^2 \times \P^2$ of type~$\sX_{2,2,2}$ over an algebraically closed field.
Let~$h_i$, $1 \le i \le 3$, be the pullbacks to~$X$ of the hyperplane classes of the~$\P^2$-factors.

\begin{lemma}
\label{lemma:x222-picrd}
Let~$X \subset \P^2 \times \P^2 \times \P^2$ be a variety of type~$\sX_{2,2,2}$ over an algebraically closed field.
Each projection
\begin{equation*}
\pr_{i,j} \colon X \to \P^2 \times \P^2,
\qquad 
1 \le i < j \le 3, 
\end{equation*}
is the blowup of a smooth Fano threefold~$Y_{i,j} \subset \P^2 \times \P^2$ of type~$\sX_{2,2}$ 
along a smooth rational curve~$\Gamma_{i,j} \subset Y_{i,j} \subset \P^2 \times \P^2$ of bidegree~$(2,2)$.
The Picard group of~$X$ is freely generated by~$h_1,h_2,h_3$, 
and if~$E_{i,j}$ are the exceptional divisors of the projections~$\pr_{i,j}$, we have
\begin{equation}
\label{eq:x222-picard}
E_{1,2} = h_1 + h_2 - h_3,
\quad 
E_{1,3} = h_1 + h_3 - h_2,
\quad 
E_{2,3} = h_2 + h_3 - h_1,
\quad 
K_X = -h_1 -h_2 -h_3.
\end{equation} 
The cone of effective divisors of~$X$ is generated by the classes~$h_1$, $h_2$, $h_3$, $E_{1,2}$, $E_{1,3}$, $E_{2,3}$, 
and each of these has intersection product~$1$ with the symmetric curve class 
\begin{equation}
\label{eq:delta-x222}
\updelta_X \coloneqq \frac1{10}(h_1 + h_2 + h_3)^2.
\end{equation}
\end{lemma}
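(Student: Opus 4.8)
The plan is to reduce everything to a concrete description of the three projections $\pr_{i,j}\colon X\to\P^2\times\P^2$ and then do linear algebra in the Picard lattice. First I would fix attention on one projection, say $\pr_{1,2}$. Writing $X$ as the complete intersection in $\P^2\times\P^2\times\P^2$ of divisors of multidegree $(1,1,0)$, $(1,0,1)$, $(0,1,1)$, the equation of multidegree $(1,1,0)$ pulls back from $\P^2\times\P^2$ (the $(1,2)$-factor) and cuts out there a divisor of bidegree $(1,1)$, i.e.\ a threefold $Y_{1,2}\subset\P^2\times\P^2$ of type $\sX_{2,2}$; this is smooth for general $X$ (and the locus of singular ones is excluded, or one argues smoothness of $Y_{1,2}$ from smoothness of $X$ via the exceptional divisor computation below). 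The remaining two equations, of multidegree $(1,0,1)$ and $(0,1,1)$, are linear in the third factor, so $X\to Y_{1,2}$ identifies $X$ with the subvariety of $\P(\cO\oplus\cO(?))\times\dots$ — more precisely, over a point of $Y_{1,2}$ the fiber is the locus in $\P^2$ (third factor) where two linear forms vanish, which is a point unless those two linear forms are proportional; the latter happens exactly along a curve $\Gamma_{1,2}\subset Y_{1,2}$, and over $\Gamma_{1,2}$ the fiber is a $\P^1$. So $\pr_{1,2}$ is an isomorphism away from $\Gamma_{1,2}$ with $\P^1$-fibers over $\Gamma_{1,2}$; checking that it is exactly the blowup of $Y_{1,2}$ along $\Gamma_{1,2}$ is the standard computation that the map is birational, projective, with reduced exceptional divisor a $\P^1$-bundle over $\Gamma_{1,2}$, which forces it to be the blowup by the universal property (or by a direct comparison of canonical classes). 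The bidegree $(2,2)$ of $\Gamma_{1,2}$ comes from intersecting $\Gamma_{1,2}$ with the pullbacks of the hyperplane classes, computed either on $Y_{1,2}$ using the Koszul resolution or by Riemann--Roch on $X$.

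Next I would establish the Picard relations. By the blowup description $\Pic(X)=\pr_{1,2}^*\Pic(Y_{1,2})\oplus\ZZ E_{1,2}$; since $\sX_{2,2}$ has Picard rank $2$ with $\Pic(Y_{1,2})=\ZZ h_1\oplus\ZZ h_2$, we get that $h_1,h_2,E_{1,2}$ freely generate $\Pic(X)$, and by symmetry the same is true for $h_1,h_2,h_3$ once we know $h_3$ can be expressed in terms of $h_1,h_2,E_{1,2}$. To pin down the relation $E_{1,2}=h_1+h_2-h_3$, I would compute the canonical class $K_X$ in two ways: by adjunction on the complete intersection, $K_X=(-3h_1-3h_2-3h_3)+(h_1+h_2)+(h_1+h_3)+(h_2+h_3)=-h_1-h_2-h_3$, which gives the last equality in~\eqref{eq:x222-picard} directly; and by the blowup formula $K_X=\pr_{1,2}^*K_{Y_{1,2}}+E_{1,2}$. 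Since $K_{Y_{1,2}}=-h_1-h_2$ on $\sX_{2,2}$ (adjunction for the $(1,1)$-divisor in $\P^2\times\P^2$: $K=(-3h_1-3h_2)+(h_1+h_2)=-2h_1-2h_2$, wait — I would recompute carefully here: $K_{\P^2\times\P^2}=-3h_1-3h_2$, and $K_{Y}=(K_{\P^2\times\P^2}+h_1+h_2)|_Y=-2h_1-2h_2|_Y$), so matching $-h_1-h_2-h_3=-2h_1-2h_2+E_{1,2}$ gives $E_{1,2}=h_1+h_2-h_3$; the other two follow by permuting indices. The routine point to be careful about is exactly this canonical-class bookkeeping, since an off-by-one in $K_Y$ propagates.

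For the effective cone, I would argue that $h_1,h_2,h_3$ are obviously effective (base-point-free, being pullbacks of $\cO(1)$), and $E_{1,2},E_{1,3},E_{2,3}$ are effective being exceptional divisors; conversely any effective divisor $D$ maps under each $\pr_{i,j}$ to an effective class on $Y_{i,j}$, and combining the three resulting inequalities (the nef cones of the $Y_{i,j}$ are known since $\sX_{2,2}$ has a two-dimensional explicit nef/effective cone spanned by $h_i,h_j$) with the relations~\eqref{eq:x222-picard} pins $\overline{\mathrm{Eff}}(X)$ down to the cone spanned by those six classes. Finally, for the class $\updelta_X$: I would just compute the intersection numbers $h_i\cdot(h_1+h_2+h_3)^2$ and $E_{i,j}\cdot(h_1+h_2+h_3)^2$ on $X$. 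The monomials $h_1^3=h_2^3=h_3^3=0$ (pullbacks from $\P^2$), and the mixed numbers $h_i^2h_j$ are read off from the class $[X]=(h_1+h_2)(h_1+h_3)(h_2+h_3)$ in the Chow ring of $\P^2\times\P^2\times\P^2$; expanding gives $h_1h_2h_3\cdot(\text{coefficient})$ etc., from which $(h_1+h_2+h_3)^2\cdot h_i$ and $(h_1+h_2+h_3)^2\cdot E_{i,j}$ all come out equal to $10$, so dividing by $10$ makes each intersection product equal to $1$. The anticanonical degree $(-K_X)^3=(h_1+h_2+h_3)^3=30$ is the same computation and serves as a consistency check. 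The main obstacle is not conceptual but the careful tracking of the geometry of $\pr_{i,j}$ — verifying that it is honestly the blowup of a \emph{smooth} $\sX_{2,2}$ along a \emph{smooth rational} curve of bidegree $(2,2)$, rather than something with worse singularities — and getting all the sign and coefficient bookkeeping in the intersection ring consistent across the three symmetric projections.
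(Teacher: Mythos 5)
Your proposal is correct in substance, but it proves the two halves of the lemma by genuinely different routes than the paper, so let me compare. For the first half (the blowup description and the relations~\eqref{eq:x222-picard}) the paper does not argue at all: it cites~\cite[Lemma~2.3, Lemma~2.4(i)]{KP21}. Your re-derivation of the Picard relations via adjunction on the complete intersection ($K_X=-h_1-h_2-h_3$) and the blowup formula with $K_{Y_{1,2}}=-2h_1-2h_2$ is correct, but be aware that the lemma is asserted for \emph{every} smooth $X$ of this type, so the smoothness of $Y_{i,j}$, the exclusion of $2$-dimensional fibers, and the smoothness and \emph{rationality} of $\Gamma_{i,j}$ all require the case analysis you only gesture at; in particular bidegree $(2,2)$ by itself does not force genus $0$ (an elliptic curve mapping $2{:}1$ onto a line in each factor also has bidegree $(2,2)$), so rationality must come from the determinantal description of $\Gamma_{i,j}$ -- this is exactly what the citation to~\cite{KP21} covers. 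For the effective cone your argument differs from the paper's: you push an effective divisor $D=a_1h_1+a_2h_2+a_3h_3$ forward along the three contractions $\pr_{i,j}$, use that the effective cone of $Y_{i,j}\cong\Fl(1,2;3)$ is spanned by $h_i,h_j$, and obtain the inequalities $a_i+a_j\ge 0$; a short dual-cone computation, which you should spell out, confirms that these three inequalities cut out precisely the cone generated by $h_1,h_2,h_3,E_{1,2},E_{1,3},E_{2,3}$, so the argument does close. The paper instead intersects $D$ with the fiber classes $\rf_{i,j}$ of $E_{i,j}\to\Gamma_{i,j}$ and repeatedly subtracts $E_{i,j}$ whenever $D\cdot\rf_{i,j}<0$, reducing to a nonnegative combination of the $h_i$; that negativity-style argument is self-contained on $X$ and needs no knowledge of the cones of the blowdowns, while yours trades that for the (known) cones of the three $Y_{i,j}$ plus a small piece of convex geometry. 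Finally, your computation of the intersection numbers from $[X]=(h_1+h_2)(h_1+h_3)(h_2+h_3)$, giving $h_i^3=0$, $h_i^2h_j=1$, $h_1h_2h_3=2$, hence $(h_1+h_2+h_3)^2\cdot h_i=(h_1+h_2+h_3)^2\cdot E_{i,j}=10$ and $(h_1+h_2+h_3)^3=30$, is exactly the verification the paper dismisses as straightforward, and it is correct.
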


\begin{proof}
The first part is easy, see~\cite[Lemma~2.4(i), Lemma~2.3]{KP21}. 

To describe the cone of effective divisors, let~$\rf_{i,j}$ denote the class of a fiber of the map~$E_{i,j} \to \Gamma_{i,j}$.
Note that if~$(i,j,k)$ is a permutation of~$(1,2,3)$, we have
\begin{equation*}
h_i \cdot \rf_{i,j} = h_j \cdot \rf_{i,j} = 0,
\qquad
h_k \cdot \rf_{i,j} = - E_{i,j} \cdot \rf_{i,j} = 1
\end{equation*}
Note also that curves of type~$\rf_{i,j}$ sweep the divisor~$E_{i,j}$, 
so if an effective divisor~$D \subset X$ has negative intersection with~$\rf_{i,j}$, it contains~$E_{i,j}$,
so we can write~$D = D' + E_{i,j}$, where~$D'$ is also effective.
Iterating this observation, we eventually obtain an effective divisor~$\bar{D}$ 
such that~$\bar{D} \cdot \rf_{i,j} \ge 0$ for all~$1 \le i < j \le 3$.
If~$\bar{D} = a_1h_1 + a_2h_2 + a_3h_3$ then
\begin{equation*}
0 \le \bar{D} \cdot \rf_{i,j} = (a_1h_1 + a_2h_2 + a_3h_3) \cdot \rf_{i,j} = a_k,
\end{equation*}
i.e., now~$\bar{D}$ is a nonnegative linear combination of the~$h_i$, 
while~$D$ by construction is a sum of~$\bar{D}$ and a nonnegative linear combination of~$E_{i,j}$.

Finally, the equalities~$h_i \cdot \updelta_X = 1$ and~$E_{i,j} \cdot \updelta_X = 1$ are straightforward.
\end{proof}

In the next key proposition we use freely the notation introduced in Lemma~\ref{lemma:x222-picrd}.

\begin{proposition}
\label{prop:x222-ec}
Let~$X \subset \P^2 \times \P^2 \times \P^2$ be a Fano threefold of type~$X_{2,2,2}$ 
over an algebraically closed field~$\kk$.
Set~$H_X \coloneqq h_1 + h_2 + h_3$.
Then there is a semiorthogonal decomposition
\begin{equation}
\label{eq:ec-x222}
\bD(X) = \langle \cO_X, \cE_X^\vee, \cO_X(h_1), \cO_X(h_2), \cO_X(h_3), \cO_X(h_1 + h_2), \cO_X(h_1 + h_3), \cO_X(h_2 + h_3) \rangle,
\end{equation}
where~$\cE_X$ is an $H_X$-stable 
exceptional vector bundle of rank~$4$ with~$\det(\cE_X) \cong \cO_X(-H_X)$.

Moreover, there are unique self-dual exact sequences
\begin{equation}
\label{eq:x222-cex-cex-1}
0 \to \cE_X \to \cO_X^{\oplus 8} \to \cE_X^\vee \to 0
\end{equation}
and 
\begin{multline}
\label{eq:x222-cex-cex-2}
0 \to \cE_X^\vee \to 
\cO_X(h_1)^{\oplus 2} \oplus \cO_X(h_2)^{\oplus 2} \oplus \cO_X(h_3)^{\oplus 2} \\ \to
\cO_X(H_X - h_3)^{\oplus 2} \oplus \cO_X(H_X - h_2)^{\oplus 2} \oplus \cO_X(H_X - h_1)^{\oplus 2} \to
\cE_X(H_X) \to 0.
\end{multline}
\end{proposition}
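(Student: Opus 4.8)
The plan is to work over~$\kk$ and to obtain the exceptional collection~\eqref{eq:ec-x222} by mutating an explicit full exceptional collection. By Lemma~\ref{lemma:x222-picrd} the projection~$\pr_{2,3}\colon X \to Y_{2,3}$ is the blowup of the smooth Fano threefold~$Y_{2,3} \cong \Fl(1,2;3)$ along the smooth rational curve~$\Gamma_{2,3}$, so Orlov's blowup formula together with~\eqref{eq:db-fl123} and~$\bD(\Gamma_{2,3}) = \bD(\P^1)$ produces a full exceptional collection of length~$8$ in~$\bD(X)$ whose last six terms are pullbacks along~$\pr_{2,3}$ of the line bundles in~\eqref{eq:db-fl123}. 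The computational engine of the whole proof is the determination of the groups~$\rH^\bullet(X, \cO_X(a h_1 + b h_2 + c h_3))$ for~$(a,b,c)$ in a finite window: twisting the Koszul resolution of~$\cO_X$ as a sheaf on~$\P^2 \times \P^2 \times \P^2$ (whose terms are the seven line bundles~$\cO(-1,-1,0),\dots,\cO(-2,-2,-2)$) by~$\cO(a,b,c)$ and applying the K\"unneth formula and Bott's theorem on~$(\P^2)^3$ reduces each such group to a finite computation; I would record the outcome as a separate lemma.

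Using the window, I would first verify that the seven line bundles~$\cO_X$, $\cO_X(h_1)$, $\cO_X(h_2)$, $\cO_X(h_3)$, $\cO_X(h_1+h_2)$, $\cO_X(h_1+h_3)$, $\cO_X(h_2+h_3)$ form an exceptional collection in this order. Then I would carry out an explicit sequence of mutations turning the blowup collection into
\begin{equation*}
\langle \cO_X, \cE_X^\vee, \cO_X(h_1), \cO_X(h_2), \cO_X(h_3), \cO_X(h_1+h_2), \cO_X(h_1+h_3), \cO_X(h_2+h_3) \rangle,
\end{equation*}
defining~$\cE_X^\vee$ to be the dual of the object occupying the second slot; fullness of~\eqref{eq:ec-x222} is then automatic because mutations preserve it, and~$\cE_X$ is exceptional by construction. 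The window shows that~$\cE_X^\vee$ is in fact a vector bundle, and comparing classes in~$\rK_0(X) \cong \ZZ^{\oplus 8}$ (equivalently, a Chern-class computation) gives~$\rank(\cE_X) = 4$ and~$\det(\cE_X) \cong \cO_X(-H_X)$.

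Next I would establish the two resolutions. Since~$\cO_X$ is the first term of~\eqref{eq:ec-x222}, the window gives~$\rH^{>0}(X, \cE_X^\vee) = 0$ and~$\upchi(\cE_X^\vee) = 8$, and~$\cE_X^\vee$ is globally generated (visible from its description as a mutation); hence~$\cE_X \coloneqq \Ker\bigl(\rH^0(X, \cE_X^\vee)^\vee \otimes \cO_X \to \cE_X^\vee\bigr)$ is a vector bundle of rank~$4$ and fits into~\eqref{eq:x222-cex-cex-1}. The isomorphism~$\wedge^3\cE_X^\vee \cong \cE_X(H_X)$ together with~$\Ext^1(\cE_X^\vee, \cE_X) \cong \kk$ (read off from~\eqref{eq:x222-cex-cex-1} and exceptionality of~$\cE_X$) gives self-duality and uniqueness of this sequence. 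For~\eqref{eq:x222-cex-cex-2} I would resolve~$\cE_X(H_X)$ by the six line bundles~$\cO_X(h_i)$, $\cO_X(h_j+h_k)$ via the Beilinson-type resolution attached to~\eqref{eq:ec-x222} after deleting the components~$\cO_X$ and~$\cE_X^\vee$; the multiplicities~``$\oplus 2$'' are the dimensions of the relevant~$\Ext$-groups delivered by the window, self-duality expresses that this complex coincides with its own~$\cO_X(H_X)$-twisted dual, and both self-duality and uniqueness follow from the canonicity of the resolution.

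It remains to prove that~$\cE_X$ is~$H_X$-stable. Here I would argue that a saturated destabilizing subsheaf~$\cF \subsetneq \cE_X$ would, through the inclusion~$\cF \hookrightarrow \cE_X \hookrightarrow \cO_X^{\oplus 8}$ from~\eqref{eq:x222-cex-cex-1}, have~$\det(\cF) = \cO_X(-D)$ with~$-D$ effective, and then use the description of the effective cone of~$X$ in Lemma~\ref{lemma:x222-picrd} (every nonzero effective divisor meets~$\updelta_X$ positively, while~$\upmu_{H_X}(\cE_X) = -\tfrac{15}{2}$) to cut down to finitely many numerical possibilities for~$(\rank(\cF), c_1(\cF))$, each excluded by a vanishing~$\rH^0(X, \cE_X^\vee(-E)) = 0$ or~$\rH^0(X, \wedge^2\cE_X^\vee(-E)) = 0$ for a small effective~$E$, again obtained from the window (for several of these~$E$ the vanishing is in fact immediate from the semiorthogonality in~\eqref{eq:ec-x222}); alternatively one may restrict~$\cE_X$ to a general member of~$|H_X|$, on which it becomes a rigid simple, hence stable, sheaf. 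I expect the two main obstacles to be the mutation bookkeeping — each intermediate object must be shown, via the window, to simplify to an honest sheaf in the claimed position — and the stability argument, where the non-simplicial effective cone of~$X$ forces one to handle several numerical cases individually.
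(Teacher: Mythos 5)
Your route to the collection \eqref{eq:ec-x222} (blowup collection for one of the projections plus mutations, with the Ext-computations supplied by Koszul/K\"unneth on~$(\P^2)^3$) is essentially the paper's, and that part of the plan is workable. The genuine gaps are in the construction of the two self-dual sequences, which is where the real content of the proposition lies. First, $\cE_X(H_X)$ admits no resolution by the six line bundles alone: the eight objects of \eqref{eq:ec-x222} form a basis of $\rK_0(X)$, and in that basis the class of $\cE_X(H_X)$ has coefficient $1$ at $[\cE_X^\vee]$ --- this is exactly what \eqref{eq:x222-cex-cex-2} records, its first term being $\cE_X^\vee$ --- so ``deleting $\cO_X$ and $\cE_X^\vee$'' cannot work; what you actually need is the right mutation of $\cE_X^\vee$ through the six line bundles, and then you must still show the resulting filtration collapses to a four-term exact sequence of locally free sheaves. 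Second, self-duality is asserted rather than proved. For \eqref{eq:x222-cex-cex-1} the crux is that the evaluation map $\rH^0(X,\cE_X^\vee)\otimes\cO_X\to\cE_X^\vee$ is surjective and that its kernel is the dual bundle $(\cE_X^\vee)^\vee$; neither follows from $\wedge^3\cE_X^\vee\cong\cE_X(H_X)$ (true for any rank-$4$ bundle with this determinant) nor from ``canonicity'', and quoting $\Ext^1(\cE_X^\vee,\cE_X)\cong\kk$ from \eqref{eq:x222-cex-cex-1} presupposes the sequence you are building. The paper's argument here is the one nontrivial idea missing from your plan: performing the same mutation construction with the other two projections yields bundles occupying the same slot of \eqref{eq:ec-x222}, hence isomorphic to $\cE_X$; this gives three presentations of $\cE_X^\vee$, and mutating through $\cO_X(h_1)\oplus\cO_X(h_2)\oplus\cO_X(h_3)$ produces a rank-$2$ bundle $\cF_X$ with $\det\cF_X\cong\cO_X(H_X)$, so that $\cF_X\cong\cF_X^\vee(H_X)$ for free; merging the mutation sequence with its twisted dual gives \eqref{eq:x222-cex-cex-2}, and a Serre-functor comparison of the left mutation through $\cO_X$ with the right mutation through the remaining six objects then yields \eqref{eq:x222-cex-cex-1}. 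Some substitute for this symmetry argument is required.

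Your stability proof inherits these gaps, since it feeds on the sequences: the embedding $\cE_X\hookrightarrow\cO_X^{\oplus 8}$ comes from \eqref{eq:x222-cex-cex-1}, and the vanishings $\rH^0(X,\wedge^2\cE_X^\vee(-E))=0$ needed for rank-$2$ destabilizers are not line-bundle cohomology and would again be computed from the resolutions (the paper instead kills this case with the extension $0\to\cF_X(-h_1)\to\cE_X^\vee\to\cO_X(h_1)^{\oplus 2}\to 0$, once more via $\cF_X$); the case-by-case scheme itself is sound and close to the paper's. Finally, your fallback is incorrect: a general member of $|H_X|=|{-}K_X|$ is a K3 surface, on which no sheaf is rigid ($\Ext^2(\cF,\cF)\cong\Hom(\cF,\cF)^\vee\neq 0$ by Serre duality), and simplicity of the restriction does not imply its stability.
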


\begin{proof}
Using the blowup formula for~$\pr_{1,2} \colon X \to Y_{1,2}$, we obtain a semiorthogonal decomposition
\begin{equation*}
\bD(X) = \langle \pr_{1,2}^*(\bD(Y_{1,2})), i_*q^*\bD(\Gamma_{1,2}) \rangle,
\end{equation*}
where $q \colon E_{1,2} \to \Gamma_{1,2}$ is the exceptional divisor of the blowup~$\pr_{1,2}$,
and $i \colon E_{1,2} \hookrightarrow X$ is its embedding.
Now we choose exceptional collections in the two components of the above decomposition.
Mutating the last two bundles in the exceptional collection~\eqref{eq:db-fl123} in~$\bD(Y_{1,2})$ to the far left
and taking into account that~$K_{Y_{1,2}} = -2h_1 - 2h_2$, we obtain 
\begin{equation*}
\bD(Y_{1,2}) = \langle \cO(-h_2), \cO(-h_1), \cO, \cO(h_1), \cO(h_2), \cO(h_1 + h_2) \rangle.
\end{equation*}
Combining this collection with~$\bD(\Gamma_{1,2}) = \langle \cO_{\Gamma_{1,2}}(3), \cO_{\Gamma_{1,2}}(4) \rangle$ 
and denoting by~$\rf = \rf_{1,2}$ the class of a fiber of~$q$, we obtain a semiorthogonal decomposition
\begin{equation*}
\bD(X) = \langle \cO(-h_2), \cO(-h_1), \cO, \cO(h_1), \cO(h_2), \cO(h_1 + h_2), \cO_{E_{1,2}}(3\rf), \cO_{E_{1,2}}(4\rf) \rangle.
\end{equation*}
Now we find a sequence of mutations that transforms it into the form~\eqref{eq:ec-x222}.

First, we mutate~$\cO(-h_1)$ and~$\cO(-h_2)$ to the far right.
Since~$-K_X =  H_X = h_1 + h_2 + h_3$, we obtain the collection
\begin{equation*}
\bD(X) = \langle \cO, \cO(h_1), \cO(h_2), \cO(h_1 + h_2), \cO_{E_{1,2}}(3\rf), \cO_{E_{1,2}}(4\rf), \cO(h_1 + h_3), \cO(h_2 + h_3) \rangle.
\end{equation*}
Now we mutate~$\cO_{E_{1,2}}(3\rf)$ three steps to the left.
Since~$\Gamma_{1,2}$ has bidegree~$(2,2)$ in~$Y_{1,2}$, we have 
\begin{equation*}
\Ext^\bullet(\cO(h_1+h_2), \cO_{E_{1,2}}(3\rf)) = 
\Ext^\bullet(\cO(h_1+h_2), \cO_{\Gamma_{1,2}}(3)) = 
H^\bullet(\Gamma_{1,2},\cO_{\Gamma_{1,2}}(-1)) = 0,
\end{equation*}
hence the first step is just the transposition.
Similarly, $\Ext^\bullet(\cO(h_i), \cO_{E_{1,2}}(3\rf)) = \kk^2$, 
hence the result of the mutation, which we denote by~$\cE_X^\vee$, fits into the distinguished triangle
\begin{equation}
\label{eq:x222-cex}
\cE_X^\vee \to \cO(h_1)^{\oplus 2} \oplus \cO(h_2)^{\oplus 2} \to \cO_{E_{1,2}}(3\rf).
\end{equation}
The morphism on the right is given by evaluation, hence surjective,
hence~$\cE_X^\vee$ is an exceptional vector bundle of rank~4, the triangle is an exact sequence, 
and we obtain an exceptional collection
\begin{equation*}
\bD(X) = \langle \cO, \cE_X^\vee, \cO(h_1), \cO(h_2), \cO(h_1 + h_2), \cO_{E_{1,2}}(4\rf), \cO(h_1 + h_3), \cO(h_2 + h_3) \rangle.
\end{equation*}
Finally, we mutate $\cO_{E_{1,2}}(4\rf)$ one step to the left.
We have~$\Ext^\bullet(\cO(h_1+h_2), \cO_{E_{1,2}}(4\rf)) = \kk$. 
On the other hand, using~\eqref{eq:x222-picard} we deduce an exact sequence
\begin{equation*}
0 \to \cO(h_3 - h_1 - h_2) \to \cO \to \cO_{E_{1,2}} \to 0.
\end{equation*}
Clearly, its twist by~$\cO(h_1 + h_2)$ gives the required mutation, 
hence we obtain the required exceptional collection~\eqref{eq:ec-x222}.

The determinant of~$\cE_X$ is easy to compute from~\eqref{eq:x222-cex} and~\eqref{eq:x222-picard},
so it remains to show the stability of~$\cE_X$ and to construct the exact sequences~\eqref{eq:x222-cex-cex-1} and~\eqref{eq:x222-cex-cex-2}.

We start with constructing the exact sequences.
First, note that the construction of the bundle~$\cE_X$ described above depends on a choice of the projection~$\pr_{1,2} \colon X \to Y_{1,2}$.
The same construction applied to other projections produces two other vector bundles, say~$\cE_{1,3}$ and~$\cE_{2,3}$, on~$X$,
which fit into the same exceptional collection~\eqref{eq:ec-x222}.
Therefore, these bundles are isomorphic to~$\cE_X$.
In other words, besides the triangle (in fact exact sequence)~\eqref{eq:x222-cex}, we have two other triangles
\begin{equation*}
\cE_X^\vee \to \cO(h_1)^{\oplus 2} \oplus \cO(h_3)^{\oplus 2} \to \cO_{E_{13}}(3\rf_{13})
\qquad\text{and}\qquad 
\cE_X^\vee \to \cO(h_2)^{\oplus 2} \oplus \cO(h_3)^{\oplus 2} \to \cO_{E_{23}}(3\rf_{23}).
\end{equation*}
Note that all these triangles are mutation triangles, in particular we have 
\begin{equation*}
\Ext^\bullet(\cE_X^\vee, \cO(h_i)) \cong 
\kk^2
\end{equation*}
for all~$1 \le i \le 3$, and the first morphisms in all these triangles are the coevaluation morphisms.

Now let~$\cF_X$ be the right mutation of~$\cE_X^\vee$ 
through the triple of mutually completely orthogonal line bundles~$\cO_X(h_i)$, $1 \le i \le 3$,
so that we have a distinguished triangle
\begin{equation}
\label{eq:x222-cex-cfx}
\cE_X^\vee \to \cO(h_1)^{\oplus 2} \oplus \cO(h_2)^{\oplus 2} \oplus \cO(h_3)^{\oplus 2} \to \cF_X,
\end{equation}
where~$\cF_X$ is an exceptional object and~$\det(\cF_X) \cong \cO_X(H_X)$.
Note that~\eqref{eq:x222-cex-cfx} implies that the second arrow is the evaluation morphism.
Comparing~\eqref{eq:x222-cex-cfx} to~\eqref{eq:x222-cex} and the other two defining triangles of~$\cE_X^\vee$, we obtain triangles
\begin{equation}
\label{eq:x22-cfx-1}
\cO_X(h_i)^{\oplus 2} \to \cF_X \to \cO_{E_{j,k}}(3\rf_{j,k})
\end{equation}
for each permutation~$(i,j,k)$ of~$(1,2,3)$, and dualizing them we obtain triangles
\begin{equation}
\label{eq:x22-cfx-2}
\cF_X^\vee \to \cO_X(-h_i)^{\oplus 2} \to \cO_{E_{j,k}}(\rf_{j,k} - h_i\vert_{E_{j,k}}).
\end{equation}
Each arrow in these triangles is the evaluation or coevaluation morphism.
It follows, that the second arrow in~\eqref{eq:x22-cfx-2} is surjective, hence~$\cF_X^\vee$ is a vector bundle of rank~$2$.
Therefore, its dual~$\cF_X$ is also a vector bundle of rank~2, 
and the triangles~\eqref{eq:x222-cex-cfx} and~\eqref{eq:x22-cfx-1} are exact sequences.

Since~$\cF_X$ is a vector bundle of rank~2 and~$\det(\cF_X) \cong \cO_X(H_X)$, we have
\begin{equation}
\label{eq:x222-cfx-self-dual}
\cF_X \cong \cF_X^\vee(H_X).
\end{equation}
Therefore, merging exact sequence~\eqref{eq:x222-cex-cfx} with its dual twisted by~$\cO_X(H_X)$, we obtain~\eqref{eq:x222-cex-cex-2}.
Since isomorphism~\eqref{eq:x222-cfx-self-dual} is skew-symmetric, so is the sequence~\eqref{eq:x222-cex-cex-2}.

Now note that~\eqref{eq:x222-cex-cfx} is a mutation sequence by construction, 
hence the same is true for its twisted dual, as well as for the sequence~\eqref{eq:x222-cex-cex-2} obtained by merging these two.
Therefore, the result of the right mutation of~$\cE_X^\vee$ to the far right in~\eqref{eq:ec-x222} is~$\cE_X(H_X)[-2]$.
On the other hand, the left mutation of~$\cE_X^\vee$ through~$\cO_X$ is isomorphic to this right mutation 
composed with the Serre functor~$- \otimes \cO_X(-H_X)[3]$ of~$\bD(X)$, thus the result of this left mutation is~$\cE_X[1]$.
Therefore, the corresponding left mutation triangle looks like 
\begin{equation*}
\cE_X \to \Ext^\bullet(\cO_X,\cE_X^\vee) \otimes \cO_X \to \cE_X^\vee.
\end{equation*}
Since the first and last terms are vector bundles of rank~4, the middle term must be isomorphic to~$\cO_X^{\oplus 8}$,
hence the triangle gives the sequence~\eqref{eq:x222-cex-cex-1}.

Now it remains to prove the stability of~$\cE_X$.
By Lemma~\ref{lemma:x222-picrd} the {\sf normalized} slope of~$\cE_X$ is
\begin{equation*}
\upmu(\cE_X) = \frac{ \rc_1(\cE_X) \cdot \updelta_X} {\rank(\cE_X)}
= - \frac{ (h_1 + h_2 + h_3) \cdot \updelta_X}4
= - \frac34.
\end{equation*}
Therefore, to verify stability of~$\cE_X$ or~$\cE_X^\vee$, it is enough to exclude the following possibilities:
\begin{aenumerate}
\item 
\label{item:x222-rank-1}
$\cG \subset \cE_X$ is a reflexive subsheaf of rank~$1$ with~$\upmu(\cG) \ge 0$;
\item 
\label{item:x222-rank-2}
$\cG \subset \cE_X^\vee$ is a stable reflexive subsheaf of rank~$2$ with~$\upmu(\cG) \ge 1$;
\item 
\label{item:x222-rank-3}
$\cG \subset \cE_X^\vee$ is a reflexive subsheaf of rank~$1$ with~$\upmu(\cG) \ge 1$.
\end{aenumerate}

Assume~\ref{item:x222-rank-1}.
Then~$\cG$ is a line bundle and by~\eqref{eq:x222-cex-cex-1} it has a nontrivial morphism to~$\cO_X$, 
therefore~\mbox{$\cG \cong \cO_X(-D)$} with effective~$D$.
Furthermore,~$D \cdot \updelta_X = -\upmu(\cG) \le 0$, 
hence~$D = 0$ by Lemma~\ref{lemma:x222-picrd} and~$\cG \cong \cO_X$.
But~$\Hom(\cO_X,\cE_X) = 0$ by semiorthogonality in~\eqref{eq:ec-x222}, hence~\ref{item:x222-rank-1} is impossible. 

A similar argument (with~\eqref{eq:x222-cex-cex-1} replaced by~\eqref{eq:x222-cex-cex-2}) 
shows that if~\ref{item:x222-rank-3} holds then~$\cG \cong \cO(h_i)$,
which contradicts semiorthogonality of~\eqref{eq:ec-x222}.
Thus, \ref{item:x222-rank-3} is also impossible. 

Finally, assume~\ref{item:x222-rank-2}. 
Combining~\eqref{eq:x222-cex} and~\eqref{eq:x22-cfx-2} 
we obtain an exact sequence
\begin{equation*}
0 \to \cF_X(-h_1) \to \cE_X^\vee \to \cO_X(h_1)^{\oplus 2} \to 0.
\end{equation*}
Since~$\upmu(\cG) \ge 1 = \upmu(\cO_X(h_1))$ and~$\cG$ is stable and reflexive, while~$\upmu(\cF_X(-h_1)) = \frac12$,
the composition~\mbox{$\cG \hookrightarrow \cE_X^\vee \twoheadrightarrow \cO_X(h_1)^{\oplus 2}$} must be an isomorphism,
which contradicts stability of~$\cG$.
Thus, \ref{item:x222-rank-2} is impossible. 
\end{proof}

Now that we know a good symmetric exceptional collection for~$X$ over algebraically closed fields, 
we can pass to fibrations over any base scheme~$S$.

\begin{theorem}
\label{thm:x222}
If $X/S$ is a form of a Fano threefold of type~$\sX_{2,2,2}$ there is a semiorthogonal decomposition
\begin{multline*}
\bD(X) = \langle
\cO_X \otimes \bD(S), 
\cE_X^\vee \otimes \bD(S,\upbeta_\cE), \\
\uppsi^* \big( \Phi(\cO_{\P_{S'}(V)}(1) \otimes \bD(S')) \big), 
\cO_X(H_X) \otimes \uppsi^* \big( \Phi(\cO_{\P_{S'}(V)}(-1) \otimes \bD(S')) \big) 
\rangle,
\end{multline*}
where~$\upbeta_\cE \in \Br(S)$, $\upbeta_\cE^2 = 1$, 
and~$\cE_X$ is an $S$-exceptional $\upbeta_\cE$-twisted vector bundle of rank~$4$ on~$X$.

Moreover, if~$X(S) \ne \varnothing$ then~$\upbeta_\cE$ can be represented by a conic bundle.
\end{theorem}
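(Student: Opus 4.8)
The plan is to combine the geometric description of Proposition~\ref{prop:x222} with the exceptional collection of Proposition~\ref{prop:x222-ec}, using a relative moduli space to produce the twisted bundle $\cE_X$ over $S$ and the Weil-restriction functor of Theorem~\ref{thm:pmd-qs-ps} for the two remaining components.

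\emph{Construction of $\cE_X$ and the bound on $\upbeta_\cE$.} Let $\rc_2,\rc_3$ be the Chern classes of the bundle $\cE_X$ of Proposition~\ref{prop:x222-ec} computed over an algebraically closed field (from \eqref{eq:x222-cex} or \eqref{eq:x222-cex-cex-1}, with $\rc_1=-H_X$), and set $\rM\coloneqq\rM_{X/S,H_X}(4;-H_X,\rc_2,\rc_3)$ (notation of Remark~\ref{rem:moduli-chern-hilbert}), with open subscheme $\rM^\circ\subset\rM$ parameterizing sheaves $\cE$ on fibers $X_s$ with $\rH^\bullet(X_s,\cE)=0$. By Lemma~\ref{lemma:x222-picrd} one has $\rc_1\cdot\updelta_X=-3\in\ZZ$, so no subsheaf of rank $1$, $2$ or $3$ can have slope $-3/4=\upmu(\cE_X)$; hence every semistable sheaf with these invariants is automatically stable. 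Over an algebraically closed field, Proposition~\ref{prop:uniqueness-general} applied to $\cU=\cE_{X_s}$ and the sequence \eqref{eq:x222-cex-cex-1} (so $\cV=\cO_{X_s}^{\oplus 8}$, $\cW=\cE_{X_s}^\vee$) — its hypotheses being provided by the stability and slope inequalities established in the proof of Proposition~\ref{prop:x222-ec} — shows that any $\cE$ as above numerically equivalent to $\cE_{X_s}$ is isomorphic to it. Thus $\rM^\circ\to S$ is bijective on geometric points and parameterizes exceptional bundles, hence an isomorphism by Corollary~\ref{cor:moduli-etale}, and Proposition~\ref{prop:twisted-universal} produces a Brauer class $\upbeta_\cE\in\Br(S)$ together with a $p^*(\upbeta_\cE)$-twisted universal bundle $\cE_X$ on $X\times_S\rM^\circ=X$ whose fiber over each $s$ is the bundle of Proposition~\ref{prop:x222-ec}. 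Since $\wedge^4\cE_X\cong\cO_X(-H_X)$ up to a twist by a line bundle on $S$ and $\cO_X(H_X)=\cO_X(-K_{X/S})$ is untwisted, Lemma~\ref{lemma:beta-kr} gives $\upbeta_\cE^4=1$; and as \eqref{eq:x222-cex-cex-1} is the evaluation sequence $0\to\cE_{X_s}\to\rH^0(X_s,\cE_{X_s}^\vee)\otimes\cO_{X_s}\to\cE_{X_s}^\vee\to0$ with $\dim\rH^0=8=2\rank\cE_X$ and $\cE_{X_s}$ exceptional, Lemma~\ref{lemma:beta-2} improves this to $\upbeta_\cE^2=1$. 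In particular $\cE_X^\vee$ is also $p^*(\upbeta_\cE)$-twisted and $(\cO_X,\cE_X^\vee)$ is a relative exceptional collection by \eqref{eq:ec-x222}, so the first two functors $\bD(S)\to\bD(X)$ and $\bD(S,\upbeta_\cE)\to\bD(X)$ are fully faithful with semiorthogonal admissible images by Corollary~\ref{cor:relative-exceptional}.

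\emph{The two geometric components.} By Proposition~\ref{prop:x222} there is a finite \'etale covering $f\colon S'\to S$ of degree $3$ with $S'$ connected, an untwisted rank-$3$ bundle $V$ on $S'$, and a closed embedding $\uppsi\colon X\hookrightarrow\Res_{S'/S}(\P_{S'}(V))$; write $q\colon\P_{S'}(V)\to S'$ for the projection. As $V$ is untwisted, Theorem~\ref{thm:bernardara} exhibits $\cO_{\P_{S'}(V)}(1)\otimes q^*\bD(S')$ and $\cO_{\P_{S'}(V)}(-1)\otimes q^*\bD(S')$ as components of $S'$-linear semiorthogonal decompositions of $\bD(\P_{S'}(V))$ lying respectively in the left and right orthogonals of $\cO_{\P_{S'}(V)}\otimes\bD(S')$, so Theorem~\ref{thm:pmd-qs-ps} applies. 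Let $a,b$ be the projections of $X\times_S S'$ onto $X$ and $S'$, and let $g\colon X\times_S S'\to\P_{S'}(V)$ be the composite of $\uppsi\times_S\id_{S'}$ with the projection of \eqref{eq:weil-diagram}. Flat base change along the Cartesian square relating $X\times_S S'$ to $X$ (the projection $\Res_{S'/S}(\P_{S'}(V))\times_S S'\to\Res_{S'/S}(\P_{S'}(V))$ being finite flat, a base change of $f$) identifies $\uppsi^*\Phi(\cO_{\P_{S'}(V)}(\pm1)\otimes q^*\cF)$ with $a_*(g^*\cO_{\P_{S'}(V)}(\pm1)\otimes b^*\cF)$ for $\cF\in\bD(S')$; hence $\uppsi^*\Phi(\cO_{\P_{S'}(V)}(1)\otimes q^*(-))$ and $\cO_X(H_X)\otimes\uppsi^*\Phi(\cO_{\P_{S'}(V)}(-1)\otimes q^*(-))$ are $S$-linear Fourier--Mukai functors $\bD(S')\to\bD(X)$ with kernels $\cG_+\coloneqq g^*\cO_{\P_{S'}(V)}(1)$ and $\cG_-\coloneqq a^*\cO_X(H_X)\otimes g^*\cO_{\P_{S'}(V)}(-1)$ on $X\times_S S'$. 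By the diagram \eqref{eq:xyz-diagram}, over a geometric point $s$ with $S'_s=\{s'_1,s'_2,s'_3\}$ one has $\cG_+|_{X_s\times\{s'_k\}}\cong\cO_{X_s}(h_k)$ and $\cG_-|_{X_s\times\{s'_k\}}\cong\cO_{X_s}(h_i+h_j)$, and a Koszul computation on $(\P^2)^3$ gives $\rH^\bullet(X_s,\cO_{X_s}(h_i-h_j))=0$ for $i\ne j$, so each of the two triples is a completely orthogonal exceptional collection and both functors are fully faithful on every geometric fiber. Regrouping the collection \eqref{eq:ec-x222} of Proposition~\ref{prop:x222-ec} into the four blocks $\cO_X$, $\cE_X^\vee$, $(\cO(h_1),\cO(h_2),\cO(h_3))$, $(\cO(h_1+h_2),\cO(h_1+h_3),\cO(h_2+h_3))$ shows that over every geometric point the four functors above have semiorthogonal images generating $\bD(X_s)$; Proposition~\ref{prop:relative-sod}\ref{item:linear-sod} then yields the asserted $S$-linear semiorthogonal decomposition of $\bD(X)$.

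\emph{The conic bundle, and the main obstacle.} For the last statement, let $i\colon S\to X$ be a section. I would define $\cF_X$ as the right mutation of $\cE_X^\vee$ through the third component of the decomposition, carried out inside the twisted category $\bD(X,p^*(\upbeta_\cE))$, in which that component reads $\Phi_{\cG_+}(\bD(S',f^*(\upbeta_\cE)))$ and all terms of the mutation triangle are $p^*(\upbeta_\cE)$-twisted; fiberwise this mutation is precisely the rank-$2$ vector bundle $\cF_X$ of Proposition~\ref{prop:x222-ec} (sequences \eqref{eq:x222-cex-cfx} and \eqref{eq:x22-cfx-2}). Hence $\cF_X$ is a $p^*(\upbeta_\cE)$-twisted vector bundle of rank $2$ on $X$, $i^*\cF_X$ is a $\upbeta_\cE$-twisted rank-$2$ bundle on $S$, and $\P_S(i^*\cF_X)\to S$ is a conic bundle with Brauer class $\upbeta_\cE$ by Lemma~\ref{lemma:sb-twisted-sheaves}. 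I expect this last step to be the principal difficulty: unlike the rank-$2$ cases treated earlier, the bundle produced directly on $X$ has rank $4$, and the rank-$2$ witness $\cF_X$ is obtained over an algebraically closed field only through a mutation against the line bundles $\cO(h_i)$, which are not defined over $S$. Making that mutation rigorous against the geometric component $\Phi_{\cG_+}(\bD(S',f^*(\upbeta_\cE)))$ inside the twisted category, and checking that its output remains locally free of rank $2$ in families — together with the systematic bookkeeping of the various Brauer twists throughout the argument — is where the real work lies.
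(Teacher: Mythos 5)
Your proposal follows the paper's proof essentially step for step: the same relative moduli space with the cohomology-vanishing open locus, uniqueness via Proposition~\ref{prop:uniqueness-general} applied to \eqref{eq:x222-cex-cex-1}, Corollary~\ref{cor:moduli-etale} and Proposition~\ref{prop:twisted-universal} to produce the twisted universal bundle, Lemma~\ref{lemma:beta-2} for $\upbeta_\cE^2=1$, the Weil-restriction functor of Theorem~\ref{thm:pmd-qs-ps} combined with the regrouped collection \eqref{eq:ec-x222} and Proposition~\ref{prop:relative-sod} for the decomposition, and restriction along a section of the relative right mutation of $\cE_X^\vee$ through the third component for the conic bundle. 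That last step, which you flag as the principal difficulty, is exactly the paper's (one-line) argument: the mutation triangle is taken in the $p^*(\upbeta_\cE)$-twisted category against the $S$-linear geometric component, and fiberwise it restricts to the exact sequence \eqref{eq:x222-cex-cfx}, so the result is a $p^*(\upbeta_\cE)$-twisted rank-$2$ bundle; your anticipated obstacle is not one.

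The one genuine (though easily repaired) gap is in the bijectivity of $\rM^\circ\to S$ on geometric points. By Remark~\ref{rem:moduli-chern-hilbert} the moduli space is cut out by the Hilbert polynomial, and since the geometric fibers have Picard rank~$3$ the Hilbert polynomial does \emph{not} determine the Chern classes; so you cannot restrict attention to sheaves ``numerically equivalent to $\cE_{X_s}$'' --- your uniqueness statement then covers only part of $\rM^\circ$, and the conclusion ``thus $\rM^\circ\to S$ is bijective'' does not follow as written. What Proposition~\ref{prop:uniqueness-general} actually requires of $\cE$ is only \eqref{eq:cu-ce-assumptions}: the rank and slope are read off from the Hilbert polynomial, and the equality $\upchi(\cE_{X_s},\cE)=\upchi(\cE_{X_s},\cE_{X_s})$ follows from Lemma~\ref{lemma:hilbert-euler}, because $\rch(\cE_{X_s})\in\QQ[H_X]$ (all its Chern classes are multiples of powers of $H_X$: $\rc_1=-H_X$, $\updelta_X=\tfrac1{10}H_X^2$, $P_X=\tfrac1{30}H_X^3$). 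This is precisely how the paper closes the gap, and with that substitution your argument agrees with the one in the text.
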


\begin{proof}
First, we construct a global version of the bundle~$\cE_X$ by using the argument of Proposition~\ref{prop:x16-forms}.
Consider the relative moduli space 
\begin{equation*}
\rM \coloneqq \rM_{X/S,H_X}(4;-H_X, 5\updelta_X, -4P_X),
\end{equation*}
where~$H_X$ is the fundamental class, 
the class~$\updelta_X$ is defined in~\eqref{eq:delta-x222}
and~$P_X$ is the class of a point.
Let also $\rM^\circ \subset \rM$ be the open subscheme parameterizing bundles~$E$ on~$X_s$ with the vanishing
\begin{equation*}
\rH^\bullet(X_s,E) = 0.
\end{equation*}
By Propositions~\ref{prop:x222-ec} and Proposition~\ref{prop:uniqueness-general} 
(conditions~\eqref{eq:cu-ce-assumptions} are satisfied by Lemma~\ref{lemma:hilbert-euler})
applied to exact sequence~\eqref{eq:x222-cex-cex-1}
the natural morphism~\mbox{$f \colon \rM^\circ \to S$} is bijective on geometric points
and for every geometric point~$[E] \in \rM^\circ$, the bundle~$E$ is exceptional.
Therefore, $f$ is an isomorphism by Corollary~\ref{cor:moduli-etale}.

By Proposition~\ref{prop:x222-ec} every sheaf parameterized by the moduli space~$\rM^\circ$ is $H_X$-stable.
Therefore, applying Proposition~\ref{prop:twisted-universal} 
we obtain a Brauer class~$\upbeta_\cE \in \Br(S)$ on~$\rM^\circ \cong S$
and a $p^*(\upbeta_\cE)$-twisted universal family~$\cE_X$ on~$X \times_S \rM^\circ = X$.
Applying Lemma~\ref{lemma:beta-2} and~\eqref{eq:x222-cex-cex-1}, we obtain~\mbox{$\upbeta_\cE^2 = 1$}.

Next, the proof of the semiorthogonal decomposition follows easily from~\eqref{eq:ec-x222}
by the argument used in the proof of Theorems~\ref{thm:x111} and~\ref{thm:x22}.

Finally, if~$X(S) \ne \varnothing$, if~$i \colon S \to X$ is a section of~$X \to S$,
and if~$\cF_X$ is the right mutation of~$\cE_X$ through~$\uppsi^* \big( \Phi(\cO_{\P_{S'}(V)}(1) \otimes \bD(S')) \big)$
(this is a global version of the vector bundle from~\eqref{eq:x222-cex-cfx}) 
then~$i^*\cF_X$ is a $\upbeta_\cE$-twisted vector bundle of rank~$2$ on~$S$, 
so that~$\upbeta_\cE$ is represented by the conic bundle~$\P_S(i^*\cF_X)$.
\end{proof}

\subsection{Forms of~$\sX_{4,4}$}
\label{ss:x44}

Recall that according to notation from the Introduction 
over an algebraically closed field every Fano threefold of type~$\sX_{4,4}$ 
is isomorphic to an intersection of the graph of the Cremona transformation~$\P^5 \dashrightarrow \P^5$ 
with~$\P^4 \times \P^4 \subset \P^5 \times \P^5$.
It has Picard number~2, anticanonical degree~28, and number~2.21 in Mori--Mukai classification.
It is easy to see (e.g., \cite[Lemma~2.4(ii)]{KP21}) that 
each of the projections~$X \to \P^4$ is birational onto a smooth quadric~$Q^3 \subset \P^4$.
Using Proposition~\ref{prop:forms-general} we obtain a description of all Fano fibrations of this type.

\begin{proposition}
\label{prop:forms-x44}
If~$p \colon X \to S$ is a smooth Fano fibration with fibers of type~$\sX_{4,4}$
there is an \'etale covering~$S' \to S$ of degree~$2$ with connected~$S'$,
a smooth projective morphism~$Z \to S'$ with fiber~$Q^3$, 
and a closed embedding 
\begin{equation*}
\uppsi \colon X \hookrightarrow \Res_{S'/S}(Z)
\end{equation*}
which over each geometric point~$s \in S$ coincides with the natural embedding~$X_s \hookrightarrow Q^3 \times Q^3$.
\end{proposition}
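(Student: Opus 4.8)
The plan is to apply Proposition~\ref{prop:forms-general} with $Y_s = Q^3 \subset \P^4$, $d = 2$, and $\upxi_s \colon X_s \hookrightarrow Q^3 \times Q^3$ the natural embedding coming from the two birational projections $X_s \to Q^3$ described above. First I would check that the hypotheses of Proposition~\ref{prop:forms-general} are satisfied: that $Y_s = Q^3$ is a smooth Fano variety of Picard rank $1$ (clear), that the two projections $\pr_{s,i} \colon X_s \to Q^3$ are surjective with connected fibers (they are birational morphisms between smooth projective varieties, hence have connected fibers by Zariski's main theorem), and that if $h_s \in \Pic(Q^3)$ is the ample generator and $h_{s,i} = \pr_{s,i}^*(h_s)$ then $\Pic(X_s) = \ZZ h_{s,1} \oplus \ZZ h_{s,2}$ with $h_{s,1}, h_{s,2}$ generating the nef cone. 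The latter follows from the standard description of $\sX_{4,4}$: it has Picard number $2$, each projection to $\P^4$ (equivalently to $Q^3$) is one of the two extremal contractions, and the pullbacks of the hyperplane classes generate both the Picard group and the nef cone (see~\cite[Lemma~2.4(ii)]{KP21}). Also $\Pic_{X/S}(S) \cong \ZZ$ holds by Corollary~\ref{cor:picard-invariant} since the monodromy must permute the two nef generators transitively, the invariants being rank one (equivalently, the geometric Picard rank is $2$ but the relative Picard rank is $1$ by assumption).

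Granting these hypotheses, Proposition~\ref{prop:forms-general} directly produces a finite \'etale covering $f \colon S' \to S$ of degree $d = 2$ with connected $S'$, a smooth projective Fano fibration $Z \to S'$, and a closed embedding $\uppsi \colon X \hookrightarrow \Res_{S'/S}(Z)$ such that for each geometric point $s' \in S'$ one has $Z_{s'} \cong Y_{f(s')} = Q^3$ and the compatibility diagram~\eqref{eq:xyz-diagram} commutes. The commutativity of~\eqref{eq:xyz-diagram}, combined with the identification of the bottom row with $Q^3 \times Q^3$ via the two isomorphisms $Z_{s'} \cong Q^3$, says exactly that $\uppsi_s$ coincides over each geometric point $s \in S$ with the natural embedding $X_s \hookrightarrow Q^3 \times Q^3$. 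This gives the statement.

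The main obstacle is the verification of hypothesis~\ref{item:picard} of Proposition~\ref{prop:forms-general}, namely that $h_{s,1}$ and $h_{s,2}$ generate the \emph{nef cone} of $X_s$ (and freely generate $\Pic(X_s)$). Freeness of generation is standard, but identifying the two rays of the nef cone with the pullbacks from the quadrics requires knowing that the two birational projections $X_s \to Q^3$ are precisely the two elementary Mori contractions of $X_s$; equivalently, that every nef class on $X_s$ is a nonnegative combination of $h_{s,1}$ and $h_{s,2}$. I would extract this from the classification of Fano threefolds of type $2.21$ (Mori--Mukai) or cite it from~\cite[\S2]{KP21}, where the Sarkisov-type structure of $\sX_{4,4}$ is analyzed: both projections contract a divisor to a curve, so each defines an extremal contraction, and the corresponding pullback classes span the two boundary rays of the two-dimensional nef cone. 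Once this input is in place, everything else is a formal application of the already-established Proposition~\ref{prop:forms-general}, and no separate argument (e.g.\ a moduli-space construction as in the prime Fano cases) is needed, since here the target variety $Z$ is simply a quadric fibration, reconstructed fiberwise from the nef generator via the very ample class $h_s$ on $Q^3$.
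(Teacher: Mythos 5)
Your proposal is correct and is essentially the paper's own proof: the paper likewise just applies Proposition~\ref{prop:forms-general} with $Y_s = Q^3$, $d = 2$, and $\upxi_s$ the natural embedding, relying on \cite[Lemma~2.4(ii)]{KP21} for the birational projections to the quadrics. Your extra verification of hypothesis~\ref{item:picard} (the nef cone being spanned by $h_{s,1},h_{s,2}$, as the two projections are the extremal divisorial contractions) is exactly the content the paper leaves implicit there and records later in Lemma~\ref{lemma:x44-picrd}.
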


\begin{proof}
We apply Proposition~\ref{prop:forms-general} with~$Y_s = Q^3$, $d = 2$, and~$\upxi_s$ being the natural embedding.
Then~$Z \to S'$ is a smooth fibration with fiber~$Q^3$, 
and we obtain the required closed embedding~\mbox{$\uppsi \colon X \hookrightarrow \Res_{S'/S}(Z)$}.
\end{proof}

Now to describe the derived category, we start with the case of a variety~$X \subset Q^3 \times Q^3$ 
of type~$\sX_{4,4}$ over an algebraically closed field.
Let~$h_i$, $i = 1,2$, be the pullbacks to~$X$ of the hyperplane classes of the factors and~$H_X \coloneqq h_1 + h_2$.

\begin{lemma}
\label{lemma:x44-picrd}
Let~$X \subset Q_1 \times Q_2$ be a variety of type~$\sX_{4,4}$ over an algebraically closed field,
where~$Q_1$ and~$Q_2$ are smooth $3$-dimensional quadrics.
Each projection
\begin{equation*}
\pr_{i} \colon X \to Q_i,
\qquad 
i = 1,2, 
\end{equation*}
is the blowup of~$Q_i$ along a smooth rational quartic curve~$\Gamma_i \subset Q_i$.
The Picard group of~$X$ is freely generated by~$h_1$ and~$h_2$, 
and if~$E_i$ is the exceptional divisor of the projection~$\pr_i$, we have
\begin{equation}
\label{eq:x44-picard}
E_1 = 2h_1 - h_2,
\qquad 
E_2 = 2h_2 - h_1,
\qquad 
K_X = -h_1 -h_2 = -H_X.
\end{equation} 
The cone of effective divisors of~$X$ is generated by the classes~$h_1$, $h_2$, $E_1$, $E_2$, 
and each of these has intersection product~$1$ with the symmetric curve class 
\begin{equation}
\label{eq:delta-x44}
\updelta_X \coloneqq \frac1{14}(h_1 + h_2)^2.
\end{equation}
\end{lemma}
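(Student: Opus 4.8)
The plan is to follow the template of Lemma~\ref{lemma:x222-picrd}. First I would recall from \cite[Lemma~2.4(ii)]{KP21} that each projection $\pr_i \colon X \to Q_i$ is birational; since $Q_i$ has Picard rank~$1$ while $X$ has Picard rank~$2$, the morphism $\pr_i$ contracts a unique prime divisor $E_i$, and the geometry of the Cremona picture identifies $\pr_i$ with the blow-up of $Q_i$ along a smooth rational quartic curve $\Gamma_i$. This already gives $\Pic(X) = \ZZ h_1 \oplus \ZZ E_1$ and, by the blow-up formula, $K_X = \pr_1^*K_{Q_1} + E_1 = -3h_1 + E_1$ (and symmetrically $K_X = -3h_2 + E_2$).

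Next I would pin down the class of $h_2$ in the basis $(h_1,E_1)$. The quickest route is geometric: the birational map $Q_1 \dashrightarrow Q_2$ is cut out by the quadric sections of $Q_1$ through $\Gamma_1$, so $h_2 = 2h_1 - E_1$, whence $E_1 = 2h_1 - h_2$, $K_X = -3h_1 + E_1 = -h_1 - h_2 = -H_X$, and $E_2 = K_X + 3h_2 = 2h_2 - h_1$, which is precisely~\eqref{eq:x44-picard}. If one prefers not to quote the Cremona description, I would argue numerically instead: writing $h_2 = \alpha h_1 + \beta E_1$, the blow-up formula and adjunction for $\Gamma_1 \cong \P^1$ yield $h_1^3 = 2$, $h_1^2 E_1 = 0$, $h_1 E_1^2 = -\deg\Gamma_1 = -4$, $E_1^3 = -\deg\cN_{\Gamma_1/Q_1} = -\bigl(2\g(\Gamma_1) - 2 - K_{Q_1}\cdot\Gamma_1\bigr) = -10$; since $\pr_2$ is birational with exceptional divisor $E_2 \neq E_1$ it does not contract $E_1$, so $h_2$ meets a ruling of $E_1$ positively, forcing $\beta \le -1$, while $h_1^2\cdot h_2 = 2\alpha > 0$ forces $\alpha \ge 1$, and then $h_2^3 = \deg Q_2 = 2$ together with $h^0(X,\cO_X(h_2)) = h^0(Q_2,\cO(1)) = 5$ leaves only $\alpha = 2$, $\beta = -1$.

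With~\eqref{eq:x44-picard} established, the effective cone computation proceeds exactly as in Lemma~\ref{lemma:x222-picrd}: let $\rf_i$ be the class of a fibre of $E_i \to \Gamma_i$, so $h_i\cdot\rf_i = 0$, $E_i\cdot\rf_i = -1$, and using~\eqref{eq:x44-picard} one gets $h_j\cdot\rf_i = 1$ for $i \neq j$; since curves of class $\rf_i$ sweep out $E_i$, any effective divisor meeting some $\rf_i$ negatively contains $E_i$, and subtracting copies of $E_1$ and $E_2$ reduces an arbitrary effective $D$ to an effective $\bar D = c_1 h_1 + c_2 h_2$ with $\bar D\cdot\rf_1 = c_2 \ge 0$ and $\bar D\cdot\rf_2 = c_1 \ge 0$, so the effective cone is generated by $h_1, h_2, E_1, E_2$. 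Finally, the identities $h_i\cdot\updelta_X = 1$ and $E_i\cdot\updelta_X = 1$ follow from the single computation $(h_1+h_2)^2\cdot h_1 = h_1^3 + 2h_1^2 h_2 + h_1 h_2^2 = 2 + 8 + 4 = 14$ together with symmetry and~\eqref{eq:x44-picard} (and one may cross-check $H_X^3 = (h_1+h_2)^3 = 28$, the anticanonical degree of $\sX_{4,4}$). The only step that is not a routine intersection computation is the identification $h_2 = 2h_1 - E_1$; I expect this to be the main obstacle, with the numerical argument above as the safe fallback if the Cremona description is not invoked directly.
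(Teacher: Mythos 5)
Your proposal is correct and follows essentially the same route as the paper: the paper disposes of the blow-up description and the relations~\eqref{eq:x44-picard} by citing \cite[Lemma~2.4(ii), Lemma~2.3]{KP21}, and then proves the effective-cone statement and the intersections with~$\updelta_X$ exactly by the fiber-sweeping argument of Lemma~\ref{lemma:x222-picrd} that you reproduce. Your extra derivation of $h_2 = 2h_1 - E_1$ (Cremona description, with the numerical computation as a cross-check, consistent with $(-K_X)^3 = 28$) is just a more self-contained substitute for that citation.
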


\begin{proof}
The first part is easy, see~\cite[Lemma~2.4(ii), Lemma~2.3]{KP21}
and the second is analogous to Lemma~\ref{lemma:x222-picrd}.
\end{proof}

We denote by~$\rf_i$ the class of a fiber of~$E_i \to \Gamma_i$
and by~$\cS_i$ the spinor bundle on the quadric~$Q_i$ and its pullback to~$X$.
We will also need the following result.

\begin{lemma}
\label{lemma:x44-spinor}
Let~$X \subset Q_1 \times Q_2$ be a Fano threefold of type~$X_{4,4}$ 
over an algebraically closed field~$\kk$.
Then there are exact sequences
\begin{equation*}
0 \to \cS_2^\vee \to \cO_X(h_1)^{\oplus 2} \to \cO_{E_1}(5\rf_1) \to 0,
\qquad 
0 \to \cS_1^\vee \to \cO_X(h_2)^{\oplus 2} \to \cO_{E_2}(5\rf_2) \to 0,
\end{equation*}
where the second arrows are induced by the evaluation morphisms~$\cO_X^{\oplus 2} \to \cO_{E_i}(\rf_i)$.
\end{lemma}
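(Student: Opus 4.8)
The plan is to prove both sequences at once, using the symmetry between the two projections. Fix the projection $\pr_1 \colon X \to Q_1$, which by Lemma~\ref{lemma:x44-picrd} is the blowup of $Q_1$ along the quartic curve $\Gamma_1$, with exceptional divisor $E_1$ and fiber class $\rf_1$. First I would recall the blowup semiorthogonal decomposition $\bD(X) = \langle \pr_1^*\bD(Q_1), i_*q^*\bD(\Gamma_1)\rangle$, where $q \colon E_1 \to \Gamma_1$ and $i \colon E_1 \hookrightarrow X$. On the $Q_1$-side I would use the standard exceptional collection coming from~\eqref{eq:sod-quadric}--\eqref{eq:exact-spinor}, namely that the spinor bundle $\cS_1$ on $Q_1$ fits into $0 \to \cS_1 \to \cO_{Q_1}^{\oplus 4} \to \cS_1(1) \to 0$; the key point is that $\cS_2^\vee$, the bundle I want to describe, lives naturally on the \emph{other} quadric $Q_2$, so the actual strategy is to realize $\cS_2^\vee$ as a mutation of a spinor sheaf on $E_1$.

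The cleanest route, which mirrors the construction of $\cE_X$ in Proposition~\ref{prop:x222-ec}, is the following. The pullback $\cS_2^\vee$ of the dual spinor bundle from $Q_2$ restricts on $E_1$ (where the second projection $\pr_2$ contracts the fibers $\rf_1$, since $h_2 \cdot \rf_1 = ?$ — one computes from \eqref{eq:x44-picard} that $E_1 \cdot \rf_1 = -1$, $h_1 \cdot \rf_1 = 0$, so $h_2 \cdot \rf_1 = 1$, and hence $\rf_1$ maps to a line on $Q_2$) to a bundle whose restriction to each fiber $\rf_1 \cong \P^1$ is the restriction of a spinor bundle to a line, i.e.\ $\cO_{\P^1}(-1)^{\oplus ?}$. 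I would instead argue directly: consider the evaluation map $\cO_X^{\oplus 2} \to \cO_{E_1}(\rf_1)$ (the relative $\cO(1)$ for $q \colon E_1 \to \Gamma_1$, which is globally generated with two sections since $\Gamma_1$ is rational and $q_*\cO_{E_1}(\rf_1)$ has rank $2$), twist by $\cO_X(h_1)$ to get $\cO_X(h_1)^{\oplus 2} \to \cO_{E_1}(\rf_1 + h_1|_{E_1}) = \cO_{E_1}(5\rf_1)$ — here the identification $h_1|_{E_1} = 4\rf_1$ comes from $\Gamma_1$ having degree $4$ in $Q_1 \subset \P^4$, so $h_1 \cdot \rf_1 = 0$ on $X$ but $h_1|_{E_1}$ restricted to a fiber... actually the correct bookkeeping is that $\cO_{E_1}(-E_1)|_{\text{fiber}} = \cO(1)$ and $q^*\cO_{\Gamma_1}(h_1) = q^*\cO_{\P^1}(4)$ — I would carefully pin down that $\cO_{E_1}(h_1) \cong \cO_{E_1}(4\rf_1)$ as claimed implicitly by the exponent $5$ in the statement. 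Granting this, the kernel of the surjection $\cO_X(h_1)^{\oplus 2} \twoheadrightarrow \cO_{E_1}(5\rf_1)$ is a rank-$1$ reflexive, hence invertible, sheaf $\cK$ — wait, rank of kernel is $2\cdot 2 - $ (rank of $\cO_{E_1}$ which is $0$ generically) $= 2$ off $E_1$, so $\cK$ has rank $2$; its determinant is $\cO_X(2h_1 - E_1) = \cO_X(h_2)$ by \eqref{eq:x44-picard}, and $\cK$ is locally free since it is a rank-$2$ kernel of a surjection of vector bundles onto a sheaf supported on a divisor, which is a syzygy bundle. I would then identify $\cK \cong \cS_2^\vee$.

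To make the identification $\cK \cong \cS_2^\vee$, the cleanest argument is again by mutation/uniqueness: compute the Chern classes of $\cK$ from the exact sequence and \eqref{eq:x44-picard}, \eqref{eq:delta-x44}, and check they agree with those of the pulled-back dual spinor bundle $\cS_2^\vee$ on $X$ (whose Chern classes on $Q_2$ are known from \cite[Remark~2.9]{Ott}, and pull back via $\pr_2$); then invoke stability plus Proposition~\ref{prop:uniqueness-general} — or more simply, observe that $\cK$, being defined as the kernel of the evaluation of $\cO_X(h_1)^{\oplus 2}$ onto the relative hyperplane sheaf on $E_1$, is by construction $\pr_1$-relatively trivial off $E_1$ and its restriction to a general fiber of $\pr_2$ (a line $\ell$ in $X$ mapping isomorphically to $Q_2$... no, $\pr_2$ is birational, so a general fiber is a point) — instead restrict to a general line in the other ruling and match with the spinor bundle's defining property. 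I expect \textbf{the main obstacle} to be the divisor-class bookkeeping on $E_1$: precisely verifying that $\cO_{E_1}(h_1) \cong \cO_{E_1}(4\rf_1)$ and that the twisted evaluation map is surjective with the stated cokernel $\cO_{E_1}(5\rf_1)$, since $E_1$ is a ruled surface over $\Gamma_1 \cong \P^1$ and one must track the two generators $\rf_1$ and a section class carefully. The symmetric sequence follows by exchanging the roles of $Q_1$ and $Q_2$ (equivalently $h_1 \leftrightarrow h_2$, $E_1 \leftrightarrow E_2$), which is legitimate because the construction used only the blowup structure of $\pr_1$, available equally for $\pr_2$.
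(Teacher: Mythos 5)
Your route is the paper's: set $\cK \coloneqq \ker\bigl(\cO_X(h_1)^{\oplus 2} \to \cO_{E_1}(5\rf_1)\bigr)$, observe it is a rank-$2$ bundle with $\det(\cK) \cong \cO_X(2h_1 - E_1) = \cO_X(h_2)$ by~\eqref{eq:x44-picard}, match its numerical invariants with those of $\cS_2^\vee$, and conclude $\cK \cong \cS_2^\vee$ via Proposition~\ref{prop:uniqueness-general} applied to the pullback of the spinor sequence~\eqref{eq:exact-spinor} from $Q_2$. But the step you flag as "the main obstacle" is routine: $h_1\vert_{E_1} = q^*(h_1\vert_{\Gamma_1}) = q^*\cO_{\P^1}(4)$ because $\Gamma_1$ is a rational quartic, so twisting the surjection $\cO_X^{\oplus 2} \to \cO_{E_1}(\rf_1) = q^*\cO_{\P^1}(1)$ by $\cO_X(h_1)$ gives exactly the stated surjection onto $\cO_{E_1}(5\rf_1)$, with surjectivity inherited from the evaluation on $\P^1$.

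The genuine gap is the semistability of $\cK$, which Proposition~\ref{prop:uniqueness-general} requires of the unknown bundle $\cE$ (here $\cE = \cK(-h_2)$); "invoke stability" does not say whose stability or how to obtain it, and this is precisely where the paper does real work. One computes $\upmu(\cK) = \tfrac12$ with respect to the class $\updelta_X$ of~\eqref{eq:delta-x44}; a destabilizing reflexive subsheaf must be a line bundle admitting a nonzero map to $\cO_X(h_1)$, hence of the form $\cO_X(h_1 - D)$ with $D$ effective, and $\upmu \ge \tfrac12$ forces $D\cdot\updelta_X \le 0$, so $D = 0$ by the effective-cone description of Lemma~\ref{lemma:x44-picrd}; but $\Hom(\cO_X(h_1),\cK) = 0$ by the very definition of $\cK$ (the evaluation map is injective on sections), a contradiction. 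Without this argument, or a substitute, the appeal to the uniqueness proposition is unjustified; you should also record its remaining hypothesis $\Ext^2(\cO_X^{\oplus 4},\cK(-h_2)) = H^2(X,\cK(-h_2)) = 0$, which follows from the defining sequence of $\cK$, and note that the sequence to feed into the proposition is the one pulled back from $Q_2$ (involving $\cS_2$), not the sequence for $\cS_1$ you quote at the start. The symmetry claim for the second sequence is fine.
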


\begin{proof}
We prove the first sequence; the proof of the second is analogous.
Denote the kernel of the map~$\cO_X(h_1)^{\oplus 2} \to \cO_{E_1}(5\rf_1)$ by~$\cK$,
so that we have an exact sequence
\begin{equation}
\label{eq:sequence-ck}
0 \to \cK \to \cO_X(h_1)^{\oplus 2} \to \cO_{E_1}(5\rf_1) \to 0,
\end{equation}
Let us check that the bundle~$\cK$ is $H_X$-stable.
Indeed, using~\eqref{eq:sequence-ck} and~\eqref{eq:x44-picard} 
we see that~$\rank(\cK) = 2$ and~$\rc_1(\cK) = 2h_1 - E_1 = h_2$, 
hence the normalized slope of~$\cK$ is
\begin{equation*}
\upmu(\cK) = 
\frac{\rc_1(\cK) \cdot \updelta_X}{\rank(\cK)} =
\frac12.
\end{equation*}
Assume~$\cG \subset \cK$ is a destabilizing reflexive sheaf.
Then~$\rank(\cG) = 1$, hence~$\cG$ is a line bundle, 
and~$\cG$ has a nontrivial morphism to~$\cO_X(h_1)$, hence~$\cG \cong \cO_X(h_1 - D)$, where~$D$ is effective.
But the slope of~$\cG$ must be positive, hence~$(h_1 - D) \cdot \updelta_X = 1 - D \cdot \updelta_X \ge 1$, hence~$D = 0$.
Thus, $\cG \cong \cO(h_1)$, which is absurd because~$\Hom(\cO_X(h_1), \cK) = 0$ by definition of~$\cK$.

Next we note that~$\cK$ is numerically equivalent to~$\cS_2^\vee$;
indeed, the Chern classes of~$\cK$ can be computed from~\eqref{eq:sequence-ck}, 
and it is easy to see that they are the same as those of~$\cS_2^\vee$
(we already have seen this for~$\rc_1$).
Now we apply Proposition~\ref{prop:uniqueness-general} to~$\cU = \cS_2 \cong \cS_2^\vee(-h_2)$, $\cE = \cK(-h_2)$, 
and the pullback to~$X$ of sequence~\eqref{eq:exact-spinor},
and conclude that~$\cK \cong \cS_2^\vee$, as we need.
\end{proof}

In the next proposition we use freely the notation introduced in Lemma~\ref{lemma:x44-picrd} and Lemma~\ref{lemma:x44-spinor}.

\begin{proposition}
\label{prop:x44-ec}
Let~$X \subset Q_1 \times Q_2$ be a Fano threefold of type~$\sX_{4,4}$ 
over an algebraically closed field~$\kk$.
Then there is a semiorthogonal decomposition
\begin{equation}
\label{eq:ec-x44}
\bD(X) = \langle \cE_X, \cO, \cS_1^\vee, \cS_2^\vee, \cO_X(h_1), \cO_X(h_2) \rangle
\end{equation}
where~$\cE_X$ is a stable exceptional vector bundle of rank~$3$ with~$\det(\cE_X) \cong \cO_X(-H_X)$.
Moreover, there are unique exact sequences
\begin{equation}
\label{eq:x44-cex-cexvee}
0 \to \cE_X \to \cO_X^{\oplus 8} \to {\cS_1^\vee}^{\oplus 2} \oplus {\cS_2^\vee}^{\oplus 2} \to \cE_X^\vee \to 0,
\end{equation}
and
\begin{equation}
\label{eq:x44-cexvee-cex}
0 \to \cE_X^\vee \to \cO_X(h_1)^{\oplus 3} \oplus \cO_X(h_2)^{\oplus 3} \to \cE_X(H_X) \to 0
\end{equation} 
\end{proposition}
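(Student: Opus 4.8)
The plan is to imitate the construction of $\cE_X$ in Proposition~\ref{prop:x222-ec}: obtain the exceptional collection~\eqref{eq:ec-x44} by applying the blowup formula to $\pr_1\colon X\to Q_1$, inserting the semiorthogonal decomposition~\eqref{eq:sod-quadric} of $\bD(Q_1)$ and $\bD(\Gamma_1)=\langle\cO_{\Gamma_1}(a),\cO_{\Gamma_1}(a+1)\rangle$ for a suitable twist $a$, and then performing a sequence of mutations to bring the collection into a symmetric form. Concretely, after the blowup one has a collection involving $\cS_1$, $\cO$, $\cO(h_1)$, $\cO(2h_1)$ (a twist of~\eqref{eq:sod-quadric}) together with two objects $\cO_{E_1}(k\rf_1)$, $\cO_{E_1}((k+1)\rf_1)$; mutating the "extra" line bundles $\cO(2h_1)$ past the anticanonical class (using $-K_X=H_X=h_1+h_2$ and $E_1=2h_1-h_2$) produces $\cO(h_2)$ and $\cO(h_1+h_2)$, and mutating the two exceptional-divisor objects to the left across the line bundles $\cO(h_i)$ produces, by Lemma~\ref{lemma:x44-spinor}, the spinor bundles $\cS_2^\vee$ and $\cS_1^\vee$ and finally the leading bundle $\cE_X$. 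The determinant of $\cE_X$ is read off from the mutation triangle and~\eqref{eq:x44-picard}.

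Next I would establish the two self-dual-type resolutions. Exactly as in Proposition~\ref{prop:x222-ec}, the sequence~\eqref{eq:x44-cexvee-cex} is obtained as the composite of the right-mutation triangle of $\cE_X^\vee$ through the pair of completely orthogonal line bundles $\cO_X(h_1)$, $\cO_X(h_2)$ — here each $\Ext^\bullet(\cE_X^\vee,\cO_X(h_i))$ is computed to be $\kk^3$, so the middle term is $\cO_X(h_1)^{\oplus 3}\oplus\cO_X(h_2)^{\oplus 3}$ — followed by the observation that the result $\cF_X$ of this mutation has rank $3$ and is a vector bundle (checked by dualizing the triangles $\cO_X(h_i)^{\oplus 3}\to\cF_X\to\cO_{E_j}(\cdots)$ that come from comparing with Lemma~\ref{lemma:x44-spinor}), and that $\det\cF_X\cong\cO_X(H_X)$; one must verify that the right mutation in~\eqref{eq:ec-x44} is $\cE_X(H_X)[-2]$, so that the left mutation of $\cE_X^\vee$ through $\cO_X$ is $\cE_X[1]$. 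Dualizing~\eqref{eq:x44-cexvee-cex} and twisting by $\cO_X(H_X)$, then merging with the left-mutation triangle $\cE_X\to \Ext^\bullet(\cO_X,\cE_X^\vee)\otimes\cO_X\to\cE_X^\vee$ gives~\eqref{eq:x44-cex-cexvee} once one checks $\Ext^\bullet(\cO_X,\cE_X^\vee)$ has total dimension $8$ and is concentrated in degree $0$ (this is a Riemann--Roch / vanishing computation using semiorthogonality in~\eqref{eq:ec-x44}).

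For stability of $\cE_X$: using Lemma~\ref{lemma:x44-picrd}, $\rc_1(\cE_X)=-H_X=-(h_1+h_2)$ pairs with $\updelta_X$ to give $-2$, so the normalized slope is $\upmu(\cE_X)=-2/3$. Hence it suffices to rule out destabilizing reflexive subsheaves of $\cE_X$ of rank $1$ with $\upmu\ge 0$ and of $\cE_X^\vee$ of rank $1$ or $2$ with $\upmu\ge 2/3$, i.e. $\upmu\ge 1$ since slopes of reflexive subsheaves lie in $\tfrac1{?}\ZZ$ — more precisely a rank-$1$ reflexive subsheaf is a line bundle $\cO_X(D)$ with $D\cdot\updelta_X$ an integer, and a rank-$2$ stable reflexive subsheaf of $\cE_X^\vee$ forces $\upmu\ge 1/2+\epsilon$ so $\ge 1$ as the effective cone of $X$ intersects $\updelta_X$ in integers. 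A rank-$1$ subsheaf of $\cE_X$ with $\upmu\ge0$ maps nontrivially to $\cO_X$ via~\eqref{eq:x44-cex-cexvee}, hence is $\cO_X(-D)$ with $D$ effective and $D\cdot\updelta_X\le 0$, so $D=0$ by Lemma~\ref{lemma:x44-picrd}, contradicting $\Hom(\cO_X,\cE_X)=0$. A rank-$1$ subsheaf of $\cE_X^\vee$ with $\upmu\ge1$ maps to $\cO_X(h_i)$ by~\eqref{eq:x44-cexvee-cex} and similarly must be $\cO_X(h_i)$, contradicting semiorthogonality. The rank-$2$ case is handled by splicing~\eqref{eq:x44-cexvee-cex} with the spinor triangles of Lemma~\ref{lemma:x44-spinor} to exhibit a short exact sequence $0\to\cK\to\cE_X^\vee\to\cO_X(h_i)^{\oplus m}\to 0$ with $\upmu(\cK)$ strictly below $1$, forcing the composite $\cG\hookrightarrow\cE_X^\vee\twoheadrightarrow\cO_X(h_i)^{\oplus m}$ to be an isomorphism and contradicting stability of $\cG$.

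The main obstacle I expect is the mutation bookkeeping that produces~\eqref{eq:ec-x44} and identifies the leading term $\cE_X$ and the two resolutions: one must choose the right intermediate twist $a$ of the Hilbert-scheme-of-conics resolution on $\Gamma_1$ so that the spinor bundles (rather than some other rank-$2$ bundles) appear after mutation, and then carefully track which $\Ext$-groups vanish versus equal $\kk$ or $\kk^2$ or $\kk^3$ at each mutation step — the quadric-fiber case differs from~$\sX_{2,2,2}$ precisely in that the "extra" pieces are the non-split spinor bundles $\cS_i^\vee$ rather than line bundles, and Lemma~\ref{lemma:x44-spinor} is the device that makes this identification. Everything else (Chern class and $\upchi$ computations, the reflexive-sheaf arguments) is routine once the effective cone and $\updelta_X$ from Lemma~\ref{lemma:x44-picrd} are in hand.
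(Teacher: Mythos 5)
The main gap is in your derivation of \eqref{eq:x44-cex-cexvee}. You assert that the left mutation of $\cE_X^\vee$ through $\cO_X$ alone is $\cE_X[1]$, so that merging the triangle $\cE_X \to \Ext^\bullet(\cO_X,\cE_X^\vee)\otimes\cO_X \to \cE_X^\vee$ (with $\Ext^\bullet$ of dimension $8$ in degree $0$) with the twisted dual of \eqref{eq:x44-cexvee-cex} yields \eqref{eq:x44-cex-cexvee}. This cannot work: it would produce a three-term sequence $0\to\cE_X\to\cO_X^{\oplus 8}\to\cE_X^\vee\to 0$, which is impossible since both outer terms have rank $3$, and in any case your recipe never generates the middle term ${\cS_1^\vee}^{\oplus2}\oplus{\cS_2^\vee}^{\oplus2}$ (note also that the twisted dual of \eqref{eq:x44-cexvee-cex} is \eqref{eq:x44-cexvee-cex} itself, so the ``merge'' adds nothing new). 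Unlike the $\sX_{2,2,2}$ case, where $\cE_X^\vee$ sits immediately to the right of $\cO_X$ in the collection, here $\cE_X^\vee$ lies in $\langle\cE_X,\cO_X,\cS_1^\vee,\cS_2^\vee\rangle$ but not in $\langle\cE_X,\cO_X\rangle$; the correct statement is that the mutation of $\cE_X^\vee[-2]$ through the whole subcategory $\langle\cO_X,\cS_1^\vee,\cS_2^\vee\rangle$ is $\cE_X$, and one must analyze the resulting cone, which has two cohomology sheaves, and pin down the multiplicities $(8;2,2)$ by comparing ranks and first Chern classes to obtain the four-term sequence. Since your stability argument in case (a) invokes \eqref{eq:x44-cex-cexvee}, the gap propagates there as well.

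The mutation bookkeeping producing \eqref{eq:ec-x44} is also off in the specific steps you give: twisting $\cO(2h_1)$ by $K_X=-h_1-h_2$ gives $\cO(h_1-h_2)$, not $\cO(h_2)$, and $\cO(h_1+h_2)$ does not occur in \eqref{eq:ec-x44} at all (you are carrying over the $\sX_{2,2,2}$ template); moreover only one spinor bundle can arise from the two $E_1$-supported objects --- Lemma~\ref{lemma:x44-spinor} turns $\cO_{E_1}(5\rf_1)$ into $\cS_2^\vee$, while $\cS_1^\vee$ must already be present in the collection chosen on $Q_1$. The paper's route takes $\langle\cO(-h_1),\cO,\cS_1^\vee,\cO(h_1)\rangle$ on $Q_1$, moves $\cO(-h_1)$ to the far right where it becomes $\cO(h_2)$, obtains $\cE_X(H_X)$ as the right mutation of $\cO_{E_1}(6\rf_1)$ past $\cO(h_2)$ and transports it to the far left, and only then converts $\cO_{E_1}(5\rf_1)$ into $\cS_2^\vee$. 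Your slope computation, the effective-cone argument via $\updelta_X$, and the reduction of stability to rank-one subsheaves of $\cE_X$ and of $\cE_X^\vee$ are correct (the separate rank-two case you add is redundant for a rank-three bundle), but as written the proposal establishes neither the collection \eqref{eq:ec-x44} with the claimed $\cE_X$ nor the resolution \eqref{eq:x44-cex-cexvee}.
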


\begin{proof}
Consider the projection~$\pr_1 \colon X \to Q_1$,
which, recall, is the blowup of a smooth rational quartic curve~$\Gamma_1 \subset Q_1$.
Using the blowup formula we obtain a semiorthogonal decomposition
\begin{equation*}
\bD(X) = \langle \pr_1^*(\bD(Q_1)), i_*q^*\bD(\Gamma_1) \rangle,
\end{equation*}
where $q \colon E_1 \to \Gamma_1$ is the exceptional divisor of the blowup~$\pr_1$,
and $i \colon E_1 \hookrightarrow X$ is its embedding.

Now we choose exceptional collections in the two components of the above decomposition.
In the first component we choose one of the standard exceptional collections
\begin{equation*}
\bD(Q_1) = \langle \cO(-h_1), \cO, \cS_1^\vee, \cO(h_1) \rangle.
\end{equation*}
Combining it with~$\bD(\Gamma_1) = \langle \cO_{\Gamma_1}(5), \cO_{\Gamma_1}(6) \rangle$,
we obtain a semiorthogonal decomposition
\begin{equation*}
\bD(X) = \langle \cO(-h_1), \cO, \cS_1^\vee, \cO(h_1), \cO_{E_1}(5\rf_1), \cO_{E_1}(6\rf_1) \rangle.
\end{equation*}
Now we find a sequence of mutations that transforms it into the form~\eqref{eq:ec-x44}. 

First, we mutate~$\cO(-h_1)$ to the far right.
Since~$-K_X = h_1 + h_2$, we obtain the collection
\begin{equation*}
\bD(X) = \langle \cO, \cS_1^\vee, \cO(h_1), \cO_{E_1}(5\rf_1), \cO_{E_1}(6\rf_1), \cO(h_2) \rangle.
\end{equation*}
Now we mutate~$\cO_{E_1}(6\rf_1)$ to the right.
Using relations~\eqref{eq:x44-picard} and~$\cO_{E_1}^\vee \cong \cO_{E_1}(E_1)[-1]$, we obtain
\begin{multline*}
\Ext^\bullet( \cO_{E_1}(6\rf_1), \cO(h_2) ) 
\cong \Ext^\bullet( \cO(h_2)^\vee, \cO_{E_1}(6\rf_1)^\vee ) 
\\
\cong \Ext^\bullet( \cO(E_1 - 2h_1), \cO_{E_1}(E_1 - 6\rf_1)[-1] ) 
\cong H^\bullet(X, \cO_{E_1}(2\rf_1))[-1] \cong \kk^3[-1],
\end{multline*}
therefore, the mutation triangle takes the form of the canonical extension
\begin{equation}
\label{eq:x44-cex}
0 \to \cO_X(h_2)^{\oplus 3} \to \cE_X(h_1 + h_2) \to \cO_{E_1}(6\rf_1) \to 0,
\end{equation}
where we denote the middle term (the result of the mutation) by~$\cE_X(h_1 + h_2)$ for further convenience.
Note that~$\cE_X$ is a coherent sheaf by construction.
Moreover, taking the dual of~\eqref{eq:x44-cex} and using the above identifications, we obtain a triangle
\begin{equation}
\label{eq:x44-cexvee}
\cE_X^\vee(-h_1-h_2) \to \cO_X(E_1 - 2h_1)^{\oplus 3} \to \cO_{E_1}(E_1 - 6\rf_1),
\end{equation}
where the second arrow is the evaluation morphism, hence it is surjective.
This proves that~$\cE_X^\vee$ is locally free, hence the same is true for~$\cE_X$.

Now we mutate~$\cE_X(h_1 + h_2)$ to the far left, and, using again the equality~$K_X = -h_1 - h_2$,
obtain the exceptional collection
\begin{equation*}
\bD(X) = \langle \cE_X, \cO, \cS_1^\vee, \cO(h_1), \cO_{E_1}(5\rf_1), \cO(h_2) \rangle.
\end{equation*}
Finally, we mutate $\cO_{E_1}(5\rf_1)$ one step to the left.
We have~$\Ext^\bullet(\cO(h_1), \cO_{E_1}(5\rf_1)) = \kk^2$, 
so applying Lemma~\ref{lemma:x44-spinor} we conclude that the corresponding mutation is given by the dual spinor bundle~$\cS_2^\vee$,
hence we obtain the required exceptional collection~\eqref{eq:ec-x44}.

The rank and determinant of~$\cE_X$ are easy to compute from~\eqref{eq:x44-cex}, 
so it remains to show the stability of~$\cE_X$ 
and to construct the exact sequences~\eqref{eq:x44-cex-cexvee} and~\eqref{eq:x44-cexvee-cex}.

We start with constructing the exact sequences.
For this we take~\eqref{eq:ec-x44}, mutate~$\cE_X$ to the far right, 
and then mutate the obtained object~$\cE_X(h_1 + h_2)[-3]$ two steps to the left.
The first mutation (through~$\cO(h_2)$) is given by the exact sequence~\eqref{eq:x44-cex}, 
hence the result of this mutation is the object~$\cO_{E_1}(6\rf_1)[-3]$.
Furthermore,
\begin{equation*}
\Ext^\bullet(\cO(h_1), \cO_{E_1}(6\rf_1)) \cong H^\bullet(X, \cO_{E_1}(2\rf_1)) = \kk^3,
\end{equation*}
and the evaluation morphism is surjective, hence the mutation triangle takes the form
\begin{equation}
\label{eq:x44:cfx}
0 \to \cF_X \to \cO_X(h_1)^{\oplus 3} \to \cO_{E_1}(6\rf_1) \to 0
\end{equation} 
so that the result of the mutation is~$\cF_X[-2]$.
Note that this triangle coincides with a twist of~\eqref{eq:x44-cexvee} by~$h_1 + h_2 = 3h_1 - E_1$,
hence~$\cF_X \cong \cE_X^\vee$.
So, merging~\eqref{eq:x44:cfx} with~\eqref{eq:x44-cex} and taking into account the equality~$\Ext^1(\cO(h_1),\cO(h_2)) = 0$, 
we obtain the exact sequence~\eqref{eq:x44-cexvee-cex}.

To construct~\eqref{eq:x44-cex-cexvee} it remains to note 
that the mutation of~$\cE_X^\vee[-2]$ through~$\langle \cO_X, \cS_1^\vee, \cS_2^\vee \rangle$ is~$\cE_X$,
therefore there is a distinguished triangle
\begin{equation*}
A^\bullet \otimes \cO_X \to B_1^\bullet \otimes \cS_1^\vee \oplus B_2^\bullet \otimes \cS_2^\vee \to \Cone(\cE_X^\vee \to \cE_X[2])
\end{equation*}
for some graded vector spaces~$A^\bullet$, $B_1^\bullet$, and~$B_2^\bullet$.
In other words, the cone of the first arrow has two cohomology sheaves, $\cE_X$ in degree~$-1$ and~$\cE_X^\vee$ in degree~$0$.
Looking at the cohomology exact sequence and taking into account 
semiorthogonality of the pairs~$(\cO_X,\cE_X^\vee)$ and~$(\cS_i^\vee,\cE_X)$, $i = 1,2$, 
we conclude that the triangle implies an exact sequence
\begin{equation*}
0 \to \cE_X \to A^0 \otimes \cO_X \to B_1^0 \otimes \cS_1^\vee \oplus B_2^0 \otimes \cS_2^\vee \to \cE_X^\vee \to 0.
\end{equation*}
Comparing the first Chern classes and ranks, we see that~$\dim(B_1^0) = \dim(B_2^0) = 2$, \mbox{$\dim(A^0) = 8$},
so that this sequence takes the form~\eqref{eq:x44-cex-cexvee}.

Now it remains to prove the stability of~$\cE_X$.
By Lemma~\ref{lemma:x44-picrd} the {\sf normalized} slope of~$\cE_X$ is
\begin{equation*}
\upmu(\cE_X) = 
\frac{ \rc_1(\cE_X) \cdot \updelta_X} {\rank(\cE_X)}
= - \frac{ (h_1 + h_2) \cdot \updelta_X}3
= - \frac23.
\end{equation*}
Therefore, to check the stability of~$\cE_X$ or~$\cE_X^\vee$, it is enough to exclude the following possibilities:
\begin{aenumerate}
\item 
\label{item:x44-rank-1}
$\cG \subset \cE_X$ is a reflexive subsheaf of rank~$1$ with~$\upmu(\cG) \ge 0$;
\item 
\label{item:x44-rank-2}
$\cG \subset \cE_X^\vee$ is a reflexive subsheaf of rank~$1$ with~$\upmu(\cG) \ge 1$.
\end{aenumerate}
Assume~\ref{item:x44-rank-1}.
Then~$\cG$ is a line bundle and by~\eqref{eq:x44-cex-cexvee} it has a nontrivial morphism to~$\cO_X$, 
therefore~\mbox{$\cG \cong \cO_X(-D)$} with effective~$D$.
Furthermore,~$D \cdot \updelta_X = -\upmu(\cG) \le 0$, 
hence~$D = 0$ by Lemma~\ref{lemma:x44-picrd} and~$\cG \cong \cO_X$.
But~$\Hom(\cO_X,\cE_X) = 0$ by semiorthogonality in~\eqref{eq:ec-x44}, hence~\ref{item:x44-rank-1} is impossible. 

A similar argument (with~\eqref{eq:x44-cex-cexvee} replaced by~\eqref{eq:x44-cexvee-cex}) 
shows that if~\ref{item:x222-rank-2} holds then~$\cG \cong \cO(h_i)$,
which contradicts semiorthogonality of~\eqref{eq:ec-x44}.
Thus, \ref{item:x222-rank-2} is also impossible. 
\end{proof}

Now that we know a good symmetric exceptional collection for~$X$ over algebraically closed fields, 
we can pass to fibrations over any base scheme~$S$.
Recall the description of~$X$ given in Proposition~\ref{prop:forms-x44},
in particular the double covering~$S' \to S$ and the quadric fibration~$q \colon Z \to S'$.

\begin{theorem}
\label{thm:x44}
If $X/S$ is a form of a Fano threefold of type~$\sX_{4,4}$ there is a semiorthogonal decomposition
\begin{equation*}
\bD(X) = \langle
\cE_X \otimes \bD(S), 
\cO_X \otimes \bD(S), 
\uppsi^* \big( \Phi(\cS^\vee \otimes \bD(S',\upbeta_\cS)) \big), 
\uppsi^* \big( \Phi(\cO_{Z}(H_Z) \otimes \bD(S')) \big)
\rangle,
\end{equation*}
where~$\upbeta_\cS$ is a $2$-torsion Brauer class on~$S'$,
$\cS$ is a $q^*(\upbeta_\cS)$-twisted spinor bundle of rank~$2$ on~$Z$,
and~$\cE_X$ is an $S$-exceptional vector bundle of rank~$3$ on~$X$.

Moreover, if~$X(S) \ne \varnothing$ then~$\upbeta_\cS \in \Br(S')$ can be represented by a conic bundle. 
\end{theorem}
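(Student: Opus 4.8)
The plan is to mirror the proof of Theorem~\ref{thm:x222}, combining the Weil-restriction package of \S\ref{ss:weil-decomposition} with a relative moduli-space construction of the bundle~$\cE_X$ and reducing the semiorthogonal decomposition to its fibrewise form~\eqref{eq:ec-x44}. I would begin with the geometry: Proposition~\ref{prop:forms-x44} supplies the \'etale double cover $f\colon S'\to S$, the quadric fibration $q\colon Z\to S'$ with fibres~$Q^3$, and the closed embedding $\uppsi\colon X\hookrightarrow\Res_{S'/S}(Z)$; then Theorem~\ref{thm:db-quadric} applied to~$Z/S'$ produces the $2$-torsion class $\upbeta_\cS\in\Br(S')$, the $q^*(\upbeta_\cS)$-twisted rank-$2$ spinor bundle~$\cS$ on~$Z$, and the decomposition $\bD(Z)=\langle \cS\otimes\bD(S',\upbeta_\cS),\ \cO_Z\otimes\bD(S'),\ \cO_Z(H_Z)\otimes\bD(S'),\ \cO_Z(2H_Z)\otimes\bD(S')\rangle$ (using $\upbeta_\cS^{-1}=\upbeta_\cS$, and Remark~\ref{rem:quadric-hx-untwisted} for the untwistedness of~$\cO_Z(H_Z)$). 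This already settles the final assertion: a section of~$X/S$, composed with~$\uppsi$ and the adjunction~\eqref{eq:res-adjunction}, gives a section of~$Z/S'$, so the last clause of Theorem~\ref{thm:db-quadric} (for~$Z/S'$) represents~$\upbeta_\cS$ by a conic bundle over~$S'$.

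Next I would construct~$\cE_X$ by the argument of Proposition~\ref{prop:x16-forms} used for~$\sX_{2,2,2}$: let $\rM\coloneqq\rM_{X/S,H_X}(3;-H_X,\dots)$ (in the convention of Remark~\ref{rem:moduli-chern-hilbert}) be the relative moduli space with the Chern invariants of~$\cE_{X_s}$ from Proposition~\ref{prop:x44-ec}, and let $\rM^\circ\subset\rM$ be the open locus of bundles~$E$ with $\rH^\bullet(X_s,E)=0$. To see $\rM^\circ\to S$ is bijective on geometric points one applies Proposition~\ref{prop:uniqueness-general} with~$\cU=\cE_{X_s}$; conditions~\eqref{eq:cu-ce-assumptions} hold by Lemma~\ref{lemma:hilbert-euler} (applicable because $\cE_X$ is invariant under the $Q_1\leftrightarrow Q_2$ symmetry, so $\rch(\cE_X)$ is monodromy-invariant and hence lies in~$\QQ[H_X]$), while the needed $\Ext^2$-vanishing comes out of~\eqref{eq:x44-cex-cexvee}: breaking it as $0\to\cE_X\to\cO_X^{\oplus8}\to\cK\to0$ and $0\to\cK\to\cS_1^{\vee2}\oplus\cS_2^{\vee2}\to\cE_X^\vee\to0$, the vanishing $\rH^\bullet(X_s,E)=0$ kills $\Ext^\bullet(\cO_X^{\oplus8},E)$, so $\Ext^2(\cE_{X_s},E)\cong\Ext^3(\cK,E)$, which is squeezed between $\Ext^3(\cS_1^{\vee2}\oplus\cS_2^{\vee2},E)$ and $\Ext^4(\cE_{X_s}^\vee,E)=0$; the former vanishes since $\Hom(E,\cS_i^\vee(K_X))=0$ ($E$ semistable of slope~$-\tfrac23$, each~$\cS_i^\vee(K_X)$ stable of slope~$-\tfrac32$). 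Since~$\cE_{X_s}$ is exceptional, Corollary~\ref{cor:moduli-etale} upgrades this to $\rM^\circ\cong S$, and Proposition~\ref{prop:twisted-universal} then supplies a class~$\upbeta_\cE\in\Br(S)$ and a~$p^*(\upbeta_\cE)$-twisted universal bundle~$\cE_X$ on~$X$ with~$\cE_X|_{X_s}\cong\cE_{X_s}$.

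With~$\cE_X$ available, the decomposition follows as in Theorems~\ref{thm:x111}, \ref{thm:x22}, \ref{thm:x222}: both $\cS^\vee\otimes\bD(S',\upbeta_\cS)$ and $\cO_Z(H_Z)\otimes\bD(S')$ lie in ${}^\perp(\cO_Z\otimes\bD(S'))$ inside~$\bD(Z)$ (as $\rH^\bullet(Q^3,\cS)=\rH^\bullet(Q^3,\cO(-1))=0$), so applying the functor~$\Phi$ of Theorem~\ref{thm:pmd-qs-ps} and then~$\uppsi^*$ gives admissible $S$-linear subcategories of~$\bD(X)$; combined with the relative exceptional pair~$(\cE_X,\cO_X)$ (Corollary~\ref{cor:relative-exceptional}) these restrict on geometric fibres to exactly the collection~\eqref{eq:ec-x44}, so Proposition~\ref{prop:relative-sod}\ref{item:linear-ff}--\ref{item:linear-sod} yields the stated $S$-linear decomposition, with first component~$\cE_X\otimes\bD(S,\upbeta_\cE^{-1})$ a priori. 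It then remains to check~$\upbeta_\cE=1$: one has $\upbeta_\cE^3=1$ by Corollary~\ref{cor:twisted-rank} since $\rank(\cE_X)=3$, and $\upbeta_\cE^2=1$ by the argument of Theorem~\ref{thm:x222} — a relative incarnation of the self-dual resolutions~\eqref{eq:x44-cex-cexvee}, \eqref{eq:x44-cexvee-cex} (a four-term variant of Lemma~\ref{lemma:beta-2}, or the twist bookkeeping $\upbeta_\cE^{-2}=\upbeta_\cE^{-1}$ for the middle term $\uppsi^*\Phi(\cO_Z(H_Z)\otimes q^*\cW)$ of~\eqref{eq:x44-cexvee-cex} via $\cores_{S'/S}$); hence $\upbeta_\cE=1$, $\cE_X$ is an honest rank-$3$ bundle, and the first component is $\cE_X\otimes\bD(S)$.

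The main obstacle I expect is concentrated in the construction of~$\cE_X$, and is twofold. First, establishing bijectivity of~$\rM^\circ\to S$: unlike for~$\sX_{2,2,2}$ there is no three-term self-dual resolution of~$\cE_X$, so Proposition~\ref{prop:uniqueness-general} must be fed via the four-term complex~\eqref{eq:x44-cex-cexvee} — either by the $\Ext$-chase above, or by verifying that the intermediate sheaf~$\cK$ is semistable with $\upmu(\cK(K_X))<\upmu(\cE_X)$ — and one must still secure~\eqref{eq:cu-ce-assumptions} despite~$X$ having Picard rank~$2$, which is exactly where the symmetry of~$\cE_X$ enters to license Lemma~\ref{lemma:hilbert-euler}. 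Second, the triviality (rather than just $2$-torsion) of~$\upbeta_\cE$ is a new feature relative to~$\sX_{2,2,2}$, forced by $\gcd(2,\rank(\cE_X))=1$; making the relative version of~\eqref{eq:x44-cex-cexvee}--\eqref{eq:x44-cexvee-cex} precise enough that the twist bookkeeping delivers $\upbeta_\cE^2=1$ (effectively extending Lemma~\ref{lemma:beta-2} to a four-term evaluation complex) is the delicate step.
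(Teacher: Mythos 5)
Your overall strategy is the paper's: Proposition~\ref{prop:forms-x44} and Theorem~\ref{thm:db-quadric} for the spinor component and the conic-bundle claim (your reduction of the last assertion via the adjunction~\eqref{eq:res-adjunction} is exactly what the paper does), a relative moduli space with a twisted universal family for~$\cE_X$, and Proposition~\ref{prop:relative-sod} together with Proposition~\ref{prop:x44-ec} to globalize the fibrewise collection~\eqref{eq:ec-x44}. Your variant of the moduli identification is legitimate: you take the open locus $\rH^\bullet(X_s,E)=0$ and run an $\Ext$-chase through the two halves of~\eqref{eq:x44-cex-cexvee}, using $H_X$-stability of the pulled-back spinor bundles (Lemma~\ref{lemma:x44-spinor}) and slopes to kill $\Ext^3(\cS_i^\vee,E)$, and Lemma~\ref{lemma:hilbert-euler} for~\eqref{eq:cu-ce-assumptions}; the paper instead takes the locus $\rH^\bullet(X_s,E(-h_1))=\rH^\bullet(X_s,E(-h_2))=0$ and feeds the three-term sequence~\eqref{eq:x44-cexvee-cex} directly into Proposition~\ref{prop:uniqueness-general}. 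Either route works.

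The genuine gap is exactly where you flag it: the triviality of~$\upbeta_\cE$. What you actually establish is only $\upbeta_\cE^3=1$ (and the correct citation there is Lemma~\ref{lemma:beta-kr} applied to $\det\cE_X\cong\cO_X(-H_X)$, which is untwisted; Corollary~\ref{cor:twisted-rank} concerns twisted bundles on~$S$ itself, not relative twisted bundles on~$X$). Your proposed source of $\upbeta_\cE^2=1$ does not go through as stated: Lemma~\ref{lemma:beta-2} needs $\dim\rH^0(X_s,\cE^\vee)=2\rank(\cE)$ and a three-term self-dual evaluation sequence, whereas here $\rH^0(X_s,\cE_{X_s}^\vee)$ is $8$-dimensional, $\rank(\cE_X)=3$, and the resolution~\eqref{eq:x44-cex-cexvee} is a four-term complex whose middle terms are themselves $\upbeta_\cS$-twisted, so the ``four-term variant'' is an unproved lemma; likewise the bookkeeping via~\eqref{eq:x44-cexvee-cex} presupposes that this sequence exists as a sequence of relative twisted sheaves on~$X$, which is essentially the statement you are trying to prove. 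The paper closes this point with a much simpler observation that you could substitute verbatim: $W\coloneqq(p_*\cE_X^\vee)^\vee$ is a $\upbeta_\cE$-twisted vector bundle of rank~$8$ on~$S$ (fibrewise $\rH^0(X_s,\cE_{X_s}^\vee)$ is $8$-dimensional by~\eqref{eq:x44-cex-cexvee} and the orthogonality in~\eqref{eq:ec-x44}), so $\upbeta_\cE^8=1$ by Corollary~\ref{cor:twisted-rank}, and together with $\upbeta_\cE^3=1$ this forces $\upbeta_\cE=1$. With that replacement your argument is complete and coincides with the paper's proof.
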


\begin{proof}
First, we construct a global version of the bundle~$\cE_X$ by using the argument of Proposition~\ref{prop:x16-forms}.
Consider the relative moduli space 
\begin{equation*}
\rM \coloneqq \rM_{X/S}(3; -H_X, 6\updelta_X, -2P_X),
\end{equation*}
where~$H_X$ is the fundamental class, 
the class~$\updelta_X$ is defined in~\eqref{eq:delta-x44}, 
and~$P_X$ is the class of a point.
Let also $\rM^\circ \subset \rM$ be the open subscheme parameterizing bundles~$E$ on~$X_s$ with the vanishings
\begin{equation*}
\rH^\bullet(X_s,E(-h_1)) = \rH^\bullet(X_s,E(-h_2)) = 0.
\end{equation*}
By Propositions~\ref{prop:x44-ec} and Proposition~\ref{prop:uniqueness-general} 
(conditions~\eqref{eq:cu-ce-assumptions} follow from numerical equivalence)
applied to the exact sequence~\eqref{eq:x44-cexvee-cex},
the natural morphism~\mbox{$f \colon \rM^\circ \to S$} is bijective on geometric points
and for every geometric point~$[E] \in \rM^\circ$, the bundle~$E$ is exceptional.
Therefore, $f$ is an isomorphism by Corollary~\ref{cor:moduli-etale}.

By Proposition~\ref{prop:x222-ec} every sheaf parameterized by the moduli space~$\rM^\circ$ is $H_X$-stable.
Therefore, applying Proposition~\ref{prop:twisted-universal} 
we obtain a Brauer class~$\upbeta_\cE \in \Br(S)$ on~$\rM^\circ \cong S$
and a $p^*(\upbeta_\cE)$-twisted universal family~$\cE_X$ on~$X \times_S \rM^\circ = X$.
Let 
\begin{equation*}
W \coloneqq (p_*\cE_X^\vee)^\vee;
\end{equation*}
this is a~$\upbeta_\cE$-twisted vector bundle on~$S$ of rank~8.
Therefore, $\upbeta_\cE^8 = 1$.
On the other hand, we have~$\wedge^3\cE_X \cong \cO_X(-H_X)$ is untwisted, hence~$\upbeta_\cE^3 = 1$.
From these two observations it follows that~$\upbeta_\cE = 1$, hence~$\cE_X$ is untwisted.

Now let~$\upbeta_\cS \in \Br(S')$ be the 2-torsion Brauer class 
and let~$\cS$ be the~$q^*(\upbeta_\cS)$-twisted spinor bundle on the smooth 3-dimensional quadric bundle~$q \colon Z \to S'$, 
constructed in Theorem~\ref{thm:db-quadric}.
Then we have a semiorthogonal collection of $S'$-admissible subcategories
\begin{equation*}
\langle \cO_Z \otimes \bD(S'), \cS^\vee \otimes \bD(S',\upbeta_\cS), \cO_Z(H_Z) \otimes \bD(S') \rangle \subset \bD(Z).
\end{equation*}
Applying Theorem~\ref{thm:pmd-qs-ps} we obtain the last three components in the required semiorthogonal decomposition of~$\bD(X)$.
Using the bundle~$\cE_X$ constructed above, we obtain the first component.
To check full faithfulness, semiorthogonality and generation, 
we apply Proposition~\ref{prop:relative-sod}.
Accordingly, we need to consider the case where~$S$ is the spectrum of an algebraically closed field.
In this case the required semiorthogonal decomposition was constructed in Proposition~\ref{prop:x44-ec}.

Finally, if~$X(S) \ne \varnothing$ then~$Z(S') \ne \varnothing$ by~\eqref{eq:res-adjunction},
and if~$i \colon S' \to Z$ is a section of~$Z \to S'$ 
then~$i^*\cS$ is a $\upbeta_\cS$-twisted vector bundle of rank~$2$ on~$S'$, 
so that~$\upbeta_\cS$ is represented by the conic bundle~$\P_{S'}(i^*\cS)$.
\end{proof}

\subsection{Forms of~$\sX_{3,3}$}

Recall that according to notation from the Introduction 
over an algebraically closed field every Fano threefold of type~$\sX_{3,3}$ 
is isomorphic to a linear section of~$\P^3 \times \P^3$ of codimension~3.
Using Proposition~\ref{prop:forms-general} we obtain a description of all Fano fibrations of this type.

Recall that for an \'etale double covering~$S' \to S$ 
we denote the action of the Galois involution of~$S'$ over~$S$ on~$\Br(S')$ by~$\upbeta' \mapsto \bar\upbeta'$.

\begin{proposition}
\label{prop:x33}
If~$p \colon X \to S$ is a smooth Fano fibration with fibers of type~$\sX_{3,3}$
there is an \'etale covering~$S' \to S$ of degree~$2$ with connected~$S'$, 
a $4$-torsion Brauer class~$\upbeta' \in \Br(S')$ such that 
\begin{equation}
\label{eq:x33-beta}
\bar\upbeta' = {\upbeta'}^{-1},
\end{equation} 
and a $\upbeta'$-twisted vector bundle~$V$ on~$S'$ of rank~$4$ 
such that~$X \subset \Res_{S'/S}(\P_{S'}(V))$.
Moreover, if~$W \coloneqq \cores_{S'/S}(V)$ is the Segre bundle then~$W$ is untwisted and
there is a vector bundle~$A$ of rank~$3$ on~$S$ and an epimorphism~\mbox{$\varphi \colon W \twoheadrightarrow A^\vee$} 
such that 
\begin{equation}
\label{eq:form-x33}
X = \Res_{S'/S}(\P_{S'}(V))) \times_{\P_S(W)} \P_S(\Ker(\varphi)) 
\end{equation}
where the morphism $\Res_{S'/S}(\P_{S'}(V)) \to \P_S(W)$ in the fiber product is the Segre embedding.
\end{proposition}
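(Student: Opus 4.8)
The plan is to follow the same strategy as in Propositions~\ref{prop:x22} and~\ref{prop:x222}. Since $p \colon X \to S$ has relative Picard rank~$1$, i.e.\ $\Pic_{X/S}(S) \cong \ZZ$, I would first apply Proposition~\ref{prop:forms-general} with $Y_s = \P^3$, $d = 2$, and $\upxi_s \colon X_s \hookrightarrow \P^3 \times \P^3$ the natural embedding of a codimension-$3$ linear section. Its hypotheses --- the two projections $X_s \to \P^3$ are surjective with connected fibers (each being a birational morphism between smooth projective varieties, it blows up $\P^3$ along a curve), and $\Pic(X_s) = \ZZ h_1 \oplus \ZZ h_2$ with $h_1, h_2$ generating the nef cone --- are part of the classical description of~$\sX_{3,3}$, see~\cite{KP21}. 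This produces a connected degree-$2$ \'etale covering $f \colon S' \to S$ and a smooth projective morphism $Z \to S'$ with fibers~$\P^3$, together with a closed embedding $\uppsi \colon X \hookrightarrow \Res_{S'/S}(Z)$ restricting to~$\upxi_s$ on each geometric fiber. As $Z \to S'$ is a smooth $\P^3$-fibration, Lemma~\ref{lemma:sb-twisted-sheaves} gives $Z \cong \P_{S'}(V)$ for some $\upbeta'$-twisted vector bundle~$V$ of rank~$4$ on~$S'$, and Corollary~\ref{cor:twisted-rank} yields ${\upbeta'}^4 = 1$.

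Next I would pass to the Segre picture. Put $W \coloneqq \cores_{S'/S}(V)$ and $\upbeta \coloneqq \cores_{S'/S}(\upbeta')$; by Lemma~\ref{cor:segre}, $W$ is a $\upbeta^{-1}$-twisted bundle of rank~$16$ and the Segre embedding realizes $\Res_{S'/S}(\P_{S'}(V)) \hookrightarrow \P_S(W)$, with fundamental class $H_W$ that is $p^*(\upbeta^{-1})$-twisted and restricts to $\cO(1,1)$ on each geometric fiber $\P^3 \times \P^3$. Let $\cI_X$ denote the ideal of~$X$ in $\Res_{S'/S}(\P_{S'}(V))$ and let $p_W$ be the projection to~$S$. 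Since $X_s \subset \P^3 \times \P^3$ is a codimension-$3$ linear section in the Segre embedding, a cohomology-and-base-change argument --- using that $\rH^0$ of the $\cO(1,1)$-twisted ideal of $X_s$ in $\P^3 \times \P^3$ is $3$-dimensional and that its higher cohomology vanishes --- shows that $A \coloneqq p_{W*}(\cI_X \otimes \cO(H_W))$ is a $\upbeta^{-1}$-twisted vector bundle of rank~$3$ on~$S$. Then Corollary~\ref{cor:twisted-rank} gives $\upbeta^3 = 1$, while $\upbeta^4 = \cores_{S'/S}(\upbeta')^4 = \cores_{S'/S}({\upbeta'}^4) = 1$; hence $\upbeta = 1$, so $W$ and~$A$ are untwisted. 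The identity $f^*(\cores_{S'/S}(\upbeta')) = \upbeta' \cdot \bar\upbeta'$ for degree-$2$ coverings (see~\cite[\S3.8]{CTS}) then gives $\upbeta' \cdot \bar\upbeta' = f^*(1) = 1$, which is~\eqref{eq:x33-beta}.

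Finally, dualizing the fiberwise-injective inclusion $A \hookrightarrow p_{W*}\cO(H_W) = W^\vee$ yields an epimorphism $\varphi \colon W \twoheadrightarrow A^\vee$, and $\P_S(\Ker(\varphi)) \subset \P_S(W)$ is the relative linear subspace cutting out~$X$; unwinding the definitions exactly as in Proposition~\ref{prop:x22} identifies $X$ with $\Res_{S'/S}(\P_{S'}(V)) \times_{\P_S(W)} \P_S(\Ker(\varphi))$ along the Segre embedding, which is~\eqref{eq:form-x33}.

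I expect the only real subtlety to be the Brauer-class bookkeeping in the middle step --- organizing the torsion constraints so that ${\upbeta'}^4 = 1$ together with the rank-$3$ relation $\upbeta^3 = 1$ forces $\upbeta = 1$, while (correctly) leaving $\upbeta'$ itself possibly nontrivial --- together with the fiberwise computation that $A$ has rank~$3$ with vanishing higher direct images so that the pushforward is locally free. Everything else is a direct application of Proposition~\ref{prop:forms-general} and Lemmas~\ref{lemma:sb-twisted-sheaves}, \ref{cor:segre}, \ref{cor:twisted-rank}, and no techniques beyond those of~\S\ref{sec:picard}--\S\ref{sec:weil} are needed.
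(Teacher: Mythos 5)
Your proposal is correct and follows essentially the same route as the paper, whose proof is simply declared ``analogous to Proposition~\ref{prop:x22}'': apply Proposition~\ref{prop:forms-general} with $d=2$, identify $Z\cong\P_{S'}(V)$ via Lemma~\ref{lemma:sb-twisted-sheaves}, and use the Segre embedding and the pushforward of the twisted ideal together with Corollary~\ref{cor:twisted-rank}. You also correctly supply the one extra wrinkle not present in the $\sX_{2,2}$ case, namely that the rank-$3$ pushforward only gives $\upbeta^3=1$, which must be combined with $\upbeta^4=\cores_{S'/S}({\upbeta'}^4)=1$ to force $\upbeta=1$ (the only blemish is the immaterial sign slip calling $W$ a $\upbeta^{-1}$-twisted rather than $\upbeta$-twisted bundle).
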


\begin{proof}
The proof is analogous to the proof of Proposition~\ref{prop:x22}.
\end{proof}

Using this description and Theorem~\ref{thm:pmd-qs-ps} we can construct a semiorthogonal decomposition.
Recall that~$H_V$ denotes the fundamental class of~$\P_{S'}(V)$.

\begin{theorem}
\label{thm:x33}
If $X/S$ is a form of a Fano threefold of type~$\sX_{3,3}$ then there is a semiorthogonal decomposition
\begin{equation*}
\bD(X) = \langle
\cA_X,
\cO_X \otimes \bD(S), 
\uppsi^* \big( \Phi(\cO_{\P_{S'}(V)}(H_V) \otimes \bD(S',\upbeta')) \big) 
\rangle,
\end{equation*}
where~$\cA_X \subset \bD(X)$ is an $S$-linear admissible triangulated subcategory. 
Moreover, the base change of~$\cA_X$ along the double covering~$S' \to S$ has a semiorthogonal decomposition
\begin{equation}
\label{eq:x33-cax-sp}
(\cA_X)_{S'} = \langle \bD(\Gamma'), \cE \otimes \bD(S', {\upbeta'}^2) \rangle,
\end{equation}
where $\Gamma'/S'$ is a smooth curve of genus~$3$ and~$\cE$ 
is a $p^*({\upbeta'}^2)$-twisted $S'$-exceptional vector bundle of rank~$3$ on~$X \times_S S'$.
\end{theorem}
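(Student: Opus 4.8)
The plan is to combine the explicit description of $X$ from Proposition~\ref{prop:x33} with the Weil-restriction decomposition of Theorem~\ref{thm:pmd-qs-ps}, and then to analyze the leftover component after base change to $S'$. First I would record that over an algebraically closed field a Fano threefold $X_0$ of type $\sX_{3,3}$ sits inside $\P^3 \times \P^3$ as a codimension-$3$ linear section, that each projection $X_0 \to \P^3$ is a conic bundle (or small contraction—one checks the precise geometry), and that the full exceptional/semiorthogonal structure of $\bD(X_0)$ is known: $\bD(X_0)$ has a semiorthogonal decomposition whose ``extra'' piece is $\bD(\Gamma_0)$ for a genus-$3$ curve $\Gamma_0$ together with an exceptional bundle of rank $3$ coming from the spinor/null-correlation construction on the two $\P^3$ factors. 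I would cite the relevant places in \cite{K06} (and \cite{KP21} for the geometry) for this. The point is that the collection $\langle \cO, \text{(pullback of } \cO_{\P^3}(1)\text{-twisted structure)}\rangle$ part of this decomposition is monodromy-symmetric in the two factors, while the remaining component $\cA_{X_0} = \langle \bD(\Gamma_0), \cE_0 \rangle$ is intrinsic and hence survives the $S$-linear gluing.

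Concretely: applying Theorem~\ref{thm:bernardara} to $\P_{S'}(V)\to S'$ gives $\bD(\P_{S'}(V)) = \langle \cO_{\P_{S'}(V)}\otimes\bD(S'), \cO_{\P_{S'}(V)}(H_V)\otimes\bD(S',\upbeta'), \cO_{\P_{S'}(V)}(2H_V)\otimes\bD(S',{\upbeta'}^2), \cO_{\P_{S'}(V)}(3H_V)\otimes\bD(S',{\upbeta'}^3)\rangle$, and feeding the first two terms (i.e., the left orthogonal of the rest, resp.\ the structure sheaf) into Theorem~\ref{thm:pmd-qs-ps} produces, after pulling back along $\uppsi$ and using Proposition~\ref{prop:relative-sod}, the two components $\cO_X\otimes\bD(S)$ and $\uppsi^*(\Phi(\cO_{\P_{S'}(V)}(H_V)\otimes\bD(S',\upbeta')))$ together with an $S$-linear admissible complement $\cA_X \subset \bD(X)$; here one uses that $\uppsi$ is the zero locus of a regular section and the Koszul/projection formalism to transport the subcategories, exactly as in the proofs of Theorems~\ref{thm:x22}, \ref{thm:x222}, \ref{thm:x44}. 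The semiorthogonality and generation are checked fiberwise over geometric points, reducing to the known decomposition of $\bD(X_0)$ by Proposition~\ref{prop:relative-sod}\ref{item:linear-so}--\ref{item:linear-sod}.

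For the base-changed statement \eqref{eq:x33-cax-sp} I would argue as follows. After pulling back along $S'\to S$ the covering $S'\to S$ splits, so $\Res_{S'/S}(\P_{S'}(V))_{S'} \cong \P_{S'}(V) \times_{S'} \P_{S'}(\bar V)$ (where $\bar V$ is the Galois conjugate), and $X_{S'}$ becomes a divisorial/linear section of this product. On this product the remaining two Bernardara pieces $\cO(2H_V)\otimes\bD(S',{\upbeta'}^2)$ and $\cO(3H_V)\otimes\bD(S',{\upbeta'}^3)$ of one factor, combined with the structure sheaf of the other, give—via a further application of the blowup/projection formula for the conic-bundle projection $X_{S'}\to \P_{S'}(\bar V)$, mirroring the computation over a field—a twisted exceptional bundle $\cE$ of rank $3$ (twist ${\upbeta'}^2$, forced by the $\wedge^2$ computation and Lemma~\ref{lemma:beta-kr}, consistent with $\bB(H_X)$ being trivial on $X$ and ${\upbeta'}^4=1$ from Corollary~\ref{cor:twisted-rank}) and a component $\bD(\Gamma')$ for a smooth genus-$3$ family $\Gamma'\to S'$. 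The curve $\Gamma'$ should be produced as a relative moduli space of sheaves on $X_{S'}/S'$ exactly as in Proposition~\ref{prop:y4-moduli} / \ref{prop:x9-moduli}: choose the appropriate Hilbert polynomial, show by Theorem~\ref{thm:moduli-smooth} and Corollary~\ref{cor:moduli-curve} that the moduli space is smooth of relative dimension $1$, identify the fibers with the genus-$3$ curves $\Gamma'_{X_s}$ from the field case, and invoke Proposition~\ref{prop:twisted-universal}; here the Brauer obstruction on $\Gamma'$ turns out to be trivial because $\bD(\Gamma')$ is the ``$\P^3$-end'' component and the universal sheaf has rank equal to its $\chi$ on a line/conic, as in Theorems~\ref{thm:x10}, \ref{thm:x7}.

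The main obstacle I expect is the construction of the twisted exceptional bundle $\cE$ of rank $3$ (and the precise identification of its Brauer twist ${\upbeta'}^2$): over a field this requires a somewhat delicate mutation computation inside $\bD(X_0)$ analogous to Propositions~\ref{prop:x222-ec} and~\ref{prop:x44-ec}—establishing the self-dual exact sequences that pin down $\cE$, proving stability so that Proposition~\ref{prop:twisted-universal} applies to a relative moduli space, and then verifying via Lemma~\ref{lemma:beta-kr} and Corollary~\ref{cor:twisted-rank} that the resulting Brauer class is exactly ${\upbeta'}^2$ rather than something smaller. The secondary difficulty is bookkeeping the Brauer twists through the Weil-restriction splitting and the Segre embedding, i.e.\ checking that $\bar\upbeta' = {\upbeta'}^{-1}$ (Proposition~\ref{prop:x33}) forces the twist on $\bD(\Gamma')$ to vanish while leaving ${\upbeta'}^2$ on $\cE$; this is formal given Lemma~\ref{cor:segre} and the exact sequence \eqref{eq:twisted-picard-brauer-sequence}, but must be done carefully.
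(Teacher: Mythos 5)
Your first half (using Proposition~\ref{prop:x33}, Theorem~\ref{thm:bernardara} for $\P_{S'}(V)$, Theorem~\ref{thm:pmd-qs-ps}, and the fiberwise check via Proposition~\ref{prop:relative-sod}, with $\cA_X$ defined as the orthogonal complement) is exactly the paper's argument. The gap is in the second half: you never identify the geometric fact that drives the whole computation of $(\cA_X)_{S'}$, namely that after base change $X' = X \times_S S'$ embeds in $\P_{S'}(V) \times_{S'} \P_{S'}(\uptau^*V)$ and the \emph{first projection} $X' \to \P_{S'}(V)$ is, fiberwise and hence globally, the blowup of the $\P^3$-bundle along a family $\Gamma' \to S'$ of smooth sextic curves of genus~$3$ (\cite[Lemma~2.4(iii)]{KP21}). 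Your hedged guess that this projection is ``a conic bundle (or small contraction)'' is wrong, and because of this you are forced into a much heavier and incomplete substitute: you propose to manufacture $\Gamma'$ as a relative moduli space of sheaves (à la Propositions~\ref{prop:y4-moduli}, \ref{prop:x9-moduli}) and to build the rank-$3$ bundle $\cE$ by a mutation-plus-stability argument modeled on Propositions~\ref{prop:x222-ec} and~\ref{prop:x44-ec}. You yourself flag the construction of $\cE$, its stability, and the identification of its twist as ``the main obstacle,'' and you only assert, without argument, that the Brauer class on $\Gamma'$ vanishes; none of these steps is carried out, so the proposal does not prove~\eqref{eq:x33-cax-sp}.

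In the paper all of this is immediate once the blowup structure is in hand: the blowup formula gives $\bD(X') = \langle i_*q^*\bD(\Gamma') \otimes \omega_{X'/S'},\ \pr_1^*\bD(\P_{S'}(V)) \rangle$, so $\bD(\Gamma')$ appears \emph{untwisted} with no moduli-space or Brauer-class analysis needed; then one inserts the standard twisted exceptional collection $\langle \cO(-h_1), \cT_1(-2h_1), \cO, \cO(h_1) \rangle$ on the $\P^3$-bundle and performs two elementary mutations (move the $\Gamma'$-component one step right, then $\cO(-h_1)$ to the far end using $K_{X'} = -h_1 - h_2$). The bundle $\cE$ is simply the pullback of the twisted relative tangent bundle $\cT_1(-2h_1)$, which is already $S'$-exceptional of rank~$3$ and carries the twist ${\upbeta'}^{\pm 2}$ by the bookkeeping of Example~\ref{ex:p3} --- no stability, self-dual sequences, or Proposition~\ref{prop:twisted-universal} are required. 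To repair your write-up you would need either to supply the blowup identification (after which your moduli/mutation machinery becomes unnecessary) or to actually carry out the moduli-theoretic construction you sketch, including the vanishing of the Brauer obstruction on $\Gamma'$ and the exactness/stability statements pinning down $\cE$; as it stands neither is done.
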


\begin{proof}
The proof of the first part is analogous to the proof of Theorem~\ref{thm:x111}.
More precisely, we use Theorem~\ref{thm:pmd-qs-ps} to construct the last two components
and define the category~$\cA_X$ as their orthogonal.
So, it remains to describe the base change~$(\cA_X)_{S'}$.

Now we prove the second part.
The embedding~$\uppsi \colon X \to \Res_{S'/S}(\P_{S'}(V))$ constructed in Proposition~\ref{prop:x33} 
after base change along~$f \colon S' \to S$ gives an embedding
\begin{equation*}
\uppsi' \colon X' \coloneqq X \times_S S' \hookrightarrow \Res_{S'/S}(\P_{S'}(V)) \times_S S' 
\cong \P_{S'}(V) \times_{S'} \P_{S'}(\uptau^*V),
\end{equation*}
where~$\uptau \colon S' \to S'$ is the involution of the double covering~$f$.
Consider the first projection
\begin{equation*}
\pr_1 \colon X' \to \P_{S'}(V).
\end{equation*}
On each geometric fiber over~$S'$ this is the blowup of~$\P^3$ 
along a smooth sextic curve of genus~$3$ (see, e.g., \cite[Lemma~2.4(iii)]{KP21}), 
hence the same is true globally, i.e., 
there is a smooth projective morphism~$\Gamma' \to S'$ with geometric fibers curves of genus~$3$,
and an embedding~$\Gamma' \hookrightarrow \P_{S'}(V)$ (of relative degree~6)
such that
\begin{equation*}
X' \cong \Bl_{\Gamma'}(\P_{S'}(V)).
\end{equation*}
Using the blowup formula, we obtain a semiorthogonal decomposition
\begin{equation*}
\bD(X') = \langle i_*q^*\bD(\Gamma') \otimes \omega_{X'/S'}, \pr_1^*(\bD(\P_{S'}(V))) \rangle,
\end{equation*}
where $q \colon E_1 \to \Gamma'$ is the exceptional divisor of the blowup~$\pr_1$ 
and~$i \colon E_1 \hookrightarrow X'$ is its embedding.
Let~$h_1$ and~$h_2$ denote the fundamental classes of~$\P_{S'}(V)$ and~$\P_{S'}(\uptau^*V)$, respectively, 
as well as their pullbacks to~$X'$.
Then using one of standard (twisted) exceptional collections for~$\P_{S'}(V)$ 
we can rewrite the above semiorthogonal decomposition as
\begin{equation*}
\bD(X') = 
\langle i_*q^*\bD(\Gamma') \otimes \omega_{X'/S'}, \cO(-h_1), \cT_1(-2h_1), \cO, \cO(h_1) \rangle
\end{equation*}
(recall from Example~\ref{ex:p3} that~$\cO(kh_1)$ is~${\upbeta'}^{-k}$-twisted, 
while~$\cT_1$ is the relative tangent bundle for~$X'/S'$ and so~$\cT_1(-2h_1)$ is~${\upbeta'}^{-2}$-twisted,
and we omit the derived categories~$\bD(S',{\upbeta'}^k)$ which should appear as the corresponding factors)
and apply a couple of mutations.
First, we mutate~$i_*q^*\bD(\Gamma') \otimes \omega_{X'/S'}$ one step to the right,
obtaining the decomposition
\begin{equation*}
\bD(X') = \langle \cO(-h_1), \bR_{\cO(-h_1)}(i_*q^*\bD(\Gamma') \otimes \omega_{X'/S'}), \cT_1(-2h_1), \cO, \cO(h_1) \rangle.
\end{equation*}
After that we mutate~$\cO(-h_1)$ to the far right,
and since~$K_{X'} = -h_1 - h_2$, we obtain the decomposition
\begin{equation*}
\bD(X') = \langle \bR_{\cO(-h_1)}(i_*q^*\bD(\Gamma') \otimes \omega_{X'/S'}), \cT_1(-2h_1), \cO, \cO(h_1), \cO(h_2) \rangle.
\end{equation*}
We note that the base change to~$S'$ of~$\uppsi^* \big( \Phi(\cO_{\P_{S'}(V)}(H_V) \otimes \bD(S',\upbeta')) \big)$ 
is the $S'$-linear subcategory of~$\bD(X')$ 
generated by~$\cO(h_1)$ and~$\cO(h_2)$, therefore we obtain~\eqref{eq:x33-cax-sp} with~$\cE = \cT_1(-2h_1)$.
\end{proof}

\subsection{Forms of~$\sX_{1,1,1,1}$}

Recall that according to notation from the Introduction 
over an algebraically closed field every Fano threefold of type~$\sX_{1,1,1,1}$ 
is isomorphic to a hyperplane section of~$\P^1 \times \P^1 \times \P^1 \times \P^1$.
Using Proposition~\ref{prop:forms-general} we easily obtain a description of all Fano fibrations of this type.

\begin{proposition}
\label{prop:forms-x1111}
If $p \colon X \to S$ is a smooth Fano fibration with fibers of type~$\sX_{1,1,1,1}$
then there is an \'etale covering~$f \colon S' \to S$ of degree~$4$ with connected~$S'$, 
a~$2$-torsion Brauer class~$\upbeta' \in \Br(S')$ such that
\begin{equation*}
\cores_{S'/S}(\upbeta') = 1,
\end{equation*}
and a $\upbeta'$-twisted vector bundle~$V$ of rank~$2$ on~$S'$
such that~$X \subset \Res_{S'/S}(\P_{S'}(V))$.
Moreover, if~$W \coloneqq \cores_{S'/S}(V)$ is the Segre bundle then~$W$ is untwisted and
there is an untwisted line bundle~$\cL$ on~$S$, 
and an epimorphism~$\varphi \colon W \to \cL^\vee$ such that 
\begin{equation}
\label{eq:form-x1111}
X = \Res_{S'/S}(\P_{S'}(V)) \times_{\P_S(W)} \P(\Ker(\varphi)),
\end{equation}
where the morphism $\Res_{S'/S}(\P_{S'}(V)) \to \P_S(W)$ in the fiber product is the Segre embedding.
\end{proposition}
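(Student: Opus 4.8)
The plan is to imitate the proofs of Propositions~\ref{prop:x22} and~\ref{prop:x33} with $d = 4$ factors of $\P^1$ replacing two factors of $\P^2$ or $\P^3$; almost every step is then formal twisted-sheaf bookkeeping.

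First I would apply Proposition~\ref{prop:forms-general} with $Y_s = \P^1$, $d = 4$, and $\upxi_s \colon X_s \hookrightarrow (\P^1)^4$ the natural embedding of the geometric fibre as a divisor of multidegree $(1,1,1,1)$. The hypotheses to be verified are: each projection $\pr_{s,i} \colon X_s \to \P^1$ is surjective with connected fibres --- a fibre is a divisor of multidegree $(1,1,1)$ in $(\P^1)^3$, hence a connected del Pezzo sextic --- and $\Pic(X_s) = \bigoplus_{i=1}^{4}\ZZ h_{s,i}$ with $h_{s,i} \coloneqq \pr_{s,i}^{*}\cO_{\P^1}(1)$ generating the nef cone of $X_s$. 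The Picard statement is Grothendieck--Lefschetz, and the nef-cone statement is equivalent to saying that the Mori cone of $X_s$ is spanned by the four fibre classes of the exceptional divisors of the four blowdown contractions $X_s \to (\P^1)^3$, a standard property of this Mori--Mukai family (cf.\ \cite{KP21}). Proposition~\ref{prop:forms-general} then yields a finite \'etale covering $f \colon S' \to S$ of degree $4$ with connected $S'$, a smooth projective $\P^1$-fibration $Z \to S'$, and a closed embedding $\uppsi \colon X \hookrightarrow \Res_{S'/S}(Z)$ compatible with the $\upxi_s$.

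Next, Lemma~\ref{lemma:sb-twisted-sheaves} identifies $Z$ with $\P_{S'}(V)$ for a $\upbeta'$-twisted vector bundle $V$ of rank $2$ on $S'$, and Corollary~\ref{cor:twisted-rank} gives ${\upbeta'}^2 = 1$; so $\uppsi$ realizes $X$ as a divisor in $\Res_{S'/S}(\P_{S'}(V))$. I would then run $V$ through Lemma~\ref{cor:segre}: the Segre bundle $W \coloneqq \cores_{S'/S}(V)$ has rank $16$ and twist $\upbeta \coloneqq \cores_{S'/S}(\upbeta')$, and there is a Segre embedding $\upsigma \colon \Res_{S'/S}(\P_{S'}(V)) \hookrightarrow \P_S(W)$ under which the fundamental class $H_W$ of $\Res_{S'/S}(\P_{S'}(V))$ is $p_W^{*}(\upbeta^{-1})$-twisted, $p_W$ being the projection to $S$. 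To see that $\upbeta$ is trivial I would consider $\cL \coloneqq p_{W*}(\cI_X \otimes \cO(H_W))$, where $\cI_X$ is the ideal sheaf of $X$ in $\Res_{S'/S}(\P_{S'}(V))$: fibrewise $\cI_{X_s} \otimes \cO(H_W)$ restricts to $\cO_{(\P^1)^4}(-1,-1,-1,-1) \otimes \cO_{(\P^1)^4}(1,1,1,1) \cong \cO_{(\P^1)^4}$, which has one-dimensional space of sections and no higher cohomology, so $\cL$ is an $\upbeta^{-1}$-twisted line bundle on $S$ that commutes with base change; Corollary~\ref{cor:twisted-rank} then forces $\upbeta = \cores_{S'/S}(\upbeta') = 1$. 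Hence $W$ and $\cL$ are untwisted. Dualizing the subbundle inclusion $\cL \hookrightarrow p_{W*}\cO(H_W) = W^{\vee}$ produces the epimorphism $\varphi \colon W \twoheadrightarrow \cL^{\vee}$, and comparing with the fibrewise picture $X_s = (\P^1)^4 \cap \P^{14} \subset \P^{15}$ --- the hyperplane $\P^{14}$ being cut out by the functional $\cL_s \subset W_s^{\vee}$ --- gives the presentation~\eqref{eq:form-x1111}.

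The main obstacle is entirely contained in the first step: establishing that the four classes pulled back from the $\P^1$-factors generate both the Picard lattice and the nef cone of a geometric fibre (equivalently, pinning down the Mori cone of a variety of type $\sX_{1,1,1,1}$), since that is exactly what allows Proposition~\ref{prop:forms-general} to be invoked. Once that is in place everything else mirrors the $\sX_{2,2}$ and $\sX_{3,3}$ arguments; the only genuinely new feature is cosmetic --- because $S'/S$ has degree $4$ it need not be Galois, so the conclusion is recorded directly as $\cores_{S'/S}(\upbeta') = 1$ rather than as a relation between the class $\upbeta'$ and a Galois conjugate.
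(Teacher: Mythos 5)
Your proposal is correct and takes essentially the same route as the paper, whose proof consists of the single sentence that the argument is analogous to Propositions~\ref{prop:x22} and~\ref{prop:x33}: you carry out precisely that analogue with $d=4$, a rank-$2$ twisted bundle $V$, and the line bundle $\cL = p_{W*}(\cI_X \otimes \cO(H_W))$ forcing $\cores_{S'/S}(\upbeta') = 1$ via Corollary~\ref{cor:twisted-rank}. The details you supply (Lefschetz for $\Pic(X_s)$, the nef-cone verification needed to invoke Proposition~\ref{prop:forms-general}, and the fibrewise vanishing showing $\cL$ is a line bundle) are exactly the steps the paper leaves implicit.
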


\begin{proof}
The proof is analogous to the proofs of Proposition~\ref{prop:x22} and Proposition~\ref{prop:x33}.
\end{proof}

Using this description and Theorem~\ref{thm:pmd-qs-ps} we can construct a semiorthogonal decomposition.
Recall that~$H_V$ denotes the fundamental class of~$\P_{S'}(V)$.

\begin{theorem}
\label{thm:x1111}
If $X/S$ is a form of a Fano threefold of type~$\sX_{1,1,1,1}$ then there is a semiorthogonal decomposition
\begin{equation*}
\bD(X) = \langle
\cA_X,
\cO_X \otimes \bD(S), 
\uppsi^* \big( \Phi(\cO_{\P_{S'}(V)}(H_V) \otimes \bD(S',\upbeta')) \big)
\rangle,
\end{equation*}
where~$\cA_X \subset \bD(X)$ is an $S$-linear admissible triangulated subcategory. 
Moreover, the base change of~$\cA_X$ along the covering~$S' \to S$ has a semiorthogonal decomposition
\begin{equation*}
(\cA_X)_{S'} = \langle \bD(\Gamma'), \bD(S'', g_1^*(\upbeta') \cdot g_2^*(\upbeta')) \rangle,
\end{equation*}
where $\Gamma'/S'$ is a smooth curve of genus~$1$,
$S'' \subset S' \times_S S'$ is the complement of the diagonal,
and~$g_1,g_2 \colon S'' \to S'$ are the projections.
\end{theorem}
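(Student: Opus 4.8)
The plan is to follow the template established by the proof of Theorem~\ref{thm:x33}, replacing the $\P^3$-blowup structure by a $\P^1$-blowup structure and carefully keeping track of Brauer twists. First I would use Theorem~\ref{thm:pmd-qs-ps} applied to the Severi--Brauer fibration $\P_{S'}(V) \to S'$ (whose structure sheaf is relative exceptional, being the structure sheaf of a smooth projective fibration with connected Fano fibers, so $\rH^{>0}(\cO) = 0$) together with Bernardara's decomposition $\bD(\P_{S'}(V)) = \langle \cO_{\P_{S'}(V)} \otimes \bD(S'), \cO_{\P_{S'}(V)}(H_V) \otimes \bD(S',\upbeta') \rangle$ from Theorem~\ref{thm:bernardara}: this yields the two geometric components $\cO_X \otimes \bD(S)$ and $\uppsi^*(\Phi(\cO_{\P_{S'}(V)}(H_V) \otimes \bD(S',\upbeta')))$ as $S$-linear admissible subcategories of $\bD(X)$, and $\cA_X$ is defined as the left orthogonal to their span. (As in the proof of Theorem~\ref{thm:x111} one should check these are actually the first components of an $S$-linear semiorthogonal decomposition, which reduces by Proposition~\ref{prop:relative-sod} to the case of an algebraically closed field, where the exceptional collection for $\P^1 \times \P^1 \times \P^1 \times \P^1$ restricted to a hyperplane section does the job.)

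Next I would describe the base change $(\cA_X)_{S'}$. Base-changing the Segre embedding $\uppsi \colon X \hookrightarrow \Res_{S'/S}(\P_{S'}(V))$ along $f \colon S' \to S$ gives, by compatibility of Weil restriction with base change, an embedding of $X' := X \times_S S'$ into $\P_{S'}(V) \times_{S'} \P_{S'}(\uptau^* V) \times_{S'} (\text{two more factors})$ — here the fiber product of $S'$ with itself over $S$, which classifies ordered pairs of sheets of the degree-$4$ cover, decomposes as the diagonal $S'$ plus the complement $S'' \subset S' \times_S S'$, and this is where the extra factors and the genus-$1$ curve will come from. The crucial geometric input is that over each geometric point of $S'$, the first projection $X_s' \to \P^1$ realizes $X_s'$ as a blowup of $\P^1 \times \P^1 \times \P^1$ (the product of the remaining three factors) — no wait; rather, fixing one $\P^1$-coordinate, a hyperplane section of $(\P^1)^4$ becomes a $(1,1,1)$-divisor in $(\P^1)^3$, i.e.\ a blowup; one must instead project so that the fiber structure is a projective bundle and the exceptional locus a genus-$1$ curve. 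I would extract from \cite{KP21} (analogous to Lemma~2.4(iii) cited there) the precise statement: after a suitable projection, $X' \cong \Bl_{\Gamma'}(\text{a }\P^1\text{-bundle over }S'')$ for a smooth elliptic fibration $\Gamma' \to S'$ and a degree-$3$ étale cover $S'' \to S'$ (the three remaining sheets).

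Then, exactly as in Theorem~\ref{thm:x33}, I would apply the blowup formula for $\Bl_{\Gamma'}$: this produces $\bD(X')$ as a semiorthogonal gluing of $\bD(\Gamma')$ (twisted by the relative dualizing sheaf, then mutated) and the derived category of the projective bundle. Using the standard (twisted) exceptional collection $\langle \bD(S''), \cO(H) \otimes \bD(S'', \text{appropriate twist}) \rangle$ for the $\P^1$-bundle over $S''$ and a sequence of two mutations — pushing the blowup component one step right, then pushing one Bernardara line bundle to the far right past $K_{X'}$ — I would arrive at a decomposition in which the two rightmost components are recognized as the base change of $\uppsi^*(\Phi(\cO_{\P_{S'}(V)}(H_V) \otimes \bD(S',\upbeta')))$, leaving $(\cA_X)_{S'} = \langle \bD(\Gamma'), \bD(S'', \upbeta'') \rangle$. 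The identification of the residual Brauer class $\upbeta'' \in \Br(S'')$ as $g_1^*(\upbeta') \cdot g_2^*(\upbeta')$ comes from tracking the tensor-product rule for twists (Lemma~\ref{lemma:twisted-sd-ld}, Lemma~\ref{cor:segre}) through the Segre embedding: $S''$ parameterizes pairs of distinct sheets and the line bundle on that component carries the product of the pullbacks of $\upbeta'$ from the two sheets. The main obstacle, I expect, is not the categorical mutations — those are mechanical once the geometry is in place — but rather pinning down the correct projection realizing $X'$ as a blowup with elliptic center over $S''$, and then carefully propagating the Brauer twist bookkeeping through the three-sheet combinatorics so that the final twist on $S''$ is identified precisely; the fact that $\cores_{S'/S}(\upbeta') = 1$ and ${\upbeta'}^2 = 1$ will be used to verify consistency of these twists, and I would double-check that ${\upbeta''}^2 = 1$ follows from ${\upbeta'}^2 = 1$.
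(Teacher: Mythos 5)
Your overall template matches the paper's (Theorem~\ref{thm:pmd-qs-ps} plus Bernardara for the first part; base change, a blowup structure, and two mutations for the second), but the key geometric step is wrong as you state it. After base change, $X' = X\times_S S'$ embeds into $\P_{S'}(V)\times_{S'} Y$, where $Y \coloneqq \Res_{g_1}(\P_{S''}(g_2^*V))$ is the \emph{Weil restriction} along the degree-$3$ covering $g_1\colon S''\to S'$; this $Y$ is a Fano fibration of type~$\sX_{1,1,1}$ over $S'$ (fibers $\P^1\times\P^1\times\P^1$), and it is the projection $\pr_2\colon X'\to Y$ that is fiberwise the blowup of $(\P^1)^3$ along a genus-$1$ curve (\cite[Lemma~2.4(iv)]{KP21}), so $X'\cong\Bl_{\Gamma'}(Y)$ with $\Gamma'\to S'$ of genus~$1$. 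Your statement that $X'\cong\Bl_{\Gamma'}$ of ``a $\P^1$-bundle over $S''$'' is dimensionally impossible: a $\P^1$-bundle over $S''$ has relative dimension $1$ over $S'$, while the blown-down variety must have relative dimension $3$. (Your discarded alternative --- fixing a coordinate of the first $\P^1$, so the fibers are $(1,1,1)$-divisors in $(\P^1)^3$ --- describes a sextic del Pezzo fibration over $\P^1$, not a blowup either.)

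This error propagates into the categorical step: you decompose the blown-down variety by a two-term Bernardara collection $\langle\bD(S''),\cO(H)\otimes\bD(S'',\cdot)\rangle$, which would give only three components of $\bD(X')$ in total, whereas the base change of the asserted $S$-linear decomposition must have five (two from $(\cA_X)_{S'}$, one copy of $\bD(S')$, and two from the last component, whose base change lives over $S'\times_S S' = S'\sqcup S''$). What is actually needed is the four-component decomposition of Theorem~\ref{thm:x111} applied to $Y/S'$ (with covering $S''\to S'$ and twisted bundle $g_2^*V$), suitably twisted by $-H_Y$; this is also exactly where the class $g_1^*(\upbeta')\cdot g_2^*(\upbeta')$ appears, via the computation $\upbeta'\cdot\cores_{g_1}(g_2^*(\upbeta'))=\cores_{S'\times_S S'/S'}(\upbeta')=f^*(\cores_{S'/S}(\upbeta'))=1$ together with $2$-torsion of $\upbeta'$, giving $\cores_{g_1}(g_2^*(\upbeta'))=\upbeta'$ and hence $\bB(H_Y)=\upbeta'$ by Lemma~\ref{cor:segre}. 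You gesture at this bookkeeping but do not carry it out, and with the blown-down variety misidentified the subsequent mutations and the recognition of the last two components as the base change of $\uppsi^*\big(\Phi(\cO_{\P_{S'}(V)}(H_V)\otimes\bD(S',\upbeta'))\big)$ cannot be performed as written.
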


\begin{proof}
The proof of the first part is analogous to the proof of Theorem~\ref{thm:x111}.
More precisely, we use Theorem~\ref{thm:pmd-qs-ps} to construct the last two components
and define the category~$\cA_X$ as their orthogonal.
So, it remains to describe the base change~$(\cA_X)_{S'}$.

The embedding~$\uppsi \colon X \to \Res_{S'/S}(\P_{S'}(V))$ constructed in Proposition~\ref{prop:x33} 
after base change along~$f \colon S' \to S$ gives an embedding
\begin{equation*}
\uppsi' \colon X' \coloneqq X \times_S S' \hookrightarrow \Res_{S'/S}(\P_{S'}(V)) \times_S S'
\cong \P_{S'}(V) \times_{S'} Y,
\end{equation*}
where recall that~$S''$ is defined as the complement of the first (diagonal) component in
\begin{equation}
\label{eq:sp-sp}
S' \times_S S' = S' \sqcup S''
\end{equation}
the maps~$g_1,g_2 \colon S'' \to S'$ are induced by the projections~$S' \times_S S' \to S'$,
so that they are \'etale covering of degree~$3$, and
\begin{equation*}
Y \coloneqq \Res_{g_1}(\P_{S''}(g_2^*V)).
\end{equation*}
Note that~$Y \to S'$ is a Fano fibration of type~$\sX_{1,1,1}$, see~\S\ref{ss:x111}.
Note also that the Brauer class of the bundle~$g_2^*(V)$ on~$S''$ is~$g_2^*(\upbeta')$;
therefore, using~\eqref{eq:sp-sp} and compatibility of corestriction with base change, we obtain
\begin{equation*}
\upbeta' \cdot \cores_{g_1}(g_2^*(\upbeta')) = 
\cores_{S' \times_S S'/S'}(\upbeta') =
f^*(\cores_{S'/S}(\upbeta')) = 
f^*(1) = 1,
\end{equation*}
and since~$\upbeta'$ is a 2-torsion class, we conclude that~$\cores_{g_1}(g_2^*(\upbeta')) = \upbeta'$.

Now consider the composition of~$\uppsi'$ with the second projection~$\P_{S'}(V) \times_{S'} Y \to Y$:
\begin{equation*}
\pr_2 \colon X' \to Y.
\end{equation*}
On each geometric fiber over~$S'$ this is a blowup of~$\P^1 \times \P^1 \times \P^1$ 
along a smooth curve of genus~$1$ (see, e.g., \cite[Lemma~2.4(iv)]{KP21}), 
hence the same is true globally, i.e., 
there is a smooth projective morphism~$\Gamma' \to S'$ with geometric fibers curves of genus~$1$,
and an embedding~$\Gamma' \hookrightarrow Y$ such that
\begin{equation*}
X' \cong \Bl_{\Gamma'}(Y).
\end{equation*}
Using the blowup formula, we obtain a semiorthogonal decomposition
\begin{equation*}
\bD(X') = \langle i_*q^*\bD(\Gamma') \otimes \omega_{X'/S'}, \pr_2^*(\bD(Y)) \rangle,
\end{equation*}
where $q \colon E_2 \to \Gamma'$ is the exceptional divisor of the blowup~$\pr_2$ 
and~$i \colon E_2 \hookrightarrow X$ is its embedding.
Using the semiorthogonal decompositon of Theorem~\ref{thm:x111} 
twisted by the opposite of the fundamental class~$H_Y$ 
(note that~$\bB(H_Y) = \upbeta'$ by the above computation together with Lemma~\ref{cor:segre}) 
gives
\begin{multline*}
\bD(X') = 
\langle i_*q^*\bD(\Gamma') \otimes \cO(-H_{X'}), 
\cO(-H_Y) \otimes \bD(S',\upbeta'), \\
\cO(-H_Y) \otimes \Phi'(\cO_{\P_{S''}(g_2^*V)}(H_V) \otimes \bD(S'',g_1^*(\upbeta') \cdot g_2^*(\upbeta'))), \\
\cO_{X'} \otimes \bD(S'), 
\Phi'(\cO_{\P_{S''}(g_2^*V)}(H_V) \otimes \bD(S'',g_2^*(\upbeta'))) \rangle,
\end{multline*}
where~$\Phi'$ is the functor from Theorem~\ref{thm:x111}.

Next, we apply some mutations.
First, we mutate~$i_*q^*\bD(\Gamma') \otimes \cO(-H_{X'})$ one step to the right.
We obtain the decomposition
\begin{multline*}
\bD(X') = 
\langle 
\cO(-H_Y) \otimes \bD(S',\upbeta'), 
\bR_{\cO(-H_Y)}(i_*q^*\bD(\Gamma') \otimes \cO(-H_{X'})), \\
\cO(-H_Y) \otimes \Phi'(\cO_{\P_{S''}(g_2^*V)}(g_2^*H_V) \otimes \bD(S'',g_1^*(\upbeta') \cdot g_2^*(\upbeta'))), \\
\cO_{X'} \otimes \bD(S'), 
\Phi'(\cO_{\P_{S''}(g_2^*V)}(g_2^*H_V) \otimes \bD(S'',g_2^*(\upbeta'))) \rangle.
\end{multline*}
Next, we mutate the first component to the far right.
Since $K_X = - H_{X'}$, we obtain the decomposition
\begin{multline*}
\bD(X') = 
\langle 
\bR_{\cO(-H_Y)}(i_*q^*\bD(\Gamma') \otimes \cO(-H_{X'})), \\
\cO(-H_Y) \otimes \Phi'(\cO_{\P_{S''}(g_2^*V)}(g_2^*H_V) \otimes \bD(S'',g_1^*(\upbeta') \cdot g_2^*(\upbeta'))), 
\cO_{X'} \otimes \bD(S'), \\
\Phi'(\cO_{\P_{S''}(g_2^*V)}(g_2^*H_V) \otimes \bD(S'',g_2^*(\upbeta'))),
\cO(H_{X'} - H_Y) \otimes \bD(S',\upbeta')
\rangle.
\end{multline*}
We note that the base change to~$S'$ of~$\uppsi^* \big( \Phi(\cO_{\P_{S'}(V)}(H_V) \otimes \bD(S',\upbeta')) \big)$ 
coincides with the $S'$-linear subcategory of~$\bD(X')$ 
generated by the last two components of the above decomposition.
Therefore we obtain~\eqref{eq:x33-cax-sp}.
\end{proof}

\appendix

\section{Relative Griffiths components for threefold fibrations}
\label{sec:griffiths-components}

Let $p \colon X \to S$ be a morphism of schemes.
Recall that an admissible subcategory~$\cA \subset \bD(X)$ is {\sf $S$-linear}
if for any perfect complex~$\cF$ on~$S$ one has 
\begin{equation*}
\cA \otimes p^*\cF \subset \cA.
\end{equation*}
Note that this condition for any perfect complex~$\cF$ 
is equivalent to the same condition for a single object~$\cG$, 
if it is a \emph{classical generator} of the category of perfect complexes, 
i.e., if the minimal triangulated subcategory of~$\bD(S)$ containing~$\cG$ and closed under direct summands 
coincides with the perfect derived category of~$S$.
Similarly, a semiorthogonal decomposition~$\bD(X) = \langle \cA_1, \dots, \cA_n \rangle$ is {\sf $S$-linear}
if each component~$\cA_i$ is $S$-linear.

\begin{definition}
\label{def:griffiths}
Let~$p \colon X \to S$ be a smooth proper morphism to a connected scheme~$S$.
Let 
\begin{equation*}
\bD(X) = \langle \cA_1, \dots, \cA_n \rangle
\end{equation*}
be an $S$-linear semiorthogonal decomposition.
A component~$\cA_i$ is called {\sf relative Griffiths component} 
if it does not have further (nontrivial) $S$-linear semiorthogonal decompositions
and does not have a fully faithful $S$-linear embedding into the derived category 
of a smooth proper $S$-variety~$Y$ with~$\dim(Y/S) \le \dim(X/S) - 2$.
\end{definition}

In general it is quite hard to characterize explicitly Griffiths components.
However, in the case where~$\dim(X/S) = 3$ this is easy 
(the easier case of relative dimension~2 is left to the interested reader).
We start with a useful lemma.

\begin{lemma}
\label{lemma:linearity-covering}
Let~$f \colon S' \to S$ be a finite morphism and let~$Y$ be a scheme over~$S'$.
Then any $S$-linear subcategory in~$\bD(Y)$ is also~$S'$-linear.
\end{lemma}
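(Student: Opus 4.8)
The plan is to reduce the assertion to a purely \emph{base-level} statement: that $f^{*}\mathrm{Perf}(S)$ classically generates $\mathrm{Perf}(S')$. Write $q\colon Y\to S'$ for the given morphism and $p\coloneqq f\circ q\colon Y\to S$, and set
\[
T\coloneqq\{\,\cG\in\mathrm{Perf}(S') : \cA\otimes q^{*}\cG\subset\cA\,\}.
\]
Since $\cA$ is admissible it is closed under direct summands, and being triangulated it is closed under shifts and cones; as $q^{*}$ is exact and monoidal, $T$ is thus a thick subcategory of $\mathrm{Perf}(S')$. From $p^{*}\cong q^{*}f^{*}$ and the $S$-linearity of $\cA$ we get $f^{*}\mathrm{Perf}(S)\subset T$, hence $\mathrm{thick}\bigl(f^{*}\mathrm{Perf}(S)\bigr)\subset T$; so it remains to prove $\mathrm{thick}\bigl(f^{*}\mathrm{Perf}(S)\bigr)=\mathrm{Perf}(S')$.

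To prove this I would use that a finite morphism is affine: hence $f_{*}$ is exact and faithful on quasi-coherent sheaves, so $Rf_{*}=f_{*}\colon\bD_{\mathrm{qc}}(S')\to\bD_{\mathrm{qc}}(S)$ is conservative; moreover $f^{*}$ carries perfect complexes to perfect complexes and is left adjoint to $f_{*}$. If $N\in\bD_{\mathrm{qc}}(S')$ satisfies $\Hom(f^{*}\cE,N)=0$ for all $\cE\in\mathrm{Perf}(S)$, then $\Hom(\cE,f_{*}N)=0$ for all such $\cE$; since $\mathrm{Perf}(S)$ generates $\bD_{\mathrm{qc}}(S)$ (Thomason--Trobaugh, Bondal--Van den Bergh) this forces $f_{*}N=0$, hence $N=0$. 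Thus the localizing subcategory of $\bD_{\mathrm{qc}}(S')$ generated by the (compact) objects of $f^{*}\mathrm{Perf}(S)$ has zero right orthogonal, so it is all of $\bD_{\mathrm{qc}}(S')$; the Thomason--Neeman theorem then identifies the thick subcategory generated by $f^{*}\mathrm{Perf}(S)$ with $\bD_{\mathrm{qc}}(S')^{\mathrm c}=\mathrm{Perf}(S')$ (all schemes in sight being Noetherian, in particular quasi-compact and separated).

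The only genuinely non-formal point is this last identification: one cannot argue by an affine open cover of $S$, because a perfect complex on $S'$ is not assembled from pieces supported on such a cover, and this is exactly what the compact-generation machinery handles. I note that in every application of the lemma the morphism $f$ is finite \'etale, and there a softer argument avoids all of the above: in characteristic zero $\cO_{S}\to f_{*}\cO_{S'}$ splits (via $\tfrac1{\deg f}$ times the trace) and $f_{*}$ preserves perfectness, so by the projection formula each $\cG\in\mathrm{Perf}(S')$ is a direct summand of $f^{*}f_{*}\cG$ with $f_{*}\cG\in\mathrm{Perf}(S)$; hence $q^{*}\cG$ is a direct summand of $p^{*}(f_{*}\cG)$, and $\cA\otimes q^{*}\cG\subset\cA$ follows at once from the $S$-linearity and summand-closure of $\cA$.
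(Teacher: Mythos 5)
Your proposal is correct and has the same skeleton as the paper's proof: reduce $S'$-linearity to the statement that the pullback under~$f$ of a classical generator of the perfect complexes on~$S$ (equivalently, of all of $\mathrm{Perf}(S)$) classically generates $\mathrm{Perf}(S')$, and then use that an admissible subcategory is thick. The only real difference is that the paper simply cites~\cite[Lemma~2.2]{Pir21} for this generation statement, whereas you reprove it: $f_*$ is exact and conservative because $f$ is affine, $\mathrm{Perf}(S)$ compactly generates $\bD_{\mathrm{qc}}(S)$, so $f^*\mathrm{Perf}(S)$ is a set of compact generators of $\bD_{\mathrm{qc}}(S')$, and Thomason--Neeman identifies its thick closure with $\mathrm{Perf}(S')$. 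That argument is sound (and in fact works for any affine~$f$, not just finite ones), which buys a self-contained proof at the cost of invoking the compact-generation machinery. Two small blemishes, neither affecting the main argument: noetherian schemes are quasi-separated rather than separated, but quasi-compact plus quasi-separated is all that the generation theorems need; and in your ``softer'' \'etale aside, the trace splitting of $\cO_S \to f_*\cO_{S'}$ combined with the projection formula exhibits $\cF$ as a direct summand of $f_*f^*\cF$ for $\cF$ on~$S$, which is not what you need --- to see $\cG$ as a direct summand of $f^*f_*\cG$ for $\cG$ on~$S'$ one should instead use flat base change along $S'\times_S S'$, whose diagonal component is open and closed when $f$ is \'etale; the claimed conclusion is still true, so that aside is repairable.
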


\begin{proof}
Let~$p \colon Y \to S'$ be a morphism, let~$\cA \subset \bD(Y)$ be an~$S$-linear subcategory, 
and let~$\cG$ be a classical generator of the perfect derived category of~$S$.
Then~$\cA \otimes p^*f^*\cG \subset \cA$ because~$\cA$ is $S$-linear.
But~$f^*\cG$ is a classical generator of the perfect derived category of~$S'$ (see, e.g., \cite[Lemma~2.2]{Pir21}),
hence the above inclusion also implies that~$\cA$ is~$S'$-linear.
\end{proof}

\begin{proposition}
\label{prop:non-griffiths}
Let~$Y \to S$ be a smooth proper morphism with~$\dim(Y/S) \le 1$.
If~\mbox{$\cA \subset \bD(Y)$} is an $S$-linearly indecomposable $S$-linear semiorthogonal component 
then~$\cA$ is $S$-inearly equivalent to one of the following categories:
\begin{renumerate}
\item 
$\bD(S')$, where $S'$ is a connected finite \'etale covering of~$S$, or
\item 
$\bD(S',\upbeta)$, where $S'$ is as above 
and~$\upbeta \in \Br(S')$ is the Brauer class of a conic bundle, or
\item 
$\bD(C)$, where $C \to S$ is a smooth proper fibration
such that each connected component of any geometric fiber of~$C \to S$ is a curve of positive genus.
\end{renumerate}
\end{proposition}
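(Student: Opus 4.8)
The plan is to analyze the possible $S$-linear semiorthogonal components of $\bD(Y)$ according to the relative dimension of $Y/S$, treating the cases $\dim(Y/S)=0$ and $\dim(Y/S)=1$ separately; in each case the geometric fibers of $Y\to S$ are disjoint unions of points or of smooth projective curves, and the description of $\cA$ will be obtained by passing to a finite \'etale covering that trivializes the relevant monodromy.

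First I would dispose of the case $\dim(Y/S)=0$. Then $Y\to S$ is finite \'etale, so $\bD(Y)$ is a direct sum of categories $\bD(Y_j)$ indexed by the connected components $Y_j$ of $Y$, each of which is a connected finite \'etale covering of $S$; since $\cA$ is $S$-linearly indecomposable it must be a single summand $\bD(Y_j)\otimes\bD(Y_j)$-linear subcategory — more precisely, using Lemma~\ref{lemma:linearity-covering} the subcategory $\cA\subset\bD(Y_j)$ is automatically $Y_j$-linear, and a $Y_j$-linear admissible subcategory of $\bD(Y_j)$ is either $0$ or all of $\bD(Y_j)$ (the structure sheaf, a $Y_j$-ample generator, generates $\bD(Y_j)$ as a $Y_j$-linear category). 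This gives case~(i). Note that in this dimension case~(ii) with $\upbeta=1$ is subsumed and no curve appears, so there is nothing more to check.

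Next I would treat $\dim(Y/S)=1$, which is the substantive case. Fix a geometric point $s_0\in S$; the fiber $Y_{s_0}$ is a disjoint union of smooth projective curves, and $\uppi_1(S,s_0)$ acts by permuting its connected components. Since $\cA$ is $S$-linearly indecomposable, after passing to the finite \'etale covering $S'\to S$ associated with one $\uppi_1$-orbit of components (an application of Corollary~\ref{cor:orbit-covering}-type reasoning: the orbit of a connected component corresponds to a $\uppi_1$-set, hence a covering) we may, by Lemma~\ref{lemma:linearity-covering}, assume the geometric fibers of $Y'\to S'$ are \emph{connected} curves $C'$ of fixed genus $g$, and $\cA\subset\bD(Y')$ is $S'$-linear and $S'$-linearly indecomposable. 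Now split by $g$: if $g\ge 1$ then by standard arguments (the fibers have no full exceptional object, indeed $\bD$ of a positive-genus curve is $S'$-linearly indecomposable) the only possibility is $\cA=\bD(Y')$ with $Y'\to S'$ a smooth fibration in positive-genus curves, which after descending back over $S$ gives case~(iii) (here one must observe that the fibration $C\to S$ recovered from $Y'\to S'$ has the stated property on connected components of geometric fibers). If $g=0$ then $Y'\to S'$ is a Severi--Brauer curve $\P_{S'}(\cE)$ for a twisted rank-$2$ bundle $\cE$ by Lemma~\ref{lemma:sb-twisted-sheaves}, with Brauer class $\upbeta=\bB(H_{Y'})\in\Br(S')$ which is $2$-torsion by Corollary~\ref{cor:twisted-rank} (rank $2$); by Theorem~\ref{thm:bernardara}, $\bD(Y')=\langle\bD(S',\upbeta^{-1}),\bD(S')\rangle=\langle\bD(S',\upbeta),\bD(S')\rangle$, and an $S'$-linearly indecomposable $S'$-linear component of this is, up to twist, either $\bD(S')$ (case~(i)) or $\bD(S',\upbeta)$ with $\upbeta$ the class of the conic bundle $Y'=\P_{S'}(\cE)$ (case~(ii)). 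One then checks that the component $\cA\subset\bD(Y)$ we started from is $S$-linearly equivalent to the corresponding category via the descent isomorphism $\Pictw(Y/S)/\Pic(S)\cong\Pic_{Y/S}(S)$ of Lemma~\ref{lemma:picard-twisted}, which matches the relative tautological line bundle with the classes used over $S'$.

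The main obstacle I anticipate is the reduction step over the covering $S'\to S$: one needs to know that an $S$-linearly indecomposable component, after base change to $S'$, decomposes into pieces that are permuted transitively by the Galois group and hence ``comes from'' a single $\uppi_1$-orbit of connected components of the geometric fiber, so that the descent of a single piece reconstructs $\cA$. This requires a careful argument about $S$-linear semiorthogonal decompositions under finite \'etale base change (that $S$-linear decompositions over $S$ correspond to Galois-equivariant $S'$-linear decompositions over $S'$) together with the indecomposability hypothesis, and it is exactly here that Lemma~\ref{lemma:linearity-covering} and the behaviour of $\Br$ and $\uPic_{Y/S}$ under $\cores$ and restriction enter. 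The genus-$0$ versus positive-genus dichotomy and the identification of the $2$-torsion Brauer class with a conic bundle are then comparatively routine given the results assembled in Sections~\ref{sec:picard} and~\ref{sec:db-forms}.
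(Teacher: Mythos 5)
Your overall strategy (reduce to connected fibers over a finite \'etale cover, then use indecomposability of derived categories of curves fiberwise and Bernardara's theorem in the genus-zero case) is the same as the paper's, but the two steps you leave open are exactly the ones that carry the content, and one of them is set up incorrectly. First, the reduction: base-changing $Y$ along $S'\to S$ does not make the geometric fibers connected --- the fibers of $Y\times_S S'\to S'$ are the same as those of $Y\to S$ --- and the Galois-equivariant descent problem you flag at the end as ``the main obstacle'' is never resolved, so the reduction to connected fibers is not actually established in your proposal. The paper avoids this entirely: after first discarding disconnected $Y$ (then $\bD(Y)$ splits completely orthogonally along the connected components of $Y$, and an indecomposable $\cA$ lies in one summand), it takes the Stein factorization $Y\to S'\to S$, so that $Y$ itself, with no base change, is an $S'$-scheme with geometrically connected fibers, and Lemma~\ref{lemma:linearity-covering} shows the given $S$-linear component is automatically $S'$-linear. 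No equivariance or descent statement is needed on that route; on your route it is needed and is left open.

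Second, the fiberwise-to-global step is missing. Even with connected fibers, knowing (by Okawa) that each base-changed category $\cA_s\subset\bD(Y_s)$ is $0$, all of $\bD(Y_s)$, or $\langle\cO(i)\rangle$ when $Y_s\cong\P^1$ does not by itself say that the same alternative (with the same $i$) occurs for every geometric point $s$, nor, in the genus-zero case, that $\cA$ must be a component of one of the standard relative decompositions --- you assert that an indecomposable $S'$-linear component of $\bD(\P_{S'}(\cE))$ is, up to twist, a Bernardara component, but that is precisely what needs proof. The paper settles both points at once: the loci in $S$ where each fiberwise alternative holds are complements of supports of $p_*\cG$, $p_*\cG'$, $p_*(\cG(-i-1))$, $p_*(\cG'(-i))$, where $\cG$, $\cG'$ are classical generators of $\cA$ and $\cA^\perp$; these loci are therefore open, they partition the connected base, so one alternative holds everywhere, and then $\cA=0$, $\cA=\bD(Y)$, or $\cA$ is a component of the decomposition of Theorem~\ref{thm:bernardara}, whose $2$-torsion Brauer class is represented by the conic bundle $Y$ itself. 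Without an argument of this kind (or a genuine base-change/gluing statement replacing it), your proposal does not yet prove the proposition.
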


\begin{proof}
First, note that if~$Y$ is not connected, then~$\bD(Y)$ has a completely orthogonal decomposition 
and every indecomposable admissible subcategory of~$Y$ is contained in the derived category of one of the components.
Thus, without loss of generality, we may assume that~$Y$ is connected.

Let~$Y \to S' \to S$ be the Stein factorization, so that~$Y \to S'$ has geometrically connected fibers,
$S'$ is connected, and~$f \colon S' \to S$ is a finite \'etale morphism.
By Lemma~\ref{lemma:linearity-covering} any $S$-linear semiorthogonal component in~$\bD(Y)$ is also~$S'$-linear.
Therefore, we can replace~$S$ by~$S'$, or, in other words, we can assume that the fibers of~$Y \to S$ are geometrically connected.

Now let~$p \colon Y \to S$ be a smooth fibration with all geometric fibers connected curves of the same genus or points.
Let~$\cA \subset \bD(Y)$ be an $S$-linear semiorthogonal component.
For each point~$s \in S$, by base change~\cite{K11} we obtain a semiorthogonal component~$\cA_s \subset \bD(Y_{s})$.
By~\cite{Okawa} we have the following possibilities:
\begin{aenumerate}
\item 
\label{item:as-zero}
$\cA_s = 0$, or
\item 
\label{item:as-all}
$\cA_s = \bD(Y_s)$, or
\item 
\label{item:as-oi}
$Y_s \cong \P^1$ and~$\cA_s = \langle \cO(i) \rangle$ for some~$i \in \ZZ$.
\end{aenumerate}
Let~$S^{\mathrm{a}}$, $S^{\mathrm{b}}$, and~$S^{\mathrm{c}}_i$ be the subsets of points of~$S$ for which the corresponding possibilities hold
(the subsets~$S^{\mathrm{c}}_i$ are only defined if~$Y \to S$ is a $\P^1$-fibration, otherwise we set~$S^{\mathrm{c}}_i \coloneqq \varnothing$).
Clearly,
\begin{equation*}
S = S^{\mathrm{a}} \sqcup S^{\mathrm{b}} \sqcup \left( \bigsqcup_{i \in \ZZ} S^{\mathrm{c}}_i \right).
\end{equation*}
On the other hand, as we will show below, each of subsets~$S^{\mathrm{a}}$, $S^{\mathrm{b}}$, and~$S^{\mathrm{c}}_i$ is open in~$S$.
Since~$S$ is connected, it follows that~$S$ coincides with one of these sets.

Let~$\cG$ be a classical generator of~$\cA$ and~$\cG'$ a classical generator of~$\cA^\perp$. 
Note that~$\cG \oplus \cG'$ is a classical generator of~$\bD(Y)$. 
Now, the situations~\ref{item:as-zero}, \ref{item:as-all}, and~\ref{item:as-oi} 
are equivalent to the vanishings at the point~$s$ of the objects~$p_*\cG$, $p_*\cG'$, 
or~$p_*(\cG(-i-1))$ and~$p_*(\cG'(-i))$, respectively.
But the supports of these objects are closed in~$S$, hence the opennes of the corresponding subsets follows.

If~$S = S^{\mathrm{a}}$ then evidently~$\cA = 0$ and if~$S = S^{\mathrm{b}}$ then $\cA^\perp = 0$, hence~$\cA = \bD(Y)$.
Finally, if~$S = S^{\mathrm{c}}_i$ (hence~$Y$ is a $\P^1$-fibration over~$S$)
then~$\bD(Y) = \langle \cA^\perp, \cA \rangle$ is one of the standard decompositions from Theorem~\ref{thm:bernardara}.
Thus, either~$\cA \simeq \bD(S)$, 
or (if~$Y$ is a non-trivial Severi--Brauer variety associated to a Brauer class~$\upbeta$ and~$i$ is odd)~$\cA \simeq \bD(S,\upbeta)$,
and~$\upbeta$ is represented by the conic bundle~$Y$.
\end{proof}

\bibliographystyle{alpha}
\bibliography{fano}

\begin{thebibliography}{{Sta}20}

\bibitem[AB92]{AB92}
Alberto Alzati and Marina Bertolini.
\newblock On the rationality of {F}ano {$3$}-folds with {$B_2\geq 2$}.
\newblock {\em Matematiche (Catania)}, 47(1):63--74 (1993), 1992.

\bibitem[AB18]{AB}
Asher Auel and Marcello Bernardara.
\newblock Semiorthogonal decompositions and birational geometry of del {P}ezzo
  surfaces over arbitrary fields.
\newblock {\em Proc. Lond. Math. Soc. (3)}, 117(1):1--64, 2018.

\bibitem[Ber09]{Bernardara}
Marcello Bernardara.
\newblock A semiorthogonal decomposition for {B}rauer--{S}everi schemes.
\newblock {\em Math. Nachr.}, 282(10):1406--1413, 2009.

\bibitem[BLR90]{BLR}
Siegfried Bosch, Werner L\"{u}tkebohmert, and Michel Raynaud.
\newblock {\em N\'{e}ron models}, volume~21 of {\em Ergebnisse der Mathematik
  und ihrer Grenzgebiete (3)}.
\newblock Springer-Verlag, Berlin, 1990.

\bibitem[BO]{BO95}
A.~Bondal and D.~Orlov.
\newblock Semiorthogonal decomposition for algebraic varieties.
\newblock {\em Preprint alg-geom/9506012}.

\bibitem[BW19]{BW}
Olivier Benoist and Olivier Wittenberg.
\newblock Intermediate {J}acobians and rationality over arbitrary fields.
\newblock {\em Preprint arXic:1909.12668}, 2019.

\bibitem[Cal00]{Cal}
Andrei~Horia Caldararu.
\newblock {\em Derived categories of twisted sheaves on {C}alabi-{Y}au
  manifolds}.
\newblock ProQuest LLC, Ann Arbor, MI, 2000.
\newblock Thesis (Ph.D.)--Cornell University.

\bibitem[CTS21]{CTS}
Jean-Louis Colliot-Th\'{e}l\`ene and Alexei~N. Skorobogatov.
\newblock {\em The {B}rauer--{G}rothendieck group}, volume~71 of {\em
  Ergebnisse der Mathematik und ihrer Grenzgebiete. 3. Folge. A Series of
  Modern Surveys in Mathematics}.
\newblock Springer, Cham, [2021] \copyright 2021.

\bibitem[dJ03]{dJ}
Aise~Johan de~Jong.
\newblock A result of {G}abber.
\newblock {\em Preprint}, 2003.

\bibitem[DK18]{DK18}
Olivier Debarre and Alexander Kuznetsov.
\newblock {G}ushel--{M}ukai varieties: classification and birationalities.
\newblock {\em Algebriac Geometry}, 5(1):15--76, 2018.

\bibitem[HL10]{HL}
Daniel Huybrechts and Manfred Lehn.
\newblock {\em The geometry of moduli spaces of sheaves}.
\newblock Cambridge Mathematical Library. Cambridge University Press,
  Cambridge, second edition, 2010.

\bibitem[Kap88]{Kap}
M.~M. Kapranov.
\newblock On the derived categories of coherent sheaves on some homogeneous
  spaces.
\newblock {\em Invent. Math.}, 92(3):479--508, 1988.

\bibitem[Kle05]{Kleiman}
Steven~L. Kleiman.
\newblock The {P}icard scheme.
\newblock In {\em Fundamental algebraic geometry}, volume 123 of {\em Math.
  Surveys Monogr.}, pages 235--321. Amer. Math. Soc., Providence, RI, 2005.

\bibitem[KP19]{KP19}
Alexander Kuznetsov and Yuri Prokhorov.
\newblock Rationality of {F}ano threefolds over non-closed fields.
\newblock {\em Preprint arXiv:1911.08949}, 2019.

\bibitem[KP21a]{KP20a}
Alexander Kuznetsov and Yuri Prokhorov.
\newblock Rationality of {M}ukai varieties over non-closed fields.
\newblock In {\em Rationality of Varieties}, pages 249--290, Cham, 2021.
  Springer International Publishing.

\bibitem[KP21b]{KP21}
Alexander Kuznetsov and Yuri Prokhorov.
\newblock Rationality over non-closed fields of {F}ano threefolds with higher
  geometric {P}icard rank.
\newblock {\em Preprint arXiv:2103.02934}, 2021.

\bibitem[KPS18]{KPS}
Alexander~G. Kuznetsov, Yuri~G. Prokhorov, and Constantin~A. Shramov.
\newblock Hilbert schemes of lines and conics and automorphism groups of {F}ano
  threefolds.
\newblock {\em Jpn. J. Math.}, 13(1):109--185, 2018.

\bibitem[Kuz96]{k1996v22}
Alexander Kuznetsov.
\newblock An exception set of vector bundles on the varieties {$V\sb {22}$}.
\newblock {\em Vestnik Moskov. Univ. Ser. I Mat. Mekh.}, (3):41--44, 92, 1996.

\bibitem[Kuz05]{k2005v12}
Alexander Kuznetsov.
\newblock Derived categories of the {F}ano threefolds {$V\sb {12}$}.
\newblock {\em Mat. Zametki}, 78(4):579--594, 2005.

\bibitem[Kuz06]{K06}
Alexander Kuznetsov.
\newblock Hyperplane sections and derived categories.
\newblock {\em Izv. Ross. Akad. Nauk Ser. Mat.}, 70(3):23--128, 2006.

\bibitem[Kuz08]{k2008quadrics}
Alexander Kuznetsov.
\newblock Derived categories of quadric fibrations and intersections of
  quadrics.
\newblock {\em Adv. Math.}, 218(5):1340--1369, 2008.

\bibitem[Kuz09]{k2009Fano}
Alexander Kuznetsov.
\newblock Derived categories of {F}ano threefolds.
\newblock {\em Tr. Mat. Inst. Steklova}, 264(Mnogomernaya Algebraicheskaya
  Geometriya):116--128, 2009.

\bibitem[Kuz11]{K11}
Alexander Kuznetsov.
\newblock Base change for semiorthogonal decompositions.
\newblock {\em Compos. Math.}, 147(3):852--876, 2011.

\bibitem[Kuz13]{K13}
Alexander Kuznetsov.
\newblock A simple counterexample to the {J}ordan--{H}\"older property for
  derived categories.
\newblock {\em Preprint arXiv:1304.0903}, 2013.

\bibitem[Kuz16]{K16}
Alexander Kuznetsov.
\newblock Derived categories view on rationality problems.
\newblock In {\em Rationality problems in algebraic geometry}, volume 2172 of
  {\em Lecture Notes in Math.}, pages 67--104. Springer, Cham, 2016.

\bibitem[Kuz21]{K19}
Alexander Kuznetsov.
\newblock Derived categories of families of sextic del {P}ezzo surfaces.
\newblock {\em Int. Math. Res. Not. IMRN}, (12):9262--9339, 2021.

\bibitem[Lie08]{Lieb}
Max Lieblich.
\newblock Twisted sheaves and the period-index problem.
\newblock {\em Compos. Math.}, 144(1):1--31, 2008.

\bibitem[Lie17]{Liedtke}
Christian Liedtke.
\newblock Morphisms to {B}rauer-{S}everi varieties, with applications to del
  {P}ezzo surfaces.
\newblock In {\em Geometry over nonclosed fields}, Simons Symp., pages
  157--196. Springer, Cham, 2017.

\bibitem[Mar78]{Maruyama}
Masaki Maruyama.
\newblock Moduli of stable sheaves. {II}.
\newblock {\em J. Math. Kyoto Univ.}, 18(3):557--614, 1978.

\bibitem[Mir21]{Mir}
Mikhail Mironov.
\newblock Lefschetz exceptional collections in {$S _k$}-equivariant categories
  of {$(\Bbb P^n)^k$}.
\newblock {\em Eur. J. Math.}, 7(3):1182--1208, 2021.

\bibitem[Oka11]{Okawa}
Shinnosuke Okawa.
\newblock Semi-orthogonal decomposability of the derived category of a curve.
\newblock {\em Adv. Math.}, 228(5):2869--2873, 2011.

\bibitem[Orl91]{Orlov:v5}
D.~O. Orlov.
\newblock Exceptional set of vector bundles on the variety {$V_5$}.
\newblock {\em Vestnik Moskov. Univ. Ser. I Mat. Mekh.}, (5):69--71, 1991.

\bibitem[Ott88]{Ott}
Giorgio Ottaviani.
\newblock Spinor bundles on quadrics.
\newblock {\em Trans. Amer. Math. Soc.}, 307(1):301--316, 1988.

\bibitem[Pir20]{Pir21}
Dmitrii Pirozhkov.
\newblock Stably semiorthogonally indecomposable varieties.
\newblock {\em Preprint arXiv:2011.12743}, 2020.

\bibitem[Pro13]{Pro13}
Yuri Prokhorov.
\newblock {$G$}-{F}ano threefolds, {II}.
\newblock {\em Adv. Geom.}, 13(3):419--434, 2013.

\bibitem[Sim94]{Simpson}
Carlos~T. Simpson.
\newblock Moduli of representations of the fundamental group of a smooth
  projective variety. {I}.
\newblock {\em Inst. Hautes \'{E}tudes Sci. Publ. Math.}, (79):47--129, 1994.

\bibitem[{Sta}20]{SP}
The {Stacks project authors}.
\newblock The stacks project.
\newblock \url{https://stacks.math.columbia.edu}, 2020.

\bibitem[Wi{\'s}09]{Wis}
Jaros\l aw~A. Wi{\'s}niewski.
\newblock Rigidity of the {M}ori cone for {F}ano manifolds.
\newblock {\em Bull. Lond. Math. Soc.}, 41(5):779--781, 2009.

\end{thebibliography}

\end{document}